\numberwithin{equation}{section}
\theoremstyle{plain}
\newtheorem{theorem}{Theorem}[section]
\newtheorem{proposition}[theorem]{Proposition}
\newtheorem{lemma}[theorem]{Lemma}
\newtheorem{corollary}[theorem]{Corollary}
\newtheorem{definition}[theorem]{Definition}
\newtheorem{example}[theorem]{Example}
\newtheorem{remark}[theorem]{Remark}
\newtheorem{conjecture}[theorem]{Conjecture}
\newenvironment{proof}{{\noindent \textbf{Proof}\,\,}}{\hspace*{\fill}$\Box$\medskip}
\newcommand{\Sing}{\operatorname{Sing}}
\def\cc{\mathbb C}
\def\mcf{\mathcal F}
\def\oc{\overline{\cc}}
\def\cp{\mathbb{CP}}
\def\wt#1{\widetilde#1}
\def\rr{\mathbb R}
\def\var{\varepsilon}
\def\mcr{\mathcal R}
\def\nn{\mathbb N}
 \def\mcb{\mathcal B}
 \def\zz{\mathbb Z}
 \def\rp{\mathbb{RP}}
 \def\mcb{\mathcal B}
 \def\Sing{\operatorname{Sing}}
\def\La{\Lambda}
\def\ii{\mathbb I}
  \def\diag{\operatorname{diag}}
 \def\la{\lambda}
\def\mcc{\mathcal C}
\def\mcm{\mathcal M}
\def\mcq{\mathcal Q}
\def\mcr{\mathcal R}
\def\mcp{\mathcal P}
\def\mcn{\mathcal N}
\def\mcd{\mathcal D}
\def\Xi{\mathcal Z}
\def\qq{\mathbb Q}
\def\d{\partial}
\def\dd{\partial^2}
\def\gpq{\gamma_{p,q}}
\def\er{\eta_{\rho}}
\def\pqr{(p,q;\rho)}
\def\tro{\theta_{\rho}}
\def\tr{\theta_r}
\def\mo{\operatorname{mod}}
\def\ii{\mathbb I}
\title{On rationally integrable planar dual and projective billiards}
\author{Alexey Glutsyuk\thanks{CNRS, France (UMR 5669 (UMPA, ENS de Lyon), UMI 2615 (ISC J.-V.Poncelet)). E-mail: 
aglutsyu@ens-lyon.fr} \thanks{HSE University, Moscow, Russia} \thanks{Kharkevich Institute for Information Transmission Problems (IITP RAS), Moscow} \thanks{Partially supported by Laboratory of Dynamical Systems and Applications, HSE University, of the Ministry of science and higher education of  RF grant  No 075-15-2019-1931} \thanks{Supported by part by RFBR grants 16-01-00748, 16-01-00766, 20-01-00420}
 \thanks{This material is partly based upon work supported by the National Science Foundation under Grant No. 1440140, while the author was in residence at the Mathematical Sciences Research Institute in Berkeley, California, during the period 
 August--September 2018.}}
\begin{document}
\maketitle
\begin{abstract}  
A {\it caustic} of a strictly  convex planar bounded billiard is a smooth curve 
whose tangent lines are reflected from the billiard boundary to its tangent lines. 
The famous Birkhoff Conjecture  states that if the billiard 
 boundary has an inner neighborhood foliated by closed caustics, then the billiard is an ellipse. It was studied by many mathematicians, including H.Poritsky, 
 M.Bialy, S.Bolotin, A.Mironov, V.Kaloshin, A.Sorrentino and others. In the paper we study  its following generalized {\it dual} version  stated by S.Tabachnikov. Consider a closed smooth strictly convex   curve $\gamma\subset\rp^2$ 
 equipped with a {\it dual  billiard structure:} a family of non-trivial 
 projective involutions acting on its projective tangent lines and fixing the tangency points. 
 {\it Suppose that its outer neighborhood admits a foliation by closed curves (including $\gamma$) such that 
 the involution of each tangent line permutes 
 its intersection points with every leaf. Then $\gamma$ and the leaves are conics forming a pencil.} 
 We prove positive answer in the case, when the curve $\gamma$ is $C^4$-smooth and the 
 foliation  admits a rational first integral. To this end, we show that each 
 $C^4$-smooth germ  of  curve carrying a rationally integrable dual  
 billiard structure is a conic and classify   rationally integrable dual  billiards on (punctured) conic. 
 They include  the dual  billiards  induced by pencils of conics,  two infinite series of exotic 
dual billiards and five more  exotic ones.
\end{abstract}
 \tableofcontents
\section{Introduction}
\subsection{Main results: classification of rationally integrable dual  planar 
billiards}
The famous Birkhoff Conjecture deals with a  billiard 
in a bounded planar domain $\Omega\subset\rr^2$ with smooth strictly convex boundary. 
 Recall that its {\it caustic} is a curve $S\subset\rr^2$ 
 such that each tangent line to $S$ is reflected from the boundary $\partial\Omega$ 
  to a line tangent to $S$. 
 A billiard $\Omega$  is called {\it Birkhoff caustic-integrable,} if a neighborhood of its boundary in $\Omega$ is foliated by closed caustics, and the boundary $\partial\Omega$ is a leaf of this foliation. 
 It is well-known that each elliptic billiard is integrable:  ellipses confocal to the boundary are caustics, 
 see \cite[section 4]{tab}. 
 The {\bf Birkhoff Conjecture}  states the converse: {\it the only Birkhoff caustic-integrable 
 convex bounded planar 
 billiards with smooth boundary are ellipses.}\footnote{This  conjecture,  attributed  to 
 G.Birkhoff,  was first mentioned   in  print    in the paper \cite{poritsky} by  H. Poritsky, who worked with Birkhoff as a post-doctoral fellow 
 in late 1920-ths.} See its brief survey  in  Subsection 1.5. 
 
 S.Tabachnikov suggested its generalization  to projective billiards introduced by himself in 1997 in \cite{tabpr}. 
 See the following definition and conjecture.

 \begin{definition} \cite{tabpr} A {\it projective billiard} is a smooth planar curve $C\subset\rr^2$ equipped with a transversal line field $\mcn$. 
 For every $Q\in C$ the {\it projective billiard reflection involution} at $Q$ acts on the space of lines through $Q$ as the affine involution 
 $\rr^2\to\rr^2$ that fixes the points of the tangent line to $C$ at $Q$, preserves the line $\mcn(Q)$ and acts on $\mcn(Q)$ as central symmetry 
 with respect to the point\footnote{In other words, two lines $a$, $b$ through $Q$ are permuted by reflection at $Q$, if  and only if 
 the quadruple of lines $T_QC$, $\mcn(Q)$, $a$, $b$ is harmonic: there exists a projective involution of the space $\rp^1$ of lines through 
 $Q$ that fixes $T_QC$, $\mcn(Q)$ and permutes $a$, $b$.} $Q$. 
 In the case, when $C$ is a strictly convex closed curve,  the {\it projective billiard map} acts on the {\it phase cylinder:} 
 the space of oriented lines intersecting $C$. It sends an oriented line to its image under the above reflection involution at its last point 
 of intersection with $C$ in the sense of orientation. See Fig. 1.
 \end{definition} 
 \begin{figure}[ht]
  \begin{center}
   \epsfig{file=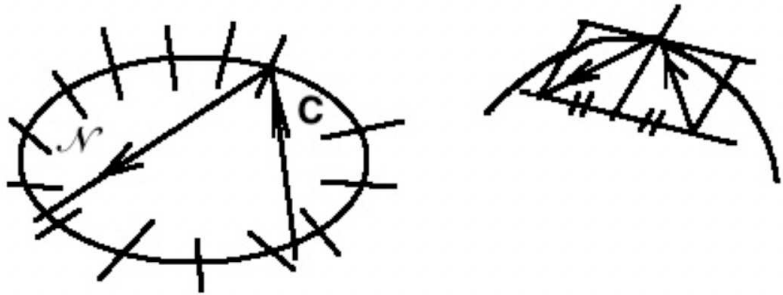, width=28em}
    \caption{The projective billiard reflection.}
    \label{fig:0}
  \end{center}
\end{figure}
\begin{example} \label{exconst} 
A usual Euclidean planar billiard is a projective billiard with transversal line field being 
normal line field. 
\end{example}
\begin{example} \label{exconst2} Each simply connected complete Riemannian surface of constant curvature is  isometric (up to  constant factor) to one of the  two-dimensional space forms:   the  Euclidean plane, the unit sphere, 
 the hyperbolic plane. {\it Any billiard in the hyperbolic plane (hemisphere) is  isomorphic to a projective billiard,} see \cite{tabpr}. 
Namely, each space form is represented by a  hypersurface $\Sigma$ in the space $\rr^3$ equipped with appropriate quadratic form 
$$<Ax,x>, \ \  <x,x>:=x_1^2+x_2^2+x_3^2,$$
$$A \text{ is a symmetric 3x3-matrix called {\it space form matrix}}:$$

Euclidean plane: $A=\diag(1,1,0)$,  $\Sigma=\{ x_3=1\}$.

Sphere: $A=Id$, $\Sigma=\{<x,x>=1\}$ is the unit sphere.

Hyperbolic plane: $A=\diag(1,1,-1)$, $\Sigma=\{ <Ax,x>=-1, \ x_3>0\}.$

\noindent The metric of the surface $\Sigma$ is induced by the quadratic form $<Ax,x>$. 
Its geodesics are the sections of the surface 
$\Sigma$ by two-dimensional vector subspaces in $\rr^3$. The billiard in a  domain $\Omega\subset\Sigma_+:=\Sigma\cap\{ x_3>0\}$  
is defined by reflection of geodesics from its boundary. The tautological projection $\pi:\rr^3\setminus\{0\}\to\rp^2$ sends $\Omega$ diffeomorphically 
to a domain in the affine chart $\{ x_3=1\}$. It sends billiard orbits in $\Omega$ to orbits of the projective billiard on $C=\pi(\partial\Omega)$ 
with the transversal line field $\mcn$ on $C$ being  
the image of the normal line field to $\partial\Omega$ under the differential $d\pi$. The projective billiard on $C$  is 
a space form billiard, see the next definition.
\end{example}
\begin{definition}  \label{pspform} Let $A$ be a space form matrix. Let $C$ be a curve in an affine chart in 
$\rp^2$. Let $\mcn$ be the transversal line field on $C$ defined as follows.

a) Case, when  $A=\diag(1,1,0)$. Then $\mcn$ is the normal line field to $C$ in the affine chart $\{ x_3\neq0\}$. 

b) Case, when $\det A\neq0$, i.e., $A=\diag(1,1,\pm1)$. Then for every $Q\in C$ 
the two-dimensional subspaces in $\rr^3$ projected to the lines tangent to $T_QC$ and $\mcn(Q)$ are orthogonal 
with respect to the scalar product $<Ax,x>$.

Then the projective billiard defined by $\mcn$ is called a {\it space form billiard\footnote{A space form projective billiard with matrix 
$A=\diag(1,1,-1)$ is not necessarily the projection of a billiard in the hyperbolic plane $\Sigma=\Sigma_+$. Some its part may lie in the projection to $\rp^2$ of 
the de Sitter cylinder $\{<Ax,x>=1\}$, where the quadratic form $<Ax,x>$ defines a pseudo-Riemannian metric of constant curvature.}.}
\end{definition}

 The definitions of  caustic and  Birkhoff integrability for 
 projective billiards repeat the above definitions given for classical billiards. 
 
 \begin{conjecture}\label{conjt} {\bf (S.Tabachnikov)} In every Birkhoff integrable projective billiard  its boundary and closed caustics forming a foliation 
 are ellipses whose projective-dual conics form a pencil. 
 \end{conjecture}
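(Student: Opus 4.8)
The plan is to pass first to the dual picture, where the conjecture becomes the bolded statement of the abstract. Projective duality in $\rp^2$ sends a boundary point $Q\in C$ to a line $Q^*$ tangent to the dual curve $\gamma=C^*\subset(\rp^2)^*$, and it identifies the pencil of lines through $Q$ with the line $Q^*$; under this identification the tangent line $T_QC$ goes to the tangency point $Q^*\cap\gamma$ and $\mcn(Q)$ goes to a second point of $Q^*$. Hence the projective reflection involution at $Q$, which fixes $T_QC$ and $\mcn(Q)$, dualizes to a non-trivial projective involution of $Q^*$ fixing the tangency point, i.e. to a dual billiard structure on $\gamma$. A line $\ell$ tangent to a caustic $S$ and passing through $Q$ becomes a point $\ell^*$ of $S^*$ lying on $Q^*$, so the caustic condition ``reflection at $Q$ permutes the tangent lines to $S$ through $Q$'' becomes ``the involution of $Q^*$ permutes the points of $Q^*\cap S^*$''. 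Therefore Birkhoff integrability of $(C,\mcn)$ is equivalent to the existence of a foliation of an outer neighborhood of $\gamma$ by closed leaves $S^*$ (including $\gamma$) whose intersection with each tangent line is permuted by the dual involution, and the conclusion ``$\gamma$ and the leaves are conics forming a pencil'' dualizes back to ``$C$ and its caustics are ellipses whose dual conics form a pencil''. This reduction is rigorous and turns the conjecture into the dual integrability problem.

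It then remains to show, for a smooth such dual billiard, that $\gamma$ and the leaves are conics forming a pencil. I would attack this in two stages. First, the local stage: show that the germ of $\gamma$ is a conic. Parametrizing $\gamma(t)$, encoding the involution $\sigma_t$ of the tangent line $T_t\gamma$ by the position of its second fixed point, and introducing a first integral $F$ whose level curves are the leaves, the permutation condition reads $\sigma_t(P_1(t,c))=P_2(t,c)$, where $P_1,P_2$ are the two intersection points of $T_t\gamma$ with $\{F=c\}$. Differentiating this identity in $t$ and in the leaf-parameter $c$, and expanding as $c\to 0$ (the leaves collapsing onto $\gamma$), produces a hierarchy of differential relations tying the projective invariants of $\gamma$ to $\sigma_t$ and to the jets of $F$. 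The aim is to show that the one-parameter family of leaves forces these invariants to vanish, i.e. that $\gamma$ is a conic. Second, the global stage: on the conic $\gamma$ classify the dual billiard structures admitting a foliation of an annular neighborhood by closed leaves, and verify that the only surviving case is the pencil of conics, the exotic structures being discarded because their leaves are not closed curves foliating a full neighborhood.

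The main obstacle is the transition from a merely smooth first integral $F$ to an algebraic (rational) one. Both the germ rigidity and the global classification that I can presently cite rest on rational integrability, where algebro-geometric tools --- the degrees of the level conics, the structure at the base points of the pencil, and analytic continuation into the complexification $\cc^2$ --- make the problem tractable; in the $C^4$ rationally integrable case this is precisely what the germ theorem and the classification announced in the abstract provide. Deriving algebraicity of $F$ from the sole hypotheses of smoothness and of a foliation by closed leaves is the projective-dual analogue of the notoriously hard algebraic Birkhoff problem for classical billiards, where even polynomial integrability is known only under degree restrictions. I therefore expect this regularity-to-algebraicity step to be the decisive difficulty: without it the present method establishes the conjecture only under the additional assumption of rational integrability, and a full smooth proof would require a new argument that converts the global closedness of the leaves into algebraicity of the first integral.
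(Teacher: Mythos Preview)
Your assessment is accurate, and in fact it matches the paper's own position: Conjecture~\ref{conjt} is \emph{not} proved in the paper. It is stated as an open conjecture, and the paper's main results (Theorems~\ref{tclosed} and~\ref{tgerm}) establish it only under the additional hypothesis that the foliation admits a rational first integral. Your reduction to the dual picture is correct and is exactly the content of the paper's remark that Conjecture~\ref{conjtd} is dual to Conjecture~\ref{conjt}; the paper then attacks Conjecture~\ref{conjtd} under rational integrability.

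Your two-stage plan (local rigidity forcing $\gamma$ to be a conic, then classification of rationally integrable dual billiards on a conic) is precisely the architecture of the paper's proof of Theorems~\ref{tgerm} and~\ref{tcompl}, though the actual execution is far more involved than your sketch suggests: the local stage requires a reduction to quasihomogeneously integrable $(p,q;\rho)$-billiards, a delicate combinatorial analysis of two formulas for the residue $\rho$, and Shustin's Pl\"ucker-type formula to conclude that the curve is a conic. Your stage-two expectation that ``exotic structures are discarded because their leaves are not closed'' is also what happens in the proof of Theorem~\ref{tclosed}: the non-pencil cases all have real singularities on the conic, hence cannot arise from a smooth closed curve.

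You have correctly identified the genuine gap: passing from a smooth foliation by closed invariant curves to a rational first integral. The paper does not close this gap either; it is the projective analogue of the full Birkhoff Conjecture and remains open. So your proposal is not a proof of the conjecture, but it is an accurate diagnosis of where the problem stands and how the rationally integrable special case is handled.
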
 
 
 Below we state the dual version of  the Tabachnikov's Conjecture (2008, \cite{tab08}) and present partial positive results. 
To do this, 
consider  $\rr^2_{x_1,x_2}$ as the plane $\{ x_3=1\}\subset\rr^3_{x_1,x_2,x_3}$  
 identified with the corresponding affine chart in $\rp^2_{[x_1:x_2:x_3]}$. The {\it orthogonal polarity} 
 sends a two-dimensional vector subspace $W\subset\rr^2$ to its Euclidean-orthogonal  subspace 
 $W^\perp$. The corresponding {\it projective  duality} (also called orthogonal polarity) is the map 
 $\rp^{2*}\to\rp^2$ sending lines to points so that the tautological projection  of each punctured two-dimensional 
 subspace $W\setminus\{0\}\subset\rr^3$ (a line $L$)  is sent to the projection  of its punctured orthogonal complement $W^\perp\setminus\{0\}$ (called its dual point and denoted by $L^*$). The line dual to a point $P$ will be denoted by $P^*$. To each curve $C\subset\rr^2$ we associate the {\it dual curve} $\gamma=C^*\subset\rp^2$ 
 consisting of those points that are dual to the tangent lines to $C$. 
 
 Let now a planar curve $C$ be equipped with a projective billiard structure: a transversal  line field $\mcn$. For every point $Q\in C$ let $L_Q$ 
 denote the projective tangent line to $C$ at $Q$ in the ambient projective plane $\rp^2\supset\rr^2$. The projective duality sends the 
 space $\rp^1_Q$ of lines through $Q$ to the projective line $Q^*$ dual to $Q$. The  line $Q^*$ is tangent to $\gamma$ at the point $P=L_Q^*$ dual to 
 $L_Q$. {\it The duality "line $\mapsto$ point" conjugates the projective billiard involution acting on $\rp^1_Q$ with a non-trivial projective involution 
 $\sigma_P:L_P\to L_P$ fixing $P$ and the point  dual to $\mcn(Q)$.}
  Thus,  {\it the duality transforms a projective billiard on $C$ to a dual billiard on $\gamma=C^*$,} see the next definition. 

 \begin{definition} 
 A  {\it dual  billiard structure} on a smooth curve $\gamma\subset\rp^2$ is a family of non-trivial projective 
 involutions $\sigma_P:L_P\to L_P$ fixing $P$. 
 \end{definition}
 
 \begin{remark}  Let a projective billiard on $C$ have a strictly convex closed caustic $S$. Then its dual curve 
 $S^*$ is also strictly convex and closed, and  for every $P\in\gamma=C^*$ the  
 dual billiard 
 involution  $\sigma_P:L_P\to L_P$ permutes the two points of intersection  $L_P\cap S^*$. See Fig. 2. A curve $S^*$ satisfying the 
 latter statement is called an {\it invariant curve for the dual billiard.}
  \end{remark}
 
  \begin{figure}[ht]
  \begin{center}
   \epsfig{file=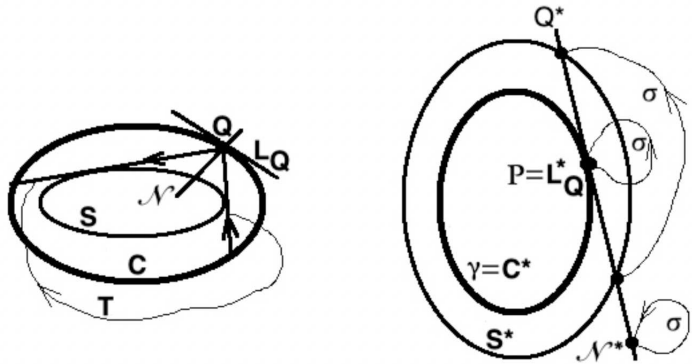, width=25em}
    \caption{The projective billiard reflection involution $T$ acting on lines through a point $Q\in C$ and the dual involution $\sigma=\sigma_{P}$ acting on  the dual line $Q^*$ tangent to the dual curve $\gamma=C^*$ at the point $P=L_Q^*$.}
    \label{fig:0}
  \end{center}
\end{figure}

\begin{definition} \label{dpint} 
 A dual  billiard on a strictly convex closed curve $\gamma$ is  {\it integrable,} if there exists a $C^0$-foliation by closed strictly convex 
 invariant curves on a neighborhood of $\gamma$ on its concave side, with $\gamma$ being a leaf. See Fig. 3. 
 \end{definition}
   \begin{figure}[ht]
  \begin{center}
   \epsfig{file=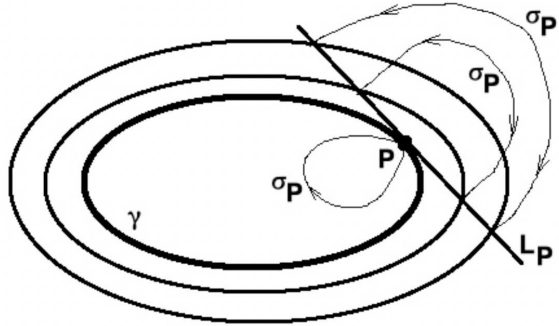, width=20em}
   \caption{An integrable dual  billiard structure}
        \label{fig:5}
  \end{center}
\end{figure}

\begin{conjecture} \label{conjtd} {\bf (S.Tabachnikov \cite{tab08}); dual to Conjecture \ref{conjt}).}  For every integrable dual billiard 
the underlying curve and the corresponding invariant curves forming foliation are conics forming a pencil.
\end{conjecture}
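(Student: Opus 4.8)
\medskip
\noindent\textbf{Proof proposal.} The plan is to prove the conjecture, under the stated rational-integrability and $C^4$ hypotheses, in three parts, exactly as announced in the abstract: a \emph{local rigidity} statement --- a $C^4$ germ of curve carrying a rationally integrable dual billiard structure is an arc of a conic; a \emph{classification} of rationally integrable dual billiard structures on a conic; and a \emph{global assembly} deducing the pencil statement from the first two. From the outset I would complexify: a rational first integral $R$ of the foliation is a rational function on $\rp^2$, hence the restriction of a rational function on $\cp^2$, whose level curves $\{R=c\}$ form an algebraic pencil $\mcp$ of plane curves of some degree $d$; the germ of $\gamma$, being a leaf, lies on $\{R=c_0\}$, and all the objects extend holomorphically to a neighborhood of the complexified arc.

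\emph{The governing equation.} Fix $P\in\gamma$, let $L_P$ be the projective tangent line, and put $R_P:=R|_{L_P}$, a rational function on $L_P\cong\cp^1$. For every leaf $S$ the reflection $\sigma_P$ interchanges the two points of $L_P\cap S$; since a nontrivial projective involution of a line is determined by its fixed point $P$ together with any interchanged pair, the \emph{same} $\sigma_P$ interchanges $L_P\cap S$ for all leaves $S$ simultaneously, i.e. $R_P\circ\sigma_P=R_P$: the restriction of the rational integral to every tangent line of $\gamma$ is invariant under a nontrivial projective involution fixing the tangency point. In a coordinate $t$ on $L_P$ with $t(P)=0$ and $\sigma_P\colon t\mapsto -t$ this says that $R_P$ is an even rational function of $t$. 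The point $P$ is automatically a critical point of $R_P$ (as $L_P$ is tangent to the leaf $\gamma=\{R=c_0\}$), so the real content is that this critical point is matched by a $\sigma_P$-symmetric critical structure; in particular $d$ is even whenever $L_P$ misses the base locus of $\mcp$, and at such $P$ the involution $\sigma_P$ is uniquely recovered from $\gamma$ and $R$, which legitimizes treating $P\mapsto\sigma_P$ as an algebraic family.

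\emph{Local rigidity.} I would then show that this equation traps the germ inside a fixed algebraic object. In the incidence variety of pairs $(P,\ell)$ with $P\in\ell$, let $\Sigma$ be the subset of pairs for which $R|_\ell$ is invariant under a nontrivial projective involution fixing $P$; this is an algebraic subvariety. The hypothesis says that the germ, together with its field of tangent lines, is an integral arc of the algebraic constraint ``$(P,L_P)\in\Sigma$''. From this one first raises the regularity: differentiating the governing equation the requisite number of times produces an analytic ordinary differential equation satisfied by the germ, so the germ is real-analytic and extends to an algebraic curve $\wh\gamma$; once analyticity is available it supplies all higher-order information for free, and a degree-by-degree analysis of $R_P\circ\sigma_P=R_P$ along $\wh\gamma$ --- controlling $\deg\wh\gamma$ and the base points of $\mcp$ on its tangent lines by intersection theory --- forces $\deg\wh\gamma=2$. \emph{I expect this bootstrap from $C^4$-smoothness to analyticity, together with the degree bound, to be the main obstacle}: a priori nothing about $\gamma$ is algebraic, so the projective-algebraic machinery is unavailable until the regularity has been upgraded, and the $C^4$-hypothesis should be exactly what is needed to set up the analytic differential equation --- once the germ is known analytic, the ostensibly fifth-order condition of being a conic costs nothing further.

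\emph{Classification and assembly.} With $\gamma$ a conic, I would parametrize it rationally by $P=P(s)$; then the tangent lines and the functions $R_{P(s)}$ become explicit rational expressions in $s$ and the coefficients of the pencil $\mcp$, and the demand that each $R_{P(s)}$ be even under a nontrivial involution fixing $P(s)$ becomes an algebraic identity in $s$. Sorting through the admissible degrees $d$ and factorization types of $R$ --- for instance $R$ induced by a pencil of conics through $\gamma$, or $R$ a power of a ratio of products of linear forms --- yields the complete finite list: the dual billiards from pencils of conics, the two infinite exotic series, and the five sporadic exotic ones. Finally, for the conjecture itself under the hypotheses: applying the local rigidity at every point of a closed strictly convex $C^4$ curve $\gamma$ admitting a rationally integrable dual billiard shows the germ at each point is a conic arc, and overlapping arcs determine a single conic, so $\gamma$ is an ellipse; its dual billiard structure is then one of the classified types; and among these only the pencil-of-conics structure admits a $C^0$-foliation by closed strictly convex invariant curves on a one-sided neighborhood of a smooth convex conic --- the exotic structures are supported only on punctured conics, their invariant curves failing to close up smoothly near a genuine convex conic. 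Hence $\gamma$ and the leaves of the foliation are conics forming a pencil, which is the assertion of Conjecture \ref{conjtd} under these hypotheses.
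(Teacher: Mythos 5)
Your proposal addresses only the rationally integrable, $C^4$ case of the conjecture, which is exactly what the paper proves (Theorems \ref{tclosed}, \ref{tgerm}, \ref{tcompl}), so the restriction of scope is fine, and your three-part architecture (local rigidity, classification on the conic, global assembly) matches the paper's. The governing identity $R\circ\sigma_P=R$ on $L_P$ and the final assembly step are also essentially the paper's: a closed leaf cannot pass through a real singularity of the involution family, and all non-pencil structures in the classification have real singular points on the conic.

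However, you have the difficulty inverted, and the two steps you gloss over are where the entire content lies. First, the passage from a $C^2$ germ to an algebraic curve is not an obstacle at all and needs no ODE bootstrap or analyticity upgrade: since $R|_{L_P}$ is invariant under a nontrivial projective involution fixing $P$, it is an even function in a suitable coordinate centered at $P$, hence has vanishing derivative at $P$; therefore $dR$ vanishes on $T_P\gamma$ for every $P$, so $R|_\gamma\equiv const$ and $\gamma$ lies in the algebraic level curve $\{R=const\}$ (Proposition \ref{proalg2}; the $C^4$ hypothesis is used only at the very end, to rule out pasting arcs of distinct conics). Second, your assertion that "a degree-by-degree analysis \ldots forces $\deg\wh\gamma=2$" conceals the actual theorem. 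A priori the level curve containing the germ can have any degree, and analyticity buys nothing here. The paper's proof of this step occupies Sections 2--6 and requires: the differential equation $dH(G)/dz=6\psi\,H(G)$ for the Hessian of a normalized root of the integral; the asymptotics of the degenerating involutions at singular and inflection points, producing a residue $\rho=-d/3$; the reduction to quasihomogeneously integrable $(p,q;\rho)$-billiards; two independent formulas for $\rho$ whose compatibility miraculously forces $p=2$, $q=1$; uniqueness of the singular point with a singular branch; and Shustin's generalized Pl\"ucker formula to conclude the curve is a conic. Your outline contains no mechanism that would bound the degree or exclude inflection points, and the subsequent classification on the conic (the residue sum $=4$, the admissible residue configurations, and the explicit exotic integrals) is likewise asserted rather than argued. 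As written, the proposal is a plausible table of contents for the paper, not a proof.
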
 

\begin{remark} A projective billiard  on a  strictly convex closed curve is integrable, if and only if so is its dual billiard. 
The {\it outer} dual billiard in $\rr^2$,  
 with $\sigma_P:L_P\to L_P$ being the central symmetry with respect to the tangency point $P$, is dual to the centrally-projective billiard, 
 whose transversal  field consists of lines passing through the origin \cite{tabpr}. 
Thus, {\it Conjecture \ref{conjtd} would imply the Birkhoff Conjecture and its versions on surfaces of constant curvature and for 
outer billiards} (as observed in \cite{tab08}), see Examples \ref{exconst}, \ref{exconst2}. 
\end{remark}

One of the main results of the present paper is the following theorem.

\begin{theorem} \label{tclosed} Let $\gamma\subset\rp^2$ be a $C^4$-smooth strictly convex closed planar curve 
 equipped with an  integrable dual   billiard structure. Let the corresponding foliation by invariant curves admit a rational first 
 integral. Then its leaves, including $\gamma$, are conics forming a pencil.  
 \end{theorem}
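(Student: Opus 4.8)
\medskip
\noindent\textbf{Proof strategy.}
The plan is to derive Theorem~\ref{tclosed} from two statements, each established separately: a local classification asserting that every $C^4$-smooth germ of curve carrying a rationally integrable dual billiard structure is an arc of a conic, and a complete classification of rationally integrable dual billiard structures on a (possibly punctured) conic. Granting the local statement, the global conclusion is immediate: a rational first integral $R$ of the integrable foliation restricts to a rational first integral near every point of $\gamma$, so every germ of $\gamma$ lies on a conic; two overlapping germs lie on conics sharing an arc, hence on the \emph{same} conic (distinct conics meet in at most four points), so by connectedness $\gamma$ lies on a single conic, which --- being closed and strictly convex --- is an ellipse $E$. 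It then remains to single out, among the rationally integrable dual billiard structures on $E$, those that are integrable in the sense of Definition~\ref{dpint}, and to check that for these the leaves of the foliation are conics of a pencil.

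The local statement is the analytic core, and I expect it to be the main obstacle. I would prove it by complexification: the germ of $\gamma$ extends to a holomorphic curve, the leaves to the level curves of the rational first integral $R$ viewed as a rational function on $\cp^2$, and the dual billiard involutions to a holomorphic family of M\"obius involutions $\sigma_P$ of the complex tangent lines $L_P$. The basic identity is that $\sigma_P$ permutes $L_P\cap\{R=c\}$ for every $c$, hence $R|_{L_P}\circ\sigma_P=R|_{L_P}$: the restriction of $R$ to every tangent line of $\gamma$ is invariant under $\sigma_P$, which fixes $P$ and, because $L_P$ is tangent to the leaf $\gamma$, attains there a double value. Thus $R|_{L_P}$ has even degree and factors through the degree-two quotient $L_P\to L_P/\sigma_P\cong\cp^1$, and its ramification points and critical values are controlled by the tangencies of $L_P$ with the other leaves and by the points where $L_P$ meets the base locus and the singular leaves of $R$. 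Running a Riemann--Hurwitz and degree count along the one-parameter family $\{L_P\}_{P\in\gamma}$, and tracking these incidences, should force the degree of the pencil of leaves to be $2$ --- so the leaves are conics --- and then show that the distinguished leaf $\gamma$ satisfies a projectively invariant ODE satisfied only by conics. The delicate point is that this case analysis must be sharp: rationally integrable dual billiard structures exist that are \emph{not} induced by pencils of conics (the exotic series and the five exceptional ones), so the argument must permit these while still forcing the underlying curve to be a conic.

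For the classification on a conic $C$, I would parametrise $C$ rationally by $\cp^1$; then the condition ``$\sigma_P$ permutes $L_P$ with every leaf $\{R=c\}$'' becomes an explicit algebraic--functional equation relating the first integral to the family of involutions, whose complete solution yields the pencil-induced structures, two infinite series of exotic structures, and five further exceptional ones. Finally, to close the proof of Theorem~\ref{tclosed}, I would observe that each exotic structure in this list is genuinely defined only on a \emph{punctured} conic: at some point of the conic the prescribed involution degenerates (becomes trivial, or fails to be projective), which is incompatible with the non-triviality and regularity of a dual billiard structure carried by the \emph{closed} curve $\gamma=E$. Hence the structure on $E$ is pencil-induced, its invariant curves are precisely the conics of that pencil, and since the leaves of the foliation in Definition~\ref{dpint} are invariant curves carrying the rational first integral, they coincide near $\gamma$ with conics of the pencil. (Should some exotic structure happen to extend to the whole conic, one excludes it instead by inspecting its explicit invariant curves and checking that near $\gamma$ they do not form a foliation by closed strictly convex ovals on the concave side.)
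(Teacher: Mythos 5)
Your global reduction is exactly the paper's route for Theorem~\ref{tclosed} (Subsection~8.2): invariance of $R|_{L_P}$ under $\sigma_P$ reduces everything to the germ statement (Theorem~\ref{tgerm}); overlapping conical arcs glue into a single conic; and the exotic structures are then excluded because each of them has real singular points of the involution family on the conic, whereas an integrable dual billiard on the closed curve $\gamma$ is defined and non-trivial at every point. Your closing observation is the paper's argument almost verbatim, and your parenthetical fallback is not needed: every non-pencil structure in the classification does have real singularities. One small point you gloss over, which the paper handles via Proposition~\ref{propencil}, is that the pencil obtained must have no real base point on $\gamma$ (otherwise that point would again be a singularity of the involution family), which is what guarantees that the nearby conics of the pencil actually foliate an annulus on the concave side.

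The genuine gap is in your sketch of the local core, which is where essentially all of the paper's work lies. You propose to run Riemann--Hurwitz and a degree count along the family $\{L_P\}$ to ``force the degree of the pencil of leaves to be $2$ --- so the leaves are conics.'' That intermediate claim is false: for the exotic rationally integrable structures the level curves of the integral are \emph{not} conics (e.g.\ $R_{c1}=(w-z^2)^3/(1+w^3-2zw)^2$ has cubic level curves, and the series (\ref{exot1})--(\ref{exot2}) have leaves of arbitrarily high degree); only the distinguished curve $\gamma$ is forced to be a conic. So a degree count on the generic leaf cannot close the argument, and you acknowledge but do not resolve this tension. The paper's actual mechanism is quite different: normalize $R$ to vanish on $\gamma$, derive the differential equation (\ref{difeq1}) for the Tabachnikov Hessian $H(G)$ along $\gamma$ from the vanishing of the cubic Taylor coefficient of $R\circ\mcf_P$, read off a residue $\rho=-d/3$ from the asymptotics of $H(G)$ at a would-be singular or inflection point, reduce to quasihomogeneously integrable $(p,q;\rho)$-billiards (Theorem~\ref{quasi-pqr}), prove by the two residue formulas that necessarily $p=2$, $q=1$ (Theorem~\ref{classpqr}), and finish with Shustin's generalized Pl\"ucker formula (Theorem~\ref{type-conic}). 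None of this is visible in your sketch, so as written the proposal assumes Theorem~\ref{tgerm} and the classification on the conic rather than proving them; the deduction of Theorem~\ref{tclosed} from those two inputs is correct and matches the paper.
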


Below we state a more general result for $\gamma$ being a germ. To do this, let us introduce the following definition.
 \begin{definition} A dual  billiard on a (germ of)   curve  $\gamma\subset\rp^2$ 
  given by involution family $\sigma_P:L_P\to L_P$ 
 is called {\it rationally integrable,}  if there exists a non-constant rational 
 function $R$ on $\rp^2$ whose restriction to $L_P$ is $\sigma_P$-invariant for every $P\in\gamma$: 
 $R\circ\sigma_P=R$ on $L_P$.  
  \end{definition}
  \begin{example} Let a dual billiard on $\gamma$ be {\it polynomially} integrable: the above integral $R$ is  polynomial in some 
  affine chart $\rr^2$. Then for every $P\in\gamma\cap\rr^2$ the involution $\sigma_P$ fixes the intersection point of the line $L_P$ with the 
  infinity line, and hence, is the central symmetry $L_P\to L_P$ with respect to the tangency point $P$. 
  Thus, the dual billiard in question is a polynomially integrable outer billiard. It is known that in this case the underlying curve is a conic:  
  stated as a conjecture and proved  in \cite{tab08} under some non-degeneracy assumption; proved in full generality in   \cite{gs}.  
  \end{example}

\begin{example} \label{tabobs} 
{\bf (S.Tabachnikov's observation).} Let $A$, $B$ be real symmetric $3\times3$-matrices, $B$ be non-degenerate. Consider 
the pencil of conics $\mcc_\la:=\{<(B-\la A)x,x>=0\}$; set $\gamma:=\mcc_0=\{<Bx,x>=0\}$. 
The set of those points in $\cp^2$ that lie in complexifications of all  $\mcc_\la$ simultaneously will be called the  {\it basic set} 
of the pencil  and denoted by $\mcb(\mcc)$.  
 {\it For every $P\in\gamma^o:=\gamma\setminus\mcb(\mcc)$ 
   the involution  permuting the two complex points of intersection $\mcc_\la\cap L_P$ for each $\la$ 
   is a well-defined real projective involution $\sigma_P:L_P\to L_P$.} 
    This   yields a dual  billiard  on $\gamma^o$, which will be called {\it dual billiard 
   of conical pencil type.} It is known to be rationally integrable with a quadratic  integral: the ratio of quadratic polynomials vanishing 
   on some two given conics of the pencil. 
   \end{example}

 \begin{definition} Two  dual   billiard structures on two (germs of) curves $\gamma_1$, 
 $\gamma_2$ in 
 $\rp^2$  are  {\it real-projective equivalent,} if there exists a 
 projective transformation $\rp^2\to\rp^2$ sending $\gamma_1$ to $\gamma_2$ and transforming one structure to the other one. 
 (Projective equivalence  preserves rational integrability.) Real-projective equivalence of projective 
 billiards is defined analogously. 
 \end{definition}
 
 The main result of the paper is the next theorem stating that for every rationally integrable dual billiard 
 the underlying curve is a conic, the dual billiard structure extends to 
 the  conic punctured in at most four points, and  classifying  rationally integrable
   dual billiards on punctured conic.  Unexpectedly,  {\it there are infinitely 
 many exotic, non-pencil rationally integrable dual billiards on punctured conic, with integrals of arbitrarily high degrees.} 
 
   \begin{theorem} \label{tgerm} Let $\gamma\subset\rr^2\subset\rp^2$ be a $C^4$-smooth non-linear  germ of curve  equipped with a rationally integrable dual   billiard structure. 
 Then $\gamma$ is a conic, and the dual   billiard structure has one of the 
 three following types (up to real-projective equivalence):
 
  1) The  dual billiard  is of conical pencil type and has a quadratic integral.
    
  2) There exists an affine chart $\rr^2_{z,w}\subset\rp^2$ 
  in which $\gamma=\{ w=z^2\}$ and such that for every $P=(z_0,w_0)\in\gamma$ the 
  involution $\sigma_P:L_P\to L_P$ is given by one of the following formulas: 
  
  a) In the coordinate 
  $$\zeta:=\frac z{z_0}$$
  $$\sigma_P:\zeta\mapsto\eta_\rho(\zeta):=\frac{(\rho-1)\zeta-(\rho-2)}{\rho\zeta-(\rho-1)},$$
\begin{equation} \rho=2-\frac 2{2N+1}, \ \text{ or } \ \rho=2-\frac1{N+1} \  \text{ for some 
  } N\in\nn.\label{rhoval}\end{equation} 
  
  b) In the coordinate 
  $$u:=z-z_0$$
  \begin{equation} \sigma_P: u\mapsto-\frac u{1+f(z_0)u},\label{sigmaef}\end{equation}
   \begin{equation} f=f_{b1}(z):=\frac{5z-3}{2z(z-1)} \text{ (type 2b1))}, \  \text{ or } \ 
   f=f_{b2}(z):=\frac{3z}{z^2+1} \text{ (type 2b2))}.\label{sigma2}\end{equation}

  c) In the above coordinate $u$ the involution $\sigma_P$ takes the form (\ref{sigmaef}) with 
   \begin{equation} f=f_{c1}(z):=\frac{4z^2}{z^3-1} \text{ (type 2c1))}, \  \text{ or } \ 
   f=f_{c2}(z):=\frac{8z-4}{3z(z-1)} \text{ (type 2c2))}.\label{sigma3}\end{equation}
   
   d) In the above coordinate $u$ the involution $\sigma_P$ takes the form (\ref{sigmaef}) with 
   \begin{equation} f=f_d(z)=\frac4{3z}+\frac1{z-1}=\frac{7z-4}{3z(z-1)} \ \ \text{ (type 2d)}.\end{equation}
  \end{theorem}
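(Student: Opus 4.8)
The argument divides into two parts: (I) every $C^4$-smooth non-linear germ carrying a rationally integrable dual billiard structure lies on a conic, and (II) the classification of such structures on a conic. Throughout, the rational integral $R$ is complexified to a rational function on $\cp^2$, written $R=F/G$ with $F,G$ coprime homogeneous of the same degree.

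\textbf{Part I, algebraicity.} Fix a generic point $P$ of the germ. Since $R$ is non-constant, $R|_{L_P}$ is a non-constant rational function on $L_P\cong\cp^1$ (it can be constant only on a nowhere dense set of $P$: a non-linear germ all of whose tangent lines were level-lines of $R$ would have to be a line), and by hypothesis it is invariant under the non-trivial involution $\sigma_P$. Hence $R|_{L_P}$ factors through the degree-two quotient $L_P\to L_P/\sigma_P\cong\cp^1$, so both fixed points of $\sigma_P$ --- in particular $P$ --- are ramification points of $R|_{L_P}$; equivalently $dR|_P$ vanishes on the tangent direction $T_P\gamma=L_P$, i.e. $\gamma$ is tangent to the level curve of $R$ through $P$. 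As this holds at every point, the germ $\gamma$ lies on an irreducible component of a level curve $\{R=c\}$, an algebraic curve $\Gamma\subset\cp^2$; since $\gamma$ is $C^4$-smooth and non-linear along the germ, $\deg\Gamma=d\ge 2$.

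\textbf{Part I, $d=2$.} First I record that the dual billiard structure is algebraic in $P$: for generic $P$, $\sigma_P$ is one of the finitely many involutions of $L_P$ fixing $P$ and preserving $R|_{L_P}$, these form an algebraic family over $\Gamma$, and the structure --- a continuous selection --- is a single algebraic branch of it; write $P^{+}$ for the algebraically varying second fixed point of $\sigma_P$. Now I would study the ramification and fibre data of the map $R|_{L_P}$ (of degree $\deg R$ for generic $P$): by Riemann--Hurwitz its critical divisor consists of $P$ and $P^{+}$ (from the quotient $L_P\to L_P/\sigma_P$) together with $\sigma_P$-symmetric pairs supported on the ramification curve of $R$ on $\cp^2$ and on the base locus $\{F=G=0\}$, while the fibre $(R|_{L_P})^{-1}(c)\supset L_P\cap\Gamma$ is $\sigma_P$-invariant. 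Tracking how the residual intersection points $L_P\cap\Gamma\setminus\{P\}$, the base points, and the ramification points are distributed and matched up by $\sigma_P$ as $P$ runs over $\Gamma$ yields parity and degree identities that become contradictory once $d\ge 3$, hence $\Gamma$ is a conic. I expect this combinatorics of the ramification data along the pencil of tangent lines of $\Gamma$ to be the main obstacle of the proof.

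\textbf{Part II, classification.} After a real-projective normalization I work, for the exotic branches, in the chart with $\gamma=\{w=z^2\}$ and $P=P_t=(t,t^2)$, parametrizing $L_{P_t}$ by $\zeta=z/t$; then $\sigma_{P_t}$ is a Möbius involution of $\cp^1_\zeta$ fixing $\zeta=1$, hence determined by its other fixed point $\beta(t)$ (equivalently by the rational function $f$ of the statement). The restriction $r_t(\zeta):=R|_{L_{P_t}}$ is rational jointly in $(t,\zeta)$, with $R|_\gamma\equiv c$, and $\sigma_{P_t}$-invariance means that $r_t$ is a rational function of $t$ and of $\bigl(\tfrac{\zeta-1}{\zeta-\beta(t)}\bigr)^2$. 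Imposing that all these restrictions are cut out by a single rational function on $\cp^2$ produces a functional equation for $\beta$; expanding at a base point and differentiating reduces it to a finite system whose solutions fall into two regimes. The ``soft'' regime gives a one-parameter family of solutions, which I identify with the dual billiards of conical pencil type of Example \ref{tabobs}, reading off the pencil $\mcc_\la$ from $\beta$ and exhibiting the quadratic integral (type 1). The ``rigid'' regime imposes resonance: the required solution of the reduced system is rational only when a ratio built from the orders of vanishing of $R|_{L_P}-c$ at $P$ and $P^{+}$ (equivalently, from $\deg R$) is an integer, and these integrality conditions cut out exactly the two infinite series $\rho=2-\tfrac{2}{2N+1}$ and $\rho=2-\tfrac1{N+1}$, $N\in\nn$, of type 2a together with the five sporadic solutions $f_{b1},f_{b2},f_{c1},f_{c2},f_d$. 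Finally the punctures are the poles, in $P$, of the resulting involution family --- the base locus $\mcb(\mcc)$ of the pencil in type 1, and the poles of $f$ together with the point at infinity of the parabola in types 2 --- at most four points in all; checking directly that each listed structure is rationally integrable by exhibiting its integral explicitly completes the classification.
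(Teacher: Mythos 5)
Your Part I algebraicity step is sound and matches the paper's Proposition \ref{proalg2}: since $\sigma_P$ fixes $P$, the restriction $R|_{L_P}$ has vanishing derivative at $P$, so $R$ is constant along $\gamma$ and the germ lies on an algebraic level curve. But from that point on the proposal is a plan, not a proof, and the plan skips exactly the hard content. For ``Part I, $d=2$'' you write that tracking ramification data of $R|_{L_P}$ along the pencil of tangent lines ``yields parity and degree identities that become contradictory once $d\ge 3$,'' and that you \emph{expect} this to be the main obstacle. No such identities are produced, and it is far from clear they exist: the genuine difficulty is not a global count but the local behaviour of the involution family at inflection points and singular points of $\Gamma$, where $\sigma_P$ degenerates. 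The paper handles this by deriving a differential equation for the Tabachnikov Hessian $H(G)$ along $\gamma$ (equation (\ref{difeq1})), extracting from its leading asymptotics a ``residue'' $\rho=-d/3$ at each such point, reducing to quasihomogeneous $(p,q;\rho)$-billiards via Newton diagrams, and proving (Theorem \ref{classpqr}) that quasihomogeneous integrability forces $p=2,q=1$ and $\rho\in\mcm$ --- this is where the inflection points are excluded and the branches are shown to be quadratic. Even then one needs the satellite-germ analysis of Section 5 to control singular quadratic branches and Shustin's generalized Pl\"ucker formula (Theorem \ref{type-conic}) to conclude $d=2$ for possibly singular $\Gamma$. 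None of this, nor any workable substitute, appears in your sketch; a Riemann--Hurwitz bookkeeping over generic tangent lines does not see the local branch structure that drives the argument.

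Part II has the same character. The claim that a ``functional equation for $\beta$,'' after ``expanding at a base point and differentiating,'' reduces to a finite system whose rigid regime ``cuts out exactly'' the two series of \eqref{rhoval} and the five sporadic $f$'s is an assertion of the answer, not a derivation. The actual classification rests on two inputs you do not establish: the quantization $\rho\in\mcm=\{0,1,2,3,4\}\cup\{2\pm\frac2k\}$ of the residue at each singular point of the involution family (a consequence of Theorem \ref{classpqr}), and the global constraint that the residues sum to $4$ (Proposition \ref{propsum}, a residue formula for the rational function $f$). Only together do these enumerate the admissible configurations $(\frac32,\frac32,1)$, $(\frac43,\frac43,\frac43)$, $(\frac43,\frac53,1)$, etc.; and realizability of each configuration still requires constructing the integral explicitly --- e.g.\ for the $(\frac43,\frac53,1)$ case one must find the auxiliary rational cubic $S$ of \eqref{s=k=0} and verify $\sigma_P$-invariance of $L_P\cap S$ by a Vieta computation. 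Likewise the split of the complex types into the real forms 2b1)/2b2) and 2c1)/2c2), according to whether the complex conjugation fixes or permutes the singular points, is not addressed. As it stands the proposal reproduces the statement of the classification rather than proving it.
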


  \medskip

  {\bf Addendum to Theorem \ref{tgerm}.} {\it Every dual   billiard structure 
  on $\gamma$ of type 2a) has a rational first integral $R(z,w)$ of the form} 
  \begin{equation}R(z,w)=\frac{(w-z^2)^{2N+1}}{\prod_{j=1}^N(w-c_jz^2)^2}, \ \ 
  c_j=-\frac{4j(2N+1-j)}{(2N+1-2j)^2}, \ \text{ for } \rho=2-\frac2{2N+1};\label{exot1}
  \end{equation}
  \begin{equation}R(z,w)=\frac{(w-z^2)^{N+1}}{z\prod_{j=1}^N(w-c_jz^2)}, \ \ 
  c_j=-\frac{j(2N+2-j)}{(N+1-j)^2}, \ \text{ for } \rho=2-\frac1{N+1}.\label{exot2}\end{equation}
   {\it The dual billiards of types 2b1) and 2b2) have respectively the integrals}
  \begin{equation}R_{b1}(z,w)=\frac{(w-z^2)^2}{(w+3z^2)(z-1)(z-w)}, 
  \label{exo2bnew} \end{equation}
   \begin{equation}R_{b2}(z,w)=\frac{(w-z^2)^2}{(z^2+w^2+w+1)(z^2+1)}.
  \label{exo2bnew2} \end{equation}
  {\it The dual billiards of types 2c1), 2c2) have respectively the integrals}
  \begin{equation}R_{c1}(z,w)=\frac{(w-z^2)^3}{(1+w^3-2zw)^2},\label{exo2b}\end{equation} 
  \begin{equation}R_{c2}(z,w)=\frac{(w-z^2)^3}{(8z^3-8z^2w-8z^2-w^2-w+10zw)^2}.\label{exoc2}\end{equation}
{\it The dual billiard  of type 2d) has the integral}
 \begin{equation}R_{d}(z,w)=\frac{(w-z^2)^3}{(w+8z^2)(z-1)(w+8z^2+4w^2+5wz^2-14zw-4z^3)}.\label{exodd}\end{equation}

  \medskip

We prove the following theorem, which is a unifying complex version  of Theorems \ref{tclosed}, \ref{tgerm}. 
To state it, let us introduce the following definition.

\begin{definition} Consider a regular germ of holomorphic curve $\gamma\subset\cp^2$ at a point $O$. 
A {\it complex (holomorphic or not) dual   billiard} on $\gamma$ is a germ of (holomorphic or not) family 
of complex projective involutions $\sigma_P:L_P\to L_P$, $P\in\gamma$, acting on  
complex projective tangent lines $L_P$ to $\gamma$ at $P$ and fixing $P$.  A complex 
dual   billiard on $\gamma$ is said to be {\it rationally integrable,} if 
there exists a non-constant complex rational function $R$ on $\cp^2$ such that for every $P\in\gamma$ 
the restriction $R|_{L_P}$ is $\sigma_P$-invariant: $R\circ\sigma_P=R$ on 
$L_P$. The definition of complex-projective equivalent complex dual   billiards 
repeats the definition of real-projective equivalent ones with change of real projective 
transformations $\rp^2\to\rp^2$ to complex ones acting on $\cp^2$.
\end{definition}

 \begin{theorem} \label{tcompl} 
 Every regular  germ of holomorphic curve in $\cp^2$ (different from a straight line) 
 equipped with a 
 rationally integrable  complex dual   billiard structure is a conic. 
 Up to complex-projective equivalence, the corresponding billiard structure has one of the types 1), 2a), 2b1), 2c1), 2d) listed in Theorem \ref{tgerm}, with 
 a rational integral as in its addendum. (Here the coordinates $(z,w)$ 
 as in the addendum are complex affine coordinates.) 
  \end{theorem}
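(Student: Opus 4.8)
The plan is to prove the complex statement (Theorem~\ref{tcompl}) directly; the real Theorems~\ref{tclosed} and \ref{tgerm} then follow by complexifying the $C^4$ germ with its rational integral and, in the closed case, by the standard passage from local conics to a global conic and to the pencil structure of the foliation. So fix the germ $\gamma\subset\cp^2$ at $O$, the family $\sigma_P\colon L_P\to L_P$ of non-trivial projective involutions fixing $P$, and the rational integral $R$; shrinking the germ and, if necessary, replacing $R$ by $1/R$, we may assume $R$ is holomorphic and non-constant near $O$. The first step is that $\gamma$ is algebraic. A non-identical projective involution of $L_P\cong\cp^1$ is diagonalizable with two distinct fixed points, one being $P$; in an affine coordinate $u$ on $L_P$ with $u(P)=0$ and the other fixed point at $\infty$ it becomes $u\mapsto-u$. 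Then $R|_{L_P}$, being $\sigma_P$-invariant, is an even function of $u$, so $\tfrac{d}{du}(R|_{L_P})(P)=0$; since $\partial_u|_{u=0}$ spans $T_PL_P=T_P\gamma$, we get $dR|_{T_P\gamma}=0$ for all $P$, hence $R\equiv c_0$ on the connected germ $\gamma$. Thus $\gamma$ lies in an irreducible component $\Gamma$ of the algebraic curve $\{R=c_0\}$, and henceforth $\gamma$ is treated as a germ of $\Gamma$.

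The second step is that $\Gamma$ is a conic. In an affine chart $\cc^2_{z,w}$ put $\gamma=\{w=f(z)\}$ near $O=(0,0)$ with $f(0)=f'(0)=0$, $f''\not\equiv0$. For $P=(z_0,f(z_0))$ the tangent line $L_P=\{w=f(z_0)+f'(z_0)(z-z_0)\}$, parametrized by $z$, carries the involution $\sigma_P$ fixing $z_0$, necessarily of the form $z-z_0\mapsto -(z-z_0)/(1+g(z_0)(z-z_0))$ with a structure function $g$ and second fixed point at $z=z_0-2/g(z_0)$. Rational integrability says $\Phi_{z_0}(z):=R(z,f(z_0)+f'(z_0)(z-z_0))$ is $\sigma_P$-invariant. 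Expanding in $u:=z-z_0$: $\Phi_{z_0}-c_0$ vanishes at $u=0$ to an even order (twice the multiplicity of $\gamma$ in the fiber $\{R=c_0\}$), its first non-zero odd-degree coefficient expresses $g(z_0)$ through the jets of $R$ and $f$ along $L_P$, and the later coefficients are genuine relations among the jets of $f$ and of $R$. Since $R$ is rational of finite degree, its restrictions $R|_{L_P}$ form an algebraic family of bounded-degree rational functions on $\cp^1$, which bounds the number of independent such relations and forces them to collapse to the statement that $\gamma$ has contact of order $\ge6$ with its osculating conic at every point; hence $\gamma$ is a conic. In particular $g$, and so $\sigma_P$, depends algebraically on $P$, so the ``holomorphic or not'' option is void in the rationally integrable case.

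The third step classifies the structures. As $\gamma$ is a conic, normalize it by a complex projective transformation to $\gamma=\{w=z^2\}$, with germ at a generic point. The level curves of $R$ form a pencil $\mcp_R$ with $\gamma$ among its components, base locus supported on $\gamma$ plus finitely many points, and every member an invariant curve; so $\sigma_P$ must permute $L_P\cap\{R=\la\}$ for all $\la$. Fixing $P$ outside the base and indeterminacy loci and comparing the fiber $\{R=c_0\}$ (containing $P$, which $\sigma_P$ fixes) with a generic fiber forces the moving intersection points of $L_P$ with the pencil to be even in number and perfectly paired by $\sigma_P$, and ties the second fixed point $z_0-2/g(z_0)$ to the fiber $\{R=c_0\}$. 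Expressing these constraints, together with the explicit $\sigma_P$ from Step~2, in terms of the homogeneous numerator and denominator of $R$ --- i.e.\ in terms of the base locus and of the (necessarily highly non-reduced) members of $\mcp_R$ over the critical values of $R$ --- produces an explicit combinatorial system. Its solutions are: the conic pencils (case~1); two infinite families of pencils $\{(w-z^2)^{k}=\la\,z^{a}\prod_j(w-c_jz^2)^{b}\}$, for which $\sigma_P$-invariance over the whole pencil is equivalent precisely to the identities fixing the $c_j$ and the exponents in (\ref{exot1})--(\ref{exot2}) (case~2a); and finitely many sporadic pencils with one or two further members (cases~2b1), 2c1), 2d)). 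Over $\cc$ the real-only variants 2b2), 2c2) are complex-projectively equivalent to 2b1), 2c1). Finally, for each listed case one checks directly that the function in the addendum is a rational integral, a routine verification of $\sigma_P$-invariance of $R|_{L_P}$, completing the classification.

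The main obstacle is the third step: writing down the over-determined system that encodes ``rational integrability over the whole pencil'' and solving it exhaustively --- in particular, discovering the two infinite families and proving that, beyond them and the conic pencils, only the three sporadic structures survive. Finiteness of the sporadic list needs a boundedness input on the genus and singular-fibre combinatorics of the admissible pencils, and the reduction in Step~2 must be handled carefully so that rationality of $R$ really forces the conic conclusion rather than merely an infinite sequence of jet identities.
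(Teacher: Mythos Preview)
Your Step~1 (algebraicity) is correct and matches Proposition~\ref{proalg}: the even--function argument shows $R|_\gamma$ is constant, hence $\gamma$ lies in an algebraic curve.

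Step~2 is a genuine gap. You assert that boundedness of the degree of $R|_{L_P}$ ``forces the relations to collapse to the statement that $\gamma$ has contact of order $\ge 6$ with its osculating conic at every point''. This is not derived. The Taylor coefficients of $\Phi_{z_0}$ in $u$ involve the jets of $R$ \emph{and} of $f$ simultaneously; the vanishing of each odd coefficient (after conjugating $\sigma_P$ to $u\mapsto -u$) is a single relation among these jets and the structure function $g$. The first one determines $g$; the later ones are constraints on $R$ along $\gamma$ (see the paper's differential equation $dH(G)/dz = 6\psi H(G)$ on the Hessian), not directly on the projective curvature of $\gamma$. There is no evident mechanism by which a degree bound on $R$ converts these into the vanishing of the projective curvature of $\gamma$. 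In fact the paper's route to ``$\gamma$ is a conic'' is intrinsically \emph{global}: one extends the billiard structure to the irreducible algebraic curve $\Gamma\supset\gamma$, proves that every local branch of $\Gamma$ at every point is quadratic and that at most one point carries a singular branch (Theorems~\ref{quasi-pqr}, \ref{classpqr}, \ref{lalt}, \ref{typesing}), and then invokes Shustin's Pl\"ucker--type formula (Theorem~\ref{type-conic}) to force $\deg\Gamma = 2$. A purely local jet argument at a regular point cannot see inflection points or singular branches elsewhere on $\Gamma$, which is where the obstruction to higher degree lives.

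Step~3 is also only a sketch of intent. The paper's classification works through a precise invariant: the family $\sigma_P$ is meromorphic on the conic with simple poles, each pole has a residue $\rho\in\mcm\setminus\{0\}$ (this is exactly the output of the $(2,1;\rho)$--billiard analysis), and the residues sum to $4$ (Proposition~\ref{propsum}). Enumerating the finitely many residue configurations in~(\ref{rescoll}) and then \emph{constructing} an integral for each (Subsections~7.2--7.6) is what produces the list and excludes everything else. Your ``combinatorial system on the pencil $\mcp_R$'' would have to reproduce this, but as written it does not explain why the base--locus/critical--fibre data are finite, nor how the two infinite families in~(\ref{exot1})--(\ref{exot2}) and the three sporadic cases are singled out. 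The residue calculus gives both the finiteness and the enumeration in one stroke; without an analogue of it, your system is underdetermined.
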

 {\bf Addendum to Theorem \ref{tcompl}} {\it The billiards of  types 2b1), 2b2), see (\ref{sigma2}), are 
 complex-projectively equivalent, and so are the billiards of types 2c1) and 2c2). For every 
 $g=b,c$ there exists a complex projective equivalence between the billiards 2g1), 2g2) that sends the integral $R_{g1}$ of the former, see (\ref{exo2bnew}), (\ref{exo2b}) (treated as a rational function 
 on $\cp^2_{[z:w:t]}\supset\cc^2_{z,w}=\{ t=1\}$), to the integral $R_{g2}$ of the latter, see  (\ref{exo2bnew2}), 
 (\ref{exoc2}), up to constant factor.} 
 
 \subsection{Classification of rationally $0$-homogeneously integrable 
 projective billiards with smooth connected boundary}

 Let $\Omega\subset\rr^2_{x_1,x_2}$ be a domain with smooth boundary 
 $\partial\Omega$ equipped with a projective billiard structure 
 (transverse line field).  The {\it projective billiard flow} (introduced in \cite{tabpr}) acts on  $T\rr^2|_\Omega$ analogously to the classical 
 case of Euclidean billiards. Given a point 
 $(Q,v)\in T\rr^2$, $Q\in\Omega$, $v=(v_1,v_2)\in T_Q\rr^2$, the flow moves the point $Q$ 
along the straight line directed by $v$ with the fixed uniform velocity $v$, until it  hits the boundary 
$\partial\Omega$ at some point $H$. Let $v^*\in T_H\rr^2$ denote the image of the velocity 
vector $v$ (translated to $H$) under the projective billiard 
reflection from the tangent line $T_H\partial\Omega$. Afterwards 
the flow moves the point $H$ with the new uniform velocity $v^*$ until its trajectory 
hits the boundary again etc. See Fig. 4 below.

 \begin{figure}[ht]
  \begin{center}
   \epsfig{file=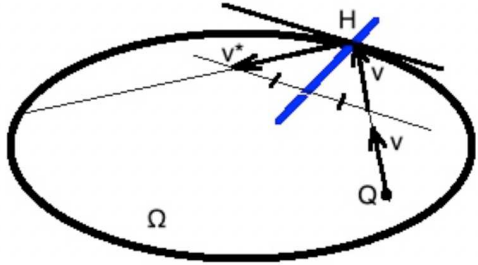, width=17em}
   \caption{Projective billiard flow}
        \label{fig:0}
  \end{center}
\end{figure}

\begin{remark} The flow in a Euclidean planar billiard always has a trivial first integral $||v||^2$. But it is not a first integral in  
a generic projective billiard. 
It is a well-known folklore fact that {\it Birkhoff integrability of a Euclidean planar billiard  with strictly convex closed boundary 
is equivalent to the existence of a non-trivial first integral of the billiard flow independent with $||v||^2$ on a neighborhood of the unit tangent bundle to 
$\partial\Omega$ in $T\rr^2|_\Omega$. }
\end{remark}

 Billiard flows in  space forms of constant curvature  
 and their integrability were studied by many mathematicians, including 
A.P.Veselov \cite{veselov, veselov2}, S.V.Bolotin \cite{bolotin, bolotin2}  (both in any dimension), M.Bialy and A.E.Mironov \cite{bm, bm2, bm3, bm4, bm5}, 
 the author \cite{gl, gl2} and others. A Euclidean planar billiard is called {\it polynomially integrable,} if its flow admits a first integral 
 that is polynomial in the velocity $v$ whose restriction to the unit velocity hypersurface $\{ ||v||=1\}$ is non-constant 
  \cite{bolotin, bolotin2, kozlov, bm}, \cite[definition 1.1]{gl2}.  
 S.V.Bolotin suggested the {\it polynomial version} of Birkhoff Conjecture stating that {\it if a billiard in a strictly 
 convex bounded planar domain with $C^2$-smooth boundary is polynomially integrable, then the billiard boundary is an ellipse,} 
 together with its versions on the sphere  and on the hyperbolic plane. 
 Now this is a theorem: a joint result of M.Bialy, A.E.Mironov and the author of the present paper \cite{bm, bm2, gl, gl2}.
 Here we present a version of this result for rationally integrable projective billiard flows, see the following definition. 
 
 All the results of this subsection will be proved in Section 9.

 \begin{definition} A planar projective billiard is {\it rationally 0-homogeneously 
 integrable,} if its flow admits a non-constant 
 first integral $I$ that is a rational homogeneous 
 function of the velocity with numerator and denominator having the same degrees 
 (called a {\it rational 0-homogeneous integral}): 
 $$I(Q,v)=\frac{I_{1,Q}(v)}{I_{2,Q}(v)}; \ \ \ I_{1,Q}(v), \ I_{2,Q}(v) 
 \text{ are homogeneous polynomials,}$$
$$\deg I_{1,Q}=\deg I_{2,Q}.$$ 
Here we consider that the degrees $\deg I_{j,Q}(v)$ are uniformly bounded.
 \end{definition}

 \begin{example}  It is known that for every polynomially integrable planar billiard 
 the polynomial integral $I_Q(v)$ can be chosen homogeneous  of even degree $2n$, see  
 \cite{bolotin}, \cite[p.118; proposition 2 and its proof on p.119]{bolotin2}, 
 \cite[chapter 5, section 3, proposition 5]{kozlov}. Then the rational function 
 $$\Psi(Q,v):=\frac{I_Q(v)}{||v||^{2n}}$$ 
 is a rational 0-homogeneous integral of the billiard. 
 Thus, {\it every polynomially integrable Euclidean planar billiard is 
 rationally 0-homogeneously integrable.} This also holds for billiards on the sphere 
 and the hyperbolic plane.
\end{example}

 \begin{theorem} \label{tclasspr} Let a projective billiard in a strictly convex bounded domain $\Omega\subset\rr^2$ 
 with  $C^4$-smooth boundary be defined by a continuous transversal line field on $\partial\Omega$ and be rationally 0-homogeneously integrable. 
   Then its boundary is a conic, and the projective billiard is a space form billiard (see Definition \ref{pspform}). 
 \end{theorem}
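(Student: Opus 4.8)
The plan is to deduce the statement from Theorem~\ref{tclosed} by projective duality, the single genuine subtlety being that the projective-dual curve of a $C^4$ boundary is only $C^3$, which I will resolve by a regularity bootstrap. The first step converts the flow integral into a caustic foliation: a rational $0$-homogeneous first integral $I(Q,v)=I_{1,Q}(v)/I_{2,Q}(v)$ of the projective billiard flow is constant along the straight segments of the trajectories and invariant under rescaling of $v$, hence descends to a function on the space of lines meeting $\Omega$, rational in suitable projective coordinates, and thus --- via the orthogonal polarity ``line $\mapsto$ point'' --- extends to a rational function $R$ on $\rp^2$. By the standard correspondence between flow integrability and Birkhoff integrability for strictly convex billiards, i.e.\ the projective analogue of the folklore fact recalled in the Remark above, the non-constancy of $I$ near the boundary circle of the phase cylinder produces a $C^0$-foliation of an inner neighborhood of $\partial\Omega$ by closed strictly convex caustics with $\partial\Omega$ itself a leaf, each caustic being the envelope of the $1$-parameter family of lines forming a level set of $I$. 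As the caustics shrink to $\partial\Omega$, their tangent lines converge to those of $\partial\Omega$, so $I$, hence $R$, is constant on the $1$-parameter family of lines tangent to $\partial\Omega$.

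The second step dualizes and bootstraps the regularity. As recalled just before Definition~\ref{dpint}, the polarity carries the projective billiard on $\partial\Omega$ to a dual billiard $\{\sigma_P\}$ on $\gamma:=(\partial\Omega)^*$, carries the caustics to invariant curves, and makes $R$ satisfy $R\circ\sigma_P=R$ on $L_P$ for every $P\in\gamma$. Hence the dual billiard on $\gamma$ is integrable in the sense of Definition~\ref{dpint} --- the caustic foliation dualizes to a $C^0$-foliation by closed strictly convex invariant curves on the concave side of $\gamma$ with $\gamma$ a leaf --- and $R$ is a rational first integral of that foliation. A priori $\gamma$ is only $C^3$, so Theorem~\ref{tclosed} does not yet apply; but $R$ is constant on $\gamma$ by the first step, so $\gamma$ lies in a level set of the rational function $R$, hence is a piece of a real algebraic curve, in particular real-analytic and a fortiori $C^4$-smooth and strictly convex closed. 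Now Theorem~\ref{tclosed} applies to $\gamma$ and shows that the leaves of the invariant foliation, $\gamma$ included, are conics forming a pencil $\{\mcc_\la\}$.

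The last step transports this back to $\Omega$ and pins down $\mcn$. Dualizing once more, $\partial\Omega=\gamma^*$ is the dual of a conic, hence a conic, necessarily a closed strictly convex one, and its caustics are the conics dual to the $\mcc_\la$; in particular the original billiard is Birkhoff integrable with conic boundary and conic caustics whose duals form a pencil, in agreement with Conjecture~\ref{conjt}. Since each $\sigma_P$ permutes the two intersection points of $L_P$ with every $\mcc_\la$, the dual billiard on $\gamma$ is exactly the dual billiard of conical pencil type attached to $\{\mcc_\la\}$ (Example~\ref{tabobs}); write $\mcc_\la=\{\langle (B-\la A)x,x\rangle=0\}$ with $\gamma=\{\langle Bx,x\rangle=0\}$. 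It remains to observe that the presence of a genuine $C^0$-foliation by nested closed strictly convex leaves $\mcc_\la$ near $\gamma$ forces the pencil to be of space form type: up to a real projective transformation one may normalize $A$ to one of $\diag(1,1,0)$, $\diag(1,1,1)$, $\diag(1,1,-1)$, and a direct check then identifies the transversal line field $\mcn$ obtained by dualizing the conical-pencil involutions $\sigma_P$ with the line field of Definition~\ref{pspform} attached to $A$, so the billiard is a space form billiard. I expect this last identification --- classifying the pencils of conics that admit a nested real-ellipse foliation, and matching the dualized conical-pencil structure with the space form line field --- together with a careful setup of the flow-integrability $\Leftrightarrow$ Birkhoff-integrability correspondence in the projective category, to be the steps demanding the real work; everything else rests on Theorem~\ref{tclosed}.
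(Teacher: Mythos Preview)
Your overall strategy — dualize, classify, dualize back — is the paper's, but you route through Theorem~\ref{tclosed} where the paper goes through Theorem~\ref{tgerm} (packaged as Theorem~\ref{tgermproj}), and that detour creates a genuine gap. Theorem~\ref{tclosed} assumes a $C^0$-foliation by closed invariant curves, and to produce it you invoke a ``projective analogue'' of the Euclidean folklore that a flow integral yields Birkhoff integrability. That analogue is not proved in the paper, and for a rational integral it is not obvious: level sets of $R$ may have indeterminacy points or reducible components near $\gamma$. The paper never needs any foliation. Propositions~\ref{ratmom} and~\ref{procriter} (the first of which — that the integral is a rational $0$-homogeneous function of $M=[r,v]$ — has real content, via separate-variable rationality, Proposition~\ref{2rat}) deliver a rational $R$ on $\rp^2$ with each $R|_{L_P}$ invariant under $\sigma_P$. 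Then Theorem~\ref{tgerm} applies directly to any germ of $\gamma$; your regularity bootstrap ($R|_\gamma\equiv const$ by Proposition~\ref{proalg2}, hence $\gamma$ algebraic, hence $C^\infty$) is correct and is precisely what bridges the $C^3$-to-$C^4$ gap here. One concludes that $\gamma$ is a conic with dual billiard of type 1) or 2a)--2d); continuity of the transversal line field on the closed curve $\partial\Omega$ means the $\sigma_P$ have no real singularities, which rules out 2a)--2d).

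Your final step is also off-target. It is not a nested real foliation that forces the pencil to be ``of space form type''; \emph{every} conical-pencil dual billiard on a conic dualizes to a space form projective billiard, and normalizing the pencil matrix $A$ is not the mechanism. The paper's argument is Proposition~\ref{procons}: the dual of any regular conic in the pencil is a complex conical caustic of the projective billiard on $C=\gamma^*$, and a projective billiard on a conic admitting a single conical caustic is automatically projectively equivalent to a space form billiard with matrix $\diag(1,1,-1)$.
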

 
 Theorem \ref{tgermproj} stated below extends Theorem \ref{tclasspr} to germs of planar projective billiards. 
 Each of them is  a germ of 
$C^4$-smooth curve $C$ equipped with a transversal line field $\mcn$. Here $C$ is not necessarily 
 convex. We choose  a side from the curve $C$ and a simply connected domain $U$ adjacent to $C$ from the chosen side. 
Let $Q\in U$ and $v\in T_Q\rr^2$ be such that the ray issued from the point $Q$ in the direction of the vector $v$ 
 intersects $C$, and the distance of the point $Q$ to their first intersection point be equal to $\tau_0||v||$, $\tau_0>0$. 
 Then for  $t_0>\tau_0$ close enough to $\tau_0$ the projective billiard flow maps in times $\tau\in(0,t_0)$ are well-defined on $(Q,P)$. 
 As before, we say that a {\it germ of projective billiard} thus defined is {\it rationally $0$-homogeneously integrable,} if it admits a first integral 
 rational and $0$-homogeneous in $v$ on $T\rr^2|_U$ for some $U$ (small enough) whose degree  
is uniformly bounded in $Q\in U$. 
 
 Before  the statement of Theorem \ref{tgermproj} let us state two preparatory propositions: the first saying that integrability is independent on choice 
 of side; the second reducing classification of germs of integrable projective billiards to classification of germs of  integrable dual billiards given by 
 Theorem \ref{tgerm}. To do this, 
 following S.V.Bolotin \cite{bolotin2}, let us identify the ambient plane $\rr^2$ of a  projective billiard with the plane $\{ x_3=1\}$ in the Euclidean space $\rr^3_{x_1,x_2,x_3}$ and represent a point $x=(x_1,x_2)\in\rr^2$ and a vector $v=(v_1,v_2)\in T_x\rr^2$ by the vectors 
 $$r=(x_1,x_2,1), \ \ v=(v_1,v_2,0)\in\rr^3.$$  
 \begin{proposition} \label{ratmom} 1) Let a germ of projective billiard in $\rr^2_{x_1,x_2}$ with reflection from a $C^2$-smooth germ of curve $C$  
 (or a global planar projective billiard in a connected domain with  $C^2$-smooth boundary)
 be rationally $0$-homogeneously integrable. Then the rational $0$-homogeneous integral can be chosen as a rational $0$-homogeneous function of the moment vector $M$: 
 \begin{equation} M=M(r,v):=[r,v]=(-v_2, v_1, \Delta), \ \Delta=\Delta(x,v):=x_1v_2-x_2v_1.\label{momve}\end{equation}
 
 2) The property of a  projective billiard germ to be  rationally $0$-homogeneously 
 integrable  depends only on the germ of curve with transversal line field and does not depend on the choice of side.
 \end{proposition}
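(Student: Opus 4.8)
The plan is to exploit the fact that the moment vector $M = [r,v]$ is invariant along straight-line segments of the billiard trajectory and is scaled in a controlled way by the reflection. First I would check part 1). Along a segment with direction $v$, the point $r$ moves as $r \mapsto r + tv$ (in the $\{x_3=1\}$ plane, so only the first two coordinates change and the third stays $1$); then $[r+tv, v] = [r,v]$, so $M$ is constant on segments. Thus any rational function of $M$ automatically has the "segment-invariance" half of the integral property for free. What remains is to show the reflection at a boundary point $H$ acts on $M$ projectively (i.e.\ up to scalar), so that a $0$-homogeneous rational function of $M$ is genuinely reflection-invariant. For this I would recall the duality discussion already given in the excerpt: reflection at $H$ acts on the pencil $\rp^1_H$ of lines through $H$ as the projective billiard involution, and passing to $M = [r,v]$ (which, for fixed $r=H$, is a linear-in-$v$ parametrization of that pencil up to scale) conjugates this to the dual involution $\sigma_P$ on the dual line $H^*$. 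So I would show that $M \mapsto M^* := [r, v^*]$ at the reflection point $H$ equals the linear-algebraic incarnation of $\sigma_{H^*}$ acting on $H^*$, and in particular is the restriction of a (point-depending) linear map of $\rr^3$, well-defined up to scalar. Consequently, if $\Psi(Q,v)$ is any rational $0$-homogeneous integral, I would average / symmetrize it over the group generated by these involutions, or more directly use that $\Psi$ restricted to each line through $Q$ is a rational function on $\rp^1$ invariant under the billiard involution there; by the standard fact that such an invariant is a rational function of the moment coordinate $M$ (since $M$ generates the invariants of that $\rp^1$ up to the involution, being the coordinate dual to $v$), conclude $\Psi(Q,v) = F(M(r,v))$ for a rational $0$-homogeneous $F$ with bounded degree.

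For part 2) I would argue that rational $0$-homogeneous integrability is a property of the germ of curve-with-transversal-line-field alone, independent of the chosen side and of the size of the adjacent domain $U$. The key point is that the billiard flow on one side and the billiard flow on the other side share the same reflection involutions on lines through each boundary point — the reflection is defined purely by $(T_HC, \mcn(H))$ and a line through $H$, with no reference to a side. So a first integral on $T\rr^2|_{U}$ for $U$ on one side, being (by part 1) a rational $0$-homogeneous function $F(M)$ where $M = [r,v]$ makes sense on all of $T\rr^2$ (it is a global polynomial in $(x,v)$), extends as the same formula $F(M(r,v))$ to $T\rr^2$ restricted to any domain $U'$ adjacent to $C$ on either side; it is constant on segments of trajectories there for the same reason as before, and it is invariant under reflection at any point of $C$ for the same reason. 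Hence $F(M)$ is automatically a first integral of the billiard germ on the other side, of the same degree. Shrinking or enlarging $U$ within the domain of definition does not affect this. This gives the claimed side-independence.

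The main obstacle I anticipate is the precise identification in part 1): showing rigorously that the reflection at $H$ acts on the moment vector $M$ by a linear map up to scalar, and that a $0$-homogeneous rational integral of the flow is forced to descend to a function of $M$ rather than of $(Q,v)$ with genuine $Q$-dependence beyond $M$. One has to be careful that $M(r,v) = (-v_2, v_1, x_1v_2 - x_2 v_1)$ encodes both the direction of $v$ and the line it spans through $Q$; a $0$-homogeneous integral $I(Q,v)$ depends a priori on $Q$ and on the projective class of $v$ separately, and the content of the proposition is that segment-invariance plus reflection-invariance collapses this dependence to dependence on $M$ alone. I expect this to follow by writing the constancy of $I$ along a segment through $Q$ in direction $v$ as a transport equation, solving it to get that $I$ depends only on the oriented line (equivalently on $M$ up to positive scalar) together with $|v|$, and then using $0$-homogeneity to kill the $|v|$-dependence; the reflection-invariance then upgrades this to a genuine rational function of $M$ as a point of $\rp^2$. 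The rest — part 2) — is then essentially formal.
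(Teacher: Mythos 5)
Your part 2) is essentially the paper's argument and is fine: the integral, once written as a rational $0$-homogeneous function of $M$, extends as a constant along lines crossing $C$, and reflection invariance depends only on $(T_HC,\mcn(H))$, not on the side. Your part 1), however, has a genuine gap at its central step. Constancy along segments plus $0$-homogeneity in $v$ does show that the integral descends to a function $F$ on the set $\mathbb P(W)\subset\rp^2_{[M_1:M_2:M_3]}$ of lines meeting $U$ (and reflections play no role here at all --- they are irrelevant to part 1), and for each fixed foot point $x$ the restriction of $F$ to the projective line $\ell(x)=\mathbb P(r^\perp)$ is rational of degree $\le d$, because $v\mapsto[r,v]$ is a linear isomorphism $T_x\rr^2\to r^\perp$. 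But this only gives that $F$ is rational \emph{along each line of a two-parameter family of lines}; it does not give that $F$ is a rational function of $M$ as a point of $\rp^2$, which is what the proposition asserts. Your appeal to ``the standard fact that such an invariant is a rational function of the moment coordinate'' is exactly the point that needs proof, and neither averaging over the reflection involutions nor the transport-equation argument supplies it: reflection invariance has nothing to do with joint rationality in $M$.

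The paper closes this gap with a separate interpolation lemma (its Proposition \ref{2rat}): a function of two variables that is rational in each variable separately, with degree uniformly bounded by $d$, is rational in both; this is proved by evaluating at $2d+1$ values of one variable and solving the resulting linear system for the coefficients, which is where the uniform degree bound in the definition of rational $0$-homogeneous integrability is actually used. One then picks two pencils of lines in $\mathbb P(W)$ through two distinct points, chooses an affine chart in which these points are the intersections of the infinity line with the coordinate axes, so that $F$ becomes separately rational in the two affine coordinates, and concludes local, hence (by connectivity of $U$) global rationality of $F$ on $\mathbb P(W)$. Without an argument of this kind your proof of part 1) does not go through; everything else in your proposal is either correct or not needed.
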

  \begin{proposition} \label{procriter} A planar projective billiard with  $C^2$-smooth boundary 
 is rationally 0-homogeneously 
 integrable, if and only if its dual billiard  is rationally integrable. If $R$ is a rational integral of 
 the dual billiard, written as a $0$-homogeneous rational function  in  homogeneous coordinates  on the 
 ambient projective plane,  then $R([r,v])$ is a $0$-homogeneous rational integral 
 of the projective billiard. 
 \end{proposition}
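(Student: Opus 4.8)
The plan is to establish the equivalence by a direct geometric translation through the moment-vector parametrization, using Proposition \ref{ratmom} to know in advance that a rational $0$-homogeneous integral of the projective billiard may be taken as a function of the moment vector $M=[r,v]$ alone. First I would set up the correspondence between phase points of the projective billiard flow and lines in $\rp^2$: a point $(Q,v)\in T\rr^2|_U$ with $Q=r=(x_1,x_2,1)$ and $v=(v_1,v_2,0)$ determines the line $\ell$ in $\rr^2$ through $Q$ directed by $v$, and in homogeneous coordinates this line is precisely the projective line whose coefficient vector is $M=[r,v]$; equivalently, the point $M^*\in\rp^2$ dual (in the sense of the orthogonal polarity used in the paper) to this line is the image of $\ell$ under duality. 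The crucial observation is that two phase points lie on the same projective billiard trajectory segment between consecutive reflections if and only if they determine the same line, i.e. the same $M$ up to scalar; and the reflection at a boundary point $H\in C$ replaces the incoming line $\ell$ by the outgoing line $\ell^*$, which is exactly the image of $\ell$ under the projective billiard reflection involution at $H$ acting on the pencil of lines through $H$.

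Next I would dualize. Under the orthogonal polarity sending a line $L$ to its dual point $L^*$, the pencil of lines through the reflection point $H\in C$ is sent to the tangent line $L_P$ to the dual curve $\gamma=C^*$ at the point $P=(T_HC)^*$, and the projective billiard reflection involution at $H$ is conjugated to the dual billiard involution $\sigma_P:L_P\to L_P$ — this is precisely the content of the discussion preceding the definition of dual billiard structure in the excerpt. Therefore: along any projective billiard trajectory the successive lines $\ell_0,\ell_1,\ell_2,\dots$ (the segments between reflections) dualize to points $P_0^-,P_0^+=P_1^-,\dots$ on $\rp^2$ such that $P_j^+=\sigma_{P_{j+1}}$ applied at the appropriate tangency point — more precisely, consecutive dual points lie on a common tangent line $L_{P}$ to $\gamma$ and are exchanged by $\sigma_P$. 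Consequently, a rational function $R$ on $\rp^2$ whose restriction to every tangent line $L_P$ is $\sigma_P$-invariant has the property that $R(\ell_j^*)$ is constant along the trajectory; pulling back via $M\mapsto M^*$ (a linear isomorphism $\rp^{2*}\to\rp^2$, hence rational and $0$-homogeneous), the function $(Q,v)\mapsto R([r,v])$ is a first integral of the projective billiard flow, manifestly rational and $0$-homogeneous in $v$ with uniformly bounded degree (the degree being that of $R$). It is non-constant on the phase space because $R$ is non-constant on $\rp^2$ and the lines $\ell$ arising as trajectory segments sweep out an open set of lines, hence their duals sweep out an open subset of $\rp^2$. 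This proves one direction and simultaneously the explicit formula $R([r,v])$ asserted in the statement.

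For the converse, I would run the argument backwards: given a rational $0$-homogeneous integral of the projective billiard, Proposition \ref{ratmom}(1) lets me write it as $I(Q,v)=\Phi(M(r,v))$ for a rational $0$-homogeneous $\Phi$ on the space of moment vectors; reinterpreting $\Phi$ via the polarity as a rational function $R$ on $\cp^2$ (or $\rp^2$), invariance of $I$ under the billiard flow forces, for each boundary point $H$ and each line $\ell$ through $H$ meeting $U$, the equality $R(\ell^*)=R((\ell^*)^{\sigma})$ where $\sigma$ is the reflection at $H$; dualizing, this says $R|_{L_P}$ is $\sigma_P$-invariant on an open subset of $L_P$, hence everywhere by the identity principle for rational functions. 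So $R$ is a rational integral of the dual billiard. The main obstacle — and it is a mild one — is bookkeeping the two dualities (lines through $Q$ versus the tangent line $L_P$, and the moment vector versus the line coefficients) and checking that the open set of lines realized as trajectory segments is large enough that "invariant on an open subset of $L_P$ for $P$ in an open subset of $\gamma$" upgrades to genuine $\sigma_P$-invariance for all $P$; this is handled by the convexity (or, for germs, the transversality) hypothesis guaranteeing that near $C$ every nearby line through a boundary point is a legitimate trajectory segment, together with the fact that a rational function vanishing on a Zariski-dense subset of a line vanishes identically. No further ideas are needed beyond the duality dictionary already laid out in the paper.
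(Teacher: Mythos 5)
Your proposal is correct and follows essentially the same route as the paper: the key remark that the projectivized moment vector $[r,v]$ is the point dual to the line through $Q$ directed by $v$, the conjugacy of the reflection involution with $\sigma_P$ under the orthogonal polarity, and the use of Proposition \ref{ratmom} to reduce the converse to an integral depending only on $M$. The paper's own proof is just a terser version of this argument; your extra care with the identity-principle upgrade in the converse is a detail the paper leaves implicit.
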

 \begin{remark} Versions of Propositions \ref{ratmom}, \ref{procriter}  for polynomially integrable billiards on surfaces of constant curvature 
 were earlier proved respectively in the paper \cite{bolotin2} by S.V.Bolotin (Proposition \ref{ratmom})  and in two joint papers by  M.Bialy and A.E.Mironov 
 \cite{bm, bm2} (Proposition \ref{procriter}, see also \cite[theorem 2.8]{gl2}). 
 \end{remark}

 \begin{theorem} \label{tgermproj} Let $C\subset\rr^2_{x_1,x_2}$ be a non-linear $C^4$-smooth germ of 
 curve equipped with a transversal line field $\mcn$. 
  Let the corresponding germ of projective billiard be $0$-homogeneously rationally integrable. 
 Then $C$ is a conic;  the line field $\mcn$ extends to a global analytic transversal line field   on the whole conic $C$  punctured in at most four points;  
the corresponding projective billiard has one of the following types up to projective equivalence.
 
 1) A space form  billiard whose matrix can be chosen $A=\diag(1,1,-1)$. 

2)  $C=\{x_2=x_1^2\}\subset\rr^2_{x_1,x_2}\subset\rp^2$, and 
the  line field $\mcn$  is directed by one of the following vector fields at points of the conic $C$: 
 \smallskip
 
2a) \ \ \ \ \ $(\dot x_1,\dot x_2)=(\rho,2(\rho-2)x_1)$, 
$$\rho=2-\frac2{2N+1} \text{ (case 2a1), \ or }  \ \ \rho=2-\frac1{N+1} \text{ (case 2a2), }  \ \ N\in\mathbb N,$$
 the  vector field 2a) has quadratic first integral $\mcq_{\rho}(x_1,x_2):=\rho x_2-(\rho-2)x_1^2.$

2b1) \ $(\dot x_1, \dot x_2)=(5x_1+3,
2(x_2-x_1))$, \ \ \ 2b2)  \ $(\dot x_1,\dot x_2)=(3x_1, 2x_2-4)$, 
\smallskip

2c1) \ $(\dot x_1, \dot x_2)=(x_2, 
x_1x_2-1)$, \ \ \ \ \ \ 2c2) \ $(\dot x_1, \dot x_2)=(2x_1+1, 
x_2-x_1)$.

2d) \ $(\dot x_1, \dot x_2)=(7x_1+4, 2x_2-4x_1)$.
\end{theorem}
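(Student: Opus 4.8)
The plan is to derive Theorem~\ref{tgermproj} from the already-established classification of rationally integrable dual billiards (Theorem~\ref{tgerm}) via the duality dictionary encoded in Propositions~\ref{ratmom} and~\ref{procriter}. First I would invoke Proposition~\ref{procriter}: a $C^4$-smooth germ of projective billiard on $(C,\mcn)$ is $0$-homogeneously rationally integrable if and only if the dual billiard structure on $\gamma=C^*$ (the family of involutions $\sigma_P:L_P\to L_P$ obtained by dualizing the projective reflection involutions, as spelled out in the paragraph before Definition~\ref{dpint}) is rationally integrable; moreover, if $R$ is a rational integral of the dual billiard then $R([r,v])$ is a $0$-homogeneous rational integral of the projective billiard. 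Since $C$ is non-linear, its dual $\gamma$ is a non-linear $C^4$-smooth germ of curve, so Theorem~\ref{tgerm} applies: $\gamma$ is a conic, hence $C=\gamma^*$ is a conic, and the dual billiard structure on $\gamma$ is projectively equivalent to one of the listed types 1), 2a), 2b1), 2b2), 2c1), 2c2), 2d). Dualizing back, $C$ carries the projective billiard structure dual to the corresponding dual-billiard type, and the transversal line field $\mcn(Q)$ is recovered as the line dual to the fixed point (other than $P=L_Q^*$) of $\sigma_P$ on $L_P$. This fixed point depends algebraically on $P$ wherever $\sigma_P$ is defined, so $\mcn$ extends analytically over the whole conic minus the (at most four) punctures coming from the excluded points of $\gamma$ in Theorem~\ref{tgerm}.

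The remaining work is to translate each type on $\gamma$ into the explicit description of $(C,\mcn)$ claimed in the statement. For type 1) (dual billiard of conical pencil type, Example~\ref{tabobs}), the dual of a pencil of conics $\mcc_\la=\{\langle(B-\la A)x,x\rangle=0\}$ is again governed by the dual pencil, and one checks that the associated projective billiard on $C$ is precisely a space form billiard; a short linear-algebra computation with the polarity identifies the space form matrix, and by classifying the possible signatures one arranges (after a real projective change) that the matrix is $\diag(1,1,-1)$ — this is the content of 1) in Theorem~\ref{tgermproj}. For the exotic types 2a)--2d), I would take the normal form $\gamma=\{w=z^2\}$ with $\sigma_P$ given by the formulas \eqref{sigmaef}, \eqref{sigma2}, \eqref{sigma3}, and for type 2a) by $\eta_\rho$ in the coordinate $\zeta=z/z_0$, then dualize explicitly. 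Under the orthogonal polarity the parabola $\gamma=\{w=z^2\}$ maps to a conic which, after the allowed projective change, is $C=\{x_2=x_1^2\}$; the involution $\sigma_P$ on $L_P$ dualizes to the projective reflection involution on the pencil of lines through $Q\in C$, and the fixed direction other than $T_QC$ reads off the line field $\mcn(Q)$. Tracking the tangency point $P=(z_0,z_0^2)\leftrightarrow Q\in C$ and differentiating the one-parameter family of fixed points gives the tangent vector to $\mcn$, which I expect to match the listed vector fields $(\dot x_1,\dot x_2)$ exactly — e.g.\ for 2a) the vector field $(\rho,2(\rho-2)x_1)$ with quadratic integral $\mcq_\rho=\rho x_2-(\rho-2)x_1^2$, dual to the integral $R(z,w)$ of the addendum to Theorem~\ref{tgerm} via $R([r,v])$ being a rational $0$-homogeneous integral of the flow on $C$.

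To finish I would also record the converse: each listed projective billiard is indeed $0$-homogeneously rationally integrable, which follows immediately by writing down $R([r,v])$ for the dual integrals in the addendum to Theorem~\ref{tgerm} (for the space form case 1) this is the classical confocal integral of the geodesic flow on the constant-curvature surface, expressed via the moment vector as in Proposition~\ref{ratmom}). The statement that types 2b1)/2b2) and 2c1)/2c2) appear as genuinely distinct real normal forms — rather than being collapsed the way they are over $\cc$ — is inherited from Theorem~\ref{tgerm} and its addendum, where the real-projective (as opposed to complex-projective) classification already distinguishes them; nothing new is needed here beyond carefully carrying the dual formulas back through the polarity and checking that the claimed affine chart $\{x_3=1\}$ realizes $C=\{x_2=x_1^2\}$.

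The main obstacle I anticipate is purely computational bookkeeping: correctly dualizing the Möbius-type involutions \eqref{sigmaef} and $\eta_\rho$ on the tangent lines of a parabola, keeping straight which affine chart and which projective normalization is in force, and verifying that the resulting line field on $C=\{x_2=x_1^2\}$ is exactly directed by the stated vector fields (and, in case 1), that the signature analysis of the pencil indeed allows the reduction to $A=\diag(1,1,-1)$ rather than $\diag(1,1,1)$ or $\diag(1,1,0)$ — one must check that the convex/real-germ hypothesis rules out the other signatures, or else handle them). There is no deep new idea required beyond Theorem~\ref{tgerm} and the duality of Propositions~\ref{ratmom}--\ref{procriter}; the risk is entirely in sign errors and chart confusion while making the explicit identifications, together with the mild subtlety of confirming the ``at most four punctures'' claim by tracking the excluded points of $\gamma$ through the polarity.
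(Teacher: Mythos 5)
Your proposal follows essentially the same route as the paper: reduce to the dual billiard via Proposition \ref{procriter}, invoke Theorem \ref{tgerm}, recover $\mcn(Q)$ as the line dual to the second fixed point of $\sigma_P$, and dualize each normal form explicitly (the paper handles case 1) via Proposition \ref{procons} by exhibiting a conical complex caustic, which is the ``linear-algebra computation'' you anticipate). The only point you gloss over is that the dual of a $C^4$ germ fails to be a regular germ at inflection points of $C$; the paper first deletes the inflection points, applies Theorem \ref{tgerm} to the resulting arcs to see they are conical, and then uses $C^4$-smoothness to conclude $C$ is a single conic.
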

{\bf Addendum to Theorem \ref{tgermproj}.} {\it The projective billiards from Theorem \ref{tgermproj} have the 
following $0$-homogeneous rational integrals:

Case 1): A ratio of two homogeneous quadratic polynomials in $(v_1,v_2,\Delta)$, 
$$\Delta:=x_1v_2-x_2v_1.$$
Case 2a1), $\rho=2-\frac2{2N+1}$:
\begin{equation}\Psi_{2a1}(x_1,x_2,v_1,v_2):=\frac{(4v_1\Delta-v_2^2)^{2N+1}}{v_1^2\prod_{j=1}^N(4v_1\Delta-c_jv_2^2)^2}. \label{r2a1v}\end{equation} 

Case 2a2), $\rho=2-\frac1{N+1}$:
\begin{equation}\Psi_{2a2}(x_1,x_2,v_1,v_2)=\frac{(4v_1\Delta-v_2^2)^{N+1}}{v_1v_2\prod_{j=1}^N(4v_1\Delta-c_jv_2^2)}. \label{r2a2v}\end{equation}
The $c_j$ in (\ref{r2a1v}), (\ref{r2a2v}) are the same, as in (\ref{exot1}) and (\ref{exot2}) respectively.

Case 2b1): 
 \begin{equation}\Psi_{2b1}(x_1,x_2,v_1,v_2)=\frac{(4v_1\Delta-v_2^2)^2}{(4v_1\Delta+3v_2^2)(2v_1+v_2)(2\Delta+v_2)}.\label{r2b1v}\end{equation}
 
 Case 2b2): 
 \begin{equation}\Psi_{2b2}(x_1,x_2,v_1,v_2)=\frac{(4v_1\Delta-v_2^2)^2}{(v_2^2+4\Delta^2+
4v_1\Delta+4v_1^2)(v_2^2+4v_1^2)}.
 \label{r3b2v}\end{equation}
 
 \begin{equation}\text{Case 2c1): } \ \ \ \ \ \ \  \ \ \ \ \Psi_{2c1}(x_1,x_2,v_1,v_2)=\frac{(4v_1\Delta-v_2^2)^3}{(v_1^3+\Delta^3+
 v_1v_2\Delta)^2}.\label{r2c1v}\end{equation}
 
 Case 2c2): 
  \begin{equation}\Psi_{2c2}(x_1,x_2,v_1,v_2)=\frac{(4v_1\Delta-v_2^2)^3}{(v_2^3+2v_2^2v_1+(v_1^2+2v_2^2+5v_1v_2)\Delta+v_1\Delta^2)^2}.
 \label{r2c2v}\end{equation}
 
 Case 2d):}  $\Psi_{2d}(x_1,x_2,v_1,v_2)$
 \begin{equation}=\frac{(4v_1\Delta-v_2^2)^3}{(v_1\Delta+2v_2^2)(2v_1+v_2)(8v_1v_2^2+2v_2^3+(4v_1^2+5v_2^2+28v_1v_2)\Delta+16v_1\Delta^2)}.
 \label{r2dv}\end{equation}
 
 In Subsection 9.3 we prove the following  characterization of space form billiards on conics as projective billiards on conics with  conical caustics.
 
 \begin{proposition} \label{procons} A transversal line field $\mcn$ on a punctured planar regular conic  $C$ defines a projective billiard that is projectively equivalent to 
 a space form billiard, if and only if there exists a regular conic $S\neq C$ such that for every $Q\in C$ the complexified 
 projective billiard reflection at $Q$ permutes the complex lines through $Q$ tangent to the complexified conic $S$.
  \end{proposition}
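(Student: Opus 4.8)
The plan is to pass to the dual billiard on $\gamma:=C^*$ and to prove a single core equivalence for projective billiards on a regular conic $C$, of which the proposition is one instance. Via the orthogonal polarity, a regular conic $S\neq C$ has the property stated in the proposition exactly when the regular conic $S^*$ is an invariant curve of the dual billiard on $\gamma$, i.e.\ $\sigma_P$ permutes the two points $L_P\cap S^*$ for every $P$; this is the complexified form of the Remark following the definition of dual billiard structure, using that a line is tangent to $S$ iff its dual point lies on $S^*$ and that the reflection involution at $Q$ is conjugated to $\sigma_P$ with $P=(T_QC)^*$. Call such an $S$ a \emph{conical caustic} for $C$. I claim the following are equivalent: \emph{(i)} the projective billiard on $C$ is projectively equivalent to a space form billiard; \emph{(ii)} the dual billiard on $\gamma=C^*$ is of conical pencil type (Example \ref{tabobs}); \emph{(iii)} $C$ has a conical caustic; \emph{(iv)} the projective billiard on $C$ has a rational $0$-homogeneous integral equal to a ratio of two quadratic forms in $(v_1,v_2,\Delta)$. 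The proposition is \emph{(i)}$\Leftrightarrow$\emph{(iii)}.

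For \emph{(i)}$\Leftrightarrow$\emph{(iv)} I would use Theorem \ref{tgermproj} and its Addendum. A space form billiard on a conic is $0$-homogeneously rationally integrable (in the classical cases it is even polynomially integrable) with a quadratic integral; by Theorem \ref{tgermproj} it is therefore projectively equivalent to one of the listed types, and since the integrals of cases 2a)--2d) in the Addendum have degree $\ge 3$, a quadratic integral forces case 1), i.e.\ a space form billiard with $A=\diag(1,1,-1)$. Conversely case 1) has, by its Addendum, an integral of the required form. Here I rely on the classification being exhaustive and on the degree of the minimal rational integral being a projective invariant distinguishing case 1) from cases 2a)--2d); pinning down this last point is one of the two delicate steps.

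For \emph{(ii)}$\Leftrightarrow$\emph{(iv)}: by Example \ref{tabobs} a conical pencil type dual billiard on $C^*$ has a quadratic rational integral $R$, so by Proposition \ref{procriter} the projective billiard on $C$ has the $0$-homogeneous integral $R([r,v])$, a ratio of two quadratic forms in $M=(-v_2,v_1,\Delta)$; conversely a quadratic $0$-homogeneous integral of the projective billiard yields, by Proposition \ref{procriter} and the duality dictionary, a quadratic rational integral of the dual billiard on $C^*$, which by Theorem \ref{tgerm} must then be of type 1), i.e.\ of conical pencil type. The implication \emph{(ii)}$\Rightarrow$\emph{(iii)} is immediate: if the dual billiard on $C^*$ is of conical pencil type for a pencil $\{\mcc_\la\}$ with $\mcc_0=C^*$, each $\mcc_\la$ is an invariant conic, and dualizing any regular member $\mcc_\la$ ($\la\neq0$) yields a conical caustic $S=(\mcc_\la)^*\neq C$.

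The one implication needing real work is \emph{(iii)}$\Rightarrow$\emph{(ii)}. Given a conical caustic $S$, the conic $S^*$ is an invariant curve of the dual billiard on $\gamma=C^*$; let $\{\mcc_\la\}$ be the pencil of conics spanned by $\mcc_0:=C^*$ and $S^*$ (a genuine pencil, as $S$ is regular and $S\neq C$). Fix a point $P$ of $\gamma$ off $S^*$ and off the base locus of the pencil and restrict the pencil of defining quadratic forms to the tangent line $L_P\cong\cp^1$. Since $\mcc_0|_{L_P}$ is the square of a linear form vanishing at $P$ while $P\notin S^*$, this pencil of binary quadratic forms is base-point-free, hence is interchanged by a unique involution of $\cp^1$, the one that swaps the root pair of two --- hence of all --- of its members. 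As $\sigma_P$ fixes $P$ (so swaps the double root of $\mcc_0|_{L_P}$) and swaps $L_P\cap S^*$ (the roots of $S^*|_{L_P}$), it must coincide with that canonical involution; therefore $\sigma_P$ permutes $L_P\cap\mcc_\la$ for every $\la$, and by analyticity of $P\mapsto\sigma_P$ this persists on the whole domain of definition. Thus the dual billiard on $C^*$ coincides with the dual billiard of conical pencil type of $\{\mcc_\la\}$. I expect the two main obstacles to be exactly those already flagged: the bookkeeping at the finitely many exceptional $P$ where $L_P$ is tangent to a member of the pencil (resolved by continuity), and establishing that a quadratic rational integral singles out the conical pencil / space form case of the classification, ruling out a ``cheaper'' integral for the exotic types 2a)--2d).
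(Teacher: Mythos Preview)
Your route is genuinely different from the paper's, and your Desargues-involution argument for (iii)$\Rightarrow$(ii) is correct and pleasant: on each $L_P$ (with $P\notin S^*$ and $P$ off the base locus) the restricted pencil $\{\mcc_\la|_{L_P}\}$ is base-point-free, hence carries a unique projective involution swapping the root pair of every member; since $\sigma_P$ fixes the double root $P$ of $\mcc_0|_{L_P}$ and swaps $L_P\cap S^*$, it must be that involution. By contrast the paper proves the hard direction (iii)$\Rightarrow$(i) by a short self-contained geometric argument that never touches the classification: put $S=\{<Ax,x>=0\}$ with $A=\diag(1,1,-1)$; for $Q\in C^o$ choose $B\in\mcn(Q)\cap S$ and take the projective involution $\mathbf I$ of $\cp^2$ fixing the line $QB=\mcn(Q)$ pointwise and swapping the tangency points $E_1,E_2$ of the two tangents from $Q$ to $S$. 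Then $\mathbf I(S)$ is a conic tangent to $S$ at $E_1,E_2$ and through $B$, so $\mathbf I(S)=S$ by B\'ezout; hence the linear lift of $\mathbf I$ preserves $<Ax,x>$, acts trivially on the plane $H_\mcn(Q)$ and preserves $H_T(Q)$ (by harmonicity $\mathbf I$ fixes $L_Q$), which forces $H_T(Q)\perp_A H_\mcn(Q)$.

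The genuine gap in your argument is the step (iv)$\Rightarrow$(i) (equivalently (ii)$\Rightarrow$(i)), where you appeal to Theorem~\ref{tgermproj}. In the paper the proof of Case~1) of Theorem~\ref{tgermproj} \emph{uses} Proposition~\ref{procons}: it observes that a pencil-type dual billiard produces a conical caustic and then invokes the proposition to conclude ``space form''. So your chain (iii)$\Rightarrow$(ii)$\Rightarrow$(iv)$\Rightarrow$(i) is circular as written. (The degree worry you flag is not the real obstruction: cases 2a)--2d) have non-integer residue configurations, so by Proposition~\ref{exun} they are not of conical pencil type and hence admit no quadratic integral.) To repair your route you must supply an independent proof that a conical-pencil dual billiard on $\gamma=C^*$ dualizes to a space form projective billiard on $C$; concretely, you would compute the second fixed point $\wt P$ of $\sigma_P$ from the pencil data, dualize to obtain $\mcn(Q)$, and verify the $A$-orthogonality of $H_T(Q)$ and $H_\mcn(Q)$ directly. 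That computation is of the same nature and length as the paper's direct proof, so the detour through the classification does not save work on the hard direction. What your approach does buy is the clean equivalence (ii)$\Leftrightarrow$(iii) via the pencil involution, which is a nice observation independent of the main result.
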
 
 \begin{remark} The latter permutation condition   determines $\mcn$ by $S$ in a unique way. The corresponding 
 projective billiard has a family of conical caustics whose dual conics form a pencil, see \cite{ccs} and \cite[subsection 2.3]{fierobe-th}. 
 \end{remark}
\subsection{Applications to billiards with complex algebraic caustics}
\begin{definition} Let $C\subset\rr^2\subset\rp^2$ be a planar curve equipped with a projective 
billiard structure. For every $Q\in C$ consider the complexification of the billiard reflection involution acting on the space of complex lines through  
$Q$. 
Let $S\subset\cp^2$ be an algebraic curve in the complexified ambient projective plane that contains no straight line. 
We say that $S$ is a {\it complex caustic} of the {\it real billiard} on $C$, if for every $Q\in C$ 
each {\it complex} projective line tangent to $S$ and passing through $Q$ is reflected by the complexified 
 reflection at $Q$ to a line tangent to $S$.  
 \end{definition}
 \begin{remark} The  usual Euclidean  billiard on a strictly convex planar curve $C$ has $C$ as a real caustic: through each its point $Q$ passes the unique 
tangent line to $C$, and it is fixed by the reflection. If $C$ is a conic, then its complexification $C_\cc$ is a complex  
caustic for $C$ for the same reason. But  if $C$ is a higher degree algebraic curve, then 
a priori its complexification $C_\cc$ is not necessarily a complex caustic. 
In this case through a generic point $Q\in C$ passes at least one complex line tangent to $C_\cc$ 
that does not coincide with its tangent line at $Q$. To check, whether $C_\cc$ is a complex caustic, one has to check 
whether the collection of all the complex lines through $Q$ tangent to $C_\cc$ is invariant under the reflection at $Q$. 
This is a non-trivial condition on the algebraic curve $C$. 
\end{remark}

{\bf Open problem.}  Consider Euclidean billiard on a strictly convex closed planar curve $C$. Let $C$ be 
{\it contained in an algebraic curve} and  
 have a  {\it real  caustic contained in an algebraic curve.} Is it true that  $C$ is a conic?
\medskip

A positive answer would imply the particular case of the Birkhoff Conjecture, when the billiard boundary is contained in an algebraic curve.

 We prove the following theorem as an application of results of \cite{gl2}. 
\begin{theorem} \label{talg1} Let $C\subset\rr^2$ be a non-linear $C^2$-smooth connected embedded (not necessarily closed, convex or algebraic) 
curve 
equipped with the structure of standard Euclidean billiard (with the usual reflection law). Let the latter billiard 
have a {\bf complex} algebraic caustic. Then $C$ is a conic. 
\end{theorem}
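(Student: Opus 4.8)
The plan is to reduce Theorem \ref{talg1} to the already-established rational integrability results of Theorem \ref{tgerm} (or their complex version Theorem \ref{tcompl}) together with the tools of \cite{gl2}. First I would dualize: a standard Euclidean billiard on $C$ is the projective billiard with transversal line field being the normal line field, so by Proposition \ref{procriter} it corresponds to a dual billiard on the dual curve $\gamma = C^*$. The hypothesis that the Euclidean billiard has a \emph{complex algebraic} caustic $S\subset\cp^2$ translates, under projective duality, into the statement that the dual conic-like object $S^*$ is a complex algebraic invariant curve for the dual billiard on $\gamma$: for every $P\in\gamma$ the involution $\sigma_P:L_P\to L_P$ permutes the intersection points $L_P\cap S^*$. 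Hence the dual billiard on the germ $\gamma$ admits an algebraic invariant curve.

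The key step is then to upgrade "algebraic invariant curve" to "rational first integral." Here I would invoke the machinery of \cite{gl2}: given a germ of (dual) billiard with one algebraic invariant curve, one analytically continues the invariant curve along the billiard dynamics. Because $S^*$ is algebraic, the family of curves obtained by iterating $\sigma_P$ and moving $P$ along $\gamma$ stays inside a finite-dimensional algebraic family (the orbit of $S^*$ under the pseudo-group generated by the involutions lies in a bounded-degree linear system, since each $\sigma_P$ is a fixed projective involution on $L_P$ and the construction is algebraic in $P$). Thus either the billiard dynamics on lines tangent to $\gamma$ has a finite orbit structure forcing a rational first integral directly, or the one-parameter analytic family through $S^*$ of invariant curves is itself algebraic and yields a rational first integral $R$ on $\cp^2$ whose restriction to each $L_P$ is $\sigma_P$-invariant. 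In either case the dual billiard on $\gamma$ is rationally integrable in the sense of the paper.

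Having a rationally integrable dual billiard on the $C^2$-smooth (a fortiori, after the analytic-continuation argument shows $\gamma$ is analytic, $C^4$-smooth — or one argues directly that rational integrability forces analyticity) germ $\gamma = C^*$, Theorem \ref{tgerm} (or \ref{tcompl}) applies and tells us $\gamma$ is a conic and $\sigma_P$ is of one of the listed types. But now the extra rigidity of the \emph{Euclidean} reflection must be used: the transversal line field is the normal field, equivalently the dual involution $\sigma_P$ fixes, besides $P$, the point dual to the normal line $\mcn(Q)$, which for the Euclidean case is a very specific point (tied to the isotropic points / circular points at infinity). One checks that among types 1), 2a), 2b1), 2c1), 2d) only the conical-pencil type 1) with the appropriate space-form matrix — in fact only the one corresponding to confocal conics, i.e.\ $A=\diag(1,1,0)$ — is compatible with $\mcn$ being the Euclidean normal field; equivalently, dualizing back, $C$ must be a conic. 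This last compatibility check — excluding the exotic types 2a)–2d) for the Euclidean normal field, and pinning down the pencil to the confocal one — is routine but is the place where the specific structure of Euclidean billiards enters.

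The main obstacle I anticipate is the upgrade from a single algebraic invariant curve to a rational first integral: a priori the orbit of one algebraic caustic under the billiard correspondence need not fill out a pencil, and one must rule out the possibility of an algebraic invariant curve that is not a leaf of any rationally-foliated structure. This is exactly the kind of difficulty addressed in \cite{gl2} by analytic continuation along the billiard map combined with an algebraicity/boundedness-of-degree argument, so I would lean on those results; making the reduction precise (keeping track of the complexified normal field, the circular points at infinity, and the bounded degree of the iterated caustic family) is the technical heart of the proof. Once rational integrability is in hand, the classification in Theorem \ref{tgerm} plus the Euclidean-compatibility check finishes it cleanly.
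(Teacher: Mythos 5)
There is a genuine gap at what you yourself call the key step: upgrading a single complex algebraic caustic to a rational first integral. Your proposed mechanism (analytically continuing the invariant curve along the billiard dynamics and arguing that its orbit stays in a bounded-degree algebraic family, hence yields a pencil or a finite orbit structure) is not substantiated and is not how the reduction actually works; a single invariant algebraic curve of a dual billiard does not by itself produce a first integral. The point you are missing is that for the \emph{Euclidean} billiard a second invariant curve comes for free: the dual of the Euclidean reflection is the Bialy--Mironov angular billiard on $\gamma=C^*$, whose complexified involution $\sigma_P:L_P\to L_P$ automatically permutes the two intersection points of $L_P$ with the isotropic conic $\ii=\{x_1^2+x_2^2=0\}$. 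With the two $\sigma_P$-invariant divisors $L_P\cap S^*$ and $L_P\cap\ii$ in hand, the explicit function $R=H^2/(x_1^2+x_2^2)^{d}$ (where $S^*=\{H=0\}$, $d=\deg H$) is already a rational integral, by the same zero/pole-divisor argument as Proposition \ref{invzero}; since a caustic contains no line, $S^*\not\subset\ii$ and $R|_{L_P}$ is non-constant. So the Euclidean structure is used at the very start to \emph{build} the integral, not only at the end as a compatibility check. Your plan defers the Euclidean rigidity to a final case exclusion and leaves the construction of the integral resting on an argument that would itself be the hard theorem.

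A secondary but real issue is smoothness: $C$ is only assumed $C^2$, while Theorem \ref{tgerm} requires $C^4$ to rule out a chain of arcs of distinct conics glued along $C$. Your parenthetical that rational integrability ``forces analyticity'' does not resolve this, because Proposition \ref{proalg2} only makes each non-linear irreducible component of the Zariski closure algebraic; it does not prevent $C$ from being a $C^2$-union of arcs of several conics. The way out is to observe that the integral constructed above has polar locus contained in $\ii$, so the original billiard is polynomially integrable, and then to invoke the $C^2$-classification from \cite{gl2} (each component is a conic, distinct components would have to be confocal, intersecting confocal conics meet orthogonally, contradicting $C^2$-smoothness of a connected curve). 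Without the explicit integral with poles on $\ii$, this route is not available either.
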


The next theorem is its analogue for projective billiards. It will be deduced from main results of the present 
paper.
\begin{theorem} \label{talg2} Let $C$ be a non-linear $C^4$-smooth connected  embedded planar curve. Let  
$C$ be equipped with a projective billiard structure having at least two different complex algebraic caustics. 
Then $C$ is a conic.
\end{theorem}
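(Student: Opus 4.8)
The plan is to dualize the projective billiard, to extract a rational first integral of the dual billiard from the two algebraic caustics by a divisor argument, and then to invoke Theorem \ref{tgermproj}. Let $\mcn$ be the transversal line field on $C$, write $\gamma=C^*$ for the (complexified) dual curve and $\sigma_P:L_P\to L_P$ for the dual billiard involutions, and let $Q\mapsto P(Q)=L_Q^*$ be the tautological correspondence $C\to\gamma$, so that $L_{P(Q)}=Q^*$. As recalled before Definition \ref{dpint}, the duality ``line $\mapsto$ point'' conjugates the complexified billiard reflection at $Q$ with $\sigma_{P(Q)}$, and it identifies the set of complex lines through $Q$ tangent to a complex curve $S$ with the set $L_{P(Q)}\cap S^*$. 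Hence, for a complex algebraic caustic $S$ of the billiard on $C$ (recall $S$ contains no line, so duality is an involution on it), the curve $\Sigma:=S^*\subset\cp^2$ is algebraic and, the reflection at $Q$ being an involution of the finite set of tangent lines to $S$ through $Q$, the involution $\sigma_P$ permutes the set $L_P\cap\Sigma$ for every $P\in\gamma$; moreover at tangent lines $L_P$ meeting $\Sigma$ transversally (a dense set of $P$) it preserves the intersection divisor $L_P\cdot\Sigma$, and this persists for all $P$ by continuity. From the two different caustics $S_1\neq S_2$ we obtain two distinct algebraic curves $\Sigma_i=S_i^*$ with this property.

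Next I would form the candidate integral. Let $F_i$ be a reduced homogeneous polynomial cutting out $\Sigma_i$, put $d_i=\deg F_i$, and set $R:=F_1^{\,d_2}/F_2^{\,d_1}$, a rational function on $\cp^2$ which is non-constant because $d_2\cdot\Sigma_1\neq d_1\cdot\Sigma_2$ as divisors (otherwise $\Sigma_1=\Sigma_2$). For $P$ such that $L_P$ is not a component of $\Sigma_1$ or $\Sigma_2$ the restriction $R|_{L_P}$ is a non-constant rational function on $L_P\cong\cp^1$ with divisor $d_2\,(L_P\cdot\Sigma_1)-d_1\,(L_P\cdot\Sigma_2)$; since $\sigma_P$ preserves each divisor $L_P\cdot\Sigma_i$, the function $R\circ\sigma_P$ has the same divisor on $L_P$ as $R|_{L_P}$, so $R\circ\sigma_P=c(P)\,R$ on $L_P$ for a nonzero constant $c(P)$. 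Composing with $\sigma_P$ again and using $\sigma_P^2=\mathrm{id}$ gives $c(P)^2=1$, hence $c(P)\in\{1,-1\}$; as $c$ is continuous on the connected parameter space, it is a global constant. Therefore $R$ (if $c\equiv 1$) or $R^2$ (if $c\equiv -1$) is a non-constant rational function whose restriction to every $L_P$ is $\sigma_P$-invariant, i.e. the dual billiard of $(C,\mcn)$ is rationally integrable. (If it happens that $C\subset S_i$, then $L_P$ is tangent to $\Sigma_i$ at $P$ with multiplicity $2$ while $\sigma_P$ fixes $P$ and permutes the residual divisor $L_P\cdot\Sigma_i-2P$, so $\sigma_P$ still preserves $L_P\cdot\Sigma_i$ — unless $\deg\Sigma_i=2$, in which case $S_i$, hence $C$, is already a conic.)

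It remains to invoke the classification. By Proposition \ref{procriter} (its explicit direction $R\mapsto R([r,v])$, which is local and hence applies to germs) the projective billiard $(C,\mcn)$ is rationally $0$-homogeneously integrable. Choose a point where the germ of $C$ is non-linear — possible since $C$ is non-linear — and apply Theorem \ref{tgermproj} to that $C^4$ germ: near this point $C$ lies on a conic $\mathcal K$, necessarily non-degenerate because an embedded germ lying on a degenerate conic is linear. Since a non-degenerate conic has nowhere vanishing curvature and $C$ is $C^4$, the set of points of $C$ having a neighborhood in $C$ contained in $\mathcal K$ is open and closed in $C$: at a boundary point the germ of $C$ still contains an arc of $\mathcal K$, so it is non-linear and lies, again by Theorem \ref{tgermproj}, on a conic that shares an arc with $\mathcal K$ and hence (two distinct conics meet in at most four points) equals $\mathcal K$. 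Connectedness of $C$ now gives $C\subset\mathcal K$, i.e. $C$ is a conic.

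The main obstacle is the verification, in the first paragraph, that $\sigma_P$ preserves the intersection divisors $L_P\cdot\Sigma_i$ and not merely the underlying point sets: for tangent lines in general position this is immediate from the caustic property, but the non-generic tangent lines — those through singular points of $\Sigma_i$, those tangent to $\Sigma_i$, and the critical configuration when $C\subset S_i$ — must be controlled by a continuity argument in $P$, and one must also confirm that $R$ cannot be constant. Everything else is either formal or a standard continuation argument along $C$.
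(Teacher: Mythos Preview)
Your proposal is correct and follows essentially the same approach as the paper: dualize, build a rational integral from the defining polynomials of the dual caustics $S_i^*$, then invoke the classification. The paper's own proof is a single paragraph: with $\mcp_i$ defining $S_i^*$ and $d_i=\deg\mcp_i$, it takes $R=\mcp_1^{2d_2}/\mcp_2^{2d_1}$ and cites Theorem~\ref{tgerm} directly on $\gamma=C^*$.

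The differences are cosmetic. First, by squaring both polynomials from the outset the paper sidesteps your $c(P)\in\{\pm1\}$ continuity argument entirely: the squared function automatically has even order at the fixed point $P$, so Proposition~\ref{invzero} applies without a sign ambiguity. Second, the paper stays on the dual side and applies Theorem~\ref{tgerm} to $\gamma$, rather than passing back through Proposition~\ref{procriter} to invoke Theorem~\ref{tgermproj} on $C$; these are of course equivalent. Your route is slightly longer but has the virtue of making explicit two points the paper leaves implicit: the divisor-versus-set issue for $\sigma_P$-invariance of $L_P\cdot\Sigma_i$ (handled by transversality at generic $P$ plus continuity), and the germ-to-global continuation along $C$ (which the paper absorbs into the proof of Theorem~\ref{tgermproj}).
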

Theorems \ref{talg1} and \ref{talg2} will be proved in Section 10.

 \subsection{Sketch of proof of Theorem \ref{tcompl} and plan of the paper}
 
 First we prove algebraicity of a rationally integrable dual   billiard.
 
 \begin{definition} A {\it singular} holomorphic dual   billiard on a  holomorphic curve $\gamma\subset\cp^2$ 
is a holomorphic 
 dual billiard  structure on the complement of the curve $\gamma$ to a discrete subset of points where the 
 corresponding family of involutions $\sigma_P:L_P\to L_P$ does not extend holomorphically. 
 \end{definition} 
  
 \begin{proposition} \label{proalg}  Let a regular non-linear germ  of holomorphic curve $\gamma\subset\cp^2$ 
 carry a complex (not necessarily holomorphic) rationally integrable dual   billiard structure with 
 a rational integral $R$. Then 1) $R|_\gamma\equiv const$,  and thus, $\gamma$ is contained in an irreducible 
 algebraic curve, which will be also denoted by $\gamma$; 2) the involution family $\sigma_P$ extends to a  singular holomorphic dual   billiard structure 
 on the algebraic curve $\gamma$ with the same rational integral $R$. 
 \end{proposition}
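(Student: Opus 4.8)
The plan is to work with the rational integral $R$ and exploit the fact that $R$ restricted to each tangent line $L_P$ is invariant under a \emph{nontrivial} involution $\sigma_P$ with fixed point $P$. First I would observe that if $R|_{L_P}$ is nonconstant, then since $\sigma_P$ is a nontrivial projective involution of $L_P\cong\cp^1$ fixing $P$, the restriction $R|_{L_P}$, being $\sigma_P$-invariant, must have a critical point at $P$: indeed, any rational function on $\cp^1$ invariant under a nontrivial involution factors through the quotient $\cp^1\to\cp^1/\sigma_P\cong\cp^1$, and the branch points of this quotient are precisely the two fixed points of $\sigma_P$, one of which is $P$. Hence, \emph{either} $R|_{L_P}$ is constant, \emph{or} $P$ is a critical point of $R|_{L_P}$, equivalently the differential $dR$ at $P$ annihilates the tangent direction $T_P\gamma$ (which spans $L_P$ at $P$). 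The first main step is to show that the second alternative cannot hold for $P$ ranging over a whole germ of curve unless $\gamma$ is itself a level curve of $R$.

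The key step: consider the set $Z=\{x\in\cp^2: dR_x(T_x(\text{fiber of }R))=0\}$ — this is empty on the open locus where $R$ is submersive, so the critical-point condition $dR_P|_{T_P\gamma}=0$ forces, at each $P\in\gamma$, that $T_P\gamma$ lies in $\ker dR_P$. But $\ker dR_P$ is exactly the tangent line to the level curve $\{R=R(P)\}$ at $P$ (at smooth points of that level curve). So if this held along all of $\gamma$, the curve $\gamma$ would be everywhere tangent to the level curves of $R$; a germ of curve everywhere tangent to the (one-dimensional) foliation by level curves of $R$ must coincide with a leaf, i.e.\ $R|_\gamma$ would be constant anyway. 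Thus in \emph{all} cases $R|_\gamma\equiv\mathrm{const}$. Since $\gamma$ is a germ of holomorphic curve contained in the algebraic curve $\{R=c\}$, it lies in an irreducible component of that curve, which is the claimed irreducible algebraic curve (still denoted $\gamma$); note $\gamma$ is not a line by hypothesis. This proves part 1).

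For part 2), I would fix the affine structure: in suitable coordinates a projective involution of the line $L_P$ fixing $P$ is determined by one further fixed point $P'(P)\in L_P$, namely $\sigma_P$ is the unique projective involution of $L_P$ fixing $P$ and $P'(P)$; equivalently it is determined by a point-valued "conjugate-point" map. The data of $\sigma_P$ for $P$ in the germ is thus encoded by the correspondence $P\mapsto P'(P)$, and the rational-integrability relation $R\circ\sigma_P=R$ on $L_P$ says precisely: for every $P\in\gamma$ and every $x\in L_P$, the value $R(x)$ equals $R(\sigma_P(x))$, where $\sigma_P(x)$ is the fourth harmonic point to $(x;P,P'(P))$. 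Since $R$ is rational and the harmonic-conjugate construction is rational in $(P,P'(P),x)$, the condition that determines $P'(P)$ — for a generic second point $x_0\in L_P$, $R(\sigma_P(x_0))=R(x_0)$ — is an algebraic condition on the pair $(P,P')\in\gamma\times\cp^2$ with $P'\in L_P$. Because $\sigma_P$ is nontrivial and the foliation by level curves of $R$ is there, this algebraic correspondence has a well-defined branch through the given germ, hence extends to an algebraic (in particular, meromorphic) section $P\mapsto P'(P)$ over all of the algebraic curve $\gamma$, off a discrete set where branches collide or escape to the base locus; over that discrete set $\sigma_P$ may fail to be a nontrivial involution, which is exactly the "singular" locus allowed in the definition. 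The resulting family $\sigma_P$ is then holomorphic on $\gamma$ minus a discrete set and, by construction and by analytic continuation of the identity $R\circ\sigma_P=R$ from the germ, has $R$ as a rational integral on the whole curve.

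The main obstacle I anticipate is the dichotomy argument in the first step: one must rule out the possibility that $R|_{L_P}$ is \emph{nonconstant with a critical point at $P$} for all $P$ without $\gamma$ being a level curve. The clean way is the tangency-to-the-foliation observation above, but care is needed at points where the level curve $\{R=R(P)\}$ is singular or where $dR_P=0$ (base points of the pencil $R$); there one argues by continuity/genericity, using that these bad points form a proper analytic subset of $\gamma$ and that $R|_\gamma$ is continuous, so constancy on a dense open subset gives constancy everywhere. A secondary technical point is checking that the extended correspondence $P\mapsto P'(P)$ genuinely gives \emph{involutions} (not just an algebraic correspondence) away from the discrete singular set — this follows because involutivity is a closed condition satisfied on the germ and hence, by the identity principle on the irreducible curve $\gamma$, on all of it.
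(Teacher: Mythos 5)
Your part 1) is correct and follows essentially the paper's own route: a nontrivial projective involution of $L_P$ fixing $P$ forces the invariant function $R|_{L_P}$ to have vanishing derivative at $P$ (the paper says it is even in a coordinate where $\sigma_P:\theta\mapsto-\theta$; you say it factors through the double cover $L_P\to L_P/\sigma_P$ branched at $P$ --- same thing), and since $T_P\gamma=T_PL_P$ this kills the derivative of $R$ along $\gamma$, so $R|_\gamma\equiv const$. Your detour through tangency to the level foliation, with its worries about base points and singular leaves, is an unnecessary complication --- it suffices to note that the one-variable function $t\mapsto R(\gamma(t))$ has identically vanishing derivative on a dense open set, hence is constant --- but it is not wrong.

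Part 2) has a genuine gap: the \emph{single-valuedness} of the extension. You encode $\sigma_P$ by its second fixed point $P'$ and correctly observe that the pairs $(P,P')$, $P'\in L_P$, with $R\circ\sigma_{P,P'}=R$ on $L_P$ form an algebraic correspondence over the algebraic curve $\gamma$ containing the given local branch. But an irreducible component of an algebraic correspondence containing a given local branch is in general a \emph{multisection}: continuation of the branch along a loop in $\gamma$ may return a different branch, so ``has a well-defined branch through the given germ, hence extends to a section over all of $\gamma$'' is exactly the assertion that requires proof, and nothing you write supplies it. What is needed is uniqueness, at a generic $A\in\gamma$, of a nontrivial involution of $L_A$ fixing $A$ and preserving the (non-constant) function $R|_{L_A}$. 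The paper proves this as follows: if $\sigma_A\neq\wt\sigma_A$ were two such involutions, their composition would be a parabolic element of $\operatorname{PSL}_2(\cc)$ with unique fixed point $A$ preserving $R|_{L_A}$; its orbits are infinite and accumulate at $A$, forcing $R|_{L_A}\equiv const$; this would then hold for an open set of $A\in\gamma$, and since the tangent lines of the non-linear curve $\gamma$ sweep out an open subset of $\cp^2$ on which $R$ would then take the single value $R|_\gamma$, the integral $R$ would be constant --- a contradiction. With this uniqueness your correspondence has degree one over $\gamma$ and your argument closes. (For comparison, the paper's extension mechanism itself is different: it continues the coefficient $\phi(P)$ of the normalized conjugacy via the differential equation $\frac{dH(G)}{d\nabla_{skew}G}=6\phi\, H(G)$ satisfied by the Hessian of a normalized root of $R$; but because $H(G)$ is multivalued, the paper must also invoke the same parabolic argument to get single-valuedness, so this step is not optional in either approach.) Your closing remark that ``involutivity is a closed condition, hence holds on all of $\gamma$ by the identity principle'' addresses a different and easier point and does not repair this.
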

 \begin{proposition} \label{proalg2} Let  $\gamma\subset\rr^2$  be a regular non-linear $C^2$-smooth germ of curve 
 equipped with a dual   billiard structure having a rational integral $R$. Then 1) $R|_\gamma\equiv const$,  and thus, the complex Zariski closure 
 of the curve $\gamma$ is an 
 algebraic curve  in $\cp^2$; 2) the dual billiard structure   extends to 
 a singular holomorphic dual    billiard structure on each its non-linear 
irreducible component, with the same integral $R$.
\end{proposition}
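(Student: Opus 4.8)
This is the $C^2$-version of Proposition~\ref{proalg}, and I would prove it by the same mechanism: recover an \emph{algebraic} description of the involutions $\sigma_P$ from the integral $R$ and continue it off $\gamma$. The algebraic input is a uniqueness lemma: if $g$ is a non-constant rational function on a projective line $L\cong\cp^1$ and $P\in L$, then there is at most one non-trivial projective involution $\sigma:L\to L$ with $\sigma(P)=P$ and $g\circ\sigma=g$. Indeed, any two such involutions lie in the finite group $G=\{\psi\in\mathrm{PGL}_2(\cc):g\circ\psi=g\}$, and in any finite subgroup of $\mathrm{PGL}_2(\cc)$ distinct involutions have disjoint fixed-point sets: if $\sigma\ne\tau$ share a fixed point $p$, then $\langle\sigma,\tau\rangle$ is dihedral, $\sigma\tau$ is a non-identity rotation whose two fixed points $p,p'$ form a $\sigma$-invariant set (since $\sigma$ conjugates $\sigma\tau$ to $(\sigma\tau)^{-1}$), so $\sigma$ fixes $p'$ as well, whence $\sigma$ and $\tau$ are both \emph{the} involution with fixed points $p,p'$, i.e. $\sigma=\tau$. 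Consequently, wherever $R|_{L_P}\not\equiv\mathrm{const}$ the involution $\sigma_P$ is \emph{determined} by $R$.

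\emph{Part 1.} For $P\in\gamma$ with $R|_{L_P}\not\equiv\mathrm{const}$ (these are dense in $\gamma$: otherwise $R$ would be constant on the union of the tangent lines to a subarc of the non-linear curve $\gamma$, an open subset of the plane, forcing $R\equiv\mathrm{const}$) choose an affine coordinate $x$ on $L_P$ with $x(P)=0$; every non-trivial projective involution of $L_P$ fixing $P$ then has the form $x\mapsto -x/(1+\beta x)$, and differentiating $R|_{L_P}\circ\sigma_P=R|_{L_P}$ at $x=0$ gives $(R|_{L_P})'(0)=-(R|_{L_P})'(0)$. Thus $P$ is a critical point of $R|_{L_P}$, and since $L_P$ is the projective tangent line to $\gamma$ at $P$, this means $dR$ annihilates $T_P\gamma$, i.e. $R\circ\gamma$ has vanishing derivative at $P$. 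As this holds on a dense set of parameters and $R\circ\gamma$ is $C^1$ where finite, the derivative vanishes identically on the finite locus; by continuity of $R\circ\gamma$ into $\cp^1$ (a finite constant value cannot jump to $\infty$ at an isolated point) it follows that $R|_\gamma\equiv c$ for some $c\in\cp^1$. Replacing $R$ by $1/R$ when $c=\infty$ and writing $R=p/q$ in affine coordinates, we get $\gamma\subset\{p-cq=0\}$, a proper algebraic curve since $R\not\equiv c$; hence the complex Zariski closure $\Gamma$ of $\gamma$ is an algebraic curve.

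\emph{Part 2.} Since $\gamma$ is Zariski-dense in $\Gamma$, every irreducible component $\Gamma_0$ of $\Gamma$ meets $\gamma$ in a Zariski-dense (hence infinite) set, and near all but finitely many of these points $P$ — those that are smooth points of $\Gamma$ lying on $\Gamma_0$ alone — the component $\Gamma_0$ is defined over $\rr$ (it is conjugation-invariant) and $\gamma$ coincides with a regular real-analytic arc of $\Gamma_0$; moreover $\Gamma$ has a non-linear component, because a non-linear $C^2$ curve cannot be contained in a finite union of lines. Fix such a non-linear $\Gamma_0$. As $R|_{\Gamma_0}$ is a rational function on the irreducible curve $\Gamma_0$ which is constant (equal to $c$) on the infinite set $\gamma\cap\Gamma_0$, we get $R|_{\Gamma_0}\equiv c$, so the vanishing $dR(T_P\Gamma_0)=0$ of Part 1 now holds at \emph{every} smooth $P\in\Gamma_0$. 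To extend $\sigma_P$, expand $R|_{L_P}\circ\sigma_P=R|_{L_P}$ to higher order: in a coordinate $x$ on $L_P$ with $x(P)=0$ and $R|_{L_P}(x)=g_0+g_2x^2+g_3x^3+\cdots$ (the linear term absent), the $x^3$-coefficient of the identity forces $\beta_P=-g_3/g_2$, with an analogous ratio of Taylor coefficients at the discrete set of points where the critical point of $R|_{L_P}$ has higher (necessarily even) order. Equivalently, the incidence condition ``the involution with fixed points $P$ and $Z\in L_P$ preserves $R|_{L_P}$'' is algebraic in $(P,Z)$, cuts out a curve in the tautological $\cp^1$-bundle of tangent lines over $\Gamma_0$, and over the infinite set $\gamma\cap\Gamma_0$ this curve is a section; hence it is birational to $\Gamma_0$, so $P\mapsto\sigma_P$ extends to a rational, thus singular holomorphic, family on $\Gamma_0$. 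This extension agrees with the given one on $\gamma\cap\Gamma_0$ by the uniqueness lemma, and the ``defect'' $R|_{L_P}\circ\sigma_P-R|_{L_P}$, a rational function on $L_P$ depending algebraically on $P$ and vanishing on the infinite set $\gamma\cap\Gamma_0$, vanishes identically; so the extended family is a singular holomorphic dual billiard structure on $\Gamma_0$ with integral $R$.

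\emph{Main obstacle.} Part 1 is the short ``critical point $=$ tangency'' observation together with the group-theoretic uniqueness lemma. The real work is in Part 2: assembling the data ``tangent line of $\Gamma_0$ $+$ jet of $R$ along it $+$ candidate involution'' into a genuinely algebraic family over (the normalization of) $\Gamma_0$, so that the formula for $\beta_P$ is manifestly rational in $P$ and the vanishing of the defect propagates from the real arc $\gamma\cap\Gamma_0$ to all of $\Gamma_0$; together with the bookkeeping needed when $\Gamma$ is reducible and when $R|_{L_P}$ has a critical point of order $>2$ at $P$ (a discrete set of parameters, absorbed into the singular locus).
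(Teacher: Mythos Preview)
Your proposal is correct, and Part 1) is essentially identical to the paper's argument: $R|_{L_P}$ is even in a local coordinate centered at $P$, hence has vanishing first derivative there, so $R$ is constant along $\gamma$.

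For Part 2) your route and the paper's coincide at the core but are packaged differently. You extract $\beta_P=-g_3/g_2$ directly from the Taylor jet of $R|_{L_P}$ and argue that this ratio is rational on the irreducible component $\Gamma_0$, with your opening group-theoretic uniqueness lemma guaranteeing well-definedness. The paper instead passes to $G=R^{1/k}$ (to force the tangential vanishing order to be exactly $2$), introduces the Hessian $H(G)$, derives the differential equation $\frac{dH(G)}{dV}=6\psi H(G)$ along $\gamma$, and uses it to continue $\psi$ (hence $\sigma_P$) holomorphically; single-valuedness is then checked by observing that two distinct involutions fixing $P$ would compose to a parabolic map preserving $R|_{L_P}$, forcing $R|_{L_P}\equiv\mathrm{const}$ on an open set of $P$'s. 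Your uniqueness lemma and the paper's parabolic argument are two phrasings of the same fact, and your Taylor-coefficient ratio is exactly the content of the Hessian equation (since $H(G)|_\gamma=\xi''(0)$ and the cubic condition gives $\xi'''(0)=6\psi\,\xi''(0)$). The paper's Hessian packaging pays off later (residue formula, asymptotics at singular branches), which is why it is set up here; your packaging is more self-contained for this proposition alone. One small point: when $R-c$ vanishes to order $k>1$ along $\Gamma_0$, the generic tangential order is $2k$, so the relevant ratio is $-\frac{2g_{2k+1}}{(2k)g_{2k}}$ rather than $-g_3/g_2$; you acknowledge this under ``higher (necessarily even) order,'' and it does not affect rationality in $P$.
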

 
 Parts 1), 2)  of these propositions will be proved in Subsections 2.1, 2.2. 
 
  Recall that each (may be singular) germ of  analytic curve in $\cp^2$ is a finite union of its irreducible components, which are 
  locally bijectively holomorphically parametrized germs 
 called {\it local branches.} 
 
 \begin{definition} \cite[definition 3.3]{gl2}
 Let $b$ be an irreducible (i.e., parametrized) non-linear 
 germ of analytic curve at a point $O\in\cp^2$. 
An affine chart $(z,w)$ centered at $O$ such that the $z$-axis is tangent 
 to $b$ at $O$ is called {\it adapted} to $b$.  In an adapted chart the germ $b$ 
 can be holomorphically bijectively parametrized by a complex parameter 
 $t$ from a disk centered at 0 as follows:
 $$t\mapsto(t^{q_b},c_bt^{p_b}(1+o(1))), \text{ as } t\to0; \ \  q_b,p_b\in\nn, \ 1\leq q_b<p_b, \ c_b\neq0,$$
 $$q_b=1, \text{ if and only if } b \text{ is a regular germ.}$$
 The {\it projective Puiseux exponent} \cite[p. 250, definition 2.9]{alg} of the germ $b$ 
 is the number 
 $$r=r_b:=\frac{p_b}{q_b}.$$
 The germ $b$ is called {\it quadratic,} if $r=2$ \cite[definition 3.5]{gs}. When $b$ is 
 a germ of line, it is parametrized by $t\mapsto(t,0)$: then we set $q_b=1$, $p_b=r_b=\infty$.
 \end{definition}
 
 The main part of the proof of Theorem \ref{tcompl} 
  is the proof of the following theorem on possible types  
 of singularities and local branches of the curve $\gamma$. 
 
 \begin{theorem} \label{typesing} Let an irreducible complex algebraic curve 
 $\gamma\subset\cp^2$ carry 
 a structure of rationally integrable singular holomorphic dual   billiard. Then the following 
 statements hold: 
 
 (i) the curve $\gamma$ has no inflection points, and at each its singular point  
 (if any) all its local branches are quadratic;
 
 (ii) there exists at most  unique singular  point of the curve $\gamma$ where 
 there exists at least one singular local branch.
 \end{theorem}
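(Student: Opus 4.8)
The plan is to exploit the rational first integral $R$ and the algebraicity established in Propositions \ref{proalg}, \ref{proalg2} to study the pencil of level curves of $R$ near $\gamma$. First I would set up local coordinates adapted to a branch $b$ of $\gamma$ at a point $O$, say $O\in\gamma\cap\rr^2$ if real, or just in $\cp^2$, and observe that since $R|_\gamma\equiv c$, the level set $\{R=c\}$ contains $\gamma$; the other nearby levels $\{R=c'\}$ form the leaves of the foliation that the dual billiard involutions preserve. The key local object is: for each $P\in\gamma$ near $O$, the involution $\sigma_P$ of the tangent line $L_P$ permutes the (at least two) intersection points of $L_P$ with a generic nearby leaf, and fixes $P$. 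I would write everything in the adapted chart $(z,w)$ with $b$ parametrized by $t\mapsto(t^{q},ct^{p}(1+o(1)))$, compute the tangent line $L_P$, intersect it with $\{R=\text{const}\}$, and extract the involution $\sigma_P$ as the unique projective involution of $L_P\cong\cp^1$ fixing $P$ and swapping the two branch intersection points; then let the leaf vary and take the limit as the leaf tends to $\gamma$ to get a well-defined $\sigma_P$ fixing $P$ and a second point (the "second fixed point" function).

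For part (i), the claim that $\gamma$ has no inflection points and all local branches at singular points are quadratic should follow by a Puiseux-exponent / contact-order analysis at $O$. The projective Puiseux exponent $r_b=p_b/q_b$ controls the order of contact of the tangent line $L_O$ with $b$ at $O$; an inflection point is exactly where a regular branch has $r_b\ge 3$. I would argue that if $r_b\ne 2$ then the intersection multiplicity of the tangent line $L_P$ with $\gamma$ at $P$ (for $P$ near $O$) is too high to be compatible with $L_P$ meeting a nearby leaf in just two points that are swapped by an involution fixing $P$: the involution $\sigma_P$, being projective on $L_P$, has exactly two fixed points, and $P$ is one of them with a definite "multiplicity-one" behavior, which forces the local contact order of $L_P$ with the rest of $\gamma$ — and hence with the nearby leaves by continuity of $R$ — to be exactly $2$. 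This is essentially the same mechanism as in \cite{gl2, gs} for polynomially integrable outer billiards, where quadraticity of branches is forced; I would adapt that contact-order bookkeeping, using the rationality of $R$ to control how the leaves $\{R=c'\}$ approach $\gamma$ near a singular or inflection point (the pole/zero divisor of $R$ meeting $\gamma$ is what detects the singular points).

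For part (ii), that there is at most one singular point carrying a singular (i.e.\ $q_b\ge 2$) local branch, I expect to argue by a global count. Once (i) gives that every branch at a singular point is quadratic, each such point contributes a fixed amount to some global invariant of the configuration $(\gamma, R)$ — e.g.\ to the intersection of $\gamma$ with the polar/ramification divisor of the rational map $R:\gamma\to\cp^1$ or to a genus/degree estimate via the Riemann--Hurwitz formula applied to $R|_{\tilde\gamma}$ on the normalization $\tilde\gamma$. Combined with the constraint from (i) that $\gamma$ has no inflections (so its dual curve $\gamma^*$ has controlled degree, or $\gamma$ itself is severely restricted — e.g.\ degree bounded in terms of the number of cusps), a second singular point with a singular branch would push this count past what is allowed, giving a contradiction. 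The main obstacle I anticipate is making the local-to-global step precise: quantifying, in terms of $\deg R$ and the multiplicities $q_b,p_b$, exactly how a singular branch constrains the pencil $\{R=\text{const}\}$ and then showing two such branches are incompatible. I would handle the local side by the explicit Puiseux computation of $\sigma_P$ above, and the global side by a degree/genus inequality for the rational function $R$ restricted to $\gamma$; the delicate point will be ruling out cancellations that could, a priori, allow two mild singular branches to coexist, which I expect to resolve by examining the behaviour of $R$ and of the second-fixed-point function simultaneously at both candidate points.
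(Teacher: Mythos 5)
Your setup (adapted Puiseux coordinates, the involutions $\sigma_P$ permuting the intersections of $L_P$ with the level curves of $R$, and the degeneration as $P\to O$) is the right stage, but in both parts the mechanism you propose is not the one that works, and I do not see how to close the gaps along your lines. For (i), the constraint is \emph{not} that $L_P$ meets a nearby leaf in two points swapped by $\sigma_P$: a level curve of $R$ meets $L_P$ in many points, and the condition is invariance of the whole intersection divisor under the projective involution $\sigma_P$. Your assertion that ``$\sigma_P$ has two fixed points, $P$ is one of them with multiplicity-one behaviour, hence the contact order is $2$'' is a heuristic that produces no contradiction for, say, $r_b=3$. The paper's actual route is: normalize $R=f^kg_1$, pass to $G=fg$ with $g=g_1^{1/k}$, derive the differential equation $\frac{d}{dz}H(G)|_\gamma=6\psi\,H(G)$ from the vanishing of the cubic Taylor coefficient of the integral along $\sigma_P$-normalized parametrizations, read off from the Puiseux asymptotics $H(G)|_b=\alpha z^d(1+o(1))$ that the involutions degenerate to $\eta_\rho$ with residue $\rho=-d/3$, reduce via lower $(p,q)$-quasihomogeneous parts to the model $(p,q;\rho)$-billiard on $w^q=z^p$, and finally derive \emph{two} independent formulas for $\rho$ (one from the Hessian, one from tracking a pair of roots of $H(z)-\lambda(z-\tfrac{\rho-2}{\rho})^{\hat d}$ that must be permuted by $\eta_\rho$) whose compatibility is an arithmetic identity forcing $p=2$, $q=1$. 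The existence of exotic integrable structures with residues $2\pm\tfrac2k$ and integrals of arbitrarily high degree shows that no naive contact-order or degree bookkeeping can suffice here.

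For (ii), your proposed global Riemann--Hurwitz/degree count is circular in spirit: the global Pl\"ucker-type count (Shustin's formula) is precisely what the paper uses afterwards to conclude that $\gamma$ is a conic, and it takes \emph{both} (i) and (ii) as inputs, so it cannot be run to prove (ii). The actual argument is local-to-semi-global: a singular quadratic branch at $O$ is uniformly $\delta$-folded and forces $\rho=2+\delta>2=r$; once $\rho>r$, a chain argument (iterating $\sigma_P$ on intersection points and producing unboundedly many of them unless all branches of a level curve meeting $L_O$ are based at $O$) shows that $R$ is constant on $L_O$ and that $L_O\setminus\{O\}$ is a \emph{regular leaf} of the foliation $R=const$. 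Two distinct singular points with singular branches would give two distinct tangent lines, each a regular punctured leaf, meeting at a point that is then simultaneously a regular and a singular point of the foliation --- a contradiction. Your proposal contains no analogue of the key inequality $\rho>2$ nor of the ``tangent line is a leaf'' phenomenon, which is what makes two singular branches incompatible.
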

 
 \begin{theorem} \label{type-conic} Every complex irreducible projective planar 
 algebraic curve satisfying the above statements (i) and (ii) is a conic.
 \end{theorem}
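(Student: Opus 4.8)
The plan is to show that an irreducible complex projective plane curve $\gamma$ with no inflection points and with at worst one singular point, all of whose local branches are quadratic (in the sense of having projective Puiseux exponent $2$), must be a conic. The basic tool is the Plücker formula relating the degree $d$ of $\gamma$, the degree $d^\vee$ of its dual curve $\gamma^\vee$, and the local contributions of inflection points and singular points to the difference $d^\vee - d$; together with the corresponding bidual relation. More precisely, for a plane curve of geometric genus $g$ the generalized Plücker/Riemann--Hurwitz formula reads $2 - 2g = 2d^\vee - \sum_{P} (m_P^\vee - 1)$, where the sum is over all branches and $m_P^\vee$ is the ramification index of the Gauss map (the order of contact of $\gamma$ with its tangent line at $P$, which equals $2$ at a smooth non-inflection point and is governed by the Puiseux data $p_b/q_b$ at a singular branch). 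Condition (i) forces every local contribution of the Gauss map to be the minimal one: at every smooth point the contact order is exactly $2$ (no inflections), and at every singular branch, being quadratic, the contact order with the tangent is again controlled so as to contribute minimally.

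First I would set up the dual curve $\gamma^\vee \subset \cp^{2\vee}$ and compute $d^\vee = \deg \gamma^\vee$ via the class formula: $d^\vee = d(d-1) - \sum_P \delta_P$, where $\delta_P$ is the local drop in class, a nonnegative integer determined by the Puiseux expansion of the branch at $P$ (for an ordinary cusp it is $3$, for a node $2$, etc.), and the sum runs over singular points. By hypothesis (ii) there is at most one singular point $P_0$, and by (i) all branches at $P_0$ are quadratic; the quadratic condition $r_b = 2$ pins down the leading Puiseux data $t\mapsto (t^{q_b}, c_b t^{2q_b}(1+o(1)))$, which I would use to bound $\delta_{P_0}$ from below in terms of the multiplicity of $\gamma$ at $P_0$. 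Next, I would apply the same analysis to the dual curve $\gamma^\vee$. The key duality input is that a smooth non-inflection point of $\gamma$ corresponds to a smooth non-inflection point of $\gamma^\vee$, so $\gamma^\vee$ also has no inflection points away from the (at most one) point dual to the tangent at $P_0$; and the branches of $\gamma^\vee$ are again quadratic. Since $\gamma^{\vee\vee} = \gamma$, the situation is symmetric, and I would run the Plücker relations for the pair $(\gamma, \gamma^\vee)$ simultaneously.

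The heart of the argument is then a counting/inequality step: write the genus formula for $\gamma$ and for $\gamma^\vee$, use that neither curve has inflection points (all Gauss-map ramification is concentrated at the single allowed singular point and its dual), and derive $d^\vee = 2(d-1) - (\text{small correction from } P_0)$ together with the dual relation $d = 2(d^\vee - 1) - (\text{small correction})$. Feeding one into the other gives $d = 2(2(d-1) - \epsilon) - 2 - \epsilon'$ for small nonnegative integers $\epsilon, \epsilon'$ bounded explicitly by the quadratic-branch data, which forces $d \le 2$. Since $\gamma$ is non-linear and irreducible, $d = 2$, i.e. $\gamma$ is a conic. I would need to handle the smooth case (no singular point at all) and the one-singular-point case separately, but in the smooth case $\gamma$ has no inflections and is irreducible, so its Gauss map is unramified of degree $d(d-1)$; the genus formula $2-2g = 2d(d-1) - \sum(\text{ram})$ with zero ramification gives $g = 1 - d(d-1)$, which is nonnegative only for $d \le 1$ or $d = 2$ — another immediate contradiction unless $d=2$ (here one checks $d=1$ is excluded by non-linearity).

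The main obstacle I anticipate is the bookkeeping at the single singular point $P_0$: one must translate the condition "every local branch at $P_0$ is quadratic" into precise bounds on both the class defect $\delta_{P_0}$ (entering $d^\vee$) and on the Gauss-map ramification at $P_0$ (entering the genus formula), and do the same for the dual singular point, being careful that several quadratic branches may be tangent to one another and that the $q_b$ need not all equal $1$. Controlling these local invariants uniformly — ideally showing the quadratic hypothesis makes every contribution exactly the minimum compatible with the branch multiplicity — is the delicate part; once the local inequalities are in hand, the global degree estimate closing out $d \le 2$ is routine. An alternative to the genus/Plücker route, which I would keep in reserve in case the local analysis is awkward, is to argue directly that a curve with no inflection points has a Gauss map which is an unramified cover onto its image off the singular locus, combine this with the fact that $\gamma$ and $\gamma^\vee$ play symmetric roles, and invoke the classification of curves whose Gauss map has minimal degree; but I expect the explicit Plücker computation to be the cleanest.
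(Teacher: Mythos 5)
Your overall strategy (Pl\"ucker-type formulas constraining the degree, with hypotheses (i) and (ii) forcing all local contributions to be minimal) is in the right spirit, but as written the argument has two genuine problems. First, the formula you take as your starting point, $2-2g=2d^\vee-\sum_P(m_P^\vee-1)$, is not correct: already for a nonsingular conic ($g=0$, $d^\vee=2$, unramified Gauss map) it reads $2=4$. The correct class formula is $d^\vee=2d+2g-2-(\text{local corrections})$, and for a smooth curve with no inflections this gives $d^\vee=d(d-1)$, which by itself yields no contradiction; to conclude you are forced back to the inflection count $3d(d-2)=0$, i.e.\ to the Hessian intersection. Second, and more seriously, the biduality step does not close the loop in the way you describe: $\gamma^\vee$ does \emph{not} inherit properties (i)--(ii). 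Even when $\gamma$ is smooth and inflection-free, its dual of degree $d(d-1)$ acquires a node for every bitangent of $\gamma$, so for $d\geq 3$ the dual has many singular points and ``at most one singular point with a singular branch'' fails for $\gamma^\vee$. Hence the symmetric relation $d=2(d^\vee-1)-\epsilon'$ you want to feed back in is not available, and the scheme collapses unless you reintroduce the full classical Pl\"ucker system with node and cusp counts for the dual --- at which point the decisive equation is again the inflection formula, i.e.\ the Hessian computation.

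The paper avoids duality altogether and uses a single identity: B\'ezout for $\gamma\cap H_\gamma$ gives $3d(d-2)=\sum_Q h(\gamma,Q)$, and Shustin's formula expresses each local Hessian as $h(\gamma,Q)=3\kappa(\gamma,Q)+\sum_j\bigl(s^*(b_{Qj})-s(b_{Qj})\bigr)$ with $\kappa=2\delta+\sum_j(s(b_{Qj})-1)$. Hypothesis (i) (all branches quadratic) makes $s^*=s$, killing the second sum everywhere; the genus bound $\sum_Q\delta(\gamma,Q)\leq\tfrac{(d-1)(d-2)}{2}$ controls the $\delta$-terms; and hypothesis (ii) confines the remaining term $\sum_j(s(b_{Oj})-1)$ to a single point $O$, where a B\'ezout estimate on the intersection of $\gamma$ with the tangent line of a singular branch shows this sum is $<d-2$. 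Together these force $3d(d-2)<3d(d-2)$ unless $d=2$. This is exactly the ``local bookkeeping'' you flagged as the delicate part; if you want to salvage your outline, you should replace the class-formula/biduality loop with this one-sided Hessian inequality, since it is the only place where the quadraticity of branches and the uniqueness of the singular point enter in a usable quantitative form.
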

 
 The proof of Theorem \ref{type-conic} will be given in Section 6. It is based on E.Shustin's generalized Plucker formula \cite{sh}, 
 dealing with intersection of an irreducible algebraic curve with its Hessian curve. It gives formula for the contributions of singular and inflection points to their intersection index.

 Theorems \ref{typesing}, \ref{type-conic} together with 
 Proposition \ref{proalg}  immediately imply that every germ of 
  holomorphic curve $\gamma$ 
 carrying a rationally integrable complex dual billiard  structure is a germ of a conic. 
Afterwards in Section 7 we classify the rationally integrable dual billiard structures on a conic. This will finish the 
proof of  Theorem \ref{tcompl}. Then in Section 8 we classify the real forms of the complex dual billiards 
from Theorem \ref{tcompl} and finish the proof of Theorems \ref{tgerm}, \ref{tclosed}.

 The  proof of 
 Theorem \ref{typesing} is based on studying the Hessian of 
appropriately normalized rational integral: the Hessian introduced by  S.Tabachnikov, who  used it to study polynomially integrable outer billiards \cite{tab08}. This idea was further elaborated and 
used by M.Bialy and A.Mironov in a series 
of papers on Bolotin's Polynomial Birkhoff Conjecture and its analogues for magnetic billiards \cite{bm, bm2, bm4}. 
It was also used in the previous paper by the author and E.Shustin on polynomially integrable outer billiards 
\cite{gs} and in the author's recent paper  on S.V.Bolotin's Polynomial Birkhoff Conjecture \cite{gl2}. 
 The rational integral  $R$ being constant along the curve $\gamma$ (Proposition \ref{proalg}), we normalize it 
 to vanish identically on $\gamma$. Let $f$ be the irreducible polynomial vanishing on $\gamma$ in an 
 affine chart $\cc^2_{x_1,x_2}\subset\cp^2$. Then 
 $$R=f^kg_1, \ \ g_1|_\gamma\not\equiv0.$$
 Replacing $R$ by its $k$-th root $G:=fg$, $g:=g_1^{\frac1k}$,  we consider its Hessian 
 $$H(G):= \frac{\dd G}{\d x_1^2}\left(\frac{\partial G}{\partial x_2}\right)^2-2\frac{\dd G}{\d x_1\d x_2}\frac{\partial G}{\d x_2}\frac{\partial G}{\d x_1}+
\frac{\dd G}{\d x_2^2}\left(\frac{\partial G}{\d x_1}\right)^2.$$
Its key property is that {\it $H(G)|_\gamma\neq0$ outside singular and inflection points of the curve $\gamma$ and zeros (poles) of the 
function $g_1|_\gamma$.}  

{\bf Plan of proof of Theorem \ref{typesing}.} 

Step 1 (Subsection 2.2): differential equation on $H(G)$. 
Given a point $P_0\in\gamma$, consider an affine chart $(z,w)$ in which  
the tangent line $L_{P_0}$ is not parallel to the $w$-axis.  
Then for every $P\in\gamma$ close enough to $P_0$ the line $L_P$ is parametrized by affine coordinate $z$. 
The involution $\sigma_P:L_P\to L_P$ is conjugated to the standard involution 
$\oc_{\theta}\to\oc_{\theta}$, $\theta\mapsto-\theta$, via a  mapping $\mcf_P:\theta\mapsto\mcf_P(\theta)$ 
that sends $\theta$ to the point 
in the tangent line $L_P$ with $z$-coordinate $z(P)+\frac{\theta}{1+\psi(P)\theta}$; $\psi(P)\in\cc$ is uniquely 
determined by $\sigma_P$.  Invariance of the  function $R|_{L_P}$ under the involution $\sigma_P$ 
is equivalent to statement that  the function $R\circ\mcf_P(\theta)$ is even. Writing the condition 
that its cubic Taylor coefficient vanishes (analogously to \cite{tab08, bm, bm2}), we get the differential equation 
\begin{equation}\frac{d H(G)|_{\gamma}}{dz}(P)=6\psi(P)H(G).\label{6pgh}\end{equation}

We prove equation (\ref{6pgh}) in a more general situation, for an irreducible germ of analytic curve $b$ at a point $B$ 
 equipped with a family of projective involutions  
 $\sigma_P:L_P\to L_P$, $P\in b\setminus\{ B\}$, admitting a germ $R$ of meromorphic (not necessarily rational) integral. 
 Here $f$ is a local defining function of the germ $b$, and $G$, $H(G)$ are  defined as above. 
 Relation between meromorphic and rational integrability will be explained in Subsection 2.4.

Step 2 (Subsection 2.3): formula relating asymptotics of $H(G)$ and $\sigma_P$.  We fix a potential singular (inflection) point $B\in\gamma$, 
  a local branch $b$ of the curve $\gamma$ at $B$ (whose quadraticity we have to prove) and affine coordinates $(z,w)$ centered at $B$ and 
  adapted to $b$. The function $H(G)$ being a linear combination of 
  products of rational powers of holomorphic functions at $B$, we get 
  that $H(G)|_b=\alpha z^{d}(1+o(1))$, as $z\to0$, for some $d\in\mathbb Q$ and $\alpha\neq0$. 
  This together with equation (\ref{6pgh}) implies that  
  $$\psi|_{b\setminus\{ B\}}=\frac1z\left(\frac d6+o(1)\right)=-\frac1z\left(\frac\rho2+o(1)\right), \text{ as } z\to0; 
   \ \rho=-\frac d3\in\mathbb Q.$$
    We then say that the involution  family $\sigma_P$ is {\it meromorphic} 
with pole  at $B$ of order at most one with {\it residue $\rho$}. This means exactly that  
$$\text{in the coordinate }\ \ \  \zeta:=\frac z{z(P)} \ \ \ \text{ on } \ L_P$$
$$ \text{ the involution } \ \sigma_P 
\text{ converges to } \ \er(\zeta):=\frac{(\rho-1)\zeta-(\rho-2)}{\rho z-(\rho-1)}, \ \text{ as } P\to B.$$
Thus, the above number $\rho$ in the limit and $H(G)$ are related by the formulas
\begin{equation} H(G)|_b= \alpha z^d(1+o(1)), \ \  \alpha\neq0, \ \ \rho=-\frac d3.\label{hgrho}\end{equation}

Step 3 (Section 3) Consider the projective Puiseux exponent $r$ of the local branch $b$, and let us represent it as an irreducible fraction: 
\begin{equation}r=\frac pq,  \   p,q\in\nn,   \ (p,q)=1:  \ \ p_b=ps_b, \ q_b=qs_b, \ s_b=G.C.D(p_b,q_b).\label{propuis}\end{equation}
For given $p$, $q$ as above and $\rho\in\cc$ we introduce the {\it $\pqr$-billiard:} the curve  
$$\gamma_{p,q}:=\{ w^q=z^p\}\subset\cc^2$$ 
equipped with the dual billiard structure given by the  family of involutions 
$$\sigma_P:L_P\to L_P, \ \ P\in\gamma_{p,q}; \ \ \sigma_P:\zeta\to\er(\zeta) \ \ \text{ in the coordinate } \ \zeta  \ \text{ for all } \ P.$$
We show (Theorem \ref{quasi-pqr}) that 
{\it if a germ of family of involutions $\sigma_P:L_P\to L_P$ 
defined on a punctured irreducible germ of holomorphic curve $b$ with Puiseux exponent $r=\frac pq$ 
admits a meromorphic first integral, then the corresponding $\pqr$-billiard (with $\rho$  given by (\ref{hgrho})) 
admits a $(p,q)$-quasihomogeneous 
rational first integral.} To do this, we consider the lower $(p,q)$-quasihomogeneous parts in the numerator 
and the denominator of the meromorphic integral.  We show that their ratio is an integral of the $\pqr$-billiard. If it is non-constant, 
then we get a non-trivial integral. The opposite case, when the latter lower 
$(p,q)$-quasihomogeneous parts are the same up to constant factor, will be reduced to the previous one by 
replacing the denominator by appropriate linear combination of the numerator and denominator.

Step 4 (Section 4). Classification of quasihomogeneously rationally integrable $\pqr$-billiards (Theorem \ref{classpqr}). 
Our first goal is to show that the underlying 
curve $\gamma_{p,q}$ is a conic: $p=2$, $q=1$. In Subsection 4.1 we treat the case of polynomial integral. To treat the 
case of non-polynomial rational integral, we first show (in Subsection 4.2) that one can normalize it to be a so-called 
$\er$-primitive quasihomogeneous rational function $R=\frac{\mcp_{1}^{m_1}}{\mcp_{2}^{m_2}}$ vanishing on $\gpq$. In particular, 
this means that each $\mcp_{j}$ is a product of  prime factors $w^q-c_jz^p$, $c_j\neq0$  
(and may be $z$, $w$)  in power 1, including the factor $w^q-z^p$. Then in Subsection 4.3 we prove two formulas (\ref{rhooo}), 
(\ref{for2}) expressing 
$\rho$ via the powers $m_1$, $m_2$, the  number of factors $w^q-c_jz^p$ and the powers of $z$, $w$ in  $\mcp_j$. 
The first formula (\ref{rhooo}) will be deduced from (\ref{hgrho}). The second formula (\ref{for2}) is obtained as follows. Restricting the 
polynomial $\mcp_{1}$ from the 
numerator  to the line $L_P$, $P=(1,1)$, and dividing it by appropriate power $(z-\frac{\rho-2}\rho)^d$
yields a $\er$-invariant rational function in the coordinate $z$ with numerator divisible by $(z-1)^2$. 
Existence of such a power $d$ will follow from  the fact that $\frac{\rho-2}\rho$ is 
a fixed point of the involution $\er$.  Afterwards we replace the numerator $\mcp_{1}$ by the difference 
$\mcp_{1}-\la(z-\frac{\rho-2}\rho)^d$ with small $\la$; we get  a family of $\er$-invariant rational functions 
depending on the parameter $\la$, which has a pair of roots $\zeta_{\pm}(\la)$ converging to 1, as $\la\to0$. 
Comparing the asymptotics of the roots $\zeta_{\pm}(\la)$ and taking into account that they should be permuted 
by the involution $\er$, we get   formula (\ref{for2}). 
The {\it main miracle} in the proof of Theorem \ref{classpqr} (Subsection 4.4) 
is that {\it combining First and Second Formulas (\ref{rhooo}), (\ref{for2}) yields that $p=2$, $q=1$ and 
the curve $\gamma_{p,q}$ is the conic $\{ w=z^2\}$,}  and it also yields the constraints on 
$\rho$ given by Theorem \ref{classpqr}: the necessary condition for quasihomogeneous integrability. 
Then we prove its sufficience by constructing  integrals (Subsection 4.5).

Steps 3 and 4 together imply Statement (i) of Theorem \ref{typesing}: each local branch of the curve $\gamma$  is quadratic. 
They also yield a list of a priori possible values of the residue $\rho$. 

Step 5. Proof of statement (ii) of Theorem \ref{typesing}: uniqueness of singular point of the curve $\gamma$ with a singular local branch. 
To do this,  we prove Theorem \ref{lalt} stating that if a quadratic local branch $b$ at a point $O$ 
 is singular, then the  integral $R$ is constant along its projective tangent line $L_O$, and 
 the punctured line $L_O\setminus\{ O\}$ is a regular leaf of the foliation $R=const$. This implies that if $\gamma$ had two 
 distinct points with singular local branches, then the corresponding tangent lines would intersect, and we get a 
 contradiction with regularity of foliation at the intersection point. The proof of Theorem \ref{lalt} given in Subsection 5.2 is partly based on  Theorem \ref{tufold} (Subsection 5.1), which implies that if there exists a singular 
 quadratic local branch, then its self-contact order  is expressed via the corresponding residue $\rho$ by an explicit formula (\ref{rhode}) implying 
 that $\rho>r=2$. Once having inequality $\rho>r$, we deduce the statements of Theorem \ref{lalt} 
 analogously to  \cite[subsection 4.6, proof of theorem 4.24]{gl2}. 
 Step 5 finishes the proof of Theorem \ref{typesing}.
 
 In Section 6 we prove Theorem \ref{type-conic}. 
 Theorems \ref{typesing} and \ref{type-conic} together imply that $\gamma$ is a conic. Afterwards in Section 7 
 we classify singular holomorphic rationally integrable dual billiards on a complex conic. The list of a priori possible residues $\rho$ 
 at singularities is given by Theorem \ref{classpqr}, Step 4. In Subsection 7.1 we show that the sum of residues should be equal to four 
 (a version of residue formula for singular holomorphic dual billiard structures). 
 Afterwards we consider all the a priori possible residue configurations given by 
 these constraints and show that all of them are indeed realized by rationally integrable dual   billiards.  
This will finish the proof of Theorem \ref{tcompl}. Then Theorems \ref{tgerm}, \ref{tclosed} are proved in Section 8 
 by describing different real forms of thus classified complex integrable dual   billiards. Theorems \ref{tclasspr} and \ref{tgermproj} 
 classifying integrable projective billiards  (which are dual to the latter real forms) will be proved in Section 9.  
 Theorems \ref{talg1} and \ref{talg2} on billiards with complex caustics will be proved in Section 10. 

\subsection{Historical remarks}
In 1973 V.Lazutkin \cite{laz} proved that every strictly convex bounded planar billiard with sufficiently smooth boundary has an infinite number (continuum) of closed caustics.  
The Birkhoff Conjecture was studied by many mathematicians. In 1950 H.Poritsky \cite{poritsky} (and later 
E.Amiran \cite{amiran} in 1988)
 proved it under the additional assumption that 
 the billiard in each closed caustic near the boundary has the same closed caustics, as the initial billiard. 
 In 1993 M.Bialy \cite{bialy} proved that if the phase cylinder of the billiard in a domain $\Omega$ is 
 foliated  by non-contractible continuous closed curves which are invariant under the billiard map,  then the boundary 
 $\partial\Omega$ is a circle. (Another proof of the same result was later obtained in \cite{wojt}.) 
   In 2012 Bialy proved a similar result  for billiards on the constant curvature surfaces \cite{bialy1} and also for magnetic billiards 
 \cite{bialy2}. In 1995 A.Delshams and R.Ramirez-Ros suggested an 
 approach to prove splitting of separatrices for generic perturbation of ellipse \cite{dr}.
 D.V.Treschev \cite {treshchev} made a numerical experience indicating that 
 there should exist analytic {\it locally integrable} billiards, with the billiard reflection map 
 having a  two-periodic point where the  germ of its second iterate is analytically conjugated to a disk rotation. See 
 also  \cite{tres2} for more detail and \cite{tres3} for a multi-dimensional version. 
 A similar effect for a ball rolling on a vertical 
 cylinder under the gravitation force was discovered in \cite{tad}. 
  Recently V.Kaloshin and A.Sorrentino have proved a {\it local version} of the Birkhoff Conjecture \cite{kalsor}: 
 {\it an integrable deformation of 
 an ellipse is an ellipse}. Very recently M.Bialy and A.E.Mironov \cite{bm6} 
 proved the Birkhoff Conjecture for centrally-symmetric 
 billiards having a family of closed caustics that extends up to a caustic tangent to four-periodic orbits. 
 For a dynamical entropic version of the Birkhoff Conjecture and related results see \cite{marco}. 
 For a survey on the Birkhoff Conjecture and  results see \cite{kalsor, KS18, bm6} and 
 references therein. 
 
 Recently it was shown by the author \cite{glcaust} that every strictly convex $C^{\infty}$-smooth non-closed planar 
 curve has an adjacent domain from the convex side that admits an infinite number (continuum) of 
 distinct $C^\infty$-smooth foliations by non-closed caustics (with the boundary being a leaf). 
 
 A.P.Veselov proved a series of complete integrability results for billiards bounded by confocal quadrics 
 in space forms of any dimension and described billiard orbits there in terms of 
 a shift of the Jacobi variety corresponding to an appropriate hyperelliptic curve \cite{veselov, veselov2}. 
Dynamics in (not necessarily convex) billiards of this type was also studied in  \cite{drag, dr2, dr3, dr4, dr5}. 

 The Polynomial Birkhoff Conjecture together with its generalization to surfaces of constant curvature was  
 stated by S.V.Bolotin and partially studied by himself, see  \cite{bolotin}, \cite[section 4]{bolotin2}, 
 and  by M.Bialy and A.E.Mironov  \cite{bm3}. Its complete  solution 
 is a joint result of M.Bialy, A.E.Mironov and the author given in  the series of papers 
  \cite{bm, bm2, gl, gl2}.

For a survey on the Polynomial Birkhoff  Conjecture, its version for magnetic billiards and related results 
see the above-mentioned papers \cite{bm, bm2} 
by M.Bialy and A.E.Mironov, \cite{bm4, bm5} and references therein. 

The analogues of the  Birkhoff Conjecture for outer and dual billiards was stated  by S.Tabachnikov 
\cite{tab08} in 2008. Its polynomial version for outer billiards  was stated by Tabachnikov and proved by himself under genericity assumptions in the same paper \cite{tab08}, and  solved completely in the joint work of the author of the present paper with E.I.Shustin \cite{gs}. 

In 1995 M.Berger have shown that in Euclidean space $\rr^n$ with $n\geq3$ the only hypersurfaces admitting caustics are quadrics \cite{berger}. 
In 2020 this result was extended to space forms of constant curvature of dimension greater than two by the author of the 
present paper \cite{commute}. 

In 1997 S.Tabachnikov \cite{tabpr} introduced projective billiards and proved a criterium and a necessary condition 
for a planar projective billiard to preserve an area form. He had shown that if a projective billiard on  circle 
preserves an area form that is smooth up to the boundary of the phase cylinder, then the billiard is integrable. 

A series of results on projective billiards with open sets of $n$-periodic orbits (classification for $n=3$ 
and new examples for higher $n$) were obtained by C.Fierobe \cite{fierobe-proj, fierobe-proj-refl, fierobe-th}.

 \section{Preliminaries}
 
 \subsection{Algebraicity of underlying curve. Proof of 
 Propositions \ref{proalg} and \ref{proalg2}, parts 1)}
 Let $\gamma\subset\cp^2$ be a regular germ of holomorphic curve. 
For every $P\in\gamma$ the restriction $R|_{L_P}$ is invariant under 
 an involution $\sigma_P$ fixing $P$. In appropriate affine coordinate $\theta$ on $L_P$ 
 centered at $P$ the latter involution takes the form $\theta\mapsto-\theta$. 
 Therefore, the restriction $R|_{L_P}$ has zero derivative at $P$, since 
an even function has zero derivative at the origin. Finally, the rational function $R$ 
has zero derivative along any vector tangent to $\gamma$. Hence, it is constant on 
$\gamma$, and  the germ of curve $\gamma$ is algebraic. This proof 
remains valid in the case, when $\gamma$ is a real germ. 
Parts 1) of Propositions \ref{proalg} and \ref{proalg2} are proved.

For completeness of presentation (to state some results in full generality), 
we will deal  with the following notion of meromorphically integrable dual   billiard structure and 
 meromorphic version of Proposition \ref{proalg}.

\begin{definition} Let $b$ be a non-linear (may be singular) irreducible germ of analytic curve in $\cc^2$ at a point $B$, 
and let $\sigma_P:L_P\to L_P$ be a family of projective involutions parametrized by $P\in b\setminus\{ B\}$. 
The family $\sigma_P$ is  called a  
{\it  meromorphically integrable (singular) dual   billiard structure,} if 
there exists a germ of  meromorphic 
function $R$ at $B$ (defined on a neighborhood of the point $B$ in $\cc^2$), $R\not\equiv const$, such that 
the restrictions $R|_{L_P}$ are $\sigma_P$-invariant: there exists a neighborhood 
$U=U(B)\subset\cc^2$ such that for every $P\in b\cap U$, $P\neq B$, and every 
$x,y\in L_P\cap U$ such that $\sigma_P(x)=y$ one has $R(x)=R(y)$. 
\end{definition} 

\begin{proposition} \label{proalgm} In the conditions of the above definition 1) $R|_b\equiv const$; 2) the family $\sigma_P$ is holomorphic in $P\in b\setminus\{ B\}$ close enough to $B$.
\end{proposition}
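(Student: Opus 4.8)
The plan is to reduce part 1) to the argument already used for Propositions \ref{proalg} and \ref{proalg2}, and then to derive part 2) from part 1) together with the structure of $R$ near $b$. First I would observe that the parametric argument from Subsection 2.1 still applies: for every $P\in b\setminus\{B\}$ the restriction $R|_{L_P}$ is invariant under the involution $\sigma_P$, which in an affine coordinate $\theta$ centered at $P$ is $\theta\mapsto-\theta$; hence the derivative of $R|_{L_P}$ at $P$ vanishes wherever $R$ is finite at $P$. Since $R$ is meromorphic and not identically constant, its polar and indeterminacy locus meets $b$ in at most a discrete set, so for $P\in b$ outside this discrete set the tangential derivative of $R$ along $b$ at $P$ is zero. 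As $b$ is irreducible, a holomorphic (hence meromorphic) parametrization $t\mapsto P(t)$ shows $\frac{d}{dt}R(P(t))\equiv 0$ on an open dense subset of the parameter disk, hence everywhere, so $R|_b\equiv c$ for some constant $c\in\cc\cup\{\infty\}$. (If $c=\infty$, i.e. $b$ is contained in the polar divisor of $R$, replace $R$ by $1/R$, which is again a meromorphic integral; so we may assume $c\in\cc$.) This proves part 1).

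For part 2), the point is that once $R|_b\equiv c$ is finite and $R$ is non-constant, $R-c$ vanishes on $b$ to some order, and $R$ (or rather $R-c$) recovers $\sigma_P$ on $L_P$ holomorphically in $P$. Concretely I would write, in a chart $(z,w)$ with $L_{B}$ not vertical, $L_P$ parametrized by $z$, and let $P$ run over $b$ near $B$ minus the discrete exceptional set coming from the indeterminacy locus of $R$ and from points where $L_P$ becomes vertical. Invariance of $R|_{L_P}$ under the nontrivial projective involution $\sigma_P$ fixing $P$ means: in the coordinate $\theta=\frac{z-z(P)}{1+\psi(P)(z-z(P))}$ (the normal form for a projective involution fixing $P$, as in Step 1 of the plan of proof of Theorem \ref{typesing}), the function $\theta\mapsto R\circ\mcf_P(\theta)$ is even. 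Taking the cubic Taylor coefficient of $R\circ\mcf_P$ at $\theta=0$ — which is exactly the computation producing equation (\ref{6pgh}) — yields $\psi(P)$ as an explicit rational expression in the $2$- and $3$-jets of $R$ along $L_P$ at $P$, divided by the first nonvanishing one. Since $R$ is meromorphic and $b$ is not contained in its indeterminacy locus, those jets depend holomorphically (in fact meromorphically) on $P\in b$, and the relevant denominator is the leading nonzero jet, which is nonzero for $P$ near $B$ outside a discrete set; meromorphy of $\psi(P)$ in $P$ follows, and the family $\sigma_P$ (determined by $\psi(P)$ and $P$) is thus holomorphic in $P\in b$ away from a discrete set. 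The only subtlety — and the step I expect to cost the most care — is ruling out that this discrete set contains accumulation points other than $B$ itself, equivalently that $\sigma_P$ stays nondegenerate: nondegeneracy holds by hypothesis for all $P\in b\setminus\{B\}$, so $\psi(P)$ cannot have poles there, and the formula for $\psi(P)$ forces the leading jet of $R|_{L_P}$ to be nonvanishing, giving holomorphy on all of $b\setminus\{B\}$ near $B$.

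Thus the main obstacle is purely bookkeeping: making precise that "the leading nonvanishing Taylor jet of $R|_{L_P}$ at $P$" varies holomorphically and is nonzero for $P\in b$ near $B$, $P\ne B$, so that the closed formula for $\psi(P)$ extracted from (\ref{6pgh}) is genuinely holomorphic there. No genuinely new idea beyond the normal-form computation of Step 1 is needed; one just has to note that that computation is local in $P$ and uses only meromorphy of $R$, not rationality.
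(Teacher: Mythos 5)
Your proposal matches the paper's own route: part 1) is proved there by exactly your evenness-kills-the-first-derivative argument, and part 2) is proved (in Subsection 2.2) by the same cubic-Taylor-coefficient computation, which is equation (\ref{difeq1}) and gives $\phi(P)$ as the holomorphic ratio $\frac{dH(G)}{d\nabla_{skew}G}(P)\big/\bigl(6H(G)(P)\bigr)$ on $b\setminus\{B\}$. One small correction: the non-vanishing of the denominator near $B$ is not forced by non-degeneracy of $\sigma_P$ (a non-trivial involution can preserve $R|_{L_P}$ even when the $2$-jet vanishes); it comes from $H(G)|_b\not\equiv0$ --- via the product formula $H(fg)=g^3H(f)$ on $\{f=0\}$ and non-linearity of $b$ --- so that $H(G)|_b=\alpha z^d(1+o(1))$, $\alpha\neq0$, is nonzero on a whole punctured neighborhood of $B$, which is all the local statement requires.
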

The proof of the first part of Proposition \ref{proalgm} repeats that of Proposition \ref{proalg}, part 1). Its second part will be proved in the next 
subsection.

Later on, in Subsection 2.4 we will show that in many cases meromorphic integrability implies rational integrability.

\subsection{The Hessian of  integral and its differential equation. Singular holomorphic extension of dual billiard structure}
Let $b$ be an irreducible germ of holomorphic curve in $\cc^2_{x_1,x_2}$ at a point $B$. Let $b\setminus\{ B\}$ be equipped with a 
germ of   dual billiard structure having a non-constant meromorphic integral $R$, see the above definition. 
Recall that $R|_b\equiv const$, by Proposition \ref{proalgm}. 
Without loss of generality we consider that 
$$R|_{b}\equiv0,$$
adding a constant to $R$ (if $R|_{b}\not\equiv\infty$), or replacing $R$ by 
$R^{-1}$ (if $R|_{b}\equiv\infty$). 

Let $f$ be an irreducible germ of holomorphic function defining  $b$:
$$b=\{ f=0\}.$$
 One has  
 $$R=g_1f^k, \ \ g_1 \ \ \text{ is meromorphic, } \ \ g_1|_{b}\not\equiv0, \ k\in\nn.$$
 From now on we will work  with  the $k$-th root 
  \begin{equation}G=R^{\frac1k}=gf, \ g=g_1^{\frac1k}.
\label{ggf}\end{equation}
For every $P\in b\setminus\{ B\}$ close enough to $B$   
 each holomorphic branch of the function $G$ on $L_P$  is $\sigma_P$-invariant, since any two its holomorphic 
 branches are obtained one from the other by multiplication by a root of unity. 

Recall that the {\it skew gradient} of the function $G$  is the vector field 
$$\nabla_{skew} G:=(\frac{\partial G}{\partial x_2},-\frac{\partial G}{\partial x_1}),$$
which is tangent to its level curves. 

The involution $\sigma_P$  is conjugated to the standard involution $\cc_{\tau}\to\cc_{\tau}$,  
$\tau\mapsto-\tau$, via a transformation 
\begin{equation}\Phi_P:\tau\mapsto P+\frac{\tau}{1+\phi(P)\tau}\nabla_{skew}G(P), \ \Phi_P(0)=P.\label{phip}\end{equation}
The conjugacy is unique up to its pre-composition with a multiplication by constant $\tau\to\la\tau$; 
we can normalize it to be of  type (\ref{phip}) in unique way. The $\sigma_P$-invariance of the function $G$ is equivalent to the statement that 
\begin{equation} \text{the function } \xi(\tau):=G(P+ \frac{\tau}{1+\phi(P)\tau}\nabla_{skew}G(P)) \text{ is even,}
\label{even}\end{equation}
which holds if and only if the function $\xi(\tau)$ has zero Taylor coefficients at odd powers. The first 
coefficient vanishes for trivial reason, being derivative of a function $G$ along a vector tangent to 
its zero level curve.

Recall that the {\it Hessian} of the function $G$ is the function 
\begin{equation}
H(G):=\frac{\dd G}{\d x_1^2}\left(\frac{\partial G}{\partial x_2}\right)^2-2\frac{\dd G}{\d x_1\d x_2}\frac{\partial G}{\d x_2}\frac{\partial G}{\d x_1}+
\frac{\dd G}{\d x_2^2}\left(\frac{\partial G}{\d x_1}\right)^2.\label{hessdef}\end{equation}
It coincides with the value of its Hessian quadratic form on its skew gradient 
and also with the second derivative $\xi''(0)$, see \cite{tab08, bm, bm2}. 

\begin{remark}
The  Hessian  function $H(G)$ was introduced by S.Tabachnikov   \cite{tab08} 
and used in \cite{tab08, gs, bm, bm2, gl2} to classify polynomially 
integrable  Birkhoff and outer planar billiards; see results  mentioned in Subsection 1.5. 
\end{remark}

\begin{theorem} \label{tdif1} For a given $P\in b\setminus\{ B\}$ 
the cubic Taylor coefficient of the function $\xi$ from (\ref{even}) at $0$ vanishes, if and only if 
\begin{equation}\frac{ dH(G)}{d\nabla_{skew}G}(P)=6\phi(P) H(G)(P).\label{difeq1}\end{equation}
\end{theorem}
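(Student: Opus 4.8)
The plan is to compute the first few Taylor coefficients of the function $\xi(\tau)=G(\Phi_P(\tau))$ explicitly and read off the vanishing of the cubic term. First I would set up local notation: fix $P\in b\setminus\{B\}$, write $N:=\nabla_{skew}G(P)$, and expand the map $\tau\mapsto \frac{\tau}{1+\phi\tau}N$ as a power series in $\tau$, namely $\tau N-\phi\tau^2N+\phi^2\tau^3N-\cdots$ with $\phi=\phi(P)$. Composing with the Taylor expansion of $G$ at $P$ up to order $3$, one gets
\[
\xi(\tau)=G(P)+\langle dG,N\rangle\,\tau+\tfrac12\Big(\langle d^2G\,N,N\rangle-2\phi\langle dG,N\rangle\Big)\tau^2+c_3\tau^3+\cdots,
\]
where $c_3$ is a combination of the third differential of $G$ evaluated on $N$, the Hessian quadratic form on $N$, and $\langle dG,N\rangle$ weighted by $\phi$ and $\phi^2$. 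The key simplification is that $N=\nabla_{skew}G(P)$ is tangent to the level curve $\{G=G(P)\}$, so $\langle dG(P),N\rangle=0$. This kills the linear term (the "trivial" vanishing already noted) and removes several terms from $c_2$ and $c_3$.

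Next I would identify the surviving pieces. With $\langle dG,N\rangle=0$, the quadratic coefficient becomes $\tfrac12\langle d^2G(P)N,N\rangle=\tfrac12 H(G)(P)$, matching the stated interpretation of the Hessian as $\xi''(0)$. For the cubic coefficient, the term coming from the $\phi^2\tau^3 N$ shift contributes $\phi^2\langle dG,N\rangle=0$ and the cross term between the $-\phi\tau^2N$ shift and the quadratic part of $G$ contributes something proportional to $\phi\langle d^2G\,N,N\rangle=\phi H(G)(P)$; the remaining contribution is $\tfrac16$ of the third derivative of $G$ in the direction $N$, i.e. $\tfrac16\,\frac{d}{dN}\langle d^2G\,N,N\rangle$ evaluated appropriately — and here one must be careful because $N=\nabla_{skew}G$ itself varies with the base point. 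The clean way is to note that along the level curve through $P$, differentiating the identity "$H(G)$ = Hessian form on skew gradient" in the direction $\nabla_{skew}G$, together with $\langle dG,\nabla_{skew}G\rangle\equiv 0$ along that curve, produces exactly $\frac{dH(G)}{d\nabla_{skew}G}(P)$ as the relevant third-order quantity. Collecting terms, $c_3=\tfrac16\big(\tfrac{dH(G)}{d\nabla_{skew}G}(P)-6\phi(P)H(G)(P)\big)$ up to an overall nonzero constant, so $c_3=0$ iff \eqref{difeq1} holds.

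The main obstacle I anticipate is the bookkeeping in the third-order term: one has to correctly account for the fact that the vector $\nabla_{skew}G$ appearing in $\Phi_P$ is the skew gradient \emph{at the fixed point $P$} (a constant vector in the parametrization), whereas the function $H(G)$ and its directional derivative involve the skew gradient field, so the chain rule must be applied with care to avoid spurious terms from differentiating $N$. The trick of restricting everything to the level curve $\{G=G(P)\}$ and using $\langle dG,\nabla_{skew}G\rangle\equiv0$ on it is what makes the directional derivative $\frac{dH(G)}{d\nabla_{skew}G}$ emerge with the correct coefficient; this computation is essentially the one carried out by Tabachnikov and by Bialy–Mironov in \cite{tab08, bm, bm2}, adapted here to a meromorphic $G$ on a possibly singular branch, where one works with a fixed holomorphic branch of $G=gf$ near $P$ (legitimate since $P\neq B$ and $f$, $g$ have honest holomorphic branches there).
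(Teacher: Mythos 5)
Your proposal is correct and follows essentially the same route as the paper: expand $\xi=g\circ\omega$ with $\omega(\tau)=\tau-\phi\tau^2+O(\tau^3)$, use $\frac{dG}{dW}(P)=0$ to kill the linear term, obtain $\xi'''(0)=\frac{d^3G}{dW^3}(P)-6\phi(P)\frac{d^2G}{dW^2}(P)$, and invoke the identities $\frac{d^2G}{dW^2}(P)=H(G)(P)$ and $\frac{d^3G}{dW^3}(P)=\frac{dH(G)}{d\nabla_{skew}G}(P)$. The only soft spot is your justification of the last identity (the paper takes it from Tabachnikov's lemma 2(i) in \cite{tab08}, where it holds identically because the terms involving $G_{11}G_{22}-G_{12}^2$ cancel in $G_2\partial_1H(G)-G_1\partial_2H(G)$, not merely by restriction to the level curve), but you correctly isolate it as the delicate point and point to the right source.
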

\begin{remark} Theorems analogous to Theorem \ref{tdif1} were stated and proved  in \cite{tab08, bm, bm2}, and the proofs from loc. cit. remain valid in our case. The proof of Theorem \ref{tdif1} given below follows similar arguments.
\end{remark}
\begin{proof} {\bf of Theorem \ref{tdif1}.} The third derivative $\xi'''(\tau)$ is equal to the third derivative in $\tau$ of the function 
$$G(P+\omega(\tau)W), \ W:=\nabla_{skew}G(P), \ \omega(\tau):= \frac{\tau}{1+\phi(P)\tau}=\tau-\phi(P)\tau^2+O(\tau^3),$$
 as $\tau\to0$. The first derivative equals 
 $$\xi'(\tau)=\frac{dG}{dW}(P+\omega(\tau)W)(1-2\phi(P)\tau+O(\tau^2)).$$
 We extend the vector $W$ to a constant vector function (field) by translations. Here and in what follows 
 the derivative of a function along $W$ means its derivative along the latter constant vector field. 
 For simplicity, in what follows we omit the argument $P+\omega(\tau)W$ at the derivatives. One has  
 $$\xi''(\tau)=(-2\phi(P)+O(\tau))\frac{dG}{dW}+(1-2\phi(P)\tau+O(\tau^2))^2\frac{d^2G}{dW^2},$$
 $$\xi'''(\tau)=O(1)\frac{dG}{dW}-(2\phi(P)+O(\tau))(1-2\phi(P)\tau+O(\tau^2))\frac{d^2G}{dW^2}$$
 $$-(4\phi(P)+O(\tau))\frac{d^2G}{dW^2}+(1-2\phi(P)\tau+O(\tau^2))^3\frac{d^3G}{dW^3}.$$
The value of the third derivative at zero is thus equal to 
 \begin{equation}\xi'''(0)= \frac{d^3G}{dW^3}(P)-6\phi(P)\frac{d^2G}{dW^2}(P),\label{'''}\end{equation}
 since $\frac{dG}{dW}(P)=0$.  One has  $\frac{d^2G}{dW^2}(P)=H(G)(P)$,
 $$\frac{d^3G}{dW^3}(P)=\frac{dH(G)}{d\nabla_{skew}G}(P),$$
 by \cite[lemma 2, (i)]{tab08} and since $\frac{d^3G}{dW^3}(P)$ is the value at $P$ of the expression
 $$G_{x_1x_1x_1}G_{x_2}^3-3G_{x_1x_1x_2}G_{x_2}^2G_{x_1}+3G_{x_1x_2x_2}G_{x_2}G_{x_1}^2
-G_{x_2x_2x_2}G_{x_1}^3.$$
  This together with (\ref{'''}) implies the statement 
 of the theorem.
 \end{proof} 
 
 Let us consider  
affine coordinates $(z,w)$  such that the tangent line $T_Bb$ is not parallel to the $w$-axis. 
For every $P\in b$ 
close to $B$ the restriction to 
$L_P$ of the coordinate $z$ is an affine coordinate on the projective line $L_P$. 

 We will deal with the following normalizations of mapping (\ref{phip}) and equation (\ref{difeq1}) 
 with respect to the coordinate $z$. Set 
 \begin{equation} V(P)=(1,\beta(P)):=\text{ the vector in } T_Pb \text{ with unit } z- \text{component.} \label{tang1}\end{equation}
The vectors $V(P)$ form a holomorphic vector field $V$ on $b\setminus\{ B\}$. One has 
\begin{equation}\nabla_{skew}G=hV, \ h:b\setminus\{ B\}\to\cc \text{ is a non-zero  function};\label{nav}\end{equation}
a priori the  function $h$ is multivalued holomorphic on $b\setminus \{ B\}$ 
(with a priori possible branching at $B$).  Set 
$$\theta:=h(P)\tau, \ \psi(P):=\phi(P)h^{-1}(P).$$
Let $\Phi_P(\tau)$ be the mapping from (\ref{phip}). Set 
$$\mcf_P(\theta):=\Phi_P(h^{-1}(P)\theta)=P+\frac{h^{-1}(P)\theta}{1+\phi(P)h^{-1}(P)\theta}h(P)V(P)$$
\begin{equation}=
P+\frac{\theta}{1+\psi(P)\theta}V(P).\label{psit}\end{equation}
\begin{proposition} The map $\mcf_P(\theta)$ conjugates the involution $\sigma_P$ 
with the standard symmetry $\theta\mapsto-\theta$, and its differential 
at $0$ sends the unit vector $\frac{\partial}{\partial\theta}$ to $V(P)$. One has
\begin{equation}\frac{dH(G)}{dV}(P)=6\psi(P)H(G)(P).\label{difeq2}\end{equation}
\end{proposition}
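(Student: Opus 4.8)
The statement is a coordinate-rescaled restatement of Theorem \ref{tdif1}, so the plan is simply to unwind the definitions. First I would note that, by (\ref{psit}), the map $\mcf_P$ is the composition $\Phi_P\circ m_P$ of the map $\Phi_P$ from (\ref{phip}) with the linear rescaling $m_P:\theta\mapsto h^{-1}(P)\theta$ of the source line. Since $\Phi_P$ conjugates $\sigma_P$ with $\tau\mapsto-\tau$ (the normalization in (\ref{phip})) and the linear map $m_P$ conjugates $\theta\mapsto-\theta$ with $\tau\mapsto-\tau$, the composition $\mcf_P=\Phi_P\circ m_P$ conjugates $\sigma_P$ with $\theta\mapsto-\theta$. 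The claim on the differential is read directly off the right-hand side of (\ref{psit}): differentiating $\theta\mapsto P+\frac{\theta}{1+\psi(P)\theta}V(P)$ at $\theta=0$ gives $V(P)$, since $\frac{d}{d\theta}\big|_{\theta=0}\frac{\theta}{1+\psi(P)\theta}=1$.

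For the identity (\ref{difeq2}) I would invoke Theorem \ref{tdif1}, whose hypothesis --- vanishing of the cubic Taylor coefficient of $\xi$ from (\ref{even}) --- is satisfied here: integrability means $R|_{L_P}$ is $\sigma_P$-invariant, so by the discussion preceding (\ref{even}) each holomorphic branch of $G=R^{\frac1k}$ on $L_P$ is $\sigma_P$-invariant, hence $\xi$ is even. Thus (\ref{difeq1}) holds. Then I would substitute $\nabla_{skew}G(P)=h(P)V(P)$ from (\ref{nav}) and $\phi(P)=h(P)\psi(P)$ (from the definition $\psi(P)=\phi(P)h^{-1}(P)$), and use linearity of the directional derivative in the direction vector to rewrite $\frac{dH(G)}{d\nabla_{skew}G}(P)=h(P)\frac{dH(G)}{dV}(P)$. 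With these substitutions (\ref{difeq1}) becomes $h(P)\frac{dH(G)}{dV}(P)=6h(P)\psi(P)H(G)(P)$; dividing by $h(P)\neq0$ --- the nonvanishing of $h$ on $b\setminus\{B\}$ is part of (\ref{nav}) --- yields (\ref{difeq2}).

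I do not expect a real obstacle: once Theorem \ref{tdif1} is available, the proposition is pure bookkeeping, the genuine computation (the formula for $\xi'''(0)$) having already been done there. The only points that need a little care are (a) the possible multivaluedness of $h$ and of the holomorphic branches of $G=R^{\frac1k}$ near $B$, which is dealt with by fixing $P\in b\setminus\{B\}$ close to $B$ and working on one line $L_P$ at a time, where everything reduces to the single-variable even-function computation of Theorem \ref{tdif1}; and (b) keeping the convention that $\frac{dH(G)}{dV}(P)$ denotes the derivative of the function $H(G)$ at the point $P$ along the fixed vector $V(P)\in\cc^2$, in line with the meaning of $\frac{dH(G)}{d\nabla_{skew}G}(P)$ in Theorem \ref{tdif1}, so that the linearity step in the previous paragraph is legitimate.
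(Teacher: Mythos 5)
Your proposal is correct and follows exactly the paper's route: the conjugacy and differential statements are read off the construction $\mcf_P=\Phi_P\circ(\theta\mapsto h^{-1}(P)\theta)$, and (\ref{difeq2}) is obtained from (\ref{difeq1}) via the substitutions $\nabla_{skew}G=hV$, $\phi=h\psi$ and division by $h(P)\neq0$ (the paper phrases this as ``multiplication by $h^{-1}(P)$''). Your extra care about the hypothesis of Theorem \ref{tdif1} and the multivaluedness of $G$ and $h$ is consistent with the standing assumptions of the subsection and does not change the argument.
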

\begin{proof} The statements on conjugacy and differential follow by construction. Equation (\ref{difeq2}) is obtained 
from (\ref{difeq1}) by multiplication  by $h^{-1}(P)$.
\end{proof}

We use the following formula for  Hessian of product, see \cite[theorem 6.1]{bm}, \cite[formulas (16) and (32)]{bm2}:
\begin{equation} H(fg)=g^3H(f) \text{ on the set } \{ f=0\}.\label{hess3}\end{equation}

\begin{proof} {\bf of parts 2) of Propositions \ref{proalg}, \ref{proalg2}, \ref{proalgm}.} Let us prove part 2) of Proposition \ref{proalg}: 
for the other propositions the proof is analogous.  Let $\gamma$ denote the irreducible algebraic curve containing the initial germ $\gamma$. 
Let $\phi(P)$ denote the function (\ref{phip}) defined by the involutions $\sigma_P$. 
Equation (\ref{difeq1}) extends the function $\phi(P)$ holomorphically along paths in  $\gamma$ avoiding 
a finite collection of points where  some branch of the multivalued function $H(G)$ either vanishes, or is not holomorphic, or 
 its derivative in the left-hand side in (\ref{difeq1}) is not holomorphic. 
 It defines a holomorphic extension of the involution family $\sigma_P$. The relation 
$R\circ\sigma_P|_{L_P}=R$ remains valid for the extended dual billiard structure, by uniqueness of analytic extension. Let us show that this 
yields a well-defined singular holomorphic dual billiard structure on $\gamma$. Suppose the contrary: 
thus extended family 
$\sigma_P$ is multivalued, i.e.,  its extensions along two different paths arriving to one and the same point $A$ are 
two different involutions $\sigma_A$ and $\wt\sigma_A$. Then  
their composition $\sigma_A\circ\wt\sigma_A:L_A\to L_A$ is a parabolic transformation 
 with unique fixed point $A$, leaving invariant the restriction $R|_{L_A}$. 
Its orbits (except for the fixed point $A$) being infinite and accumulating to $A$, one has $R|_{L_A}\equiv const$. 
The  involutions $\sigma_A$, $\wt\sigma_A$ are well-defined and satisfy the above statements on an open subset  
of points $A$ in  $\gamma$, by local analyticity. Therefore, $R|_{L_A}\equiv const$ 
for an open subset of points $A\in\gamma$, which is impossible. The contradiction thus obtained implies that the extended 
 dual billiard structure is singular holomorphic.
 \end{proof}
\subsection{Asymptotics of degenerating involutions}

Here we deal with an irreducible germ $b$  at a point $B$ of analytic curve in 
$\cc^2$ equipped with a meromorphically integrable singular holomorphic dual   
billiard structure. We study asymptotics of involutions $\sigma_P$, as $P\to B$. 

For every $\rho\in\cc$ we denote by $\eta_\rho\in\operatorname{PSL}_2(\cc)$ the projective involution
\begin{equation}\eta_{\rho}:\oc_\zeta\to\oc_\zeta, \ \ \ \eta_\rho(\zeta):=\frac{(\rho-1)\zeta-(\rho-2)}{\rho \zeta-(\rho-1)}.\label{defeta}
\end{equation}
\begin{remark} Every projective involution $\oc\to\oc$  fixing  $1$ coincides with $\er$ for some $\rho\in\cc$ and vice versa.
\end{remark}
\begin{definition} Let $b$ be an irreducible germ    of holomorphic curve at a point  
$B\in \cc^2$. Let $(z,w)$ be an affine chart adapted to $b$. A germ of singular holomorphic 
dual   billiard structure on $b$ given by  a holomorphic family of involutions 
$\sigma_P:L_P\to L_P$, $P\in b\setminus\{ B\}$   is  said to be {\it meromorphic with pole of order at most one} at $B$, 
 if the involutions $\sigma_P$ written in the coordinate
$$\zeta:=\frac z{z(P)}$$
on $L_P$  converge in $\operatorname{PSL}_2(\cc)$ to some involution $\oc_\zeta\to\oc_\zeta$. 
Then the limit involution is equal to $\eta_\rho$ for some $\rho$, by the above remark. The latter number $\rho$ 
is called the {\it residue} of the billiard structure at $B$. In the case, when $\rho\neq0$, we say that $\sigma_P$ 
has {\it pole of order exacly one} at $B$.
\end{definition}

\begin{remark} For every meromorphic billiard structure of order at most one the above  residue is 
independent on choice of adapted chart. 
\end{remark}
\begin{example} 1) In the case,  when $\sigma_P$ limits to a 
well-defined projective involution $L_B\to L_B$, as $P\to B$ (e.g., if $\sigma_P$ extends holomorphically to $P=B$), we say that 
the dual billiard structure is {\it regular} at $B$.
In this case  the involutions $\sigma_P$ written in the above coordinate $\zeta$ converge to the symmetry 
$\eta_0:\zeta\mapsto2-\zeta$, and the billiard structure has residue $\rho=0$ at $B$. 

2) Consider now the case, when  there exists a  conic $\Gamma$  passing through $B$ 
such that each involution $\sigma_P$ permutes the points of 
 intersection $L_P\cap\Gamma$. Let $\Gamma$ be transversal to $b$ at $B$. Then $\sigma_P$ converges to the 
  unique involution $\eta_1:\zeta\mapsto\frac1\zeta$ fixing 1 and 
 permuting the origin and the infinity: $\rho=1$.
 \end{example}
One of the key statements used in the proof of main results is the following proposition. 
\begin{proposition} \label{prhod} Let $b\subset\cp^2$ be an irreducible germ of holomorphic curve at 
a point $B$ equipped with a singular holomorphic dual   billiard structure admitting a meromorphic integral $R$. Let $f$ be an irreducible germ of holomorphic function defining $b$, i.e., $b=\{ f=0\}$, 
and let $k$ and  $G=R^{\frac1k}$ be the same, 
as in (\ref{ggf}).  Let $(z,w)$ be affine coordinates centered at $B$ and adapted to $b$. 
Let  us equip the germ $b$ with the coordinate $z$. Consider the restriction $H(G)|_b$ 
as a multivalued function of $z$. Let  $d\in\mathbb Q$ be the minimal number such that 
the monomial $z^d$ is contained in its Laurent Puiseux series:
$$H(G)|_b=\alpha z^d(1+o(1)), \ \ \alpha\neq0.$$
Then the involution family $\sigma_P:L_P\to L_P$ defining the dual   billiard  is meromorphic with pole $B$ of order at most one, and its  residue $\rho$ is equal to 
\begin{equation}\rho=-\frac d3.\label{rhod}\end{equation}
\end{proposition}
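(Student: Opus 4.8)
The plan is to combine the differential equation \eqref{difeq2} for $H(G)$ along the curve $b$ with the definition of the residue $\rho$ via the limit of the involutions $\sigma_P$. The starting point is that $H(G)$ is a finite linear combination of products of integer powers of the derivatives of $G=gf$, and $G$ is itself a product of rational powers of holomorphic germs at $B$; hence $H(G)|_b$, expressed in the adapted coordinate $z$ on $b$, is a convergent Laurent--Puiseux series, so that the leading-term hypothesis $H(G)|_b=\alpha z^d(1+o(1))$, $\alpha\neq 0$, with $d\in\mathbb Q$, is meaningful. Here one must be slightly careful: $H(G)|_b$ is a priori a multivalued function of $z$ (it involves fractional powers), and the statement should be read branch-by-branch; since different branches differ by multiplication by roots of unity, the exponent $d$ is well-defined independently of the branch, and likewise the derivation below does not depend on the choice.

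Next I would differentiate. Parametrizing $b\setminus\{B\}$ near $B$ by $z$, the tangent vector field $V(P)$ from \eqref{tang1} has unit $z$-component, so for any (branch of a) function $F$ on $b$ one has $\frac{dF}{dV}(P)=\frac{d}{dz}\big(F|_b\big)$. Applying this to $F=H(G)$ and using $H(G)|_b=\alpha z^d(1+o(1))$ gives $\frac{dH(G)}{dV}(P)=\alpha d\, z^{d-1}(1+o(1))$. Dividing by $H(G)(P)=\alpha z^d(1+o(1))$ and invoking the differential equation \eqref{difeq2}, namely $\frac{dH(G)}{dV}(P)=6\psi(P)H(G)(P)$, we obtain
\[
6\psi(P)=\frac{1}{H(G)(P)}\frac{dH(G)}{dV}(P)=\frac{d}{z}(1+o(1)),\qquad\text{i.e.}\qquad \psi(P)=\frac{d}{6z}(1+o(1))
\]
as $z\to 0$. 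Thus $\psi$ has at most a simple pole at $B$ with leading coefficient $d/6$.

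It remains to translate the asymptotics of $\psi$ into the assertion that $\sigma_P$ is meromorphic with pole of order at most one and to read off $\rho$. Here I would use the explicit normal form \eqref{psit}: in the coordinate $\theta=z-z(P)$ on $L_P$ the involution $\sigma_P$ is $\theta\mapsto -\theta/(1+\psi(P)\theta)$, since $\mcf_P$ conjugates $\sigma_P$ to $\theta\mapsto -\theta$. Passing to the coordinate $\zeta=z/z(P)=1+\theta/z(P)$ on $L_P$ and substituting $\theta=z(P)(\zeta-1)$, a direct computation turns $\sigma_P$ into the Möbius map fixing $1$ with "residue parameter" $\rho(P)$ satisfying $\rho(P)=-2\,z(P)\,\psi(P)(1+o(1))$ — indeed the map $\theta\mapsto -\theta/(1+\psi\theta)$ with $\theta=z(P)(\zeta-1)$ becomes $\zeta\mapsto$ a fractional-linear involution fixing $1$, and matching it against the form $\eta_\rho$ in \eqref{defeta} forces $\rho$ to be this combination. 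Since $z(P)\psi(P)\to d/6$ by the previous paragraph, $\rho(P)\to -2\cdot\frac{d}{6}=-\frac{d}{3}$, which is the claimed formula \eqref{rhod}; in particular the limit involution exists, so the billiard structure is meromorphic with pole at $B$ of order at most one. The main obstacle, and the step I would be most careful about, is precisely this last change-of-coordinate bookkeeping: one must verify that $\mcf_P$'s differential sends $\partial_\theta$ to the unit-$z$-component vector $V(P)$ (so that $\theta$ really is $z-z(P)$ up to higher order along $L_P$), and then carry the $\psi$-asymptotics correctly through the substitution $\zeta=z/z(P)$, keeping track of the factor $z(P)$ and the sign. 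Everything else is the routine leading-order extraction outlined above.
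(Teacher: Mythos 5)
Your first half is exactly the paper's argument and is correct: since $V(P)$ has unit $z$-component and is tangent to $b$, equation (\ref{difeq2}) gives $6\psi(P)=\frac{d}{dz}\log\bigl(H(G)|_b\bigr)$, whence $\psi(P)=\frac1{z(P)}\bigl(\frac d6+o(1)\bigr)$. The gap is in the translation from $\psi$ to $\rho$ — precisely the bookkeeping you flag as the main obstacle and then do not carry out. Your stated normal form is wrong: $\theta$ in (\ref{psit}) is the conjugating parameter, not the affine coordinate $u:=z-z(P)$ on $L_P$; one has $u=\frac{\theta}{1+\psi(P)\theta}=\theta+O(\theta^2)$, and the quadratic discrepancy is exactly of the order that determines the residue, so "equal up to higher order" is not good enough. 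Writing $\sigma_P$ correctly in the coordinate $u$ from $\sigma_P(\mcf_P(\theta))=\mcf_P(-\theta)$ gives $u\mapsto-\frac{u}{1-2\psi(P)u}$, not $u\mapsto-\frac{u}{1+\psi(P)u}$.

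This matters quantitatively. Passing to $\wt u:=\zeta-1=u/z(P)$ and comparing with $\er:\wt u\mapsto-\frac{\wt u}{1+\rho\wt u}$, the normal form you wrote down would force $\rho=\lim z(P)\psi(P)=\frac d6$, which contradicts the relation $\rho=-2z(P)\psi(P)(1+o(1))$ that you then assert and from which you draw the (correct) conclusion $\rho=-\frac d3$. So the factor $-2$, which is the entire content of the second half, is asserted rather than derived and is inconsistent with your own intermediate claim. The correct chain is: $\sigma_P$ in the coordinate $u$ is $u\mapsto-u/(1-2\psi(P)u)$, hence in $\wt u$ it is $\wt u\mapsto-\wt u/(1-2\psi(P)z(P)\wt u)$, hence $\sigma_P\to\er$ with $\rho=\lim\bigl(-2\psi(P)z(P)\bigr)=-\frac d3$. (The paper reaches the same conclusion by conjugating $\diag(1,-1)$ by the M\"obius map $\theta\mapsto\zeta$ in $\operatorname{PSL}_2(\cc)$ and letting $P\to B$.) Once this computation is done honestly, your proof coincides with the paper's.
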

\begin{remark}  \label{ashessrem} The above asymptotic exponent $d$   is well-defined, 
since $H(G)$ is a finite sum of products of rational powers of holomorphic functions, see (\ref{hessdef}). It is independent on the affine chart containing $B$ 
chosen to define  $\nabla_{skew}G$ and $H(G)$, see the statement  after formula (\ref{hessdef}) above and the discussion in 
\cite[p. 1022, proof of proposition 3.6]{gl2}. Therefore, we can calculate the exponent $d$ writing $H(G)$ in the adapted 
coordinates $(z,w)$.
\end{remark}

\begin{proof} {\bf of Proposition \ref{prhod}.} Consider a  
line $L_P$ equipped with the coordinate $\zeta=\frac z{z(P)}$ and its parametrization 
 by the parameter $\theta$: 
 $$\zeta =1+\frac{\theta}{z(P)(1+\psi(P)\theta)}.$$
 Set $\rho=-\frac d3$, see (\ref{rhod}).  One has 
 $$\psi(P)=\frac 1{z(P)}\left(\frac d6+o(1)\right)=\frac1{z(P)}\left(-\frac{\rho}{2}+o(1)\right), \text{ as } P\to B,$$ 
by equation (\ref{difeq2}). Therefore, 
\begin{equation}\zeta=1+\frac{2\theta}{2z(P)(1+o(1))-\rho\theta}=\frac{2z(P)(1+o(1))-(\rho-2)\theta}{2z(P)(1+o(1))-\rho\theta}.\label{zepsi}\end{equation}
In the coordinate $\theta$ the involution $\sigma_P$ is standard: $\theta\mapsto-\theta$. Therefore, its matrix in the coordinate $\zeta$ treated as an element in 
$\operatorname{PSL}_2(\cc)$ is the conjugate of the matrix $\diag(1,-1)$ by the matrix 
of transformation (\ref{zepsi}). Up to a scalar factor, this is the matrix 
$$\left(\begin{matrix} 2-\rho & 2z(P)(1+o(1))\\ -\rho & 2z(P)(1+o(1))\end{matrix}\right)
\left(\begin{matrix} 1 & 0\\ 0 & -1\end{matrix}\right)\left(\begin{matrix} 2z(P)(1+o(1)) 
& -2z(P)(1+o(1))\\ \rho & 2-\rho\end{matrix}\right)$$
$$=
-4z(P)\left(\left(\begin{matrix}\rho-1 & -(\rho-2)\\ \rho & -(\rho-1)\end{matrix}\right)+o(1)\right).$$
Hence, $\sigma_P\to\eta_\rho$ in the coordinate $\zeta$. This proves the proposition.
\end{proof}

The  number $\rho$ is called "residue" due to the following proposition.
\begin{proposition} \label{propress}  Let $b$ be a regular germ at $B$ equipped with a meromorphic 
 dual   billiard structure with pole of order at most one with  residue $\rho$. 
Then in the coordinate 
$$u:=z-z(P)$$
the family of involutions $\sigma_P:L_P\to L_P$, $P\in b\setminus\{ B\}$, takes the form 
\begin{equation}\sigma_P:u\mapsto-\frac u{1+f(z(P))u}, \label{sigres}\end{equation}
$$f(z)=\frac{\rho}z+g(z), \ g(z) \text{ is a holomorphic function at } 0.$$
Conversely, an involution family  holomorphic in $P\in b\setminus\{ B\}$ and having form (\ref{sigres}) 
is meromorphic with pole of order at most one at $B$ with residue $\rho$. 
In particular,  $\sigma_P$ is regular at $B$, if and only if it has zero residue at $B$.
\end{proposition}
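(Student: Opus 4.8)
The plan is to prove Proposition \ref{propress} by a direct normal-form computation, reducing everything to an explicit conjugacy between the family $\sigma_P$ written in the coordinate $\zeta = z/z(P)$ and the same family written in the coordinate $u = z - z(P)$. First I would record the change of variable $\zeta = 1 + u/z(P)$, an affine map of $L_P$ depending on $P$, and use it to translate statements about the $\zeta$-picture (where meromorphicity with pole of order at most one and residue $\rho$ is \emph{defined}, namely $\sigma_P \to \eta_\rho$ as $P \to B$) into statements about the $u$-picture. The key observation is that any projective involution of $L_P$ fixing $P$, i.e. fixing $u = 0$, must have the form $u \mapsto -u/(1 + \beta u)$ for a unique $\beta = \beta(P) \in \cc$: indeed such an involution fixes $0$, is an element of $\operatorname{PSL}_2(\cc)$, and the only Möbius involutions fixing $0$ are $u \mapsto -u/(1+\beta u)$ (the general involution fixing $0$ is $u \mapsto au/(cu - a)$ with the involutive condition forcing $a^2 = a^2$ trivially but $u\mapsto u$ excluded by nontriviality, giving exactly this one-parameter family). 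Writing $f(z(P)) := \beta(P)$ defines the function $f$ appearing in (\ref{sigres}); holomorphy of $\sigma_P$ in $P \in b \setminus \{B\}$ gives holomorphy of $f(z)$ on a punctured neighborhood of $0$, so $f$ is a priori meromorphic at $0$, and the content of the proposition is that its polar part is exactly $\rho/z$.

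The next step is to compute the limiting behavior of $\sigma_P$ in the $\zeta$-coordinate in terms of $f$. Conjugating $u \mapsto -u/(1+f(z(P))u)$ by the affine map $u = z(P)(\zeta - 1)$ gives, after a short calculation, the Möbius transformation of $\zeta$ whose matrix (up to scalar) is
\begin{equation*}
\left(\begin{matrix} f(z(P))z(P) - 1 & -(f(z(P))z(P) - 2) \\ f(z(P))z(P) & -(f(z(P))z(P) - 1)\end{matrix}\right),
\end{equation*}
which is precisely the matrix of $\eta_{f(z(P))z(P)}$ by definition (\ref{defeta}). Therefore $\sigma_P = \eta_{f(z(P))z(P)}$ in the coordinate $\zeta$, for every $P$. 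Now meromorphicity with pole of order at most one and residue $\rho$ means by definition that $\eta_{f(z(P))z(P)} \to \eta_\rho$ in $\operatorname{PSL}_2(\cc)$ as $P \to B$, i.e. as $z(P) \to 0$; since the map $\rho \mapsto \eta_\rho$ is a homeomorphism onto the set of involutions fixing $1$ (the remark after (\ref{defeta})), this is equivalent to $f(z(P))z(P) \to \rho$ as $z(P) \to 0$. That limit condition says exactly that the meromorphic function $z f(z)$ extends holomorphically to $z = 0$ with value $\rho$, equivalently $f(z) = \rho/z + g(z)$ with $g$ holomorphic at $0$. This establishes both the forward direction (a meromorphic structure of order $\le 1$ with residue $\rho$ has the claimed form) and the converse (any holomorphic family of the form (\ref{sigres}) with $f = \rho/z + g$ is meromorphic of order $\le 1$ with residue $\rho$), since both reduce to the same statement about $zf(z)$.

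Finally I would note the last sentence: $\sigma_P$ is regular at $B$, meaning it limits to a well-defined involution $L_B \to L_B$, iff the residue is $0$. In the $\zeta$-coordinate $\zeta = z/z(P)$ the point $B$ corresponds to $\zeta = 0$ and regularity corresponds (by the discussion in Example after Proposition \ref{prhod}, part 1) to $\sigma_P$ limiting to $\eta_0: \zeta \mapsto 2 - \zeta$, i.e. to $\rho = 0$; conversely $\rho = 0$ gives $f(z) = g(z)$ holomorphic, hence $f(z(P))z(P) \to 0$, hence $\sigma_P \to \eta_0$, which in the $u = z - z(P)$ coordinate is the fixed affine involution $u \mapsto -u$ translated to the point $B$, a genuine limit. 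I expect no serious obstacle here; the only mildly delicate point is bookkeeping the two affine coordinates $\zeta$ and $u$ on $L_P$ and making sure the conjugating matrices are composed in the right order, and checking that the normalization of $\eta_\rho$ in (\ref{defeta}) matches the matrix produced by the conjugation — essentially the same verification already carried out at the end of the proof of Proposition \ref{prhod}, so it can be quoted rather than repeated.
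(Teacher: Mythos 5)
Your proposal is correct and follows essentially the same route as the paper: both reduce the statement to the identity $\sigma_P=\eta_{z(P)f(z(P))}$ in the coordinate $\zeta$ (the paper phrases this via the intermediate coordinate $\wt u=\zeta-1$ and the rescaling $u=z(P)\wt u$), observe that convergence $\sigma_P\to\eta_\rho$ is equivalent to $z(P)f(z(P))\to\rho$, and then use holomorphy of the family in $P$ to upgrade this limit to holomorphic extension of $zf(z)$ at $0$, which gives $f=\rho/z+g$ with $g$ holomorphic. Your explicit matrix verification and the treatment of the final regularity statement match what the paper does (or quotes from the proof of Proposition \ref{prhod} and the preceding example), so there is nothing to add.
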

\begin{proof} The family $\sigma_P$ is meromorphic of order at most one at $B$ with residue $\rho$, if and only if in the coordinate 
$$\wt u:=\zeta-1$$ 
the involutions $\sigma_P$ take the form 
\begin{equation}\sigma_P:\wt u\mapsto-\frac{\wt u}{1+(\rho+o(1))\wt u}, \text{ as } P\to B,\label{sigresv}\end{equation}
 by definition and since  
$\er$ sends $\wt u$ to $-\frac{\wt u}{1+\rho\wt u}$. Rescaling $\wt u$ to $u=\wt uz(P)$ yields (\ref{sigres}) with 
$f(z)=\frac{\rho}z+g(z)$, $g(z)=o(\frac1z)$. Conversely, rescaling $u$ to $\wt u$ transforms (\ref{sigres}) 
to (\ref{sigresv}). The family of involution $\sigma_P$ depends holomorphically on 
$P\in b\setminus\{ B\}$, and hence, on $z=z(P)$, by regularity of the germ $b$. Therefore, if (\ref{sigresv}) 
holds, then the function $zf(z)$, and hence, $h(z):=zg(z)$ extends holomorphically to $0$.  One has $h(0)=0$, since 
$g(z)=o(\frac1z)$. Hence, $g(z)=\frac{h(z)}z$ is holomorphic at $0$. Statement (\ref{sigres}) is proved, and it 
 immediately implies the last statement of the proposition.
\end{proof}
\subsection{Meromorphic integrability versus rational}
Here we prove the following proposition.
\begin{proposition} \label{merrat} 
Let $b$ be a non-linear irreducible germ of holomorphic curve at $O\in\cc^2$ equipped with a meromorphically integrable 
singular dual   billiard structure with integral $R$. Let $\rho$ be its residue  at $O$ (see Proposition \ref{prhod}). If $\rho\neq0$, then 
 $R$ is rational, and $b$ lies in an algebraic curve.
\end{proposition}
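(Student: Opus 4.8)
The plan is to bootstrap the germ of meromorphic integral $R$ near $O$ up to a global rational function on $\cp^2$; algebraicity of $b$ will then come for free, since $b$ is a branch of the level curve $\{R=0\}$.

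First I would fix the normalizations of Section 2. By Proposition \ref{proalgm}, $R|_b\equiv const$, so after adding a constant (or replacing $R$ by $R^{-1}$) we may assume $R|_b\equiv 0$; write $R=g_1 f^k$ with $f$ a local defining function of $b$ and $g_1|_b\not\equiv 0$, and set $G=R^{1/k}=gf$. By Proposition \ref{proalgm}(2) the family $\sigma_P$ is holomorphic on a punctured neighbourhood of $O$ in $b$, and by Proposition \ref{prhod} it is meromorphic with pole of order at most one at $O$ and residue $\rho=-d/3$, where $H(G)|_b=\alpha z^d(1+o(1))$ in adapted coordinates $(z,w)$; by definition this says that in the coordinate $\zeta=z/z(P)$ on $L_P$ the involutions $\sigma_P$ converge to $\eta_\rho(\zeta)=\frac{(\rho-1)\zeta-(\rho-2)}{\rho\zeta-(\rho-1)}$ as $P\to O$. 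The only essential use of the hypothesis $\rho\ne 0$ is that $\eta_\rho$ then fixes neither $\zeta=\infty$ (which, as $P\to O$, recedes into the tangent direction of $b$ at $O$) nor, unless $\rho=2$, $\zeta=0$ (which degenerates to $O$ itself); thus for $\rho\ne 0$ the involutions $\sigma_P$, $P$ near $O$, genuinely mix neighbourhoods of distinct points of $L_P$ rather than merely permuting things inside the tangent direction, whereas for $\rho=0$ one has $\eta_0\colon\zeta\mapsto 2-\zeta$ fixing $\zeta=\infty$ and the situation degenerates to an outer-billiard-type central symmetry.

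The continuation step runs as follows. $R$ is meromorphic on some neighbourhood $U$ of $O$, and on each tangent line $L_P$, $P\in b\cap U$, the germ $R|_{L_P}$ is invariant under the global projective involution $\sigma_P$ of $L_P\cong\cp^1$; hence $x\mapsto R(\sigma_P(x))$ continues $R|_{L_P}$ meromorphically from $L_P\cap U$ further along $L_P$, and these local continuations are mutually compatible because on each $L_P$ the $\sigma_P$-invariant meromorphic germ restricting to $R$ is unique. Running this over all $P\in b\cap U$ and iterating along the growing family of tangent lines propagates $R$; once we know that $b$ extends to an algebraic curve, $P$ ranges over it, the lines $L_P$ sweep out all of $\cp^2$, and the continuation reaches every point. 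A single-valued meromorphic function on all of $\cp^2$ is rational — which is the desired conclusion — and then $b\subset\{R=0\}$ is an algebraic curve.

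I expect the main obstacle to be the two points deferred above: promoting the germ $b$ to an algebraic curve, and showing the global continuation of $R$ is single-valued and meromorphic (rather than infinitely-sheeted). For single-valuedness I would argue that each leaf of the continued foliation $\{R=const\}$ is the closure of an orbit of the pseudogroup generated by the $\sigma_P$, hence intrinsically defined, so continuations of $R$ along different chains of involutions through a common point agree — the connectedness of the initial domain $U$ starting the induction; in particular the closure of $\{R=0\}$ is a closed analytic, hence algebraic, curve containing the germ $b$, which settles the algebraicity of $b$. For finiteness of the number of sheets I would use once more that $\rho=-d/3$ is rational by Proposition \ref{prhod}: this bounds the local monodromy of $H(G)$, and hence of $R$, at $O$ and at each of the finitely many points conjugate to $O$ produced by the continuation, so the limit is meromorphic on $\cp^2$. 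The hypothesis $\rho\ne 0$ is essential throughout: for $\rho=0$ the continuation stays confined to a neighbourhood of the tangent line $L_O$, and the conclusion genuinely fails for a merely meromorphic (non-rational) integral, so that case is treated separately.
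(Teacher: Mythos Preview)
Your proposal contains the right germ of an idea --- you correctly identify that $\rho\ne 0$ means $\eta_\rho(\infty)=\frac{\rho-1}{\rho}$ is finite, so the involutions $\sigma_P$ do not fix the point at infinity on $L_P$ --- but the continuation argument you build on it is both circular and unnecessarily elaborate.

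The circularity is here: you want to extend $R$ to all of $\cp^2$ by letting $P$ range over an algebraic curve containing $b$, and then conclude that $b$ is algebraic because $R$ is rational. But you need $b$ algebraic to make the lines $L_P$ sweep out $\cp^2$, which is exactly what you are trying to prove. Your attempted fix (``the closure of $\{R=0\}$ is closed analytic, hence algebraic'') presupposes that the continued $R$ is already globally defined and single-valued, which is the very point at issue. The worries about monodromy and ``finitely many sheets'' are symptoms of this gap, not a way to close it.

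The paper's proof sidesteps all of this with one clean observation you are missing: \emph{you do not need to reach all of $\cp^2$; a neighbourhood of a single projective line suffices.} Concretely: for $P\in b$ close to $O$, the involution $\sigma_P$ (which is close to $\eta_\rho$ in the coordinate $\zeta$) sends the ``far'' part $L_P\cap\{|z|>\varepsilon/2\}$ into an $o(z(P))$-neighbourhood of the point $P_\rho\in L_P$ with $\zeta$-coordinate $\theta_\rho=\frac{\rho-1}{\rho}$; since $P_\rho\to O$, this image lies inside the polydisk $U$ where $R$ is already defined. So the pullback $R\circ\sigma_P$ extends $R|_{L_P}$ from $L_P\cap U$ to \emph{all} of $L_P$ in a single step --- no iteration, no monodromy. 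As $P$ varies over a small punctured neighbourhood of $O$ in the germ $b$, the sets $L_P\cap\{|z|>\varepsilon/2\}$ foliate a neighbourhood of $L_O\setminus\{|z|<\varepsilon/2\}$; together with $U$ this covers a full neighbourhood of the projective tangent line $L_O$ in $\cp^2$. The proof is then finished by a lemma you do not mention (Proposition~\ref{2ratbis}): a function meromorphic on a neighbourhood of a projective line in $\cp^2$ is rational, which follows from separate rationality in two variables (Proposition~\ref{2rat}) after choosing an affine chart with axes close to $L_O$. Algebraicity of $b$ is then immediate from $b\subset\{R=const\}$.
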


\begin{proof} Let $(z,w)$ be coordinates adapted to $b$. Let  $U=U_z\times U_w$, 
$U_z=\{|z|<\var\}$, $U_w=\{|w|<\delta\}$, be 
a polydisk such that the meromorphic integral $R$ is well-defined on a  bigger polydisk containing its closure. For every $P\in b$ let 
$P_{\rho}$ denote the point in $L_P$ with the $\zeta$-coordinate $\theta_\rho:=\frac{\rho-1}{\rho}=\er(\infty)$; 
$\zeta=\frac z{z(P)}$. 
The involution $\sigma_P:L_P\to L_P$ sends the neighborhood of infinity $V_P(\var):=L_P\cap\{|z|>\frac\var2\}$ to a $o(z(P))$-neighborhood 
of the point  $P_\rho$ (thus, contained in $U$, if $P$ is close enough to $O$), 
 since $\sigma_P\to \er$ in the coordinate $\zeta$. The pullback of the integral $R$ 
under the map $\sigma_P|_{V_P(\var)}$ is a meromorphic function on $V_P(\var)$ whose restriction to the open subset $L_P\cap\{ \frac\var2<|z|<\var
\}\subset V_P(\var)$ coincides with $R$, by  $\sigma_P$-invariance. This extends $R$ to a meromorphic 
(and hence, rational) function on all of $L_P$ for every $P\in b\setminus\{ O\}$ close enough to $O$. 
The domains $V_P(\frac\var2)\subset L_P$ corresponding to $P$ close enough to $O$ foliate a neighborhood of 
the complement $L_O\setminus\{ |z|<\frac\var2\}$ in $\cp^2$. The function $R$ thus extended is meromorphic on 
the union of the latter neighborhood and the bidisk $U$, which covers a neighborhood of the line $L_O$ 
in $\cp^2$. 

\begin{proposition} \label{2ratbis} A function meromorphic on a neighborhood of a projective line in $\cp^2$ is rational.
\end{proposition}

\begin{proof} Take an affine chart $\cc^2_{z,w}$ on the complement of the projective line in question. We choose 
the center of coordinates close to the infinity line and the axes also  close to the infinity line. 
The function in question is rational in $z$ with fixed small $w$ and vice versa. Each function rational in 
two separate variables is rational  (Proposition \ref{2rat}). This proves Proposition \ref{2ratbis}.
\end{proof}

Proposition \ref{merrat} follows from 
Proposition \ref{2ratbis} and the above discussion.
\end{proof}

\section{Reduction to quasihomogeneously integrable 
$(p,q;\rho)$-billiards}

\begin{definition} \label{dquasint} Let $p,q\in\nn$, $1\leq q<p$, be coprime numbers. The curve 
$$\gpq:=\{ w^q=z^p\}\subset\cc^2\subset\cp^2$$
 will be called the {\it $(p,q)$-curve.} (It is injectively holomorphically parametrized by 
 $\cc^*$ via the mapping $t\mapsto(t^q,t^p)$.) 
Let $\rho\in\cc$. The {\it $(p,q;\rho)$-billiard} is the structure of singular holomorphic 
dual   billiard on the $(p,q)$-curve $\gpq$ 
defined by the family of involutions $\sigma_P:L_P\to L_P$, 
$P\in\gpq\setminus\{(0,0)\}$, all of them 
acting as the involution $\eta_{\rho}$ in the coordinate $\zeta=\frac z{z(P)}$ on $L_P$. 
\end{definition} 
\begin{definition} Recall that a polynomial $P(z,w)$ is {\it $(p,q)$-quasihomogeneous,} 
if it contains only monomials $z^kw^m$ with $(k,m)$ lying on the same line 
parallel to the segment $[(p,0),(0,q)]$. That is, a polynomial that becomes 
homogeneous after the substitution $z=t^q$, $w=t^p$, i.e., after restriction to the curve 
$\gpq$. A ratio of two $(p,q)$-quasihomogeneous polynomials will be called a 
{\it $(p,q)$-quasihomogeneous rational function.}
A $\pqr$-billiard is said to be {\it quasihomogeneously integrable,} if 
it admits a non-constant $(p,q)$-quasihomogeneous rational integral.
\end{definition}
The main result of the present section is the following theorem.
\begin{theorem} \label{quasi-pqr} Let $b$ be a non-linear irreducible germ of analytic curve 
at a point $B\in\cc^2$. Let $r=\frac pq$ be its projective Puiseux exponent, $(p,q)=1$, see (\ref{propuis}).  
Let $b$ admit a structure of meromorphically integrable singular 
dual   billiard,  $\rho$ be its residue at $B$.  
Then the $(p,q;\rho)$-billiard is quasihomogeneously integrable.
\end{theorem}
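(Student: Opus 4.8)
The plan is to transfer the meromorphic first integral $R$ of the dual billiard on $b$ to a $(p,q)$-quasihomogeneous rational first integral of the model $(p,q;\rho)$-billiard on $\gpq$, by passing to the lowest-order quasihomogeneous parts in an adapted chart. First I would fix affine coordinates $(z,w)$ centered at $B$ and adapted to $b$, so that $b$ is parametrized by $t\mapsto(t^{q_b},c_bt^{p_b}(1+o(1)))$ with $p_b/q_b=r=p/q$; after a holomorphic change of the parameter (and absorbing $c_b$ by scaling $w$) I may assume $b$ is the graph of a Puiseux series $w=z^{p/q}(1+o(1))$ over the $z$-disk, i.e., $b$ is an $s_b$-fold "thickening" of $\gpq$ sitting inside the formal neighborhood of $\gpq$. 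Write $R=N/D$ with $N,D$ holomorphic (a germ of meromorphic function); normalize by Proposition \ref{proalgm} so that $R|_b\equiv 0$, hence $f\mid N$ where $f$ is the local defining function of $b$. The key idea: expand $N$ and $D$ into $(p,q)$-quasihomogeneous components $N=\sum_{j\ge a}N_j$, $D=\sum_{j\ge b}D_j$ (grading $\deg z=q$, $\deg w=p$), and let $N_{a}$, $D_{b}$ be the lowest nonzero parts. Then the candidate integral for the $(p,q;\rho)$-billiard is the $(p,q)$-quasihomogeneous rational function $\widetilde R:=N_a/D_b$ (up to the scalar normalization needed to match degrees; if $a\ne b$ one uses instead $N_a/(z^{?}w^{?}D_b)$ or takes a suitable quasihomogeneous monomial correction so numerator and denominator have equal quasihomogeneous degree, which is harmless since $z,w$ are invariant-compatible in the obvious sense).

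The heart of the argument is to show that $\widetilde R$ is genuinely invariant under the involutions $\sigma_P^{0}:=\eta_\rho$ of the $(p,q;\rho)$-billiard. For this I would use the characterization from Theorem \ref{tdif1} / equation (\ref{difeq2}): invariance of $R|_{L_P}$ under $\sigma_P$ is equivalent (through vanishing of the odd Taylor coefficients, the cubic one giving (\ref{6pgh})) to a family of differential identities relating $H(G)|_b$, $\psi(P)$, and higher-order data along $b$; and by Proposition \ref{prhod} the residue $\rho$ is exactly $-d/3$ where $H(G)|_b=\alpha z^d(1+o(1))$. The strategy is a \emph{scaling/homogenization} argument: introduce the one-parameter family of quasihomogeneous rescalings $\delta_\lambda:(z,w)\mapsto(\lambda^q z,\lambda^p w)$, note that $\delta_\lambda$ preserves $\gpq$ and conjugates $\eta_\rho$ on $L_{\delta_\lambda(P)}$ to $\eta_\rho$ on $L_P$ (since $\eta_\rho$ is written in the scale-invariant coordinate $\zeta=z/z(P)$), and observe that as $\lambda\to 0$ the rescaled curve $\delta_{1/\lambda}(b)$ converges to $\gpq$ and the rescaled involution family $\sigma_P$ converges to $\eta_\rho$ (this is precisely the content of "residue $\rho$", i.e. Proposition \ref{prhod}). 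Meanwhile $\lambda^{-a}\,(N\circ\delta_\lambda)\to N_a$ and $\lambda^{-b}\,(D\circ\delta_\lambda)\to D_b$ uniformly on compacts away from the coordinate axes. Passing to the limit in the invariance relation $R\circ\sigma_P=R$ on $L_P$ — which survives the limit because all the data converge — yields $\widetilde R\circ\eta_\rho=\widetilde R$ on each $L_P$, $P\in\gpq\setminus\{0\}$. Finally $\widetilde R$ is quasihomogeneous rational by construction, and I must rule out $\widetilde R\equiv\mathrm{const}$: if the naive lowest parts gave a constant ratio $N_a=\mathrm{const}\cdot D_b$, replace $D$ by $D-\mathrm{const}\cdot N$ (another valid denominator of the same meromorphic function $R$, since $R=N/D=N/(D-cN)\cdot\frac{1}{1-cR}$... more carefully, use $R\mapsto R/(1-cR)$ which is a Möbius function of $R$ hence still a first integral with the \emph{same} invariance, but now with strictly larger lowest denominator order), and iterate; this process terminates and eventually produces a non-constant quasihomogeneous ratio, exactly as sketched in Step 3 of the paper's outline.

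The main obstacle I anticipate is controlling this limiting procedure rigorously: one must ensure (a) that after the Möbius normalizations the lowest quasihomogeneous parts $N_a,D_b$ of numerator and denominator do not \emph{both} vanish on $\gpq$ (so that $\widetilde R$ is well-defined and not $0/0$ along $\gpq$) — here one uses that $R|_b\equiv 0$ forces $f\mid N$ but $D|_b\not\equiv 0$, and that $f$'s lowest quasihomogeneous part is $w^q-z^p$, the defining polynomial of $\gpq$; (b) that the convergence $\sigma_P\to\eta_\rho$ from Proposition \ref{prhod} is uniform enough, in the coordinate $\zeta$ on $L_P$, on the regions of $L_P$ where the relevant orbits of $\sigma_P$ live (neighborhoods of $0$, $\infty$, $1$, $\theta_\rho$), so that the functional identity passes to the limit; and (c) bookkeeping the quasihomogeneous degrees so that the final $\widetilde R$ is an honest ratio of two quasihomogeneous polynomials of equal degree. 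The delicate point among these is (b): the involution $\sigma_P$ may send a fixed compact of $L_P$ into a shrinking neighborhood of the fixed point $\theta_\rho=\tfrac{\rho-1}{\rho}$, so one restricts attention to the annular regions $\{\tfrac{\varepsilon}{2}<|z|<\varepsilon\}\cap L_P$ exactly as in the proof of Proposition \ref{merrat}, where the estimates from (\ref{zepsi}) give the needed uniform control; the identity $\widetilde R\circ\eta_\rho=\widetilde R$ then holds on an open set of each $L_P$ and extends by analyticity.
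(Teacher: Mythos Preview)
Your approach is essentially the paper's: rescale via $\delta_\lambda$ (the paper writes it as $(\zeta,y)=(z_0^{-1}z,\,z_0^{-r}w)$ with $z_0=z(P)$, which is the same action), extract the lowest $(p,q)$-quasihomogeneous parts of numerator and denominator using (\ref{newresc}), and pass the invariance $R\circ\sigma_P=R$ on $L_P$ to the limit using $\sigma_P\to\eta_\rho$ in the coordinate $\zeta$; quasihomogeneity plus the transitive $\cc^*$-action on $\gpq$ then propagates invariance from $L_{(1,1)}$ to every tangent line.

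A few places where you are overcomplicating. The degree-matching worry (``if $a\ne b$ insert a monomial correction'') is a red herring: by the paper's Definition, a $(p,q)$-quasihomogeneous rational function is \emph{any} ratio of two quasihomogeneous polynomials, with no relation imposed between their degrees, so $\widetilde R=N_a/D_b$ is already of the required form. The detour through $H(G)$ and (\ref{difeq2}) is unnecessary here: convergence $\sigma_P\to\eta_\rho$ in the $\zeta$-coordinate is the \emph{definition} of ``residue $\rho$'', and that is all this proof uses. Your concern (b) is legitimate but mild: both $\lambda^{-a}N\circ\delta_\lambda\to N_a$ and $\sigma_P\to\eta_\rho$ converge uniformly on compact subsets of $\cc_\zeta$, so the identity holds on an open arc of $L_{(1,1)}$ and extends by rationality. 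Finally, the constant case terminates in one step rather than by iteration: if $N_a=\lambda D_b$ then in particular $a=b$, and replacing $D$ by $D-\lambda N$ (your M\"obius move $R\mapsto R/(1-\lambda R)$ is exactly this) strictly raises the lowest quasihomogeneous degree of the denominator above $a$, so the new lowest part cannot be proportional to $N_a$ and $\widetilde R$ is non-constant.
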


\subsection{Preparatory material. Newton diagrams}

Let us recall the well-known notion of Newton diagram of a germ of holomorphic function $f(z,w)$ at the origin. 
We consider that $f(0,0)=0$.  
To each  monomial $z^mw^n$ entering its Taylor series we put into correspondence 
the quadrant $K_{m,n}:=(m,n)+(\rr_{\geq0})^2$. Let $K(f)$ denote the convex hull of the union of 
the quadrants $K_{m,n}$ through all the Taylor monomials of the function $f$; it is an unbounded polygon 
with a finite number of sides. The {\it Newton diagram} 
$\bold{N}_f$ is the union of those edges of  the boundary $\partial K(f)$ that do not lie in the coordinate axes.

Fix a coprime pair of numbers $p,q\in\nn$, $(p,q)=1$. For every monomial $z^kw^m$  
 define its {\it $(p,q)$-quasihomogeneous degree:} 
$$\deg_{p,q}z^kw^m:=kq+mp.$$
Let $M_{p,q}(f)$ denote the minimal $(p,q)$-quasihomogeneous degree of a Taylor monomial of the function $f$. 
 The sum of its monomials $f_{km}z^kw^m$ with $\deg_{p,q}=M_{p,q}(f)$ is a $(p,q)$-quasihomogeneous polynomial called the 
{\it lower $(p,q)$-quasihomogeneous part} of the function $f$; it will be denoted by $\wt f_{p,q}(z.w)$. 
\begin{remark} In the case, when the Newton diagram $\bold{N}_f$ contains an edge parallel to the segment 
$[(p,0),(0,q)]$, the collection of  bidegrees of monomials entering the lower $(p,q)$-quasihomogeneous part $\wt f_{p,q}$ lies in the latter edge and contains its vertices. In the opposite case $\wt f_{p,q}$ is a 
monomial whose bidegree is the unique vertex $V$ of the Newton diagram such that the line through $V$ parallel to 
the above segment intersects $K(f)$ only at $V$. 
One has 
\begin{equation} \var^{-M_{p,q}(f)}f(\var^q z, \var^p w)=\wt f_{p,q}(z,w)+o(1), \text{ as } \var\to0,\label{newresc}\end{equation}
uniformly on compact subsets in $\cc^2$. 
\end{remark}

\begin{example} \label{exqpart} Let a germ of holomorphic function $f$ at the origin be irreducible (not a product of holomorphic 
germs vanishing at $0$). If $f(z,0)\equiv0$, then the Newton diagram $\bold{N}_f$ consists of just one, horizontal edge of height one. 
Let now $f(z,0), f(0,w)\not\equiv0$.   It is well-known that then 
the Newton diagram of the germ $f$ consists of one edge $[(ps_b,0),(0,qs_b)]$ with some $s_b\in\nn$ and 
coprime $p,q\in\nn$.
Let  $b$ be its zero locus. Then 
$b$ is a germ of curve injectively parametrized by a germ at $0$ of holomorphic map of the type 
\begin{equation}t\mapsto(t^{qs_b}, c_bt^{ps_b}(1+O(t))), \ \ c_b\neq0;\label{conscb}\end{equation} 
\begin{equation}\wt f_{p,q}(z,w)=(w^q-C_bz^p)^{s_b}, \ \ C_b=c_b^q,\label{pqpart}\end{equation}
up to constant factor. The proof of formula (\ref{pqpart}) repeats the proof of \cite[proposition 3.5]{gl2} 
with minor changes.
\end{example}

\begin{proposition} \label{pinters} 
Let $a$, $b$ be two irreducible germs of holomorphic curves at $O$. Let $r=r_b=\frac pq$ be 
the projective Puiseux exponent of the germ $b$, $(p,q)=1$, $r_a$ be that of the germ $a$. Let $(z,w)$ be affine coordinates centered at $O$ 
adapted to $b$; the coordinate $w$ being rescaled  so that $c_b=1$ in (\ref{conscb}).  

1) Let $\wt f_a(z,w)$ 
be the lower $(p,q)$-quasihomogeneous part of the germ of function $f_a$ defining $a$. Up to constant factor, 
the polynomial $\wt f_a(z,w)$ has one of the following types: 

a) $z^m$, if either $a$ is transversal to $b$, or $a$, $b$ are tangent and $r_a<r_b$; 

b) $w^m$, if $a$, $b$ are tangent and $r_a>r_b$; 

c) $(w^q-C_az^p)^m$, if $a$, $b$ are tangent and $r_a=r_b$;  $C_a$ is given by (\ref{pqpart}). 

2)  For every $P\in b\setminus\{ O\}$ consider the coordinate $\zeta:=\frac z{z(P)}$ on  the line $L_P$. 
Let $L=L_{(1,1)}$ denote the tangent line to 
$\gpq:=\{ y^q-\zeta^p=0\}\subset\cc^2_{\zeta,y}$ at the point $(1,1)$.  As $P\to O$,  the 
 $\zeta$-coordinates of 
 points of the intersection $a\cap L_P$ tend to some (finite or infinite) limits in $\oc_\zeta$. 
 The set of their finite limits coincides with the set of  zeros of the restriction to $L$ of the polynomial 
 $\wt f_a(\zeta,y)$. In the above cases a), b), c) it coincides respectively with the sets $\{0\}$, $\{\frac{r-1}r\}$ and 
 the collection of roots of the polynomial 
 $$\mcr_{p,q,C_a}(\zeta):=(1-r+r\zeta)^q-C_a\zeta^p.$$ 
\end{proposition}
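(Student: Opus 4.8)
The plan is to prove Parts 1) and 2) together by a single scaling argument using the rescaling formula (\ref{newresc}). First I would set up the key geometric picture: for $P \in b \setminus \{O\}$ close to $O$, I parametrize $b$ by $z(P) = \var^q$ (up to higher order, since the chart is adapted to $b$ with $c_b = 1$), so that $P = (\var^q, \var^p(1+o(1)))$ and $P$ lies on $\gpq$ up to higher-order terms. The tangent line $L_P$ to $b$ at $P$, written in the coordinate $\zeta = z/z(P) = z/\var^q$, converges as $\var \to 0$ to the line $L = L_{(1,1)}$ tangent to $\gpq$ at $(1,1)$: indeed, in the rescaled coordinates $(\zeta, y) = (z/\var^q, w/\var^p)$ the curve $b$ itself limits to $\gpq$ and $P$ limits to $(1,1)$, so $L_P$ limits to $L$. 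I would make this precise by writing the equation of $L_P$ and passing to the limit.

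Next, for Part 1), I would invoke Example \ref{exqpart}: since $f_a$ is irreducible, its Newton diagram is a single edge $[(p's', 0), (0, q's')]$ with $(p', q') = 1$ the coprime pair attached to $a$, and $\wt{(f_a)}_{p',q'} = (w^{q'} - C_a z^{p'})^{s'}$ up to a constant; the degenerate cases (when $a$ is transversal to the $z$-axis, or tangent to it) give $\wt{(f_a)}_{p',q'}$ equal to a power of $z$ or of $w$. The content of Part 1) is that the relevant quasihomogeneous part is the \emph{$(p,q)$-one}, governed by the \emph{tangent direction of $b$}, not the intrinsic $(p',q')$ of $a$. This is a case analysis on the position of the Newton polygon $K(f_a)$ relative to the line of slope determined by $(p,q)$: if $r_a < r_b$ (so $a$ is "less tangent" to the common $z$-axis than $b$, or transversal), the supporting line of $K(f_a)$ parallel to $[(p,0),(0,q)]$ touches $K(f_a)$ at the vertex on the $z$-axis, giving $z^m$; symmetrically $r_a > r_b$ gives $w^m$; and $r_a = r_b$ forces $p' = p$, $q' = q$ and the edge itself is parallel, giving $(w^q - C_a z^p)^m$. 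I would draw the Newton diagram and read off each case.

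For Part 2), I would use (\ref{newresc}) directly: under $(z,w) = (\var^q \zeta, \var^p y)$ we have $\var^{-M_{p,q}(f_a)} f_a(\var^q \zeta, \var^p y) \to \wt{(f_a)}_{p,q}(\zeta, y)$ uniformly on compacts. The intersection $a \cap L_P$, read in the $\zeta$-coordinate, is exactly the zero set of $f_a$ restricted to $L_P$; after rescaling this becomes the zero set of $\var^{-M_{p,q}(f_a)} f_a$ restricted to the rescaled line, which converges (by uniform convergence on compacts plus the fact that $L_P \to L$) to the zero set of $\wt{(f_a)}_{p,q}|_L$. So the finite limits of the $\zeta$-coordinates of $a \cap L_P$ are precisely the roots of $\wt{(f_a)}_{p,q}|_L$. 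The last sentence is then a computation: $L$ is the tangent to $\{y^q = \zeta^p\}$ at $(1,1)$, which has equation $y = 1 - \tfrac pq + \tfrac pq \zeta = 1 - r + r\zeta$ (where $r = p/q$); substituting this into the three forms from Part 1) gives $z^m \mapsto \zeta^m$ (root $\{0\}$), $w^m \mapsto (1-r+r\zeta)^m$ (root $\{(r-1)/r\}$), and $(w^q - C_a z^p)^m \mapsto ((1-r+r\zeta)^q - C_a \zeta^p)^m$, i.e. $\mcr_{p,q,C_a}(\zeta)^m$, as claimed. The main obstacle I anticipate is bookkeeping in the limiting argument — ensuring the roots of the restricted polynomial really are the limits (no roots escaping to infinity, no spurious collisions, correct handling of the rescaling of the \emph{line} $L_P$ simultaneously with the rescaling of $f_a$), and matching the Newton-diagram case distinctions cleanly with the tangency/Puiseux-exponent comparison between $a$ and $b$; this is exactly the kind of argument carried out in \cite[proposition 3.5]{gl2}, whose proof I would adapt.
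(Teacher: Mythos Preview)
Your proposal is correct and follows essentially the same route as the paper: the rescaling $(z,w)=(z_0\zeta,z_0^r y)$ combined with (\ref{newresc}) to pass to the limit, the Newton-diagram case analysis for Part 1) (the paper phrases it identically as whether the unique edge of $\bold{N}_{f_a}$ is parallel to $[(p,0),(0,q)]$ or not), and the explicit computation of $\wt f_a|_L$ via $y=1-r+r\zeta$ for Part 2). The only cosmetic difference is that the paper rescales by $z_0=z(P)$ with a chosen branch of $z_0^r$, while you parametrize $z(P)=\var^q$; these amount to the same thing.
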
 
\begin{proof} Cases a) and b) correspond exactly to the cases, when the unique edge of the 
 Newton diagram of the function $f_a$ is not parallel to the segment $[(p,0), (0,q)]$; then the polynomial 
 $\wt f_a$ corresponds to one of its two vertices, and hence, is a power of either $z$, or $w$. 
 In Case c) Statement 1) of the proposition follows from (\ref{pqpart}). Statement 2) follows from 
 \cite[p.268, Proposition 2.50]{alg} and can be proved directly as follows. 
 Let $P\in b\setminus\{ O\}$, $z_0:=z(P)$. 
Consider the variable change $(z,w)=(z_0\zeta,z_0^ry)$ (for some chosen value of fractional power 
$z_0^r$). As $P\to O$, i.e., as $z_0\to0$, the curve $b$ written in the coordinates 
$(\zeta,y)$ tends to the curve $\gamma_{p,q}$, $(\zeta(P),y(P))\to(1,1)$, and $L_P\to L$. 
 The function $z_0^{-\frac1qM_{p,q}(f_a)}f_a(z_0\zeta,z_0^ry)$ tends to $\wt f_a(\zeta,y)$, by (\ref{newresc}). 
 This implies that each point of intersection $a\cap L_P$, whose $\zeta$-coordinate converges to a finite 
 limit after passing to a subsequence, does converge to a zero of the restriction 
 $\wt f_a|_L$, and each zero is realized as a limit. The $\zeta$-coordinates of the other intersection points 
 (if any) converge to infinity, by construction. The  polynomial $\wt f_a(\zeta,y)$  is a power of the polynomial 
 $\zeta$, $y$, $y^q-C_a\zeta^p$ respectively up to constant factor, by Statement 1). 
 The restrictions of the latter polynomials to the line $L$  are equal  respectively to $\zeta$, $1-r+r\zeta$ and $\mcr_{p,q,C_a}$. This together with the 
 above convergence  implies Statement 2). 
  \end{proof}

\subsection{Proof of Theorem \ref{quasi-pqr}.} 
 Let $R(z,w)=\frac{f(z,w)}{g(z,w)}$ be a non-constant meromorphic first integral of the 
dual   billiard on $b$. Here $f$ and $g$ are coprime germs of holomorphic functions at $B$ written in 
affine coordinates $(z,w)$ adapted to $b$. Let $r=r_b=\frac pq$ be the irreducible fraction representation of the 
projective Puiseux exponent $r$ of the germ $b$. Without loss of generality we consider that the corresponding  
constant $c_b$ in (\ref{conscb}) is equal to one, rescaling the coordinate $w$. Then the function $f_b(z,w)$
defining the curve $b$ is equal to $(w^q-z^p)^{s_b}$ plus higher $(p,q)$-quasihomogeneous terms,  
 by  (\ref{pqpart}). For a point $P\in b\setminus\{ B\}$ set $z_0=z(P)$. In the above rescaled 
 coordinates $(\zeta,y)=(z_0^{-1}z,z_0^{-r}w)$ one has 
$P\to(1,1)$, $L_P\to L$ ($L=L_{(1,1)}$ is the same, as Proposition \ref{pinters}),   
and the functions $z_0^{-\frac1qM_{p,q}(f)}f(z_0\zeta,z_0^ry)$, $z_0^{-\frac1qM_{p,q}(g)}g(z_0\zeta,z_0^ry)$ 
tends to $\wt f_{p,q}(\zeta,y)$ and $\wt g_{p,q}(\zeta,y)$ respectively, by (\ref{newresc}). 
The restriction $R|_{L_P}$ is $\sigma_P$-invariant, and $\sigma_P\to\er$ in the 
coordinate $\zeta$ on $L_P$. Therefore,  the restriction to the line $L$ of the ratio 
$$\wt R(\zeta,y):=\frac{\wt f_{p,q}(\zeta,y)}{\wt g_{p,q}(\zeta,y)}$$
is $\er$-invariant. Consider the action of group $\cc^*$  on $\cc^2$ by rescalings $(\zeta,y)\mapsto(\tau^q\zeta, \tau^py)$. It preserves the curve $\gamma_{p,q}$ punctured 
at the origin and at infinity and acts transitively on it. These rescalings multiply the 
quasihomogeneous rational function 
$\wt R$ by constants. This together with $\er$-invariance of its restriction to the tangent line $L$ implies 
invariance of its restriction to tangent line at any other point $Q\in\gamma_{p,q}$ under the involution $\er$ 
acting in the coordinate $\frac{\zeta}{\zeta(Q)}$. 
Therefore, $\wt R$ is a quasihomogeneous integral of the $\pqr$-billiard.
A priori it may be constant. This occurs exactly in the case, 
when $\wt g_{p,q}\equiv\la\wt f_{p,q}$, $\la\in\cc$.  But then replacing $g$ by $g-\la f$ cancels $\wt g_{p,q}$, and the lower 
$(p,q)$-quasihomogeneous part of the function $g-\la f$ is not constant-proportional to $\wt f_{p,q}$. The ratio 
$\frac f{g-\la f}$ being a meromorphic integral of the billiard on $b$, the above construction applied to it yields 
a non-constant quasihomogeneous integral of the $\pqr$-billiard. Theorem \ref{quasi-pqr} is proved.

\section{Classification of quasihomogeneously integrable $\pqr$-billiards}

The main result of the present section is the following theorem. 

\begin{theorem} \label{classpqr} A  $\pqr$-billiard is quasihomogeneously integrable, if and only if $p=2$, $q=1$ 
(i.e., the underlying curve $\gpq$ is a conic) and 
\begin{equation}\rho\in\mcm:=\{0, 1, 2, 3, 4\}\cup_{k\in\nn_{\geq3}}\{ 2\pm\frac2k\}.\label{rhov}\end{equation}
Then  the following  quasihomogeneous functions $R_{\rho}(z,w)$ are integrals.
 \medskip
 
  \begin{tabular}{|l|l|l|l|l|r|}
  \hline $\rho=0$ & $\rho=1$ & $\rho=2$ & $\rho=3$ & $\rho=4$  \\
  \hline \ & \ & \ & \ & \ \\
  $R_0=w-z^2$ & $R_1=\frac{w-z^2}z$ & $R_2=\frac{w-z^2}w$ & $R_3=\frac{w-z^2}{zw}$ 
  & $R_4=\frac{w-z^2}{w^2}$\\
  \ & \ & \ & \ & \ \\
  \hline
  \end{tabular}
  
  \medskip
  
  \begin{tabular}{|l|r|}
  \hline \ & \  \\
     $\rho=2-\frac2{2N+1}$ & $R_{\rho}(z,w)=\frac{(w-z^2)^{2N+1}}{\prod_{j=1}^N(w-c_jz^2)^2}$, \  
  $c_j=-\frac{4j(2N+1-j)}{(2N+1-2j)^2}$\\
  \ & \  \\
  \hline \ & \  \\
   $\rho=2+\frac2{2N+1}$ & $R_{\rho}(z,w)=\frac{(w-z^2)^{2N+1}}{w^2\prod_{j=1}^N(w-c_jz^2)^2}$, \  
  $c_j=-\frac{4j(2N+1-j)}{(2N+1-2j)^2}$\\
  \ & \  \\
  \hline \ & \  \\
  $\rho=2-\frac1{N+1}$ & $R_{\rho}(z,w)=\frac{(w-z^2)^{N+1}}{z\prod_{j=1}^N(w-c_jz^2)}$,  \ 
  $c_j=-\frac{j(2N+2-j)}{(N+1-j)^2}$\\
  \ & \  \\
  \hline \ & \  \\
  $\rho=2+\frac1{N+1}$ & $R_{\rho}(z,w)=\frac{(w-z^2)^{N+1}}{zw\prod_{j=1}^N(w-c_jz^2)}$,  \ 
  $c_j=-\frac{j(2N+2-j)}{(N+1-j)^2}$\\
  \ & \  \\
  \hline
\end{tabular}
   \end{theorem}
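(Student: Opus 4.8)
The theorem splits into necessity and sufficiency; the substance is necessity, and the plan is as follows. Starting from a non-constant $(p,q)$-quasihomogeneous rational integral $R$ of the $\pqr$-billiard, I want to extract enough rigidity to force $p=2$, $q=1$ and $\rho\in\mathcal{M}$. First I would dispose of the case where $R$ may be chosen polynomial: then, as in the Example following the definition of rational integrability, each $\sigma_P$ fixes the intersection of $L_P$ with the line at infinity, i.e.\ $\eta_\rho$ fixes $\zeta=\infty$, which happens only for $\rho=0$; hence the $\pqr$-billiard is the outer billiard on $\gpq$, so $\gpq$ is a conic by \cite{gs}, forcing $p=2$, $q=1$. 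In the remaining, non-polynomial case I would first put $R$ into a normal form. By Proposition \ref{proalgm} the restriction $R|_{\gpq}$ is constant; for a $(p,q)$-quasihomogeneous rational function this forces numerator and denominator to have equal quasihomogeneous degree, and after subtracting the constant (or inverting $R$) we may assume $R|_{\gpq}\equiv0$, so the irreducible prime $w^q-z^p$ divides the numerator. Since every $(p,q)$-quasihomogeneous polynomial is a product of $z$, $w$ and binomials $w^q-cz^p$, I would normalize $R$ to an ``$\eta_\rho$-primitive'' form $R=\mathcal{P}_1^{m_1}/\mathcal{P}_2^{m_2}$ with $(m_1,m_2)=1$, each $\mathcal{P}_j$ a product of pairwise distinct primes from the list $z$, $w$, $w^q-cz^p$ ($c\neq0$), and with $w^q-z^p$ dividing $\mathcal{P}_1$ exactly once.

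Next I would extract two independent relations for $\rho$ in terms of the remaining combinatorial data: the coprime powers $m_1,m_2$, the numbers $n_1,n_2$ of binomial factors in $\mathcal{P}_1/(w^q-z^p)$ and in $\mathcal{P}_2$, and the exponents of $z$ and $w$ in the $\mathcal{P}_j$. The first relation comes from the Hessian: writing $G=R^{1/m_1}=(w^q-z^p)g$ as in (\ref{ggf}) and combining the product identity (\ref{hess3}) with the explicit monomial value of $H(w^q-z^p)$ along $\gpq$, the restriction $H(G)|_{\gpq}$ is a monomial in the uniformizer whose exponent $d$ is an explicit expression in $p,q$ and the combinatorial data; then (\ref{hgrho}) and Proposition \ref{prhod} give $\rho=-d/3$. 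The second relation comes from a one-parameter deformation on the single fibre $L=L_{(1,1)}$ tangent to $\gpq$ at $(1,1)$. Restricting $\mathcal{P}_1$ to $L$ and dividing by a suitable power of the linear factor vanishing at the second fixed point $\tfrac{\rho-2}{\rho}$ of $\eta_\rho$ yields an $\eta_\rho$-invariant rational function of $\zeta$ whose numerator still vanishes to order exactly two at the tangency point $\zeta=1$, the remaining zeros being controlled, through Proposition \ref{pinters}, by the polynomials $\mathcal{R}_{p,q,c_j}$. Perturbing the numerator by a small multiple of that same linear-factor power splits the double zero at $\zeta=1$ into a pair $\zeta_\pm(\lambda)=1\pm c_0\sqrt{\lambda}+\dots$ which $\eta_\rho$ must interchange; matching the second-order terms of this interchange against the local expansion $\eta_\rho(1+\epsilon)=1-\epsilon+\rho\epsilon^2+\dots$ produces the second relation tying $\rho$ to the combinatorial data.

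The hard part, and the heart of the argument, will be to combine the two relations. Both express $\rho$ through $p$, $q$ and the integer data, but they arise from genuinely different mechanisms, and eliminating the auxiliary integers must collapse the possibilities all the way down to $p=2$, $q=1$ with $\rho\in\mathcal{M}$. I would do this by clearing denominators, using the coprimality of $(p,q)$ and of $(m_1,m_2)$, and analysing the resulting Diophantine identity; the upshot is $q=1$, $p=2$, and that the number of binomial factors together with the surviving exponents ties $\rho$ to one of the families $2-\tfrac2{2N+1}$, $2+\tfrac2{2N+1}$, $2-\tfrac1{N+1}$, $2+\tfrac1{N+1}$ (the four cases distinguished by which of $z$, $w$ occur in $\mathcal{P}_2$) or to one of the five sporadic integer values $0,1,2,3,4$. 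Finally, for sufficiency I would verify directly that each tabulated $R_\rho$ is $(2,1)$-quasihomogeneous and that, for every $P\in\{w=z^2\}$, its restriction to $L_P$ in the coordinate $\zeta$ satisfies $R_\rho\circ\eta_\rho=R_\rho$; with the displayed values of the $c_j$ this reduces to a single polynomial identity on $L_P$, which I would check without writing out the computation.
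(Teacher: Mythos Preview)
Your plan matches the paper's proof closely: normalize to a primitive integral (the paper's Lemma \ref{lchichi}), derive the Hessian-based formula $\rho=-d/3$ (Proposition \ref{pfor1}) and the second formula from the $(z-\tfrac{\rho-2}{\rho})^{\hat d}$-deformation on $L_{(1,1)}$ (Lemma \ref{lfor2}), and then eliminate case-by-case (Lemma \ref{lclass}).

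Two points where you diverge. First, in the polynomial case you invoke \cite{gs}; the paper does not. Your reduction to $\rho=0$ is correct, but \cite{gs} concerns real planar outer billiards, not the complex $\pqr$-billiard on $\gpq\subset\cc^2$, so the citation needs justification. The paper instead runs the Hessian formula directly: writing $G=\mathcal P^{1/k}$ with $\mathcal P$ the polynomial integral, formula (\ref{ashes}) gives $0=\rho=\rho_0-(pN+\alpha+\beta r)$ with $\rho_0=\tfrac23(r+1)$, whence $\tfrac23(r+1)\ge qNr$, forcing $q=1$, $r\le 2$, so $p=2$. This is quicker and self-contained. Second, for sufficiency you propose direct verification of $R_\rho\circ\eta_\rho=R_\rho$ on $L_P$; the paper instead proves a structural equivalence (Lemma \ref{lsuff}): the $(2,1;\rho)$-billiard is integrable iff $\rho\in\mcm$ iff the M\"obius map $T=\eta_2\circ\eta_\rho$ (a translation $y\mapsto y+\rho-2$ in the chart $y=1/(z-1)$) satisfies $T^m(\tfrac12)=\infty$ for some $m\in\zz$. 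The $T$-orbit of $\tfrac12$ then manufactures the primitive polynomial $\mathcal P_2$ and yields the tabulated $c_j$ automatically. Your direct verification would also work, but the orbit picture explains \emph{why} those particular $c_j$ appear and why the list $\mcm$ is exactly $\{2+\tfrac2m:m\in\zz\setminus\{0\}\}\cup\{2\}$.
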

   {\bf Addendum to Theorem \ref{classpqr}.} {\it The variable change $(\wt z,\wt w)=(\frac{z}{w},\frac1{w})$ 
   transforms a $(2,1,\rho)$-billiard to a $(2,1,4-\rho)$-billiard. It interchanges 
 the integrals $R_\rho(\wt z,\wt w)$ and $R_{4-\rho}(z,w)$  given by the above formulas for every $\rho\in\mcm$.}
 
\medskip
Everywhere below by $L=L_{(1,1)}$ we denote the projective tangent line to  the curve $\gpq$ at the point $(1,1)$. 
\begin{remark} \label{remquasint} A $(p,q)$-quasihomogeneous rational function is an integral of the $\pqr$-billiard, if and only 
if its restriction to $L$ written in the coordinate $z$ is $\er$-invariant, see the above proof of Theorem \ref{quasi-pqr}.
\end{remark}
\begin{remark} \label{remquas}
It is well-known that each $(p,q)$-quasihomogeneous polynomial is a product of powers of {\it prime} 
quasihomogeneous polynomials $z$, $w$, $w^q-c_jz^p$ with $c_j\in\cc\setminus\{0\}$. 
\end{remark}
The proof of Theorem \ref{quasi-pqr} is based on the following  formula for the Hessian (calculated in the coordinates 
$(z,w)$) of a product 
\begin{equation} G(z,w)=(w^q-z^p)z^{\alpha}w^{\beta}\prod_{j=2}^M(w^q-c_jz^p)^{\mu_j}, \ \alpha,\beta,\mu_j\in\rr, \ c_j\neq0,1.\label{gzwmu}\end{equation}
\begin{proposition}\label{hesshom} Let $G$ be the same, as in (\ref{gzwmu}), with $c_j\in\cc\setminus\{0,1\}$. Set 
$$N:=1+\sum_{j=2}^M\mu_j, \ \rho_0=\frac23(r+1), \ r=\frac pq.$$
 There exists a $c\in\cc\setminus\{0\}$ such that 
\begin{equation} H(G)|_{\gpq}= c z^d;  \ d=3(pN+\alpha+\beta r-\rho_0).\label{ashes}\end{equation}
Formula (\ref{ashes}) holds for $c=qp(q-p)\left(\prod_{j=2}^p(1-c_j)\right)^3$. 
\end{proposition}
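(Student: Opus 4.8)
The plan is to deduce everything from the $(p,q)$-quasihomogeneity of $G$. First I would record that the function $G$ in (\ref{gzwmu}) is $(p,q)$-quasihomogeneous: each factor $w^q-z^p$ and $w^q-c_jz^p$ has $\deg_{p,q}=pq$, while $z^\alpha$, $w^\beta$ contribute $q\alpha$, $p\beta$, so that $G(t^qz,t^pw)=t^\delta G(z,w)$ with
$$\delta=pqN+q\alpha+p\beta,\qquad N=1+\sum_{j=2}^M\mu_j.$$
Next I would observe that the Hessian operator of (\ref{hessdef}), computed in the coordinates $(z,w)$, is homogeneous of degree $3$ in $G$ (each of its three summands is a product of one second-order and two first-order partials), and that under the scaling $S_t\colon(z,w)\mapsto(t^qz,t^pw)$ each such summand acquires exactly the factor $t^{2p+2q}$ (first partials in $z$, resp. $w$, scale by $t^q$, resp. $t^p$, and the relevant second partials by $t^{2q}$, $t^{p+q}$, $t^{2p}$). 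Comparing the two expressions for $H(G\circ S_t)$, namely $t^{3\delta}H(G)$ and $t^{2p+2q}\,(H(G)\circ S_t)$, I would conclude that $H(G)$ is itself $(p,q)$-quasihomogeneous, of degree $3\delta-2p-2q$.

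Second, the curve $\gpq$ is the $\cc^*$-orbit of the point $(1,1)$ under the action $S_t$, and along it $z=t^q$. Hence any $(p,q)$-quasihomogeneous (possibly multivalued) function $\Psi$ of degree $\mu$ restricts to $\gpq$ as $\Psi(1,1)\,z^{\mu/q}$. Applying this to $\Psi=H(G)$, $\mu=3\delta-2p-2q$, gives $H(G)|_{\gpq}=c\,z^d$ with $c=H(G)(1,1)$ and
$$d=\frac{3\delta-2p-2q}{q}=3pN+3\alpha+3\beta r-2r-2,\qquad r=\frac pq.$$
Since $3\rho_0=2(r+1)$, this $d$ equals $3(pN+\alpha+\beta r-\rho_0)$, which is the exponent in (\ref{ashes}); this step is pure bookkeeping.

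Third, to identify the constant $c=H(G)(1,1)$ I would write $G=fg$ with $f=w^q-z^p$ and $g=z^\alpha w^\beta\prod_{j=2}^M(w^q-c_jz^p)^{\mu_j}$, noting $f(1,1)=0$ while $g$ does not vanish on $\gpq$. By the Hessian-of-product formula (\ref{hess3}), $H(G)=g^3H(f)$ on $\{f=0\}$, so $c=g(1,1)^3\,H(f)(1,1)$. A one-line computation with $f_z=-pz^{p-1}$, $f_w=qw^{q-1}$, $f_{zz}=-p(p-1)z^{p-2}$, $f_{zw}=0$, $f_{ww}=q(q-1)w^{q-2}$ gives
$$H(f)(1,1)=-p(p-1)q^2+q(q-1)p^2=pq(q-p),$$
while $g(1,1)=\prod_{j=2}^M(1-c_j)^{\mu_j}$; hence $c=qp(q-p)\,g(1,1)^3$, which is nonzero because $q<p$ and each $c_j\neq1$, so in particular $H(G)|_{\gpq}\not\equiv0$.

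The only delicate point — and what I expect to be the main thing to get right rather than an actual obstacle — is that for non-integral exponents $\alpha$, $\beta$, $\mu_j$ the object $G$ is not a polynomial but a product of (rational or real) powers of holomorphic germs, hence multivalued. So the quasihomogeneity identity, the degree-$3$ scaling of the Hessian, and formula (\ref{hess3}) must all be applied branch by branch, and the conclusion $H(G)|_{\gpq}=c\,z^d$ is to be read as the leading (and, here, only) term of a Laurent--Puiseux expansion, in the sense of Remark \ref{ashessrem}. Once this framework is fixed the computation is entirely formal.
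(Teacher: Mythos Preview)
Your proof is correct and very close in spirit to the paper's. Both arguments hinge on the product formula (\ref{hess3}) to reduce to the Hessian of $f=w^q-z^p$, and both compute $H(f)$ explicitly from its partial derivatives. The only organizational difference is in how the exponent $d$ is obtained: the paper computes $H(w^q-z^p)$ as a polynomial, restricts it to $\gpq$ via $w=z^r$ to read off $qp(q-p)z^{3p-2(r+1)}$, and then multiplies by $g^3|_{\gpq}$; you instead invoke the $(p,q)$-quasihomogeneity of $G$ and the scaling behaviour of the Hessian operator to get the exponent $3\delta-2p-2q$ directly, reducing the constant to an evaluation at the single point $(1,1)$. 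Your route makes the symmetry reason for the monomial form of $H(G)|_{\gpq}$ explicit, while the paper's is marginally shorter; they are equivalent computations. (Your remark about branches and the Puiseux interpretation for non-integral $\alpha,\beta,\mu_j$ is apt and matches the paper's implicit use of Remark \ref{ashessrem}; note also that the paper's displayed constant $c$ contains evident typos --- the product should run to $M$ with exponents $3\mu_j$ --- and your formula $c=qp(q-p)\bigl(\prod_{j=2}^M(1-c_j)^{\mu_j}\bigr)^3$ is the correct one.)
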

\begin{proof} The   Hessian of the defining polynomial $w^q-z^p$ of the curve $\gpq$ calculated in the coordinates $(z,w)$ 
is equal to 
$$H(w^q-z^p)=q(q-1)p^2w^{q-2}z^{2(p-1)}-p(p-1)q^2z^{p-2}w^{2q-2}.$$
Its restriction to  $\gpq$ is equal to the same expression with $w$ replaced by $z^r$, 
which yields $qp(q-p)z^{3p-2(r+1)}$. Each polynomial $w^q-c_jz^p$ being restricted to $\gpq$ is equal 
to $(1-c_j)z^p$. This together with (\ref{hess3})  implies (\ref{ashes}). 
\end{proof}
 
 \subsection{Case of $(p,q)$-quasihomogeneous polynomial integral}
 
 \begin{proposition} \label{pro2cases} Let a $\pqr$-billiard admit a $(p,q)$-quasihomogeneous 
 polynomial integral. Then  $\rho=0$,  $p=2$, $q=1$, and the polynomial $w-z^2$ is an integral. 
 \end{proposition}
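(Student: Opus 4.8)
The plan is to use the $(p,q)$-quasihomogeneous polynomial integral $Q(z,w)$ together with the $\eta_\rho$-invariance criterion from Remark \ref{remquasint}. First I would reduce to a \emph{prime} quasihomogeneous polynomial: by Remark \ref{remquas}, $Q$ factors as a product of powers of $z$, $w$, and factors $w^q-c_jz^p$ with $c_j\neq 0$. The restriction $Q|_L$ to the tangent line $L=L_{(1,1)}$ (written in the coordinate $z$) must be $\eta_\rho$-invariant, hence a rational function of the unique $\eta_\rho$-invariant rational function of degree $2$ on $\oc_z$; in particular $Q|_L$ cannot be an arbitrary polynomial unless its degree is controlled. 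Since $z$ restricts to $z$ on $L$, $w$ restricts to the linear function $1-r+rz$, and $w^q-c_jz^p$ restricts to $\mcr_{p,q,c_j}(z)=(1-r+rz)^q-c_jz^p$ (a polynomial of degree $\max(p,q)=p$), the degree of $Q|_L$ equals $q\cdot(\text{power of }w)+p\cdot(\text{number of }w^q-c_jz^p\text{ factors, with multiplicity})+(\text{power of }z)$. An $\eta_\rho$-invariant \emph{polynomial} on $\oc_z$ (with no poles) must in fact be a polynomial in the basic degree-$2$ invariant; but a degree-$2$ invariant that is a genuine polynomial forces $\eta_\rho$ to fix $\infty$, i.e. $\rho=0$, in which case $\eta_0:z\mapsto 2-z$ and the invariant polynomial of minimal degree is $(z-1)^2$ (vanishing at the fixed point $z=1$ to order exactly $2$).

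Granting $\rho=0$: then $Q|_L$ is an $\eta_0$-invariant polynomial, hence a polynomial in $(z-1)^2$, and the minimal possible positive degree of a non-constant such restriction is $2$. So the minimal building block on the curve side is a quasihomogeneous prime polynomial whose restriction to $L$ has degree $2$. Checking the three prime types: $z|_L=z$ has degree $1$ and is \emph{not} $\eta_0$-invariant (it is not even in $z-1$); $w|_L=1-r+rz$ has degree $1$, also not invariant; $(w^q-c_jz^p)|_L=\mcr_{p,q,c_j}(z)$ has degree $p\geq 2$. For this last restriction to be $\eta_0$-invariant of minimal degree $2$ we need $p=2$ and $\mcr_{2,q,c_j}(z)$ proportional to $(z-1)^2$. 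With $p=2$ and $(p,q)=1$ we get $q=1$, so $r=2$, and $\mcr_{2,1,c}(z)=(-1+2z)^2-cz^2=(4-c)z^2-4z+1$; requiring this to be a multiple of $(z-1)^2=z^2-2z+1$ forces $-4/(4-c)=-2$ and $1/(4-c)=1$, i.e. $c=0$ — contradicting $c_j\neq 0$. Hence no factor $w^q-c_jz^p$ with $c_j\neq 0$ can appear, and the only prime quasihomogeneous polynomial left whose restriction could be invariant is $w-z^2$ itself: indeed $(w-z^2)|_L=(1-r+rz)-z^2=-z^2+2z-1=-(z-1)^2$ is $\eta_0$-invariant of degree $2$. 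So $p=2$, $q=1$, and up to a constant the only quasihomogeneous polynomial integral (of minimal degree) is $Q=w-z^2$; conversely one checks directly that $w-z^2$ is an integral of the $(2,1;0)$-billiard, since its restriction to every tangent line of $\{w=z^2\}$ is $\eta_0$-invariant.

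The main obstacle is the first step: ruling out $\rho\neq 0$ and more generally showing that a polynomial (pole-free) $\eta_\rho$-invariant function on the line is forced to have the rigid form above. I expect to argue this via the fixed-point structure of $\eta_\rho$: for $\rho\neq 0$, $\eta_\rho$ interchanges $\infty$ with the finite point $\theta_\rho=\frac{\rho-1}{\rho}$, so any $\eta_\rho$-invariant rational function with a pole at $\infty$ (which every non-constant polynomial has) must also have a pole at $\theta_\rho$, hence cannot be a polynomial; therefore $Q|_L$ would be constant, forcing $Q$ to be a power of a prime whose restriction to $L$ is constant — and among $z$, $w$, $w^q-c_jz^p$ only $w^q-z^p$ restricts to a nonzero constant multiple of a power of $z$ while $z$, $w$ restrict to non-constants, so this case collapses and we are reduced to the tractable analysis above, giving $\rho=0$. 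Once $\rho=0$ is pinned down, the rest is the elementary computation already indicated, and the alternative degenerate sub-cases (extra powers of $z$ or $w$ multiplying $w-z^2$) are excluded because they would break either quasihomogeneity of the claimed minimal integral or, after restriction to $L$, the $\eta_0$-invariance.
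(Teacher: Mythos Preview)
Your deduction of $\rho=0$ is correct and is exactly the paper's first step: since $Q|_L$ is a nonconstant $\eta_\rho$-invariant polynomial, its unique pole $\infty$ must be fixed by $\eta_\rho$, hence $\rho=0$. (Your final paragraph muddles this by speaking of $Q|_L$ being constant and of $w^q-z^p$ restricting to ``a power of $z$''; you seem to conflate restriction to $L$ with restriction to $\gamma_{p,q}$ there.) The check that $w-z^2$ is an integral of the $(2,1;0)$-billiard is also fine.

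The real gap is the step $p=2$, $q=1$. Your ``minimal building block'' argument treats the prime factors $z$, $w$, $w^q-c_jz^p$ one at a time and asks which single prime restricts to a multiple of $(z-1)^2$. But $\eta_0$-invariance of the \emph{product} $Q|_L$ does not force any individual factor's restriction to be $\eta_0$-invariant: roots contributed by different prime factors may be interchanged by $z\mapsto 2-z$. Nothing you wrote excludes, say, $p=3$, $q=1$ with a product of several $w-c_jz^3$ whose combined root set on $L$ is symmetric under $\eta_0$. There is also an algebra slip: with $q=1$ one has $\mathcal R_{2,1,c}(z)=(2z-1)-cz^2=-cz^2+2z-1$, not $(2z-1)^2-cz^2$; the correct proportionality to $(z-1)^2$ gives $c=1$ (the factor $w-z^2$), not $c=0$. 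The paper obtains $p=2$, $q=1$ by a quantitative route you have not reproduced: after observing that $Q$ must vanish on $\gamma_{p,q}$, it sets $G=Q^{1/n_1}$ and applies the Hessian asymptotics (Propositions~\ref{hesshom} and~\ref{prhod}) to get $0=\rho=\rho_0-(pN+\wt\alpha+\wt\beta r)$ with $\rho_0=\tfrac23(r+1)$ and $N\geq1$; the resulting inequality $\tfrac23(r+1)\geq qNr$ then forces $q=N=1$ and $r=p=2$. You would need either to invoke this computation or to supply a genuine combinatorial argument on the global root set of $Q|_L$; the latter is missing.
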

\begin{proof} The  restriction to $L$ of a  $(p,q)$-quasihomogeneous 
 polynomial integral $\mcp$ is $\er$-invariant and has one pole, at infinity. Hence, $\er(\infty)=\infty$, thus, $\rho=0$. 
 Its restriction to $\gpq$ 
 should be constant, see Proposition \ref{proalg}. On the other hand, the latter restriction written in the coordinate 
 $z$ is a monomial $cz^\phi$. Therefore, $c=0$ and $\mcp|_{\gpq}\equiv0$. 
  Hence, 
  $$\mcp(z,w)=z^\alpha w^\beta\prod_{j=1}^M(w^q-c_jz^p)^{n_j}, \ c_j\neq0,  \  c_1=1, \ c_j\text{ are distinct, } \alpha,\beta\in\zz_{\geq0},$$ 
  see Remark \ref{remquas}. Set $k=n_1$, 
  $$G(z,w):=\mcp^{\frac1{k}}(z,w)=z^{\wt\alpha}w^{\wt\beta}(w^q-z^p)\prod_{j=2}^M(w^q-c_jz^p)^{\mu_j},$$ 
  $$\mu_j:=\frac{n_j}k, \ \wt\alpha:=\frac\alpha{k}, \ \wt\beta:=\frac{\beta}k, \ N:=1+\sum_{j=2}^M\mu_j.$$
 The restriction to $\gpq$ of the Hessian $H(G)$ is given by formula (\ref{ashes}). 
  Hence, 
$$\rho=-\frac{d}3=\rho_0-(pN+\alpha+\beta r)=\frac23(r+1)-qNr-(\alpha+\beta r),$$
by (\ref{rhod}). The latter right-hand side should vanish, since $\rho=0$. Therefore, 
$\frac23(r+1)\geq qNr$, hence $r\leq\frac2{3Nq-2}$. But $r>1$, and $N,q\geq1$. Therefore, $q=1$, and $r\leq2$. 
Hence, $p=r=2$. 
 
 Let us now show that the polynomial $w-z^2$ is an integral of the $(2,1;0)$-billiard. 
 Indeed,   its restriction to the tangent line $L=L_{(1,1)}$  is the polynomial $-1+2z-z^2=-(z-1)^2$; 
 here $\zeta=z$. 
 The latter polynomial is clearly invariant under the involution $\eta_0:z\mapsto2-z$. Hence, $w-z^2$ is an integral, by Remark 
 \ref{remquasint}. Proposition \ref{pro2cases} is proved.
 \end{proof}
 \subsection{Normalization of rational integral to primitive one} 
 
 Here we consider a quasihomogeneously integrable $\pqr$-billiard. We prove that its  integral (if it cannot be reduced to a polynomial) 
 can be  normalized to a primitive integral, see definitions and Lemma \ref{lchichi} below. 
 
 The restriction of a $(p,q)$-quasihomogeneous polynomial  $\mcp$ to 
 the tangent line $L=L_{(1,1)}$ to the curve $\gpq$ at the point $(1,1)$ is a polynomial in the coordinate $z$ on $L$. One has 
 \begin{equation}w|_L=1-r+rz, \ (w^q-cz^p)|_L=\mcr_{p,q,c}(z):=(1-r+rz)^q-cz^p.\label{prest}\end{equation}
\begin{definition} The roots of the restriction $\mcp|_L$ of a $(p,q)$-quasihomogeneous polynomial $\mcp$ 
will be called its {\it tangent line roots}. The 
linear combination of points representing roots with coefficients equal to their multiplicities is a divisor 
on $L\simeq\cc_z$. It will be called the {\it root divisor} of the polynomial $\mcp$ and denoted by 
$\chi(\mcp)$. Sometimes we will deal with $\chi(\mcp)$ as with a collection of roots, e.g., when we write inclusions 
that some points belongs to $\chi(\mcp)$. 
\end{definition}

\begin{definition}
Recall that the {\it complement of a divisor $\chi$ to a point $\theta$} is 
the divisor $\chi$  with the term corresponding to the point $\theta$  deleted. Set 
$$\theta_\rho:=\frac{\rho-1}\rho=\er(\infty).$$
 A $(p,q)$-quasihomogeneous polynomial $\mcp$ is called {\it $\er$-quasi-invariant,} if 
the complement $\chi(\mcp)\setminus\{\theta_\rho\}$  is $\er$-invariant. A $\er$-quasi-invariant polynomial 
$\mcp$ is  {\it $\er$-primitive}, if it is not a product of two $\er$-quasi-invariant 
polynomials. 
\end{definition}
\begin{proposition} \label{proprim} 1) A primitive $\er$-quasi-invariant polynomial $\mcp$ 
is (up to constant factor) a product $\prod_jQ_j$ of  some distinct prime 
$(p,q)$-quasihomogeneous polynomials $Q_j$ equal to $w^q-c_jz^p$,  $z$ or $w$. 

2) Any two prime factors 
$Q_k$, $Q_\ell$ are {\bf equivalent} in the following sense: there exists a finite sequence $k=j_1,j_2,\dots,j_m=\ell$ 
such that for every $s=1,\dots,m-1$ there exist tangent line roots $z_s$, $z_{s+1}$ of the polynomials $Q_{j_s}$ 
and $Q_{j_{s+1}}$ respectively such that $z_{s+1}=\er(z_s)$. 

3) If $\rho=r$, then either $\mcp=cw$, $c\in\cc\setminus\{0\}$, or 
$\mcp$ contains no $w$-factor. 

4) For any two distinct primitive $\er$-quasi-invariant polynomials  their  tangent line root collections do not intersect. 

5) Every $\er$-quasi-invariant polynomial is a product of powers of primitive ones.
\end{proposition}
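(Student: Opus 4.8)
The plan is to recast the whole statement as combinatorics of root divisors on the tangent line $L=L_{(1,1)}$. First I would establish two elementary structural facts about the restrictions to $L$ of the prime $(p,q)$-quasihomogeneous polynomials (Remark \ref{remquas}): using $r=\frac pq>1$ and (\ref{prest}), the tangent line root supports of $z$, of $w$, and of the distinct polynomials $w^q-cz^p$ are \emph{pairwise disjoint} (a finite point of $L$ lies in the root support of at most one prime), and the \emph{only} prime whose restriction to $L$ has a repeated root is $w^q-z^p$, with a double root located precisely at $z=1$ — a discriminant computation forces any multiple root of $(1-r+rz)^q-cz^p$ to sit at $z=1$, which forces $c=1$, and the second derivative there is nonzero since $p\ne q$. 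Moreover $z=1$ is a fixed point of $\er$ for every $\rho$ by (\ref{defeta}). I would also record, from the definition of the complement of a divisor to a point, that $\chi(\mcp_1\mcp_2)\setminus\{\tro\}=(\chi(\mcp_1)\setminus\{\tro\})+(\chi(\mcp_2)\setminus\{\tro\})$ and $\chi(\mcp^n)\setminus\{\tro\}=n(\chi(\mcp)\setminus\{\tro\})$, so products and powers of $\er$-quasi-invariant polynomials are $\er$-quasi-invariant.

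Next, for an arbitrary $\er$-quasi-invariant $\mcp=c\prod_iQ_i^{n_i}$ with distinct primes $Q_i$, I would partition the $Q_i$ into classes under the equivalence relation of statement 2). Using disjointness of root supports together with the $\er$-invariance of $\chi(\mcp)\setminus\{\tro\}$, I would show that for each class $\mathcal A$ the union $T_{\mathcal A}$ of the root supports of its members satisfies: $T_{\mathcal A}\setminus\{\tro\}$ is $\er$-invariant. The point that needs care is that an equivalence chain never passes through $\tro$ as an intermediate vertex, since $\er(\tro)=\infty$ is not a root, and never passes nontrivially through the double root $z=1$, since two primes sharing $z=1$ coincide by disjointness; hence every root occurring along a nontrivial chain is a \emph{simple} root of its prime, and comparing multiplicities across chains via $\er$-invariance of $\chi(\mcp)\setminus\{\tro\}$ shows the exponents $n_i$ are constant on each class, say $n_{\mathcal A}$. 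This yields the canonical factorization $\mcp=c\prod_{\mathcal A}\mcp_{\mathcal A}^{\,n_{\mathcal A}}$ with $\mcp_{\mathcal A}:=\prod_{Q_i\in\mathcal A}Q_i$ squarefree and $\er$-quasi-invariant.

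Then I would prove that each $\mcp_{\mathcal A}$ is primitive: if $\mcp_{\mathcal A}=\mcp_1\mcp_2$ with both factors $\er$-quasi-invariant and nonconstant, then by squarefreeness $\mcp_1,\mcp_2$ correspond to a partition $A\sqcup B$ of the primes of $\mathcal A$ into nonempty sets, and a chain linking $A$ to $B$ produces a root $z_0\ne\tro$ of a prime in $A$ with $\er(z_0)$ a root of a prime in $B$, contradicting $\er$-quasi-invariance of $\mcp_1$ by disjointness. Statement 5) then follows at once from the canonical factorization. For statements 1) and 2): a primitive $\mcp$ cannot have two classes nor an exponent $n_{\mathcal A}\ge2$, since in either case the product/power remark splits off a proper $\er$-quasi-invariant factor; hence $\mcp=c\prod_{Q_i\in\mathcal A}Q_i$ is a product of distinct primes, all mutually equivalent (and of the shape $z,w,w^q-c_jz^p$ by Remark \ref{remquas}). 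For statement 4): two distinct primitive $\er$-quasi-invariant polynomials sharing a tangent line root must, by disjointness, share a prime factor; if they were not proportional, one of them splits as $(\prod_{A}Q_i)(\prod_{B}Q_i)$, with $A$ the common prime factors and $B$ the rest, both products $\er$-quasi-invariant by the same disjointness–invariance argument, contradicting primitivity. Finally statement 3) is again pure bookkeeping: when $\rho=r$ one has $\tro=\tr$, the unique tangent line root of $w$, so $w$ is itself $\er$-quasi-invariant; if $w\mid\mcp$, peeling it off writes $\mcp=w\cdot(w^{a-1}\tilde{\mcp})$ with the second factor still $\er$-quasi-invariant, and primitivity forces $\mcp=cw$.

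The main obstacle I anticipate is making the divisor bookkeeping around the two exceptional points watertight: the exempt point $\tro=\er(\infty)$ and the unique possible repeated tangent line root $z=1$, which is a fixed point of $\er$ — one must check carefully that neither breaks the constancy of exponents along equivalence chains and the $\er$-invariance statements for the sub-products $\prod_A Q_i$. Upstream of that, everything rests on the two elementary facts of the first paragraph (pairwise disjointness of prime root supports, and the discriminant computation localizing the only repeated root at $z=1$ with $c=1$), so I would want those pinned down first.
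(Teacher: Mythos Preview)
Your proposal is correct and follows essentially the same approach the paper intends: the paper's entire proof is the one-line remark that the proposition ``follows from definition and the fact that the polynomial $w|_L=1-r+rz$ has one root $\frac{r-1}r$,'' and your argument is precisely the detailed unpacking of that remark via root-divisor combinatorics on $L$. Your two foundational facts---pairwise disjointness of the tangent-line root supports of the primes $z$, $w$, $w^q-cz^p$, and the discriminant computation showing the only repeated root among all $\mcr_{p,q,c}$ is the double root at $z=1$ for $c=1$---are both correct and are exactly what make the equivalence-class decomposition and the constancy-of-exponents argument go through; the edge-case bookkeeping you flag (chains cannot pass through $\tro$ since $\er(\tro)=\infty$, and a step through $z=1$ is necessarily trivial since only $w^q-z^p$ has $1$ as a root) is handled correctly.
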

The proposition follows from definition and the fact that the polynomial $w|_L=1-r+rz$ has one root $\frac{r-1}r$. 
\begin{definition} A  $(p,q)$-quasihomogeneous {\it  rational integral} of the $\pqr$-billiard is {\it $\er$-primitive,} if it is 
a ratio of nonzero powers of two non-trivial primitive $\er$-quasi-invariant $(p,q)$-quasihomogeneous polynomials.
\end{definition}
\begin{lemma} \label{lchichi} Let a $\pqr$-billiard be quasihomogeneously integrable and admit no 
polynomial $(p,q)$-quasihomogeneous integral. Then it admits a $\er$-primitive 
rational integral vanishing identically on $\gpq$, and $\rho\neq0$. 
\end{lemma}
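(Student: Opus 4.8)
The plan is to begin with an arbitrary non-constant $(p,q)$-quasihomogeneous rational integral $R=\mcp_1/\mcp_2$, with $\mcp_1,\mcp_2$ coprime $(p,q)$-quasihomogeneous polynomials (both non-trivial, since by hypothesis there is no polynomial integral), and to normalize it in stages. First I would arrange $R|_{\gpq}\equiv0$. By Proposition \ref{proalg}, $R|_{\gpq}$ is constant; restricting the prime quasihomogeneous factors (Remark \ref{remquas}) to $\gpq$ via $t\mapsto(t^q,t^p)$ shows that $w^q-z^p$ is the only prime vanishing on $\gpq$, and that a $(p,q)$-quasihomogeneous polynomial $\mcp$ with $w^q-z^p\nmid\mcp$ restricts to $\mcp(1,1)\,t^{\deg_{p,q}\mcp}$ with $\mcp(1,1)\neq0$. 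So, replacing $R$ by $R^{-1}$ if necessary, I may assume $w^q-z^p\nmid\mcp_2$; and if $w^q-z^p\nmid\mcp_1$ as well, then $R|_{\gpq}$ is a non-zero constant $c_0$ and $\deg_{p,q}\mcp_1=\deg_{p,q}\mcp_2$, whence $R-c_0=(\mcp_1-c_0\mcp_2)/\mcp_2$ has $(p,q)$-quasihomogeneous numerator divisible by $w^q-z^p$; clearing common factors I reduce to $R=\mcp_1/\mcp_2$ with $\mcp_1,\mcp_2$ coprime and non-trivial, $w^q-z^p\mid\mcp_1$, $w^q-z^p\nmid\mcp_2$, and $R|_{\gpq}\equiv0$.

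Next I claim that $\mcp_1$ and $\mcp_2$ are automatically $\er$-quasi-invariant. The restriction $R|_L$ is $\er$-invariant by Remark \ref{remquasint}, and distinct prime $(p,q)$-quasihomogeneous polynomials have disjoint root divisors on $L$ (the roots of $z|_L$, $w|_L$, $(w^q-cz^p)|_L$ being $0$, $\frac{r-1}{r}$, and the roots of $(1-r+rz)^q=cz^p$, which for distinct non-zero $c$ share no root and avoid $0$ and $\frac{r-1}{r}$). Hence coprimality makes $R|_L=\mcp_1|_L/\mcp_2|_L$ a reduced fraction, so the divisors of zeros and of poles of $R|_L$ on $\cp^1$ are $\chi(\mcp_1|_L)$ and $\chi(\mcp_2|_L)$, each at most augmented by a multiple of $[\infty]$ for the factor of smaller $z$-degree; both are $\er$-invariant, and since $\er$ swaps $\infty$ and $\theta_\rho$, removing this orbit keeps invariance, so $\chi(\mcp_j)\setminus\{\theta_\rho\}$ is $\er$-invariant, i.e. $\mcp_j$ is $\er$-quasi-invariant. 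I would also deduce here that $\rho\neq0$: if $\rho=0$ then $\theta_0=\infty$, so $\er$-quasi-invariance of $\mcp_1$ means $\chi(\mcp_1|_L)$ is invariant under the affine involution $\eta_0\colon\zeta\mapsto2-\zeta$, whence $\mcp_1|_L(2-\zeta)=(-1)^{\deg\mcp_1|_L}\mcp_1|_L(\zeta)$ and therefore $(\mcp_1^2)|_L$ is $\eta_0$-invariant; by Remark \ref{remquasint}, $\mcp_1^2$ is then a non-constant polynomial $(p,q)$-quasihomogeneous integral, contradicting the hypothesis.

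Now I would decompose $\mcp_1,\mcp_2$ into powers of primitive $\er$-quasi-invariant polynomials (Proposition \ref{proprim}), take $\mcq_1$ to be the primitive factor of $\mcp_1$ containing $w^q-z^p$ and $\mcq_2$ to be any primitive factor of $\mcp_2$ (both non-trivial, coprime, with $w^q-z^p\nmid\mcq_2$). Writing $\chi(\mcq_1|_L)=D_1+n_1[\theta_\rho]$, $\chi(\mcq_2|_L)=D_2+n_2[\theta_\rho]$ with $D_i$ $\er$-invariant and $n_i\geq0$, the divisor of $\mcq_1^a/\mcq_2^b$ on $L$ equals $aD_1-bD_2+(an_1-bn_2)[\theta_\rho]+(b\deg\mcq_2|_L-a\deg\mcq_1|_L)[\infty]$, which is $\er$-invariant precisely when $a(n_1+\deg\mcq_1|_L)=b(n_2+\deg\mcq_2|_L)$; both parenthesised quantities being positive, suitable positive integers $a,b$ exist. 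For such $a,b$ the pullback $\er^{*}\big((\mcq_1^a/\mcq_2^b)|_L\big)$ equals $(\mcq_1^a/\mcq_2^b)|_L$ up to a nonzero constant, which squares to $1$ since $\er$ is an involution; hence $\wt R:=\mcq_1^{2a}/\mcq_2^{2b}$ satisfies $\er^{*}(\wt R|_L)=\wt R|_L$. By Remark \ref{remquasint} it is a non-constant $\er$-primitive rational integral, and it vanishes on $\gpq$ because $w^q-z^p\mid\mcq_1$ while $w^q-z^p\nmid\mcq_2$, which is the required conclusion.

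I expect the main obstacle to be the second paragraph: the no-cancellation argument (disjointness of the root divisors of distinct primes on $L$) that forces $\mcp_1,\mcp_2$ to be $\er$-quasi-invariant, together with the bookkeeping of the zero and pole divisors of $R|_L$ at the orbit $\{\infty,\theta_\rho\}$. Once that is in place, the primitive decomposition and the exponent-matching in the third paragraph, and the exclusion of $\rho=0$ via the affineness of $\eta_0$, are comparatively routine.
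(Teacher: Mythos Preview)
Your proposal is correct and follows essentially the same approach as the paper: normalize the integral to vanish on $\gpq$, deduce $\er$-quasi-invariance of numerator and denominator from $\er$-invariance of $R|_L$, decompose into primitive factors, and choose exponents via the degree/multiplicity balance at the $\er$-orbit $\{\infty,\theta_\rho\}$. The only minor differences are that you spell out the disjoint-root-divisor argument explicitly and use squaring to kill the $\pm1$ ambiguity, whereas the paper invokes Proposition~\ref{invzero} (using that $(w^q-z^p)|_L$ has a double zero at $\zeta=1$) to conclude invariance directly.
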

\begin{proof} Let $R$ be a quasihomogeneous integral of the $\pqr$-billiard represented as an irreducible ratio 
of two quasihomogeneous polynomials:  numerator and denominator, both being non-constant 
(absence of polynomial integral). Its restriction to the curve $\gpq$ 
is constant, by Proposition \ref{proalg}. If the latter constant is finite non-zero, then the numerator and the denominator have equal $(p,q)$-quasihomogeneous degrees. Therefore, replacing the numerator by its linear combination 
with denominator one can get  another quasihomogeneous integral that vanishes identically on $\gpq$. If the above constant is 
infinity, we replace $R$ by $R^{-1}$ and get an integral vanishing on $\gpq$. Thus, we can and will consider 
that $R\equiv0$ on $\gpq$. Both numerator and denominator are $\er$-quasi-invariant, which follows from 
$\er$-invariance of the restriction of the integral to the tangent line $L=L_{(1,1)}$. Therefore, 
they are products of powers of primitive $\er$-quasi-invariant polynomials. Among all the 
$\er$-quasi-invariant primitive factors in the numerator and the denominator there are at least two distinct ones, 
by irreducibility and non-polynomiality of the ratio $R$. Take one of them $\mcp_1$, vanishing identically on $\gpq$ (hence, divisible 
by $w^q-z^p$) and another one $\mcp_2$. For every $i=1,2$ one has 
\begin{equation}\mcp_i(z,w)=z^{\alpha_i}w^{\beta_i}\prod_{j=1}^{N_i}(w^q-c_{ij}z^p), \ \ c_{ij}\neq0; \label{newmcp}\end{equation} 
$$\alpha_i,\beta_i\in\{0,1\}, \ 
\alpha_1\alpha_2=\beta_1\beta_2=0, \ c_{11}=1, \ \text{ all } c_{ij} \text{ are distinct,}$$
 by Proposition \ref{proprim}. Set now 
\begin{equation}R(z,w):=\frac{\mcp_1^{m_1}}{\mcp_2^{m_2}}, \ \ d_i:=\deg\mcp_i=N_ip+\alpha_i+\beta_i.
\label{rprim}\end{equation}
\begin{proposition} \label{tchichi} 
 The ratio (\ref{rprim}) of powers of two non-trivial primitive $\er$-quasi-invariant polynomials $\mcp_1$, $\mcp_2$ 
is an integral of the $\pqr$-billiard, if the following relation holds:

\begin{equation}\text{Case 1), } \theta_{\rho}\notin\chi(\mcp_1)\cup\chi(\mcp_2): \ \ \ \ \ \ \ \ \ 
 d_1m_1=d_2m_2.\label{dm1}\end{equation}
\begin{equation}\text{Case 2), } \theta_\rho\in\chi(\mcp_1): \ \ \ \ \ \ \  \ \ \ \ \ \ 
(d_1+1)m_1=d_2m_2.\label{dm2}\end{equation}
\begin{equation}\text{Case 3), } \theta_\rho\in\chi(\mcp_2): \ \ \ \ \  \ \ \ \ \ \ \ \ 
 d_1m_1=(d_2+1)m_2.\label{dm3}\end{equation}
\end{proposition} 
As is shown below, Proposition \ref{tchichi} is implied by the following obvious
\begin{proposition} \label{invzero} Let a rational function $R(\zeta)$ either do not vanish at 1, or 
have 1 as a root of even degree. Then it is $\er$-invariant, if and only if its 
zero locus and its pole locus are both $\er$-invariant.
\end{proposition}

\begin{proof} A rational function  $R$ is uniquely determined by its zero and pole loci 
up to constant factor. Therefore, if the latter loci 
 are invariant under a conformal involution $\er$, then  $R\circ\er=\pm R$. 
 The sign $\pm$ is in fact $+$, taking into account the condition at the point 1, which is fixed by $\er$.
 \end{proof}

\begin{proof} {\bf of Proposition \ref{tchichi}.} Let us show, case by case, that if the corresponding relation (\ref{dm1}), 
(\ref{dm2}) or (\ref{dm3}) holds, then the zero and pole divizors of the restriction $R|_L$ are $\er$-invariant. 
This together with Proposition \ref{invzero} implies that $R|_L$ is $\er$-invariant, and hence, $R$ is an integral 
(Remark \ref{remquasint}). 

Case 1): $\theta_{\rho}\notin\chi(\mcp_1)\cup\chi(\mcp_2)$ and $m_1d_1=m_2d_2$. Then the infinity in $L$ is not 
a pole of the restriction $R|_L$. Therefore, its zeros (poles)  are zeros of the polynomial 
$\mcp_1|_L$ (respectively, $\mcp_2|_L$). Their divisors are $\er$-invariant, by $\er$-quasi-invariance of the 
polynomials $\mcp_i$, and since the root collections of their restrictions to $L$ do not 
contain $\theta_\rho=\er(\infty)$. Hence, $R|_L$ is $\er$-invariant. 

Case 2):  $\theta_{\rho}\in\chi(\mcp_1)$ and $m_1(d_1+1)=m_2d_2$. Then the infinity in $L$ is 
a zero of multiplicity $m_1$ of the restriction $R|_L$. The point $\theta_\rho$ is a simple root of  the polynomial 
$\mcp_1|_L$, by assumption and primitivity. This together with its $\er$-quasi-invariance 
implies that the zero divizor of the function $R|_L$ is $\er$-invariant. Its pole divisor, i.e., the zero divisor of the 
function $\mcp_2^{m_2}|_L$ is also $\er$-invariant, as in the above discussion.

Case 3) is treated analogously to Case 2). 
\end{proof}

Proposition \ref{tchichi} immediately implies the statement of Lemma \ref{lchichi}, except for the statement 
that $\rho\neq0$. Suppose the contrary: $\rho=0$. Then $\er(\infty)=\infty\notin\chi(\mcp_1)$. 
Therefore, the restriction to $L$ of the $\er$-quasi-invariant polynomial $\mcp_1$ is $\er$-invariant (Proposition \ref{invzero}). 
Hence, $\mcp_1$ is a polynomial integral of the $\pqr$-billiard. The contradiction thus obtained proves 
that $\rho\neq0$ and finishes the proof of Lemma \ref{lchichi}. 
\end{proof}

 \subsection{Case of rational integral. Two formulas for $\rho$} 
Here we treat the case, when the $\pqr$-billiard in question 
 admits a  rational quasihomogeous integral and does not admit a polynomial one: thus, $\rho\neq0$ 
 (Lemma \ref{lchichi}). 
 Everywhere below we consider that the integral $R(z,w)$ is $\er$-primitive, vanishes on $\gpq$ (Lemma 
 \ref{lchichi}) and is given by formula (\ref{rprim}) with $m_1, m_2\neq0$ satisfying some of relations 
 (\ref{dm1})-(\ref{dm3}). Set 
\begin{equation}
G(z,w):=(R(z,w))^\frac1{m_1}=\frac{\mcp_{1}}{\mcp_{2}^\nu}, \ \nu:=\frac{m_2}{m_1}.
\label{newg}\end{equation}
  We prove two different formulas for the 
 residue $\rho$, deduced  
 
 - on one hand,  from formula (\ref{ashes}) for the Hessian $H(G)$  and 
 formula (\ref{rhod}) expressing $\rho$ via the asymptotic exponent $d$; 
 
 -  on the other hand, 
 by applying a similar argument to a special $\er$-invariant function on $L_P\simeq\cc_z$: the ratio of the 
 numerator of the integral and a power of $z-\frac{\rho-2}\rho$. Combining the two formulas 
 for $\rho$ thus obtained, we will show in Subsection 4.4 that $p=2$, $q=1$ and $\rho\in\mcm$. 

In our case formulas (\ref{ashes}) and (\ref{rhod}) yield 
\begin{equation}H(G)|_{w^q=z^p}=cz^d, \ d=3((N_1-\nu N_2)p+\alpha_1-\nu\alpha_2+r(\beta_1-\nu\beta_2)-\rho_0),\label{hgnew}\end{equation} 
$$r=\frac pq, \ \ \rho_0=\frac23(r+1),$$
\begin{equation}\rho=-\frac d3=\rho_0-(d_1-\nu d_2)-(r-1)(\beta_1-\nu\beta_2), \ d_i=N_ip+\alpha_i+\beta_i.
\label{rhooo}\end{equation}
This is the First Formula for $\rho$. Substituting to (\ref{rhooo}) the relations between 
the degrees $d_1$ and $d_2$ given by Proposition \ref{tchichi} and taking into account that 
$\beta_j=1$ if and only if $\tr:=\frac{r-1}r\in\chi(\mcp_j)$, we get
\begin{proposition} \label{pfor1} Let  $d_1$, $d_2$, $G$ be as above. Then one has the following 
formulas for the residue $\rho$ dependently on whether or not some of the numbers $\theta_\rho=
\er(\infty)=\frac{\rho-1}{\rho}$, 
$\theta_r=\frac{r-1}r$ lie in some of  $\chi(\mcp_{1,2})$:
\begin{center}
\begin{tabular}{|l|l|l|l|r|}
\hline \ & $\tro\notin\chi(\mcp_1)\cup\chi(\mcp_2)$ & $\tro\in\chi(\mcp_1)$ & $\tro\in\chi(\mcp_2)$ \\
\hline $\tr\notin\chi(\mcp_1)\cup\chi(\mcp_2)$ & $\rho=\rho_0$ & $\rho=\rho_0+1$ & $\rho=\rho_0-\nu$ \\
\hline $\tr\in\chi(\mcp_1)$ & $\rho=\rho_1:=\rho_0+1-r$ & $\rho=\rho_1+1$ & $\rho=\rho_1-\nu$ \\
\hline $\tr\in\chi(\mcp_2)$ & $\rho=\rho_2:=\rho_0-\nu(1-r)$ & $\rho=\rho_2+1$ & $\rho=\rho_2-\nu$\\
\hline
\end{tabular}
\end{center}
\end{proposition}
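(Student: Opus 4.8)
The plan is to read off both sides of the First Formula (\ref{rhooo}) in terms of two pieces of combinatorial data: the value of $d_1-\nu d_2$, governed by the position of $\tro=\er(\infty)$, and the value of $\beta_1-\nu\beta_2$, governed by the position of $\tr=\frac{r-1}r$. Since $R$ is $\er$-primitive, $\mcp_1$ and $\mcp_2$ are two \emph{distinct} primitive $\er$-quasi-invariant polynomials, so by Proposition \ref{proprim}, part 4, their tangent line root collections are disjoint; hence each of $\tro$ and $\tr$ lies in at most one of $\chi(\mcp_1)$, $\chi(\mcp_2)$, which is what makes the $3\times3$ table a genuine case distinction.

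First I would pin down $d_1-\nu d_2$. Because $R|_L$ is $\er$-invariant (Remark \ref{remquasint}), $\er$ interchanges $\infty$ with $\tro$, and $\deg(\mcp_i|_L)=d_i$ by (\ref{prest}) together with $p>q$, the order of $R|_L$ at $\infty$ equals its order at $\tro$; comparing these two orders shows that $R$ satisfies the relation (\ref{dm1}), (\ref{dm2}) or (\ref{dm3}) according to whether $\tro$ avoids $\chi(\mcp_1)\cup\chi(\mcp_2)$, lies in $\chi(\mcp_1)$, or lies in $\chi(\mcp_2)$; here $\tro$, when it occurs, is a simple tangent line root, by primitivity. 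Dividing these relations by $m_1$ gives $d_1-\nu d_2=0$, $-1$, or $\nu$ respectively.

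Next I would pin down $\beta_1-\nu\beta_2$. One has $\beta_j\in\{0,1\}$ and $\beta_j=1$ iff $w\mid\mcp_j$; since by (\ref{prest}) the unique root of $w|_L$ is $\tr$, while $z|_L$ vanishes only at $0\ne\tr$ and $\mcr_{p,q,c}(\tr)=-c\,\tr^{p}\ne0$ for every $c\ne0$, we get $\beta_j=1\iff\tr\in\chi(\mcp_j)$. Using disjointness once more, $\beta_1-\nu\beta_2$ equals $0$, $1$, or $-\nu$ according to whether $\tr$ lies in neither of $\chi(\mcp_1)$, $\chi(\mcp_2)$, in $\chi(\mcp_1)$, or in $\chi(\mcp_2)$.

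Finally I would substitute the nine resulting pairs $(d_1-\nu d_2,\ \beta_1-\nu\beta_2)$ into $\rho=\rho_0-(d_1-\nu d_2)-(r-1)(\beta_1-\nu\beta_2)$ and, setting $\rho_1:=\rho_0+1-r$ and $\rho_2:=\rho_0-\nu(1-r)$, verify that the output reproduces the table entry by entry. I expect no genuine obstacle: all the substance is already contained in (\ref{rhooo}), Proposition \ref{tchichi} and Proposition \ref{proprim}, and the remaining task is the bookkeeping of the case analysis; the only inputs needing a word of justification are the disjointness and simplicity statements taken from Proposition \ref{proprim} and the non-degeneracy $\deg(\mcp_i|_L)=d_i$.
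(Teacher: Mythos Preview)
Your proposal is correct and follows essentially the same approach as the paper: substitute into the First Formula (\ref{rhooo}) the values of $d_1-\nu d_2$ determined by the relations (\ref{dm1})--(\ref{dm3}) (which track the position of $\tro$) and the values of $\beta_1-\nu\beta_2$ determined by the position of $\tr$. The only cosmetic difference is that the paper takes the relations (\ref{dm1})--(\ref{dm3}) as already given by the setup preceding the proposition (via Lemma \ref{lchichi} and Proposition \ref{tchichi}), whereas you re-derive them directly from the $\er$-invariance of $R|_L$ by comparing orders at $\infty$ and at $\tro$; both routes yield the same case split and the same nine-entry table.
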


The Second Formula for the residue $\rho$ is given by the next lemma.
\begin{lemma} \label{lfor2} Let $\mcp(z,w)=z^\alpha w^\beta\prod_{j=1}^N(w^q-c_jz^p)$ 
 be a primitive $\er$-quasi-invariant $(p,q)$-quasihomogeneous polynomial vanishing on $\gpq$: $c_1=1$. Set 
\begin{equation}d_\mcp:=\deg\mcp=Np+\alpha+\beta, \  \ 
\hat d_{\mcp}:=\left[\begin{array}{lr}d_\mcp, \text{ if } \theta_\rho\notin\chi(\mcp)\\
d_\mcp+1, \text{ if } \theta_\rho\in\chi(\mcp).\end{array}\right.\label{mchi}\end{equation}
Then the residue $\rho$ is expressed by the  formula 
\begin{equation}\rho(\hat d_\mcp-2)=2(Np+\alpha+\beta r-\rho_0); \ \text{ here } \rho_0=\frac23(r+1).\label{for2}
\end{equation}
\end{lemma}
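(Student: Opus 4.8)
The plan is to restrict the numerator polynomial $\mcp$ to the tangent line $L = L_{(1,1)}$, divide out the appropriate power of the linear factor vanishing at the $\er$-fixed point $\theta_\rho = \frac{\rho-1}{\rho}$, and then perturb this $\er$-invariant rational function of $z$ by a small multiple of that same power; the perturbation creates a pair of roots near $z=1$ whose asymptotics, compared via the $\er$-symmetry, will yield (\ref{for2}). First I would record the behaviour of $\mcp|_L$ near $z=1$: since $\mcp$ vanishes on $\gpq$ it is divisible by $w^q - z^p$, and $(w^q-z^p)|_L = \mcr_{p,q,1}(z) = (1-r+rz)^q - z^p$, which by (\ref{prest}) has a double zero at $z=1$ (because $(1,1)$ is the tangency point of $L$ with $\gpq$, so $z=1$ is a critical point of $w^q-z^p$ restricted to $L$). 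Thus $\mcp|_L$ has a zero of multiplicity exactly two at $z=1$ (primitivity of $\mcp$ forces the factor $w^q-z^p$ to occur to the first power, and the other prime factors $z$, $w$, $w^q-c_jz^p$ with $c_j\neq 1$ do not vanish at $z=1$). Meanwhile the degree of $\mcp|_L$ in $z$ is governed by $\hat d_\mcp$: the honest degree of $\mcp$ is $d_\mcp = Np+\alpha+\beta$, but the leading behaviour at $z=\infty$ of $\mcp|_L$ — equivalently, whether $\infty = \er(\theta_\rho)$ is a zero of $\mcp|_L$ — drops by one exactly when $\theta_\rho\in\chi(\mcp)$, i.e.\ $\mcp|_L$ viewed on $\oc_z$ has a zero of order $\hat d_\mcp$ "at $\infty$" in the sense that its pole order at $\infty$ is $d_\mcp$ but one of its finite roots is $\theta_\rho$ — I would phrase this as: the divisor of zeros of $\mcp|_L$ on $\oc_z$, minus the point $\theta_\rho$, is $\er$-invariant, and $\mcp|_L$ has total degree $d_\mcp$ with a double root at $1$.

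Next I would form, for small $\la\in\cc$, the polynomial $\mcp_\la(z) := \mcp|_L(z) - \la\,(z - \theta_\rho)^{\hat d_\mcp}$ (choosing the exponent so that $\mcp_\la$ has the same degree behaviour and so that $\mcp_\la / (z-\theta_\rho)^{\hat d_\mcp - d_\mcp\text{-correction}}$ is genuinely $\er$-invariant for each $\la$): because $(z-\theta_\rho)^{\hat d_\mcp}$ is, up to the constant-factor ambiguity of Proposition \ref{invzero}, the unique monic $\er$-invariant "rational function with all its zeros at the fixed point $\theta_\rho$ and pole at the other fixed point $1$" of the right degree, the ratio $\mcp_\la|_L / (\cdot)$ stays $\er$-invariant. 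Hence the roots of $\mcp_\la$, other than $\theta_\rho$ itself, come in $\er$-pairs $\{\zeta, \er(\zeta)\}$. At $\la=0$ there is a double root at $z=1$; for small $\la\neq 0$ it splits into two simple roots $\zeta_\pm(\la) = 1 \pm \sqrt{\la}\cdot(\text{const}) + O(\la)$, and these two roots must be exchanged by $\er$ (they cannot be exchanged with any root coming from elsewhere, which stays bounded away from $1$). Writing $\er$ in the local coordinate $u = z-1$ as $u \mapsto -u/(1+\rho u) = -u + \rho u^2 + O(u^3)$ (using $\eta_\rho$ fixes $1$ with derivative $-1$ and the expansion from Proposition \ref{propress}), the condition $\zeta_-(\la) = \er(\zeta_+(\la))$ forces a relation between the $\sqrt\la$-coefficient and the $\la$-coefficient of $\zeta_+(\la)$ — namely the $\la$-order term of $\zeta_+ + \zeta_- - 2$ must equal $\rho$ times the square of the $\sqrt\la$-coefficient.

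The final step is to compute both of those coefficients explicitly from $\mcp_\la(z) = \mcp|_L(z) - \la(z-\theta_\rho)^{\hat d_\mcp}$ by Taylor-expanding $\mcp|_L$ at $z=1$: write $\mcp|_L(1+u) = a_2 u^2 + a_3 u^3 + O(u^4)$ with $a_2\neq 0$, and $(z-\theta_\rho)^{\hat d_\mcp} = (1-\theta_\rho)^{\hat d_\mcp}(1 + \frac{\hat d_\mcp}{1-\theta_\rho}u + \cdots)$; then the defining equation $a_2 u^2 + a_3 u^3 - \la(1-\theta_\rho)^{\hat d_\mcp}(1 + \frac{\hat d_\mcp}{1-\theta_\rho}u) + \cdots = 0$ gives $u = \pm\sqrt{\la}\,\sqrt{(1-\theta_\rho)^{\hat d_\mcp}/a_2} + \frac{\la}{2a_2}\big((1-\theta_\rho)^{\hat d_\mcp}\frac{\hat d_\mcp}{1-\theta_\rho} - (a_3/a_2)(1-\theta_\rho)^{\hat d_\mcp}\big) + \cdots$, so $\zeta_+ + \zeta_- - 2 = \frac{\la(1-\theta_\rho)^{\hat d_\mcp}}{a_2}\big(\frac{\hat d_\mcp}{1-\theta_\rho} - \frac{a_3}{a_2}\big)$ while the square of the $\sqrt\la$-coefficient is $\frac{(1-\theta_\rho)^{\hat d_\mcp}}{a_2}$. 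The $\er$-pairing condition therefore reads $\frac{\hat d_\mcp}{1-\theta_\rho} - \frac{a_3}{a_2} = \rho$, i.e.\ $\rho\big(1 - \theta_\rho\big) \cdot(\text{stuff}) $ — using $1-\theta_\rho = 1 - \frac{\rho-1}{\rho} = \frac{1}{\rho}$ this becomes $\rho\hat d_\mcp - \rho\,\frac{a_3}{a_2} = \rho^2$, equivalently $\hat d_\mcp - \frac{a_3}{a_2} = \rho$. It then remains to identify $a_3/a_2$: expanding $\mcp|_L = \mathrm{const}\cdot\big((1-r+rz)^q - z^p\big)\cdot\prod'(1-r+rz)^q - c_j z^p)\cdot z^\alpha(1-r+rz)^\beta$ at $z=1$ and extracting the ratio of the cubic to the quadratic coefficient of the whole product, one gets $\frac{a_3}{a_2} = 2 - \frac{2}{3}(r+1) - (\text{the bracketed count } Np+\alpha+\beta r) + \hat d_\mcp$ after simplification — I would organize this as: $\big((1-r+rz)^q-z^p\big)$ contributes its own cubic/quadratic ratio at $z=1$ (a pure function of $r$ and $q$), each remaining prime factor contributes its logarithmic derivative at $z=1$ to the shift from $a_2$ to $a_3$, and the powers $z^\alpha$, $w^\beta$ contribute $\alpha$ and $\beta r$ respectively (since $\frac{d}{dz}\log w|_L = \frac{r}{1-r+r\cdot 1} = r$). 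Assembling, $\hat d_\mcp - \frac{a_3}{a_2} = 2 - \big(\text{that ratio}\big)$ rearranges precisely into $\rho(\hat d_\mcp - 2) = 2(Np + \alpha + \beta r - \rho_0)$ with $\rho_0 = \frac23(r+1)$.

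The main obstacle I expect is the bookkeeping in this last step: correctly tracking how the exponent bookkeeping $\hat d_\mcp$ (which already absorbs the $\theta_\rho\in\chi(\mcp)$ case distinction) interacts with the cubic-coefficient computation, and verifying that the factor $w^q-z^p$ really contributes a clean $r$-dependent constant rather than something that spoils the linearity in $N$, $\alpha$, $\beta$. The algebra is elementary but the sign of the derivative of $\er$ at its fixed point, the value $1-\theta_\rho = 1/\rho$, and the double (not higher) order of vanishing of $\mcr_{p,q,1}$ at $z=1$ all have to be used in exactly the right places; getting any one of them wrong changes the final formula. Everything else — the existence of the perturbation family, the $\er$-invariance of each member, the splitting of the double root, and the fact that the two nearby roots are $\er$-conjugate to each other — is forced by Proposition \ref{invzero} and the analytic dependence of roots on $\la$.
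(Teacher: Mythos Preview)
Your overall strategy matches the paper's: restrict $\mcp$ to $L$, form an $\er$-invariant auxiliary rational function, perturb by a small multiple of a power of a linear factor, and compare the asymptotics of the two roots near $z=1$ using the local expansion of $\er$. But there is a genuine error that breaks the argument.

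You repeatedly use $\theta_\rho = \frac{\rho-1}{\rho}$ as if it were a fixed point of $\er$. It is not: $\theta_\rho = \er(\infty)$, while the fixed points of $\er$ are $1$ and $\frac{\rho-2}{\rho}$. Consequently your function $(z-\theta_\rho)^{\hat d_\mcp}$ is \emph{not} $\er$-invariant: its pole divisor $\hat d_\mcp\cdot[\infty]$ is mapped by $\er$ to $\hat d_\mcp\cdot[\theta_\rho]$, not back to itself. Hence $H(z)/(z-\theta_\rho)^{\hat d_\mcp}$ is not $\er$-invariant either, and there is no reason the two perturbed roots $\zeta_\pm(\la)$ near $1$ should be $\er$-conjugate. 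The paper uses the \emph{other} fixed point $\frac{\rho-2}{\rho}$: then $G(z)=H(z)/(z-\tfrac{\rho-2}{\rho})^{\hat d_\mcp}$ has pole divisor concentrated at a fixed point and zero divisor equal to $\chi(\mcp)$ (plus, in the case $\theta_\rho\in\chi(\mcp)$, a simple zero at $\infty$ pairing with the simple zero at $\theta_\rho$), both $\er$-invariant, so Proposition~\ref{invzero} applies.

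This error also propagates into your numerics. With the correct point one has $1-\tfrac{\rho-2}{\rho}=\tfrac{2}{\rho}$, so the linear coefficient in the expansion of $(z-\tfrac{\rho-2}{\rho})^{\hat d_\mcp}$ at $z=1$ is $\tfrac{\hat d_\mcp\rho}{2}$, not $\hat d_\mcp\rho$; and the paper's Claim~2 gives $a_3/a_2 = Np+\alpha+\beta r-\rho_0$ directly, with no $\hat d_\mcp$ term (your proposed value of $a_3/a_2$ is incorrect). Equating the two asymptotics then yields $2(Np+\alpha+\beta r-\rho_0)+2\rho=\hat d_\mcp\rho$, which is exactly (\ref{for2}). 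Your version $\hat d_\mcp - a_3/a_2 = \rho$ does not recover this, even with your own expression for $a_3/a_2$. Once you replace $\theta_\rho$ by $\tfrac{\rho-2}{\rho}$ throughout and recompute $a_3/a_2$ from the explicit factorisation of $H$, your outline becomes the paper's proof.
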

\begin{proof} The restriction of the polynomial $\mcp$ to the tangent line $L=\cc_z$ 
 is 
$$H(z):=z^\alpha(1-r+rz)^\beta\prod_{i=1}^N((1-r+rz)^q-c_iz^p), \ \deg H=d_{\mcp}=Np+\alpha+\beta.$$
The roots of the latter polynomial are exactly points of $\chi(\mcp)$. The involution $\er:L\to L$  
has two fixed points: those with $z$-coordinates 1 and  $\frac{\rho-2}\rho$. 
We consider the following auxiliary rational function 
\begin{equation} G(z):=\frac{H(z)}{(z-\frac{\rho-2}\rho)^{\hat d_\mcp}}.\label{gx}\end{equation}

{\bf Claim 1.} {\it The rational function $G$ is $\er$-invariant.}

\begin{proof} The zero divisor of the function $G|_L$ is $\er$-invariant. Indeed, the complement of the root divisor 
$\chi(\mcp)$ of the polynomial $H$ to $\theta_\rho$ is $\er$-invariant ($\er$-quasi-invariance of the polynomial 
$\mcp$). In the case, when  $H(\theta_\rho)=0$, one has $\hat d_\mcp=\deg H+1$, and hence, $\infty$ is a simple zero of the function $G$. The pole divisor of the function $H$ is the fixed point $\frac{\rho-2}{\rho}$ of the involution $\er$. 
This together with Proposition \ref{invzero} implies that $G$ is $\er$-invariant.
\end{proof}
\begin{corollary} For every $\la\in\cc$ the polynomial 
$$H_{\la}(z):=H(z)-\la\left(z-\frac{\rho-2}\rho\right)^{\hat d_\mcp}$$
has exactly two roots $\zeta_{\pm}(\lambda)$ converging to 1, as $\la\to0$. These roots are permuted by the 
involution $\er$. 
\end{corollary}

We  will deduce  formula (\ref{for2}) by  comparing asymptotics of the numbers $\zeta_{\pm}(\la)$ as roots 
of the polynomial $H_{\la}$ and writing the condition  that they should be permuted by the involution $\er$ 
with known Taylor series. To this end, we write the polynomials $H_{\la}$ and their roots in the new 
coordinate
$$u:=z-1; \ \ u_{\pm}:=u(\zeta_{\pm}(\la))=\zeta_{\pm}(\la)-1.$$
{\bf Claim 2.} {\it There exists a constant $A\in\cc^*$ such that as $u\to0$, one has}
\begin{equation} H(z)=H(1+u)=A(1+(Np+\alpha+\beta r-\rho_0)u+O(u^2))u^2.\label{ashz}\end{equation}
\begin{proof} One has  $p=qr$, 
\begin{equation}z^\alpha(1-r+rz)^\beta=(1+u)^\alpha(1+ru)^\beta=1+(\alpha+r\beta)u+O(u^2),\label{zalphab}\end{equation}
$$(1-r+rz)^q-z^p=(1+ru)^q-(1+u)^{p}$$
$$=\frac{q(q-1)r^2-p(p-1)}2u^2+\frac{q(q-1)(q-2)r^3-p(p-1)(p-2)}6u^3+O(u^4)$$
$$=\frac{p(1-r)}2u^2+\frac p6((p-r)(p-2r)-(p-1)(p-2))u^3+O(u^4)$$
\begin{equation}=\frac{p(1-r)}2u^2(1+(p-\rho_0)u+O(u^2)).\label{p1-r}\end{equation}
For $c_i\neq1$ one has the equality
$$(1-r+rz)^q-c_iz^p=((1-r+rz)^q-z^p)+(1-c_i)(1+u)^p=(1-c_i)(1+pu)+O(u^2).$$
Multiplying it with (\ref{zalphab}), (\ref{p1-r}) yields (\ref{ashz}) with $A=\frac{p(1-r)}2\prod_{i\geq2}(1-c_i)$. 
\end{proof}
\begin{corollary} One has $u_-=-u_+(1+o(1))$, as $\la\to0$, and 
\begin{equation}A(1+(Np+\alpha+\beta r-\rho_0)u_{\pm}+O(u_+^2))u_{\pm}^2=B(1+\frac{\hat d_\mcp\rho}2u_{\pm}+O(u_{\pm}^2)),\label{compab}\end{equation} 
$$B=B(\la)=\la\left(\frac2\rho\right)^{\hat d_\mcp}.$$
\end{corollary}
\begin{proof} One has $H(\zeta_{\pm})=\la(\zeta_{\pm}-\frac{\rho-2}\rho)^{\hat d_\mcp}$, by definition. 
Substituting (\ref{ashz})  to the latter formula yields (\ref{compab}).
\end{proof}

The involution $\er(z)$ written in the coordinate $u=z-1$ takes the form 
\begin{equation}\er:u\mapsto-\frac u{1+\rho u}.\label{sigmaup}\end{equation}
Therefore,  $u_-=-u_++(\rho+O(u_+))u_+^2$, since $u_{\pm}$ converge to $0$ and are permuted by $\er$. 
Dividing equations (\ref{compab}) for $u_+$ and 
for $u_-$ and substituting the latter asymptotic formula for $u_-$ yields 
$$\frac{1+(Np+\alpha+\beta r-\rho_0)u_++O(u_+^2)}{(1-(Np+\alpha+\beta r-\rho_0)u_++O(u_+^2))
(1-\rho u_+)^2}=1+\hat d_\mcp\rho u_++O(u_+^2),$$
$$2(Np+\alpha+\beta r-\rho_0)+2\rho=\hat d_\mcp\rho.$$
This proves (\ref{for2}).
\end{proof}
\subsection{Proof of the main part of Theorem \ref{classpqr}: necessity} 
The main part of Theorem \ref{classpqr} is given by the next lemma. 
\begin{lemma}  \label{lclass} 
Let a $\pqr$-billiard be quasihomogeneously integrable. Then $p=2$, $q=1$ and 
$\rho\in\mcm$. 
\end{lemma}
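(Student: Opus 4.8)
We are given a quasihomogeneously integrable $\pqr$-billiard. By Proposition~\ref{pro2cases}, if it admits a polynomial integral, then $p=2$, $q=1$ and $\rho=0\in\mcm$, and we are done. So the main case is when it admits no polynomial integral; then by Lemma~\ref{lchichi} it admits a $\er$-primitive rational integral vanishing on $\gpq$, of the form $R=\mcp_1^{m_1}/\mcp_2^{m_2}$ with $m_1,m_2\neq0$ and $\rho\neq0$, where $\mcp_1$ (divisible by $w^q-z^p$) and $\mcp_2$ are distinct primitive $\er$-quasi-invariant polynomials as in (\ref{newmcp}). The First Formula (Proposition~\ref{pfor1}) expresses $\rho$ in terms of $\rho_0=\frac23(r+1)$, $r=p/q$, the ratio $\nu=m_2/m_1$, and which of the points $\theta_\rho,\theta_r$ belong to which $\chi(\mcp_i)$. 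The Second Formula (Lemma~\ref{lfor2}), applied to each of $\mcp_1$ and $\mcp_2$ separately, gives two further relations: $\rho(\hat d_{\mcp_i}-2)=2(N_ip+\alpha_i+\beta_ir-\rho_0)$ for $i=1,2$.

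\textbf{Next I would feed the First Formula into the two Second Formulas and extract constraints.} The idea is that the Second Formula for $i=1,2$ gives $N_ip+\alpha_i+\beta_ir-\rho_0 = \frac\rho2(\hat d_{\mcp_i}-2)$, where $\hat d_{\mcp_i}$ is the integer $d_i$ or $d_i+1$. Since $d_i=N_ip+\alpha_i+\beta_i$ and $p=qr$, the left side is $d_i-\beta_i+\beta_i r-\rho_0 = d_i+\beta_i(r-1)-\rho_0$. Plugging in the relevant case of Proposition~\ref{pfor1} for $\rho$ turns each of these into a polynomial relation among the integers $N_i,\alpha_i,\beta_i\in\{0,1\}$ (for $\alpha_i,\beta_i$), $q,p$ and $\nu$. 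Working through the finitely many cases of the $3\times3$ table of Proposition~\ref{pfor1}, together with the normalization constraints $\alpha_1\alpha_2=\beta_1\beta_2=0$, $c_{11}=1$, one wants to derive that $r=p/q$ must be $2$, i.e.\ $p=2,q=1$, using that $r>1$ is rational $>1$ and that the quantities $N_i$ are positive integers. The key leverage is the same kind of inequality used in the proof of Proposition~\ref{pro2cases}: the Second Formula forces $N_ip+\alpha_i+\beta_ir$ to be comparable to $\rho_0=\frac23(r+1)$ up to a bounded multiple of $\rho$, which is itself bounded by the First Formula, and this squeezes $r$ down to $2$. Once $p=2,q=1$ is established, $\rho_0=2$, and the First Formula collapses to $\rho\in\{2,3,2-\nu,\rho_0+1-r=1,2,1-\nu,\rho_2=2-\nu(1-r)=2+\nu,3+\nu,2\}$, i.e.\ $\rho\in\{1,2,3\}\cup\{2\pm\nu\}\cup\{1\mp\nu\}$ depending on the case; combined with the Second Formula, which now reads $\rho(\hat d_{\mcp_i}-2)=2(2N_i+\alpha_i+2\beta_i-2)$ with the right side an even integer, one gets that $\nu$ must be rational of the form $\pm\frac2k$ or $\pm\frac1{k}$ with $k$ an appropriate integer $\geq$ some bound, yielding exactly $\rho\in\mcm=\{0,1,2,3,4\}\cup_{k\geq3}\{2\pm\frac2k\}$.

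\textbf{The main obstacle is the bookkeeping of the case analysis.} There are three cases for where $\theta_\rho$ sits (in $\chi(\mcp_1)$, in $\chi(\mcp_2)$, in neither, cf.\ Proposition~\ref{tchichi}) times three for $\theta_r$, and within each I have to solve the simultaneous Diophantine-type system coming from the First Formula and the two instances of the Second Formula, ruling out all solutions with $r\neq2$ and all $\rho\notin\mcm$. A subtlety is that $\nu=m_2/m_1$ need not be an integer a priori, so the divisibility arguments forcing $\rho$ into $\mcm$ must be made carefully — one should clear denominators and track that $m_1,m_2$ are genuine positive integers and that $d_1m_1=d_2m_2$ (or the shifted versions in Cases 2,3) hold, which pins down $\nu$ as a ratio of the integers $d_2$ and $d_1$ (or $d_2$ and $d_1+1$, etc.). I would also use Proposition~\ref{proprim}(3) (the constraint on $w$-factors when $\rho=r$) and the fact that $\theta_r=\frac{r-1}r$ is the unique root of $w|_L$ to decide the values of $\beta_i$. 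The arithmetic is routine once organized, but getting every case and not missing the boundary values $k\geq3$ (equivalently excluding $\rho=2\pm1$, which would be $\rho=1$ or $3$, already in $\mcm$, and $\rho=2\pm2=0,4$, also in $\mcm$) is where care is needed; this is exactly the "main miracle" the authors advertise, and I expect the verification to occupy the bulk of the argument.
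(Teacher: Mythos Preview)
Your overall architecture---combine the First Formula (Proposition~\ref{pfor1}) with the Second Formula (Lemma~\ref{lfor2}) and run a case analysis on the position of $\theta_\rho$ and $\theta_r$---is exactly what the paper does. But two concrete points need correction.

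\textbf{The Second Formula applies only to $\mcp_1$.} You propose to apply Lemma~\ref{lfor2} ``to each of $\mcp_1$ and $\mcp_2$ separately.'' This is not allowed: the hypothesis of Lemma~\ref{lfor2} explicitly requires the polynomial to vanish on $\gpq$, i.e.\ to have $c_1=1$ among its factors. The proof of that lemma hinges on the fact that the restriction $H(z)=\mcp|_L$ has a \emph{double} zero at $z=1$ (formula~(\ref{ashz})), which comes precisely from the factor $w^q-z^p$. The polynomial $\mcp_2$ does not vanish on $\gpq$ (all the $c_{ij}$ are distinct and $c_{11}=1$), so $\mcp_2|_L$ has no root at $z=1$ at all, and the derivation of~(\ref{for2}) breaks down. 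The paper uses only the single instance of the Second Formula, for $\mcp_1$, together with the First Formula; this is enough.

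\textbf{The mechanism forcing $p=2$ is modular arithmetic, not an inequality.} You write that ``the Second Formula forces $N_ip+\alpha_i+\beta_ir$ to be comparable to $\rho_0$ \dots\ and this squeezes $r$ down to $2$,'' invoking the inequality argument of Proposition~\ref{pro2cases}. That argument works only in the polynomial case, where $\rho=0$; in the rational case $\rho$ is not zero and no such squeeze is available. What the paper actually does in each subcase is to equate the two expressions for $\rho$, clear denominators to obtain an integer polynomial relation, then \emph{reduce modulo $p$} and divide by $q$ (invertible $\bmod\ p$ since $(p,q)=1$). This produces congruences like $\alpha_1\equiv-1\pmod p$ or $4\alpha_2+3\alpha_1\equiv2\pmod p$ involving only the parameters $\alpha_i\in\{0,1\}$; since these are tiny, the congruences force $p=2$. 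Once $p=2$, $q=1$, $\rho_0=2$, the same equations pin down $N_1=1$, $\alpha_1=0$ in the relevant subcases, and the First Formula reads off $\rho=2\pm\frac{2}{2N_2+\alpha_2+1}$ or $\rho\in\{1,2,3,4\}$, which is exactly $\mcm$. So your outline is right in spirit, but you will not close the argument without the reduction-mod-$p$ step.
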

\begin{proof} The case of polynomial integrability, was already treated 
in Subsection 4.1. Let us  treat the case, when there are no polynomial integral. Then there exists 
a primitive integral $R$ vanishing on $\gpq$; let us fix it. One has 
$$R(z,w)=\frac{\mcp_{1}^{m_1}}{\mcp_{2}^{m_2}}(z,w), \ \mcp_{i}=z^{\alpha_i}w^{\beta_i}\prod_{j=1}^{N_i}(w^q-c_{ij}z^p),$$
$$ \alpha_j,\beta_j\in\{0,1\}, \ \alpha_1\alpha_2=\beta_1\beta_2=0, \ c_{11}=1, \ \ \text{ all }  \ c_{ij}  \ \text{ are distinct}.$$ 
The statement of the lemma  
 will be deduced by equating the two formulas 
 for the residue $\rho$ given by Proposition \ref{pfor1} and Lemma \ref{lfor2} (applied to  $\mcp_1$).

Case 1): $\theta_\rho=\er(\infty)\notin\chi(\mcp_1)\cup\chi(\mcp_2)$. Then 
\begin{equation}
\rho=\rho_0+(1-r)(\beta_1-\nu\beta_2)=\frac{2(N_1p+\alpha_1+\beta_1r-\rho_0)}{N_1p+\alpha_1+\beta_1-2},
\label{rhoo0}\end{equation}
by Proposition \ref{pfor1} and Lemma \ref{lfor2} applied to $\mcp_1$. 

Subcase 1a): $\beta_1=\beta_2=0$. Then (\ref{rhoo0}) yields 
$$(\rho_0-2)(N_1p+\alpha_1)=0, \ \ \rho_0=\frac23(r+1).$$
Hence, $\rho_0=r=2$, since $N_1p>0$, $\alpha_1\geq0$. This together with (\ref{rhoo0}) yields 
$$p=2, \ q=1, \  \rho=\rho_0=2.$$

Subcase 1b): $\beta_1=1$, $\beta_2=0$.  Then  (\ref{rhoo0}) yields 
$$2N_1p+2\alpha_1+2r=\rho_0(N_1p+\alpha_1+1)+(1-r)(N_1p+\alpha_1-1)$$
$$=\frac23(r+1)(N_1p+\alpha_1+1)+(1-r)
(N_1p+\alpha_1-1).$$
Substituting $r=\frac pq$ and multiplying the latter equation by $3q$ yields
\begin{equation} 6N_1pq+6\alpha_1q+6p=2(p+q)(N_1p+\alpha_1+1)+3(q-p)(N_1p+\alpha_1-1).\label{nro}
\end{equation}
Writing equation (\ref{nro}) modulo $p$ and dividing it by $q(\mo p)$ yields 
$$6\alpha_1=2(\alpha_1+1)+3(\alpha_1-1)=5\alpha_1-1(\mo p), \ \ \alpha_1\equiv-1(\mo p).$$
Thus, $\alpha_1\in\{0,1\}$ and $\alpha_1\equiv-1(\mo p)$, $p\in\nn$, $p\geq2$. Therefore, 
$$p=2, \ q=1, \ \alpha_1=1, \ \rho_0=2,$$
 $$\rho=\rho_0+(1-2)=1=\frac{2(2N_1+1+2-\rho_0)}{2N_1},$$
 by (\ref{rhoo0}). Hence, $2(2N_1+1)=2N_1$ and $N_1<0$. The contradiction thus obtained shows that Subcase 1b) is impossible. 
 
 Subcase 1c): $\beta_1=0$, $\beta_2=1$. Then 
 $$\rho=\frac{2(N_1p+\alpha_1-\rho_0)}{N_1p+\alpha_1-2}$$
 \begin{equation}=\rho_0+(r-1)\nu=\rho_0+(r-1)\frac{N_1p+\alpha_1}{N_2p+\alpha_2+1},\label{rhob2}
 \end{equation}
 by (\ref{rhoo0}) and since in our case $\nu=\frac{m_2}{m_1}=\frac{d_1}{d_2}$, see (\ref{dm1}). 
 Moving $\rho_0$ from the right- to the left-hand side, dividing both sides by $N_1p+\alpha_1$ and 
 multiplying them by the product of denominators in (\ref{rhob2}) yields
 $$(2-\rho_0)(N_2p+\alpha_2+1)=(r-1)(N_1p+\alpha_1-2).$$
 Substituting the value of $\rho_0$ and multiplying the latter equation by $3q$ yields
$$(4q-2p)(N_2p+\alpha_2+1)=3(p-q)(N_1p+\alpha_1-2).$$
 Reducing the latter equation modulo $p$ and dividing it by $q(\mo p)$ yields
 \begin{equation}4\alpha_2+3\alpha_1\equiv 2(\mo p).\label{4qp}\end{equation}
 In the case, when $\alpha_1=1$, one has $\alpha_2=0$ and $3\equiv 2(\mo p)$, which is  
 impossible, since $p\geq2$. Hence, $\alpha_1=0$. In this case $\alpha_2\in\{0,1\}$, $4\alpha_2-2=\pm2\equiv0(\mo p)$. 
 Hence, $p=2$, $q=1$, $\rho_0=2$. This together with the first equality in (\ref{rhob2}) implies that either $\rho=2$, 
 or $N_1p+\alpha_1-2=0$. If $\rho=2$, then $\rho=\rho_0$, which  contradicts the second equality in (\ref{rhob2}), 
 Therefore, 
 $$N_1p+\alpha_1-2=2N_1+\alpha_1-2=0, \ N_1=1, \ \alpha_1=0,$$
 \begin{equation}\rho=\rho_0+(r-1)\frac{N_1p+\alpha_1}{N_2p+\alpha_2+1}=2+\frac2{2N_2+\alpha_2+1}\in\mcm.
 \label{rhorho}\end{equation}

 Case 2): $\theta_\rho=\er(\infty)\in\chi(\mcp_1)$. Then
 $$\rho=\rho_0+1+(1-r)(\beta_1-\nu\beta_2)=
 \frac{2(N_1p+\alpha_1+\beta_1r-\rho_0)}{N_1p+\alpha_1+\beta_1-1},$$ 
 by (\ref{for2}) (applied to $\mcp_1$) and Proposition \ref{pfor1}. Multiplying by the denominator yields 
 \begin{equation}((r-1)\nu\beta_2-s)(N_1p+\alpha_1+\beta_1-1)=2s, \ s:=1+\beta_1(r-1)-\rho_0.\label{rho3}\end{equation}
  
 Subcase 2a): $\beta_2=0$. Then (\ref{rho3}) yields $(N_1p+\alpha_1+\beta_1+1)s=0$, hence 
 $s=0$ and $\rho_0=\frac23(r+1)=\beta_1(r-1)+1$. Thus, $\beta_1=1$, since $\rho_0>\frac43>1$, 
 $$\frac23(r+1)=r, \ r=p=2, \ q=1, \ \rho=\rho_0+1+1-r=\rho_0=2.$$
 
 Subcase 2b): $\beta_2=1$. Then $\beta_1=0$, and (\ref{rho3}) yields 
 $$(-s+\nu(r-1))(N_1p+\alpha_1-1)=2s, \ s=1-\rho_0<0.$$
 The right-hand side of the latter equation is negative, while the first factor in the left-hand side 
 is positive. Therefore, the second factor should be negative, which is obviously impossible. Hence, Subcase 2b) 
 is impossible.
 
 Case 3): $\theta_\rho=\er(\infty)\in\chi(\mcp_2)$. Then 
 \begin{equation}\rho=\rho_0-\nu+(1-r)(\beta_1-\nu\beta_2)=\frac{2(N_1p+\alpha_1+\beta_1r-\rho_0)}{N_1p+\alpha_1+\beta_1-2},\label{rhoo3}\end{equation}
 \begin{equation}\nu=\frac{d_1}{d_2+1}=\frac{N_1p+\alpha_1+\beta_1}{N_2p+\alpha_2+\beta_2+1},
 \label{fornu3}\end{equation}
  by (\ref{for2}),  Proposition \ref{pfor1} and (\ref{dm3}). 
  
  \medskip
  
  {\bf Claim 3.} {\it If $\beta_2=0$, then one has $\beta_1=\alpha_1=0$, $p=2$, $N_1=1$,}
  \begin{equation}\nu=\frac2{2N_2+\alpha_2+1}, \ \ \rho=\rho_0-\nu=2-\frac2{2N_2+\alpha_2+1}\in\mcm.
  \label{mcm3}\end{equation}
  \begin{proof} Multiplying (\ref{rhoo3}) with $\beta_2=0$ by its denominator yields
  $$(s+2-\nu)t=2t-2s, \ s:=\rho_0+\beta_1(1-r)-2, \ t=N_1p+\alpha_1+\beta_1-2,$$
  \begin{equation}(s-\nu)t=-2s.\label{-2s}\end{equation}
Note that one has always $t\geq0$. 
  
  Case $t>0$. Then $s-\nu$ and $s$ either  have different signs, or both vanish (which is impossible, 
  since $\nu>0$). Thus, $s-\nu<0<s$. But 
  $s=\frac13(2(r+1)+3\beta_1(1-r)-6)$. If $\beta_1=1$, then the latter expression in the brackets is 
  $2(r+1)-3r+3-6=-1-r<0$, hence $s<0$. The contradiction thus obtained shows that $\beta_1=0$. 
  Hence, $s=\rho_0-2$, 
  $$(\rho_0-2-\nu)(N_1p+\alpha_1-2)=-2(\rho_0-2), \ \ \nu=\frac{N_1p+\alpha_1}{N_2p+\alpha_2+1},$$
  $$(\rho_0-2)(N_1p+\alpha_1)=\nu(N_1p+\alpha_1-2).$$
  Substituting the above formula for the number $\nu$, multiplying by its denominator  and 
  by $3q$ and dividing by $N_1p+\alpha_1$ yields 
    \begin{equation}(2p-4q)(N_2p+\alpha_2+1)=3q(N_1p+\alpha_1-2).\label{3qpa}\end{equation}
    Reducing (\ref{3qpa}) modulo $p$ and dividing by $q(\mo p)$ yields 
    $-4(\alpha_2+1)\equiv 3(\alpha_1-2)(\mo p)$,  
    $$2-4\alpha_2-3\alpha_1\equiv0(\mo p).$$
    
    In the case, when $\alpha_1=1$, one has $\alpha_2=0$. Hence,  $-1\equiv0(\mo p)$, which is impossible. 
    Thus, $\alpha_1=0$. Then $2-4\alpha_2=\pm2\equiv0(\mo p)$. Hence, $p=2$, $q=1$, $\rho_0=2$, $s=0$. This 
    together with (\ref{-2s}) implies that $t=N_1p+\alpha_1-2=0$. Hence, 
    $N_1=1$, $\alpha_1=0$. The first statement of the claim is proved. Together with 
     (\ref{rhoo3}) and (\ref{fornu3}), it implies (\ref{mcm3}).
    \end{proof}
        
   {\bf Claim 4.} {\it In the case, when $\beta_2=1$, one has $p=r=\rho=2$, $q=1$.}
   
   \begin{proof} In this case $\beta_1=0$, and (\ref{rhoo3}) yields
   $$\rho=\rho_0+\nu(r-2)=\frac{2(N_1+\alpha_1-\rho_0)}{N_1p+\alpha_1-2},$$
   $$(s+2+\nu(r-2))t=2t-2s, \ s:=\rho_0-2=\frac23(r-2), \ t:=N_1p+\alpha_1-2,$$
   $$(s+\nu(r-2))t=-2s, \ \ (\frac23+\nu)(r-2)t=-\frac43(r-2), \ \ t\geq0, \ \nu>0.$$
The latter equality implies that $r=2$. Thus, $p=r=\rho_0=\rho=2$, $q=1$.
\end{proof}

Claims 3, 4 together with the previous discussion imply the statement of Lemma \ref{lclass}.
\end{proof}
\subsection{Sufficience and integrals. End of proof of Theorem \ref{classpqr}. Proof of the addendum}
Lemma \ref{lclass} reduces Theorem \ref{classpqr} to the following lemma. 
\begin{lemma} \label{lsuff} {\bf A)} The following statements are equivalent:

1) The  $(2,1;\rho)$-billiard is quasihomogeneously integrable.

2) One has $\rho\in\mcm$.

3) Either the mapping $T:=\eta_{2}\circ\er$ is the identity, or the points $\infty$ and $\frac12$ lie in the same $T$-orbit, i.e., $T^m(\frac12)=\infty$ for some $m\in\zz$. 

{\bf B)} For every $\rho\in\mcm$  the corresponding function $R_\rho(z,w)$ from the table in Theorem \ref{classpqr} 
is an integral of the $(2,1;\rho)$-billiard. 
\end{lemma}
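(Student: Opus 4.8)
The implication $(1)\Rightarrow(2)$ is already covered by Lemma~\ref{lclass} (applied with $p=2$, $q=1$), so it remains to prove Part~B, to deduce $(2)\Rightarrow(1)$ from it, and to establish $(2)\Leftrightarrow(3)$ by one short computation. The device throughout is to replace the coordinate $z$ on the tangent line $L=L_{(1,1)}\cong\oc_z$ by $s:=(z-1)^{-1}$, in which the common fixed point $z=1$ of $\er$ and $\eta_2$ becomes $s=\infty$. Substituting into (\ref{defeta}) one computes that in this coordinate $\er$ and $\eta_2$ become the affine involutions $\er\colon s\mapsto-s-\rho$ and $\eta_2\colon s\mapsto-s-2$, so $T=\eta_2\circ\er$ is the translation $s\mapsto s+(\rho-2)$; moreover $z=\infty$ becomes $s=0$ and $z=\tfrac12$ becomes $s=-2$. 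Hence $T=\mathrm{id}$ iff $\rho=2$, while $T^m(\tfrac12)=\infty$ iff $-2+m(\rho-2)=0$, i.e. $\rho=2+\tfrac2m$ for some $m\in\zz\setminus\{0\}$; enumerating $m$ gives $\{2\}\cup\{\,2+\tfrac2m:m\in\zz\setminus\{0\}\,\}=\mcm$, which is exactly $(2)\Leftrightarrow(3)$.

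For Part~B it suffices, by Remark~\ref{remquasint}, to check for each $\rho\in\mcm$ that $R_\rho|_L$ is $\er$-invariant, and by Proposition~\ref{invzero} it is enough that the zero and pole divisors of $R_\rho|_L$ on $\oc_s$ be $\er$-invariant: the hypothesis of Proposition~\ref{invzero} at the fixed point $z=1$ holds since $(w-z^2)|_L=-(z-1)^2=-s^{-2}$ contributes an even-order zero at $s=\infty$, and one checks $R_\rho|_L$ has even order (usually order $0$) at the other $\er$-fixed point $s=-\rho/2$. In the $s$-coordinate $z|_L=\tfrac{s+1}{s}$, $w|_L=\tfrac{s+2}{s}$, and $(w-cz^2)|_L=\tfrac{(1-c)(s+1)^2-1}{s^2}$, so the two roots of $w-cz^2$ on $L$ form an $\eta_2$-symmetric pair $\{-1+t,-1-t\}$ with $t^2=(1-c)^{-1}$; thus each quasihomogeneous prime contributes to $\operatorname{div}(R_\rho|_L)$ either $s=-1$ (from $z$), $s=-2$ (from $w$), an $\eta_2$-pair (from $w-cz^2$, $c\neq1$), or $s=\infty$ (from $w-z^2$), together with poles at $s=0$. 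For $\rho\in\{0,1,2,3,4\}$ the restriction $R_\rho|_L$ reduces respectively to $-s^{-2}$, $-(s(s+1))^{-1}$, $-(s(s+2))^{-1}$, $-((s+1)(s+2))^{-1}$, $-(s+2)^{-2}$, and $\er$-invariance is immediate because $\er$ fixes $s=\infty$ and $s=-\rho/2$ and transposes the two remaining poles.

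For the four infinite families write $\rho=2+\tfrac2m$ with $|m|\ge3$; the $T$-orbit of $s=-2$ is the progression $s_n:=-2+n(\rho-2)$, on which $\er(s_n)=s_{m-1-n}$ and $\eta_2(s_n)=s_{m-n}$, with $s_0=-2$, $s_m=0$, and $s_{m/2}=-1$ when $m$ is even. Using the identity $1-c_j=\bigl(\tfrac{2N+1}{2N+1-2j}\bigr)^2$ for the $\rho=2\mp\tfrac2{2N+1}$ families (resp. $1-c_j=\bigl(\tfrac{N+1}{N+1-j}\bigr)^2$ for the $\rho=2\mp\tfrac1{N+1}$ families) one gets $t_j=\pm\tfrac{2N+1-2j}{2N+1}$ (resp. $\pm\tfrac{N+1-j}{N+1}$), whence the zero pair of $w-c_jz^2$ on $L$ is an $\eta_2$-symmetric pair $\{s_k,s_{m-k}\}$ of orbit points. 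Therefore $\operatorname{div}(R_\rho|_L)$ is supported on $s=\infty$ and on orbit points $s_n$ — including $s=0=s_m$, and $s=-1=s_{m/2}$ or $s=-2=s_0$ when the corresponding $z$- or $w$-factor is present — with explicit multiplicities; since $\er$ permutes the $s_n$, a finite check that $\er$ carries each distinguished support point ($s=0$, and $s=-1$ or $s=-2$) onto an orbit point present in the divisor with the same multiplicity shows $\operatorname{div}(R_\rho|_L)$ is $\er$-invariant, so $R_\rho$ is an integral. This gives $(2)\Rightarrow(1)$ and completes Part~A. The only genuine work is this last index bookkeeping, carried out separately for each of the four families; the degree identity $\deg\operatorname{div}(R_\rho|_L)=0$ is a convenient running check, and everything else reduces to elementary arithmetic.
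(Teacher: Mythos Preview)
Your proposal is correct and follows essentially the same approach as the paper. Both arguments introduce the coordinate $s=(z-1)^{-1}$ (the paper calls it $y$), observe that $\er$ and $\eta_2$ become the affine involutions $s\mapsto-s-\rho$ and $s\mapsto-s-2$ so that $T$ is the translation $s\mapsto s+(\rho-2)$, and read off $(2)\Leftrightarrow(3)$ from the orbit condition $-2+m(\rho-2)=0$. For Part~B both arguments exploit the $T$-orbit structure of the support of $\operatorname{div}(R_\rho|_L)$ together with Proposition~\ref{invzero}; the only organisational difference is that the paper first constructs the primitive polynomial $\mcp$ abstractly from the orbit $\{\zeta_j\}$ and invokes Proposition~\ref{tchichi}, then computes the $c_j$ via (\ref{forcj}) to match the table, whereas you start from the tabulated $R_\rho$, compute the roots of each factor on $L$ via the identities $1-c_j=\bigl(\tfrac{2N+1}{2N+1-2j}\bigr)^2$ (resp.\ $\bigl(\tfrac{N+1}{N+1-j}\bigr)^2$), and verify directly that they fill out a segment of the $T$-orbit on which $\er(s_n)=s_{m-1-n}$ acts; the content is the same.
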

\begin{proof} The implication 1) $=>$ 2) is given by Lemma \ref{lclass}. 

\begin{proof} {\bf of the equivalence 2) $<=>$ 3).} The map $T$ is identity, if and only if $\rho=2\in\mcm$. 
Let us consider the case, when $\rho\neq2$. Let us write the map $\er:L\to L$, $L=\cc_z$, in  the chart 
$$y:=\frac1{z-1}; \ \   y(1)=\infty, \ y(\infty)=0, \ y\left(\frac12\right)=-2, \ y(0)=-1.$$
The involution $\er$ fixes $1$, $\frac{\rho-2}\rho$, and $y(\frac{\rho-2}\rho)=-\frac\rho2$. 
Therefore, in the chart $y$  
\begin{equation}\er:y\mapsto-y-\rho, \ T=\eta_{2}\circ\er: y\mapsto y+\rho-2.\label{ur2}\end{equation}
The condition that $\rho\in\mcm\setminus\{2\}$ is equivalent to the condition saying that $\rho-2\in\{\frac2m \ | \ m\in\zz\setminus\{0\}\}$. 
The latter in its turn is equivalent to the condition that in the chart $y$ the point $y(\infty)=0$ is the $T^m$-image of the point $y(\frac12)=-2$. 
This proves equivalence of Statements 2) and 3). \end{proof}

\begin{proof} {\bf of the implication 3) $=>$ 1).} Case $\rho=0$ was already treated in Subsection 4.1; in this case the polynomial 
$R_0(z,w)=w-z^2$ is an integral. 

Case 1): $\rho=2$. Then  $\er(z)=\frac{z}{2z-1}$ fixes 1 and permutes 
$\infty$, $\frac12$. The restriction to $L=L_{(1,1)}$ of the function $R_2(z,w)=\frac{w-z^2}w$ written in the 
 coordinate $z$ is 
$\frac{(z-1)^2}{z-\frac12}$ up to constant factor. It is $\er$-invariant, by Proposition \ref{invzero} and invariance 
of its zero and pole divisors: double zero $1$ and the pair of simple poles $\frac12$, $\infty$. 
Hence, $R_2$ is an integral of the $(2,1;2)$-billiard. 

Case 2): $\rho=1$. Then the involution $\er(z)=\frac1z$ fixes 
$1$ and permutes $0$, $\infty$.   The 
 restriction to  $L$ of the  function $R_1(z,w)=\frac{w-z^2}z$  is equal to 
 $\frac{(z-1)^2}z$ up to constant factor. It is $\er$-invariant, by Proposition \ref{invzero} and 
 invariance of its zero and pole divisors, and $R_1$ is an integral, as in the above case.

Case 3): $\rho=3$. Then $\er(z)=\frac{2z-1}{3z-2}$ fixes 1 and  permutes $0$, $\frac12$.  
The restriction to $L$ of the  function $R_3(z,w)=\frac{w-z^2}{zw}$ has double zero $1$ and simple poles 
 $0$, $\frac12$. Hence, it is $\er$-invariant, and $R_3$ is an integral, as above.

Case 4): $\rho=4$. Then $\er(z)=\frac{3z-2}{4z-3}$ fixes the points 1 and $\frac12$. The latter points 
taken twice are respectively 
zero and pole divisors of the function $R_4|_{L}$. Hence, the latter function is invariant, and $R_4$ 
is an integral. 

Case 5): $\rho-2=\frac 2m$, $m\in\zz\setminus\{0\}$, $|m|\geq3$. Note that the integer number $m$ has the same sign, as 
the number $\rho-2$. Set 
$$\zeta_0=\frac12, \zeta_j=T^j(\zeta_0), \ j=0,\dots,m; \ \ \zeta_m=\infty,$$
$$ \chi:=\{\zeta_0,\dots,\zeta_{m-1}\}, \text{ if } \rho >2, \ \text{ i.e., } m>0,$$
$$ \chi:=\{\zeta_{m+1},\dots,\zeta_{-1}\}, \text{ if } \rho <2 \ \text{ i.e., } m<0.$$

{\bf Claim 5.} {\it The set $\chi$ is a collection $\chi(\mcp)$ of  roots of restriction to $L$ of a primitive $\er$-quasi-invariant $(2,1)$-quasihomogeneous 
polynomial $\mcp$. The polynomial $\mcp$ does not vanish identically on $\gamma=\gamma_{2,1}=\{ w=z^2\}$.}

\begin{proof} The restriction to $L$ of a prime quasihomogeneous polynomial $w-cz^2$ is $\mcr_c(z):=-cz^2+2z-1$. 
 The map $\eta_{2}$ permutes roots of the polynomial $\mcr_c$ for every $c$, 
since the sum of  inverses of roots is equal to 2 and $\eta_{2}$ acts as $v\mapsto2-v$ in the chart 
$v=\frac1z$. It permutes $\zeta_j$ and $\zeta_{m-j}$ for every $j=0,\dots,m$, since 
$y(\zeta_j)$ form an arithmetic progression, see (\ref{ur2}), $y(\zeta_0)=-2$, $y(\zeta_m)=0$, and $\eta_2:y\mapsto-y-2$.  
Therefore, the numbers $\zeta_j$ and $\zeta_{m-j}$ are roots of a quadratic polynomial $\mcr_{c_j}(z)$, unless $\zeta_j\in\{0,\frac12,\infty\}$.  The middle point $-1$ of the segment $[-2,0]\in\rr_y$ 
corresponds to $z=0$. One has $\zeta_j=0$ for some $j$, if and only if $m\in2\zz$, and then $j=\frac m2$. Therefore, $\chi$  consists of the union of roots 
of the  quadratic polynomials $\mcr_{c_j}$, $|j|=1,\dots,[\frac{|m|-1}2]$,  the point $\frac12$ (if $\rho>2$)  and  zero (if  $m\in2\zz$). The complement $\chi\setminus\{\frac12\}$ is $\eta_{2}$-invariant, by construction. 
One has $m\geq0$, if and only if $\rho-2\geq0$. 
The map $\er=\eta_{2}\circ T$ sends each $\zeta_j\in\chi$ to $\zeta_{m-j-1}$, 
since $T(\zeta_j)=\zeta_{j+1}$, by 
construction.  The latter image $\zeta_{m-j-1}=\er(\zeta_j)$  lies in $\chi$, except for the  case, when  
$$\rho<2, \ m<0,  \ \zeta_{m-j-1}=\zeta_m=\infty, \ j=-1, \ \zeta_{-1}=\er(\infty)=\theta_{\rho}\in\chi.$$ 
Indeed, if $\rho>2$, then $\zeta_j\in\chi$ exactly for $j\in[0,m-1]$, and in this case $m-1-j$ lies there as well. 
If $\rho<2$, then $\zeta_j\in\chi$ exactly for  $j\in[m+1,-1]$, and in this case $\zeta_{m-1-j}\in\chi$, 
unless $j=-1$. Thus, the complement $\chi\setminus\{\theta_{\rho}\}$ is $\er$-invariant. 
Any two points $\zeta_j,\zeta_k\in\chi$ can be obtained one from the other by 
the map $T^{j-k}=(\eta_{2}\circ\er)^{j-k}$ so that the latter map considered as a composition of $2|j-k|$ 
involutions is well-defined at $\zeta_k$ and $T^{j-k}(\zeta_k)=\zeta_j$. This follows from the above discussion. 
Thus, $\chi=\chi(\mcp)$ for some primitive 
$\er$-quasi-invariant $(2,1)$-quasihomogeneous polynomial $\mcp$ that is the product of the polynomials $w-c_jz^2$ 
and may be some of the monomials $z$, $w$. One has  
$1\notin\chi$, since $y(1)=\infty$. Hence, $\mcp|_\gamma\not\equiv0$.  Claim 5 is proved.
\end{proof}

Thus, if Statement 3) of the lemma holds, then there exist at least two distinct primitive quasihomogeneous 
$\er$-quasi-invariant polynomials:   $w-z^2$ and the above polynomial $\mcp$. 
Hence, the $(2,1;\rho)$-billiard admits a quasihomogeneous rational first integral
$$R=\frac{(w-z^2)^{m_1}}{(\mcp(z,w))^{m_2}},$$
 by Proposition \ref{tchichi}. Implication 3) $=>$ 1) is proved.
 \end{proof} 
 
Equivalence of Statements 1)--3) is proved. Now for the proof of Lemma \ref{lsuff} it remains to 
calculate the integrals. To do this, let us calculate the $c_j$ from the proof of Claim 5 for $j\neq\frac m2$. 
One has 
$$\rho-2=\frac2m, \ y_j:=y(\zeta_j)=-2+j(\rho-2)=-2+\frac{2j}m, \ \zeta_j=\frac1{y_j}+1=\frac{2j-m}{2(j-m)},$$
\begin{equation}c_j=(\zeta_j\zeta_{m-j})^{-1}=-\frac{4j(m-j)}{(2j-m)^2}, \ m=\frac2{\rho-2}\in\zz\setminus\{0\}.
\label{forcj}\end{equation}
Therefore, in the case, when $\rho>2$, the polynomial $\mcp(z,w)$ is the product of the quadratic polynomials 
$w-c_jz^2$, $j=1,\dots,[\frac{m-1}2]$, the polynomial $w$, and also the polynomial $z$ (which enters 
$\mcp$ if and only if $m\in2\zz$). In the case, when $\rho<2$, the polynomial $\mcp(z,w)$ is the product of the polynomials 
$w-c_jz^2$, $j=1,\dots, [-\frac{m+1}2]$, and also the polynomial $z$ (if $m\in2\zz$). 

Subcase 5a): $\rho=2+\frac2m$, $m=2N+1$, $N\in\nn$. Then $\mcp$ is the product of the 
polynomial $w$ and  $N$ above polynomials $w-c_jz^2$. Its degree is equal to $2N+1=m$. 
The divisor $\chi(\mcp)$ contains $\frac12$ and does not contain $\er(\infty)$, by construction and 
the above discussion. Substituting  $m=2N+1$ to formula (\ref{forcj})  
yields the formula for the coefficients $c_j$ given by the  table in Theorem \ref{classpqr}. 
This together with Proposition \ref{tchichi} implies that the corresponding function $R_{\rho}$ from the same table  is an integral of the $(2,1;\rho)$-billiard. 

Subcase 5b): $\rho=2-\frac2{2N+1}$.  It is treated analogously. In this case $\mcp$ is just the product of the 
above $N$ polynomials $w-c_jz^2$. The divisor $\chi(\mcp)$ contains $\theta_\rho=\er(\infty)$. This together with 
Proposition \ref{tchichi}  implies 
that the corresponding function $R_{\rho}$ from the  table  is an integral. 

Subcase 5c): $\rho=2+\frac1{N+1}$, $m=2N+2$. Treated analogously to Subcase 5a). But now the polynomial $\mcp$ 
contains the additional factor $z$, and substituting $m=2N+2$ to (\ref{forcj}) yields the formula for $c_j$ from the 
corresponding line of the table  in Theorem \ref{classpqr}.

Subcase 5d): $\rho=2-\frac1{N+1}$. Treated analogously. Lemma \ref{lsuff} is proved. The proof of 
Theorem \ref{classpqr} is complete.
\end{proof}

\begin{proof} {\bf of the addendum to Theorem \ref{classpqr}.} The equation for the curve 
$\gamma=\{ w=z^2\}$ in the new coordinates $(\wt z,\wt w)$ is the same: $\wt w=\wt z^2$. 
The variable change $(z,w)\mapsto(\wt z,\wt w)$ preserves the point $(1,1)$, and hence, 
the corresponding tangent line $L=L_{(1,1)}$ to $\gamma$, along which one has 
$$w=2z-1, \ \ \ \wt z=\frac z{2z-1}=\eta_2(z).$$
Therefore, in the coordinate $\wt z$ the involution $\er$ takes the form 
\begin{equation}\eta_2\circ\er\circ\eta_2=\eta_{\wt\rho}, \ \  \ \wt\rho:=4-\rho;\label{eta2rho}\end{equation} 
the latter formula follows from (\ref{ur2}). 
Thus, the variable change in question transforms the  $(2,1;\rho)$-billiard to the 
$(2,1;4-\rho)$-billiard. 

In new coordinates one has $R_{\rho}(z,w)=R_{4-\rho}(\wt z,\wt w)$. Indeed, 
$$R_0(z,w)=w-z^2=\frac1{\wt w^2}(\wt w-\wt z^2)=R_4(\wt z,\wt w),$$
$$ R_1(z,w)=\frac1{\wt w\wt z}(\wt w-\wt z^2)
=R_3(\wt z,\wt w), \ R_2(z,w)=R_2(\wt z,\wt w).$$
For the other integrals $R_{\rho}$ from the table in Theorem \ref{classpqr} the proof is analogous. 
The addendum is proved.
\end{proof}

\section{Local branches. Proof of Theorem \ref{typesing}}
The main result of this section is the following theorem.
\begin{theorem} \label{lalt} Let a non-linear irreducible germ of analytic curve $b\subset\cp^2$ at a point $O$ 
 admit a germ of  singular holomorphic dual   billiard structure with a meromophic integral $R$. 
 Then the germ $b$ is quadratic and  one of the two following statements holds: 
 
 a) either $b$ is regular; 

b) or $b$ is singular,  the  integral $R(z,w)$ is a rational function that is   
constant along the projective tangent line $L_O$ to $b$ at $O$, and  
the punctured line $L_O\setminus\{ O\}$ is a regular leaf of the foliation $R=const$ on $\cp^2$.
\end{theorem}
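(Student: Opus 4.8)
The plan is to obtain quadraticity from the reduction to $\pqr$-billiards, then --- in the singular case --- to extract the inequality $\rho>r=2$ from the self-contact analysis of Subsection 5.1, and finally to transplant the argument of \cite[subsection 4.6, proof of theorem 4.24]{gl2}. First I would apply Proposition \ref{proalgm} (the structure on $b\setminus\{ O\}$ is holomorphic) and then Proposition \ref{prhod}: the involution family $\sigma_P$ is meromorphic with pole at $O$ of order at most one, with a well-defined residue $\rho=-\frac d3$, where $H(G)|_b=\alpha z^d(1+o(1))$ in coordinates adapted to $b$. Writing the projective Puiseux exponent of $b$ as $r=\frac pq$, $(p,q)=1$, Theorem \ref{quasi-pqr} shows that the $\pqr$-billiard is quasihomogeneously integrable, and then Theorem \ref{classpqr} forces $p=2$, $q=1$ and $\rho\in\mcm$; thus $b$ is quadratic. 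If $b$ is regular, statement a) holds and nothing more is needed.

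Assume henceforth that $b$ is singular, so $b$ is a singular quadratic local branch carrying a meromorphically integrable singular holomorphic dual billiard structure with residue $\rho$. Here I would invoke Theorem \ref{tufold}: it expresses the self-contact order of $b$ through $\rho$ by the explicit formula (\ref{rhode}), and this formula forces $\rho>r=2$. In particular $\rho\neq0$, so Proposition \ref{merrat} applies and shows that $R$ is a rational function on $\cp^2$ and that $b$ is contained in an irreducible algebraic curve $\gamma\subset\cp^2$; I then normalize $R$ (adding a constant or replacing it by its reciprocal, as in Subsection 2.2) so that $R|_\gamma\equiv0$.

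To deduce statement b) from $\rho>r=2$, I would argue as in \cite[subsection 4.6]{gl2}. Fix affine coordinates $(z,w)$ centered at $O$ and adapted to $b$, so that $L_O=\{ w=0\}$ and, after rescaling $w$, the branch $b$ is parametrized by $t\mapsto(t^{q_b},t^{2q_b}(1+o(1)))$ with $q_b\geq2$. For $P\in b$ close to $O$ write $\sigma_P$ in the coordinate $\zeta=\frac z{z(P)}$ on $L_P$; by definition of the residue, $\sigma_P\to\er$ in $\operatorname{PSL}_2(\cc)$ as $P\to O$, and since $\rho>2$ one has $\er(\infty)=\frac{\rho-1}\rho\in\cc\setminus\{0,\infty\}$. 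Consequently the $\sigma_P$-image of the point of $L_P$ with a fixed $z$-coordinate $z^*$ --- which has $\zeta$-coordinate $\frac{z^*}{z(P)}\to\infty$ --- converges along $L_P$ to the point with $\zeta$-coordinate $\er(\infty)$, i.e. with $z$-coordinate tending to $0$, and a short computation with the above parametrization shows that this image point tends to $O$ along the parabola $w=C^*z^2(1+o(1))$, $C^*=\frac{\rho(\rho-2)}{(\rho-1)^2}\neq0$, which is independent of $z^*$ and is not tangent to $b$ to second order (as $C^*\neq1$). Passing to the limit $P\to O$ in the identity $R\circ\sigma_P=R$ on $L_P$ then yields
$$R|_{L_O}(z^*)=\lim_{P\to O}R\bigl(z(P)\er(\infty)(1+o(1)),\ C^*z(P)^2(1+o(1))\bigr),$$
a value in $\cp^1$ independent of $z^*$; hence $R|_{L_O}$ is constant, i.e. $L_O$ lies in a single fiber of $R$.

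It remains to see that $L_O\setminus\{ O\}$ is actually a regular leaf of the foliation $R=const$ --- that $R$ is holomorphic with non-vanishing differential along $L_O\setminus\{ O\}$ and that $L_O$ enters its fiber with multiplicity one --- and I would obtain this exactly as in loc. cit.: a critical point or a point of indeterminacy of $R$ on $L_O\setminus\{ O\}$ would be a singular point of the foliation to which the invariant lines $L_P$, $P\to O$, accumulate, which would force $R$ to be constant on an open subset of $\gamma$, contradicting $R\not\equiv const$. The main obstacle I anticipate is precisely this last step: one must control the foliation $R=const$ near the whole line $L_O$ and verify that the meromorphic-integrability setting here matches the polynomial setting of \cite[theorem 4.24]{gl2} closely enough for its accumulation argument to transfer --- the inequality $\rho>r$ being exactly the input that makes that argument run.
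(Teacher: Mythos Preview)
Your reduction is exactly right up to the point where you establish $\rho>r=2$ via Theorem \ref{tufold} and rationality via Proposition \ref{merrat}: this matches the paper. The divergence comes in how you deduce constancy of $R|_{L_O}$ and regularity of the leaf.

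Your direct limit argument for constancy has a gap. You claim that $\lim_{P\to O}R(\sigma_P(Q_P))$ is independent of $z^*$ because the image point has coordinates $(z(P)\theta_\rho(1+o(1)),\, C^*z(P)^2(1+o(1)))$. But the $o(1)$ terms genuinely depend on $z^*$ --- indeed $\sigma_P(z^*/z(P))=\theta_\rho-\tfrac{z(P)}{\rho^2 z^*}+\dots$ --- and $O$ is typically an indeterminacy point of $R$ (since $b\subset\gamma$ passes through $O$ while $L_O$ may well lie in the polar locus), so the limit along paths differing at this order is not a priori path-independent. The identity $R(\sigma_P(Q_P))=R(Q_P)\to R(z^*,0)$ only tells you the limit exists for each fixed $z^*$; it does not furnish a $z^*$-independent evaluation of it.

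Your regular-leaf sketch is also not the argument of \cite[theorem 4.24]{gl2}. The lines $L_P$ accumulate to the line $L_O$, not to a putative singular point $A$; and $R$ is already constant on $\gamma$, so the contradiction you describe does not arise.

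The paper handles both conclusions uniformly via Proposition \ref{pmgerm}, which is the true analog of \cite[theorem 4.24]{gl2}: given $\rho>r$, any $(L_O,O)$-local multigerm $\Gamma$ with $\sigma_P$-invariant intersections $\Gamma\cap L_P$ has all its components based at $O$. The proof is an infinite descent. A germ at some $A\neq O$ gives an intersection point with $\zeta\to\infty$; its $\sigma_P$-image has $\zeta\to\theta_\rho\in(\theta_r,1)$, hence lies on a germ at $O$ whose associated polynomial $\mcr_{p,q,C}$ has a second root $>1$; the $\sigma_P$-image of \emph{that} point has $\zeta$-limit strictly between $\theta_\rho$ and $1$; and so on, producing unboundedly many intersection points and contradicting finiteness of the intersection index of $\Gamma$ with $L_O$. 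Constancy of $R|_{L_O}$ follows by taking $\Gamma=\{R=R(A)\}$ for a generic $A\in L_O\setminus\{O\}$; regularity of the leaf by taking $\Gamma$ to be the level curve carrying a local branch at a putative singular point $A\in L_O\setminus\{O\}$. This descent is the ``accumulation argument'' you allude to, but it is run on level curves of $R$, not on the family of tangent lines $L_P$.
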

Theorem \ref{lalt} will be proved in Subsection 5.2. Theorem \ref{typesing} will be deduced from it in Subsection 5.3. 
Quadraticity of germ in Theorem \ref{lalt} follows from Theorems \ref{quasi-pqr} and \ref{classpqr}. The proof of Statement b) of Theorem \ref{lalt} for 
a singular germ is based on Theorem \ref{tufold} (stated and proved in Subsection 5.1), which yields 
a formula for residue of a meromorphically integrable singular dual billiard on a singular quadratic germ in terms of 
its self-contact order. 

\subsection{Meromorphically integrable dual billiard structure on singular germ: formula for residue}
Recall that each non-linear irreducible germ $b$ of analytic curve at $O\in\cc^2$ in adapted coordinates centered at $O$ 
admits an injective holomorphic parametrization 
\begin{equation}t\mapsto(t^{qs},\phi(t)), \ \ \phi(t)=ct^{ps}(1+O(t)), \ 1\leq q<p, \ s, q, p\in\nn,\label{paragerm}\end{equation} 
$$ c\in\cc\setminus\{0\}; \ \ (q,p)=1, \   \ r=\frac pq \text{ is the projective Puiseux exponent.}$$

\begin{definition}  A non-linear irreducible germ $b$ will be called {\it primitive},  if $s=1$ in (\ref{paragerm}) (following a suggestion of E.Shustin). 
\end{definition}

\begin{remark} A quadratic germ is primitive, if and only if it is regular. It is well-known that  if $s\geq2$, then the function 
$\phi(t)$ in (\ref{paragerm}) satisfies one of the   two following statements:

a) either there exist a $\delta\in\qq_{>0}$ and $\nu_1,\dots,\nu_{s-1}\in\cc\setminus\{0\}$, 
such that
\begin{equation} \phi(te^{\frac{2\pi i j}s})-\phi(t)=\nu_j t^{qs(r+\delta)}(1+o(1)), \text{ for 
 } j=1,\dots,s-1, \text{ as } 
t\to0;\label{deltaunif}\end{equation}

b) or there exit at least two distinct $\delta_1, \delta_2\in\qq_{>0}$ 
and distinct  $j_1, j_2\in\{1,\dots,s-1\}$ for which (\ref{deltaunif}) holds 
with $\delta$ replaced by $\delta_{1}$ and $\delta_{2}$ respectively.  
\end{remark}

\begin{definition} If $b$ is  not primitive and (\ref{deltaunif}) holds for a unique $\delta$, then $b$ will be 
called {\it uniformly ($\delta$-) folded}. 
\end{definition}
\begin{remark} This definition is equivalent  to the well-known definition 
of a germ having {\it two Puiseux pairs}.  
 \end{remark}
 
 \begin{theorem} \label{tufold} 
 Let an irreducible quadratic germ $b$ of analytic curve at $O\in\cc^2$ admit a structure of meromorphically 
 integrable singular dual   billiard with residue $\rho$ at $O$. Then $b$ is either regular, or uniformly 
 $\delta$-folded with $\delta$ related to $\rho$ by the formula
 \begin{equation} \rho=2+\delta.\label{rhode}
 \end{equation}
 \end{theorem}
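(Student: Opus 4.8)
The plan is to extract the residue $\rho$ from the asymptotics along $b$ of the Hessian $H(G)$ of the $k$-th root $G=R^{1/k}=fg$ of the integral, via Proposition~\ref{prhod}, and to express that asymptotics through the Puiseux (self-contact) data of $b$. Since $b$ is quadratic, $r_b=2$; fix an adapted affine chart $(z,w)$ centred at $O$, in which $b$ is injectively parametrised by $t\mapsto(t^s,\phi(t))$ with $\phi(t)=ct^{2s}(1+O(t))$, $c\neq0$, $s=s_b\in\nn$. If $s=1$ the germ $b$ is regular and there is nothing to prove, so assume $s\geq2$; then $b$ is singular at $O$ and the claim is that $b$ is uniformly $\delta$-folded with $\rho=2+\delta$. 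Set $v_j:=\operatorname{ord}_t\bigl(\phi(e^{2\pi ij/s}t)-\phi(t)\bigr)$, $j=1,\dots,s-1$; since the leading monomial $ct^{2s}$ of $\phi$ is fixed by $t\mapsto e^{2\pi ij/s}t$, each $v_j\geq2s+1$, and $b$ is uniformly $\delta$-folded precisely when all the $v_j$ are equal, with common value $s(2+\delta)$ (a germ with non-uniform self-contact, i.e.\ with two or more distinct limit rates, produces unequal $v_j$, and likewise a germ with three or more Puiseux pairs).

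Next I would compute $\operatorname{ord}_t\bigl(H(G)|_b\bigr)$ by Weierstrass preparation. Up to a local unit, $f=\prod_{j=0}^{s-1}\bigl(w-\phi_j(z)\bigr)$ over the $s$ branches $\phi_j$ of the multivalued function $\phi(z^{1/s})$; near the branch $j=0$ one has $f=(w-\phi_0(z))\,u(z,w)$ with $u$ a local unit, so by the product formula~(\ref{hess3}) for the Hessian, $H(f)|_b=u^3\,H(w-\phi_0(z))|_b=-u^3\,\phi_0''(z)|_b$. Here $\phi_0''(z)\to2c\neq0$ as $z\to0$, so it contributes nothing to the $t$-order, whereas $u|_b=\prod_{j=1}^{s-1}\bigl(\phi(t)-\phi(e^{2\pi ij/s}t)\bigr)$ has $t$-order $\sum_j v_j$; applying~(\ref{hess3}) once more to $G=fg$ and invoking Proposition~\ref{prhod} (with $H(G)|_b=\alpha z^d(1+o(1))$) gives
\[
sd=\operatorname{ord}_t\bigl(H(G)|_b\bigr)=3\operatorname{ord}_t(g|_b)+3\sum_{j=1}^{s-1}v_j,\qquad
\rho=-\frac d3=-\frac1s\Bigl(\operatorname{ord}_t(g|_b)+\sum_{j=1}^{s-1}v_j\Bigr).
\]

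The crux is to evaluate the two terms on the right. Theorems~\ref{quasi-pqr} and~\ref{classpqr} already force $\rho\in\mcm$. To pin $\rho$ down exactly I would pass to the rescaled coordinates $(\zeta,y)=(z_0^{-1}z,z_0^{-2}w)$, $z_0=z(P)$, in which, as $z_0\to0$, the germ $b$ degenerates to the conic $\gamma_{2,1}=\{y=\zeta^2\}$ and the dual billiard degenerates to the $(2,1;\rho)$-billiard; the self-contact parameter enters through the size $O(z_0^{\delta})$ of the leading folding correction of $b$ — and of the involutions $\sigma_P$ — away from these limits. Matching the next-order term of the invariance equation $R\circ\sigma_P=R$ on $L_P$ against the explicit $(2,1;\rho)$-integrals of Theorem~\ref{classpqr} should yield simultaneously the value $\operatorname{ord}_t(g|_b)=-s^2(2+\delta)$ and the relation $\rho=2+\delta$, i.e.\ formula~(\ref{rhode}); the same matching is inconsistent unless all the $v_j$ coincide, which is exactly uniform foldedness and excludes germs with more than two Puiseux pairs as well as the non-uniform case. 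Once $\rho=2+\delta$ with a single $\delta\in\qq_{>0}$ is established we have $\rho\neq0$, so by Proposition~\ref{merrat} the integral $R$ is in fact rational and $b$ lies in an algebraic curve; in particular $\rho=2+\delta>2=r_b$, which is the inequality needed afterwards in the proof of Theorem~\ref{lalt}.

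The step I expect to be the main obstacle is precisely this second-order matching: controlling $\operatorname{ord}_t(g|_b)$ — equivalently, the vanishing order along $b$ at $O$ of the non-defining factor of the integral — and at the same time proving that the foldedness is uniform, since a priori the $v_j$ may differ and $g_1|_b$ may have either a zero or a pole at $O$. Everything else (the Weierstrass preparation, the Hessian product formula~(\ref{hess3}), and Proposition~\ref{prhod}) is routine once this is settled.
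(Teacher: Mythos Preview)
Your Hessian computation is sound as far as it goes. Writing $f=\prod_{j}(w-\phi_j(z))$ and applying (\ref{hess3}) twice does give
\[
\rho=-\frac d3=-\frac1s\Bigl(\operatorname{ord}_t(g|_b)+\sum_{j=1}^{s-1}v_j\Bigr),
\]
and your reverse-engineered value $\operatorname{ord}_t(g|_b)=-s^2(2+\delta)$ is consistent with the desired answer. But this is one equation in two unknowns (the order of $g|_b$ and the sum $\sum v_j$), and you never supply a second one. The proposed ``second-order matching'' is not an argument: the integral $R$ is not unique, and its non-defining factor $g_1$ can a priori vanish or blow up along $b$ to any order compatible with integrability, so nothing in your outline pins down $\operatorname{ord}_t(g|_b)$. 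Likewise, nothing forces the $v_j$ to coincide --- you simply assert that the matching ``is inconsistent'' otherwise. You also conflate two different objects: the $O(z_0^{\delta})$ self-contact of the branches of $b$ is a property of the \emph{curve}, whereas the higher-order terms of $\sigma_P$ are a property of the \emph{billiard structure}; a priori they are unrelated except through the integral, and you give no mechanism linking them.

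The paper bypasses the Hessian entirely at this step and argues geometrically. It proves a stronger statement (Theorem~\ref{tsputnik}): if \emph{any} irreducible germ $a$ lying in a level curve of $R$ is a $\delta$-satellite of $b$, then $\rho=2+\delta$. The proof is a cascade argument. One computes directly (Proposition~\ref{rro}) that the two intersection points of the satellite branch $\Gamma_a$ with $L_P$ have $u$-coordinates related by $u_-=-u_++(\rho_0+\delta)u_+^2+o(u_+^2)$ with $\rho_0=2$, while $\sigma_P$ acts as $u\mapsto -u+(\rho+o(1))u^2+\dots$. If $\Theta:=\rho-2-\delta\neq0$, then $\sigma_P(u_-)$ is \emph{not} $u_+$ but rather $u_++\Theta u_+^2+o(u_+^2)$; this new point lies on the same level set $\alpha=\{R=R(a)\}$, hence on another satellite germ $a_1\subset\alpha$, and iterating produces infinitely many distinct points $u_++k\Theta u_+^2+o(u_+^2)$ in $\alpha\cap L_P$, all converging to $O$ --- contradicting finiteness of the intersection index of the analytic set $\alpha$ with $L_O$. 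Applying this to $a=b$ (which is a $\delta$-satellite of itself for each self-contact exponent) forces every such $\delta$ to equal $\rho-2$; hence there is only one, i.e.\ $b$ is uniformly folded, and $\rho=2+\delta$.

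So the missing idea is not a finer Hessian expansion but the dynamical observation that a mismatch $\rho\neq2+\delta$ generates an unbounded orbit of intersection points.
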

 
 \begin{proof}
 Below we prove a more general theorem. To state it, let us introduce the following 
 definition.
 
 \begin{definition} Let $a$ and $b$ be two irreducible  germs of analytic curves 
 at $O\in\cc^2$ tangent to each other. Let $b$ be non-linear; let $r=r_b$ be its projective Puiseux exponent. 
Let $\delta\in\qq_{>0}$. We say 
 that $a$ is a {\it $\delta$-satellite for $b$,} if  $a$, $b$ are graphs of two multivalued functions 
 $\{ w=g_a(z)\}$, $\{ w=g_b(z)\}$ (represented by Puiseux series in $z$) satisfying 
 the following statement: there exist a sector $S$ with vertex at $0$, a 
 $c\in\cc\setminus\{0\}$, and holomorphic  branches of the functions $g_a(z)$, 
 $g_b(z)$  over $S$ (near $0$) for which 
 \begin{equation}g_a(z)-g_b(z)=cz^{r+\delta}(1+o(1)), \text{ as } z\to0, \ z\in S.
 \label{gab}\end{equation} 
 \end{definition}
 \begin{remark} \label{exsp}  Any two satellites have the same Puiseux exponent. 
  A uniformly $\delta$-folded  germ $b$ is a $\delta$-satellite for itself. 
 \end{remark}
 \begin{remark} \label{remsp} If $a$ and $b$ are $\delta$-satellites, then the above sector $S$ can be chosen with angle 
 arbitrarily large and containing an arbitrary given ray. 
 A priori it may happen that  $a$ and $b$ are $\delta_1$- 
 and $\delta_2$-satellites  with different $\delta_1,\delta_2>0$ 
 corresponding to two different pairs of holomorphic branches. (This holds, e.g.,  for $a=b$, if $b$ is neither primitive, 
 nor uniformly folded.) Two germs that 
 are $\delta$-satellites for a unique $\delta>0$ are called 
 {\it pure $\delta$-satellites.} 
 \end{remark}
 
 \begin{theorem} \label{tsputnik} Let an irreducible quadratic germ $b$  at $O\in\cc^2$ 
 admit a structure of meromorphically 
 integrable singular dual   billiard. Let the corresponding involution family  
 have residue $\rho$ at $O$. Let $a$ be an irreducible germ with the 
 same base point $O$ that lies in a level curve of the meromorphic integral. 
 Let $a$ be a $\delta$-satellite of the germ $b$. Then they are pure $\delta$-satellites, and the 
  corresponding number $\delta$ is  given by formula (\ref{rhode}). 
 \end{theorem}
\begin{proof} Let $S$, $g_a$ and $g_b$ be the same, as in (\ref{gab}). Fix a smaller sector $S'$, $\overline{S'}\setminus\{0\}\subset S$. 
The graphs of the functions $g_a$, $g_b$ over the sector $S$ will be denoted by $\Gamma_a$, 
$\Gamma_b$ respectively. Fix a 
 $z_0\in S'$, set $P=(z_0, g_b(z_0))\in\Gamma_b\subset b$. Let $L_P$ denote the line tangent to $b$ at $P$. 
 We introduce the coordinate 
 $$u:=\zeta-1=\frac z{z_0}-1$$
 on the tangent line $L_P$. 
 Let us find asymptotics of those $u$-coordinates of points of the intersection $\Gamma_a\cap L_P$, that tend to zero, as $z_0\in S'$ 
 tends to $0$. 
 
\begin{proposition} \label{rro} Let  $a$ and $b$ be  non-linear irreducible $\delta$-satellite germs at a point $O\in\cc^2$ with Puiseux exponent $r$. Let 
$S$, $g_a$, $g_b$  be the same, as in (\ref{gab}). Let $S'$ and 
$\Gamma_{a,b}$ be as above. 
As $P=(z_0,g_b(z_0))\to O$, $z_0\in S'$,  the intersection $\Gamma_a\cap L_P$ 
 contains exactly two points whose $u$-coordinates converge to zero. Their 
 $u$-coordinates $u_{\pm}$ are related by the asymptotic formula
 \begin{equation} u_-=-u_++(\rho_0+\delta)u_+^2+o(u_+^2), \ \ \rho_0=\frac23(r+1).
 \label{upmas}\end{equation}
 \end{proposition}
 
 \begin{proof} Without loss of generality we consider that $g_b(z)\simeq z^r(1+o(1))$, as $z\to0$, 
 rescaling the coordinate $w$.  Let us work in the further  rescaled coordinates $(\zeta,y)$, $(z,w)=(z_0\zeta, z_0^ry)$, in which 
 $\Gamma_{a,b}$ are graphs of functions $h_{a,b}(\zeta)$, $h_{a,b}(\zeta)$ converging 
 to $\zeta^r$ together with derivatives uniformly on compact subsets in $S$, as $z_0\to0$. In the new coordinates 
 $$\zeta(P)=1, \ h_a(\zeta)-h_b(\zeta)=cz_0^\delta(1+u)^{r+\delta}(1+\theta(z_0,u)),$$
 $\theta(z_0,u)\to 0$, as $z_0\to0$,  uniformly with derivatives in $u$ lying in a disk centered at $0$, by (\ref{gab}). Hence,  
 $$ \theta(z_0,u)=\chi(z_0)+o(u), \ \chi(z_0)\to0, \text{ as } z_0\to0.$$
The $u$-coordinates of points of the intersection $\Gamma_a\cap L_P$ 
  are found from the equation 
 $$ h_b(1)+h_b'(1)u=h_a(1+u)=h_b(1+u)+ cz_0^\delta(1+u)^{r+\delta}(1+\chi(z_0)+o(u)).$$
Moving $h_b(1+u)$ to the left-hand side and expressing the new 
 left-hand side by  Taylor formula with base point $1$ we get: 
 \begin{equation}-\frac12h_b''(1)u^2-\frac16h_b'''(1)u^3+o(u^3)= 
 cz_0^\delta(1+u)^{1+\delta}(1+\chi(z_0)+o(u)).\label{gb''}\end{equation}
 Since $h_b(\zeta)\to\zeta^r$, we get  $h_b''(1)=r(r-1)(1+\phi(z_0))$, $\phi(z_0)\to0$, $h_b'''(1)\to r(r-1)(r-2)$, as $z_0\to0$. 
 Substituting these expressions to (\ref{gb''}) yields 
 \begin{equation}-u^2(1+\frac{r-2}3u+o(u)+\phi(z_0))=\frac{2c}{r(r-1)}z_0^{\delta}(1+u)^{r+\delta}(1+\chi(z_0)+o(u)),\label{equu}\end{equation}
$\phi(z_0), \chi(z_0)\to0$. Equation (\ref{equu}) has exactly two solutions $u_\pm$ that tend to zero, as 
 $z_0\to0$: they have asymptotics 
 \begin{equation}u_+\simeq -u_-\simeq \sqrt{-\frac{2c}{r(r-1)}}z_0^{\frac{\delta}2}(1+o(1)).\label{u+d2}\end{equation}
Let us now prove (\ref{upmas}). Dividing equations (\ref{equu}) written for $u+$ 
and $u_-$ and taking into account that $u_+\simeq -u_-$, we get 
$$\left(\frac{u_+}{u_-}\right)^2\frac{1+\phi(z_0)+\frac{r-2}3u_++o(u_+)}{1+\phi(z_0)-\frac{r-2}3u_++o(u_+)}=\left(\frac{1+u_+}{1-u_+}\right)^{r+\delta}(1+\chi(z_0)+o(u_+)),$$
$$\frac{u_+}{u_-}=-(1+(r+\delta-\frac{r-2}3)u_++o(u_+)=-(1+(\rho_0+\delta)u_++o(u_+)).$$
The latter formula implies (\ref{upmas}). Proposition \ref{rro} is proved.
\end{proof}

Let now $b$ be  a singular quadratic germ equipped with a meromorphically integrable singular dual billiard structure. 
Fix two graphs $\Gamma_a\subset a$, $\Gamma_b\subset b$ satisfying (\ref{gab}) over $S$ with some 
$\delta>0$. For the proof of Theorem \ref{tsputnik} it suffices to show that $\rho=2+\delta$. Suppose the 
contrary: 
\begin{equation} \Theta:=\rho-2-\delta\neq0.\label{sneq}\end{equation}
 The curve $a$ should lie in a level curve $\alpha$ of the meromorphic integral, which is   
a one-dimensional analytic subset in a neighborhood of the origin $O$ and hence, has 
finite intersection index with the tangent line $L_O$ at $O$. Therefore, only finite 
and uniformly bounded number of points of   intersection $\alpha\cap L_P$ converge to $O$, as $P\to O$, i.e., 
as $z_0\to0$. Using the next proposition, we show that for every $N\in\nn$ and $z_0$ small enough (dependently on $N$) 
there are at least $N$ above intersection points that converge to $O$. The contradiction thus obtained will 
prove Theorem \ref{tsputnik}. 

We use the following characterization of satellite germs. 

\begin{proposition} \label{asintsp} Let $a$, $b$ be two  irreducible  germs of holomorphic curves at $O\in\cc^2$. Let $b$ be non-linear, 
 $r=\frac pq$ be  its Puiseux exponent, $(p,q)=1$. 

1) The germs $a$, $b$ are satellites, if and only the germ $a$ has the same Puiseux exponent $r$, 
and the lower $(p,q)$-quasihomogeneous parts of their defining functions are powers of one and the 
same prime $(p,q)$-quasihomogeneous polynomial $w^q-cz^p$.  

2) Let $a$ and $b$ be quadratic germs. They are satellites if and only if all 
the points of intersection $a\cap L_P$ have $\zeta$-coordinates, $\zeta=\frac z{z(P)}$, that tend to one. 
This holds if and only if {\bf some} point of the above intersection has $\zeta$-coordinate that tends to one. 
\end{proposition}

\begin{proof} Clearly the germs $a$, $b$ cannot be satellites, if they have different Puiseux exponents. Let  $f_a$, $f_b$ be the functions defining 
 $a$ and $b$, and let $\wt f_a$, $\wt f_b$ be their 
lower $(p,q)$-quasihomogeneous parts. Then up to constant factor, $\wt f_g(z,w)=(w^q-C_gz^p)^{s_g}$, $g=a,b$, 
$s_g\in\nn$, $C_g\in\cc\setminus\{0\}$, see (\ref{pqpart}). Without loss of generality we can and will consider 
that $C_b=1$, rescaling $w$.  The  curves $b$  and $a$ are 
 parametrized respectively by $t\mapsto(t^{qs_b}, t^{ps_b}(1+o(1)))$ and  $\tau\mapsto(\tau^{qs_a}, 
c_a\tau^{ps_a}(1+o(1)))$, $c_a^q=C_a$, see the discussion in 
Example \ref{exqpart}. Therefore, they are satellites, if and only if $C_a=1$. 
This proves Statement 1). The equality $C_a=1$ is equivalent to the statement that the restrictions to 
the line $L=L_{(1,1)}$ (tangent to the curve $\{ y=\zeta^2\}$ at $(1,1)$) of the quasihomogeneous polynomials  
$\wt f_a(\zeta,y)$ and $\wt f_b(\zeta,y)$ have the same roots. The latter roots are exactly the finite limits of  the $\zeta$-coordinates 
of points of the intersections $a\cap L_P$ and $b\cap L_P$ respectively (Proposition \ref{pinters}). 
 In the case of quadratic germs 
the polynomial $\wt f_b|_L=-1+2\zeta-\zeta^2=-(1-\zeta)^2$ has just one, double root 1. If $\wt f_a\neq\wt f_b$, i.e., 
$C_a\neq1$, then the polynomial $\wt f_a|_L=-1+2\zeta-C_a\zeta^2$ does not vanish at 1. This together with 
Statement 1) and the above discussion proves Statement 2).
\end{proof}

Recall that in the chart $\zeta$ the dual billiard involution $\sigma_P$ converges 
to  $\eta_{\rho}$. Therefore, in the chart $u$ it converges to the 
involution $u\mapsto-\frac u{1+\rho u}$, see (\ref{sigmaup}).
Hence, the germ  of the involution $\sigma_P$ at $u=0$ acts as 
\begin{equation}\sigma_P:u\mapsto-u+(\rho+\phi(z_0))u^2+\dots, \ \phi(z_0)\to0, \text{ as } z_0\to0.\label{sigmapu}
\end{equation}
The intersection points from  Proposition \ref{rro} with $u$-coordinates $u_\pm$ will be denoted 
by $A_{0\pm}$. The point  $A_{1+}=\sigma_P(A_{0-})\in L_P$ should lie in the same level curve $\alpha$ of the 
integral, as $A_{0-}$. Therefore, it lies in some irreducible germ $a_1$ of holomorphic curve at $O$, since 
the germ of  $\alpha$  is analytic. 
One has 
\begin{equation} u_{1+}:=u(A_{1+})=-u_-+(\rho+o(1))u_-^2=u_++\Theta u_+^2+o((u_+)^2),\label{usat}\end{equation}
by (\ref{upmas}), (\ref{sneq}), (\ref{sigmapu}), and since $r=\rho_0=2$. In particular, $\zeta(A_{1+})=1+u_{1+}\to1$, as $z_0\to1$, and $u_{1+}\simeq u_+$. 
Hence, $u_{1+}$ has asymptotics (\ref{u+d2}), and $a_1$ is a $\delta$-satellite of the germ $b$ (Proposition \ref{asintsp} and 
(\ref{u+d2})). 
Therefore, $a_1$ intersects $L_P$ at another point $A_{1-}$ with 
$$u_{1-}:=u(A_{1-})=-u_{1+}+(2+\delta)u_{1+}^2,$$ 
by (\ref{upmas}). The point $A_{2+}:=\sigma_P(A_{1-})$ also lies in the intersection $\alpha\cap L_P$, and 
$$u_{2+}:=u(A_{2+})=-u_{1-}+(\rho+o(1))u_{1-}^2$$
$$=u_{1+}(1+\Theta u_{1+}+o(u_{1+}))=u_++2\Theta u_+^2+o(u_+^2).$$
Repeating this procedure  we get a sequence of distinct points $A_{k+}\in \alpha\cap L_P$ 
with coordinates asymptotic to $u_++k\Theta u_+^2+o(u_+^2)$, $k\in\nn$. 
Passing to limit we get  that the level curve $\alpha$, which is a one-dimensional 
 analytic subset in a neighborhood of the point $O$,  has infinite intersection index with the tangent line $L_O$ 
at $O$. This is obviously impossible. The contradiction thus obtained proves Theorem \ref{tsputnik}. 
 \end{proof}

 Theorem \ref{tsputnik} together with Remarks \ref{exsp}, \ref{remsp} imply the statement of Theorem \ref{tufold}.
 \end{proof}

\subsection{Singular quadratic germs. Proof of Theorem \ref{lalt}}

In the proof of Theorem \ref{lalt} we use Theorem \ref{tufold} and the following proposition. To state it, 
let us recall the following definition.

\begin{definition} \cite[definition 3.3]{gs} Let $L\subset\cp^2$ be a line, and let $O\in L$. 
A {\it $(L,O)$-local multigerm (divisor)} is respectively a finite union (linear combination $\sum_jk_jb_j$ with 
$k_j\in\rr\setminus\{0\}$) 
of distinct irreducible germs of analytic curves $b_j$ (called {\it components}) at base points  $B_j\in L$ 
such that each germ at  $B_j\neq O$ is different from the line $L$. (A germ at $O$ can be arbitrary, 
in particular, it may coincide with the line germ $(L,O)$.)  
The {\it $(L,O)$-localization} of an algebraic curve (divisor) in $\cp^2$ is the  $(L,O)$-local 
multigerm (divisor) formed by all its local branches $b_j$ of the above type.
\end{definition}

\begin{proposition} \label{pmgerm} Let $b$ be an irreducible germ of holomorphic curve at $O\in\cc^2$, and let 
$(z,w)$ be coordinates adapted to $b$ in which the corresponding constant $C_b$ from (\ref{pqpart}) is equal to 
one. Let $L_O$ be the projective tangent line to $b$ at $O$. Let $\Gamma$ be a $(L_O,O)$-local multigerm. 
Let $\sigma_P:L_P\to L_P$ be a family of projective involutions, $P\in b\setminus\{O\}$, such that 
the intersections $\Gamma\cap L_P$ are $\sigma_P$-invariant for all $P\in b$ close enough to $O$. 
Let $\sigma_P$ converge to $\er$ in the coordinate $\zeta:=\frac z{z(P)}$ on $L_P$, as $P\to O$. Let the corresponding 
number $\rho$ be greater than the projective Puiseux exponent $r=r_b$ of the germ $b$. Then all the 
germs in 
$\Gamma$ are based at the point $O$, and the $\zeta$-coordinate of each point of  the intersection  
$\Gamma\cap L_P$ has a finite limit, as $P\to O$. 
\end{proposition}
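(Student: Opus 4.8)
The plan is to reduce the proposition to the assertion that no point of the intersection $\Gamma\cap L_P$ can have its $\zeta$-coordinate tending to $\infty$ as $P\to O$, and then to rule that out by an iteration based on Proposition \ref{pinters} and the hypothesis $\sigma_P\to\er$. For the reduction: if a component $b_j$ of $\Gamma$ were based at a point $B_j\in L_O$ with $B_j\neq O$, then $b_j$ differs from the line germ $(L_O,B_j)$, so $(b_j\cdot L_O)_{B_j}$ is a positive integer and, since $L_P\to L_O$, the germ $b_j$ meets $L_P$ in that many points near $B_j$ for $P$ close to $O$; all of them are close to $B_j$, hence either have $z$-coordinate bounded away from $0$ or lie near the point at infinity of $L_O$, and in either case $\zeta=z/z(P)$ blows up. Conversely, if no $\zeta$-coordinate of a point of $\Gamma\cap L_P$ blows up, then no such $b_j$ exists and, by Proposition \ref{pinters}, each of the remaining intersection points has a finite limit of its $\zeta$-coordinate; this gives both assertions of the proposition. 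So I may assume some point of $\Gamma\cap L_P$ has $\zeta\to\infty$ and must reach a contradiction.

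Let $\mcl\subset\oc_\zeta$ be the set of all limits of $\zeta$-coordinates of points of $\Gamma\cap L_P$ as $P\to O$; it is finite, because $\#(\Gamma\cap L_P)$ is uniformly bounded, and it is $\er$-invariant, because $\sigma_P$ preserves $\Gamma\cap L_P$ and $\sigma_P\to\er$ in $\operatorname{PSL}_2(\cc)$, hence uniformly on the compact $\oc_\zeta$. By assumption $\infty\in\mcl$, so $\tro=\er(\infty)\in\mcl$; and $\rho>r>1$ forces $\rho\neq1$ and $\rho\neq r$, hence $\tro\notin\{0,\tr,\infty\}$. By Proposition \ref{pinters}, a limit value lying outside $\{0,\tr,\infty\}$ can only be produced by a component of $\Gamma$ that is based at $O$, tangent to $b$, has the same projective Puiseux exponent $r$, and whose defining function has lower $(p,q)$-quasihomogeneous part $(w^q-Cz^p)^s$ for the unique $C$ with that value a root of $\mcr_{p,q,C}$; and then \emph{all} roots of $\mcr_{p,q,C}$ lie in $\mcl$. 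Thus, off the exceptional set $\{0,\tr,\infty\}$, the finite set $\mcl$ is closed under the operation ``replace a value by the remaining roots of the polynomial $\mcr_{p,q,C}$ of which it is a root''.

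I would now obtain the contradiction by a dynamical argument, carried out in the quadratic case $p=2$, $q=1$, which is the only one needed in the application of this proposition (there $b$ is already known to be quadratic, by Theorems \ref{quasi-pqr}, \ref{classpqr}, and $\rho>r=2$). For $p=2$, $q=1$ the polynomial $\mcr_{2,1,C}(\zeta)=2\zeta-1-C\zeta^2$ has roots $\zeta_\pm$ satisfying $\zeta_+^{-1}+\zeta_-^{-1}=2$, so passing from one root to the other is the single involution $\eta_{2}:\zeta\mapsto\zeta/(2\zeta-1)$, independently of $C$. Hence, off $\{0,\tr,\infty\}$, the set $\mcl$ is invariant under the group $\langle\er,\eta_{2}\rangle$. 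In the coordinate $y:=(z-1)^{-1}$ one has $\er:y\mapsto-y-\rho$ and $\eta_{2}:y\mapsto-y-2$, so $\eta_{2}\circ\er$ is the translation $y\mapsto y+(\rho-2)$, of infinite order because $\rho>2$. Applying $\eta_{2}$ and $\er$ alternately starting from $\tro$ (which has $y$-coordinate $-\rho$) produces the $y$-values $k(\rho-2)$ for $k\in\nn$ and $-(k+1)(\rho-2)-2$ for $k\in\nn\cup\{0\}$; for $\rho>2$ all these are positive or $<-2$, hence pairwise distinct and different from $-1$, $-2$, $0$ (the $y$-coordinates of the exceptional points $0$, $\tr=\tfrac12$, $\infty$). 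So the walk never leaves $\mcl$ and produces infinitely many elements of the finite set $\mcl$ — a contradiction. Therefore $\infty\notin\mcl$, which is the assertion of the proposition.

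The step I expect to be the main obstacle is precisely this last, dynamical one: one must make sure the orbit generated inside $\mcl$ genuinely escapes to infinitely many distinct points and never falls into the finite exceptional set $\{0,\tr,\infty\}$, and this is exactly where the hypothesis $\rho>r$ is used. For a general germ $b$ the ``other-root'' operation is no longer a single involution but the degree-$p$ algebraic correspondence associated with $\Psi(\zeta)=(1-r+r\zeta)^q\zeta^{-p}$; one would then have to check that the correspondence obtained by composing $\Psi^{-1}$, $\er$ and $\Psi$ admits no finite invariant set through the value $\Psi(\tro)$ — or, when a meromorphic integral is available, simply invoke Theorems \ref{quasi-pqr} and \ref{classpqr} to reduce at once to $p=2$, $q=1$.
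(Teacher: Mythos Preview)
Your reduction and the quadratic argument are correct. The set $\mcl$ of limit $\zeta$-values is finite, $\er$-invariant, and (off $\{0,\theta_r,\infty\}$) closed under the root-swap; for $p=2$, $q=1$ the root-swap is the single M\"obius involution $\eta_2$, and in the coordinate $y=(z-1)^{-1}$ the composition $\eta_2\circ\er$ is the nontrivial translation $y\mapsto y+(\rho-2)$. Your check that the orbit starting from $y(\theta_\rho)=-\rho$ stays off $\{-1,-2,0\}$ (the $y$-values of $0,\tfrac12,\infty$) and visits infinitely many points is clean, and indeed only the quadratic case is used downstream.

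The paper, however, proves the proposition as stated, for arbitrary $(p,q)$, and does so by a genuinely different mechanism. Instead of exploiting a group structure, it uses the \emph{real} structure coming from the hypothesis $\rho>r$: one has $\theta_\rho\in(\theta_r,1)$, so the germ producing $\theta_\rho$ has constant $C\in(0,1)$, and Proposition~\ref{proot} then supplies a second real root $\zeta_{1+}\in(1,+\infty)$ of $\mcr_{p,q,C}$. Applying $\er$ (monotone on $\rr$) to $\zeta_{1+}$ gives a new point $\zeta_{2-}\in(\theta_\rho,1)$, hence a new constant and a new root $\zeta_{2+}\in(1,\zeta_{1+})$, and so on; one obtains a strictly decreasing infinite sequence $\zeta_{j+}$ in $\mcl$. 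This monotone real-interval argument works uniformly in $(p,q)$, whereas your approach becomes a degree-$p$ algebraic correspondence for $p>2$ and the ``never returns'' step is no longer immediate. So: your route is more elementary and transparently algebraic in the quadratic case (and suffices for the applications via Theorems~\ref{quasi-pqr} and \ref{classpqr}), while the paper's route buys the full generality of the proposition with a short order argument based on Proposition~\ref{proot}.
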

\begin{proof} The proof of Proposition \ref{pmgerm} is analogous to the proof of theorem 4.24 in \cite[p. 1037]{gl2}. 
The intersection points with those germs in 
$\Gamma$ that are based at points different from $O$ (if any) have $\zeta$-coordinates that tend to infinity, since 
their $z$-coordinates tend to either infinity, or non-zero finite limits, as $z(P)\to0$. Suppose the contrary to the statement of the proposition: 
the $\zeta$-coordinate of some  
point of the intersection $\Gamma\cap L_P$ tends to infinity. Its  $\sigma_P$-image 
also lies in $\Gamma\cap L_P$, by invariance, and has $\zeta$-coordinate 
converging to $\theta_\rho:=\frac{\rho-1}\rho=\er(\infty)$, since $\sigma_P(\zeta)\to\er(\zeta)$.  This implies 
that there exists a germ $b_1\subset\Gamma$ based at $O$ whose intersection point with $L_P$ has $\zeta$-coordinate 
converging to $\theta_\rho$. One has $\theta_\rho\in(\theta_r,1)$, $\theta_r=\frac{r-1}r$, since $\rho>r$. Therefore, $\zeta_{1-}:=\theta_\rho$ is 
a root of a polynomial $\mcr_{p,q,C_{b_1}}=(1-r+r\zeta)^q-C_{b_1}\zeta^p$, by Proposition \ref{pinters}. 
Hence, $0<C_{b_1}<1$, and the same polynomial $\mcr_{p,q,C_{b_1}}$ has a unique root $\zeta_{1+}\in(1,+\infty)$, 
due to the following proposition.
\begin{proposition} \label{proot}  Let $p,q\in\nn$, $1\leq q<p$, $r=\frac pq$. The following statements are equivalent:

1) The polynomial $\mcr_{p,q,C}$ has a real root in the interval $(\theta_r,1)$.

2) It has a real root greater than 1.

3) $0<C<1$. 

In this case the  above roots  are unique, and the correspondence between them for all $C\in(0,1)$ is 
a decreasing homeomorphism $(\theta_r,1)\to(1,+\infty)$.
\end{proposition}
\begin{proof} The complex roots of the polynomial $\mcr_{p,q,C}(\zeta)$ are $q$-th powers of roots of a 
polynomial 
$$H_{p,q,c}(\theta)=c\theta^p-r\theta^q+r-1, \ \ \ c^q=C,$$
 since  $\mcr_{p,q,c}(\theta^q)=\prod_{j=0}^{q-1}((1-r+r\theta^q)-ce^{\frac{2\pi j}q}\theta^p)$. 
 The statement of Proposition \ref{proot} for $q$-th powers of roots of the polynomial $H_{p,q,c}$ is 
given by  \cite[proposition 4.25]{gl2}, and it implies Proposition \ref{proot}.
\end{proof}
 The root $\zeta_{1+}$ is the limit of the $\zeta$-coordinate of some 
point of intersection $b_1\cap L_P$ (Proposition \ref{pinters}). Hence,  its $\er$-image, 
which will be denoted by $\zeta_{2-}$, is the limit of the $\zeta$-coordinate of an intersection point of the line $L_P$ 
with a germ $b_2\subset\Gamma$ based at $O$. One has $\theta_r<\zeta_{1-}<\zeta_{2-}<1$, by monotonicity of the 
map $\er|_\rr$. Again $\zeta_{2-}$ is a root of a polynomial $\mcr_{p,q,C_{b_2}}$, $C_{b_2}>1$, 
and the latter polynomial has another root $\zeta_{2+}\in(1,\zeta_{1+})$, as in \cite[proof of theorem 4.24]{gl2}. 
Continuing this procedure we get an infinite decreasing sequence of roots $\zeta_{j+}$, 
all of them being  limits of $\zeta$-coordinates of points of the intersection $\Gamma\cap L_P$. 
Hence, the cardinality of the latter intersection is unbounded, as $P\to O$, while 
the intersection index of the multigerm $\Gamma$ with $L_O$ is finite. 
The contradiction thus obtained proves the proposition.
\end{proof}

\begin{proof} {\bf of Theorem \ref{lalt}.} Quadraticity of the germ $b$ follows from Theorems \ref{quasi-pqr} and 
\ref{classpqr}. If $b$ is regular, then there is nothing to prove. Let $b$ be singular. Let 
$\rho$ denote the residue at $O$ of the dual   billiard structure. Then $b$ is uniformly 
$\delta$-folded for some $\delta>0$, and $\rho=2+\delta>2$, by Theorem \ref{tufold}. Therefore, 
the meromorphic integral $R$ is rational and $b$ lies in an algebraic curve, by Proposition \ref{merrat}. 

Suppose the contrary to the constance statement: $R\not\equiv const$ along the  line $L_O$ tangent to $b$ at $O$, i.e.,, the $z$-axis. 
Fix a point $A\in L_O\setminus\{ O\}$ with finite $z$-coordinate $z_1=z(A)$ that is not an indeterminacy 
point for the integral $R$. 
For every $P\in b\setminus\{ O\}$ the intersection of the line 
$L_P$ with the level curve $\Gamma:=\{ R=R(A)\}$ is $\sigma_P$-invariant. The $(L_O,O)$-localization 
of the algebraic curve $\Gamma$ is a $(L_O,O)$-local multigerm satisfying the conditions of 
Proposition \ref{pmgerm}, by construction. Hence, all its curves are based at one point $O$, by  Proposition \ref{pmgerm}. On the other hand, it contains a germ of analytic curve based at the point $A$, 
by construction. The contradiction thus obtained proves 
that $R|_{L_O}\equiv const$. 

Suppose now the contrary to the last statement of Theorem \ref{lalt}: the punctured line $L_O\setminus\{ O\}$ 
contains a singular point $A$ for the foliation $R=const$. Without loss of generality we consider that $R|_{L_O}\equiv0$. 
Let us consider the germ of the integral $R$ at $A$ 
and write it as the product $w^kf(z,w)$ with $k\in\nn$; $f(z,w)$ being a germ of meromorphic function 
with $f(z,0)\not\equiv0,\infty$. The point $A$ is  singular for the foliation, if and only if at least one of the two 
following statements holds: either $A$ is an indeterminacy point for the function $f$, or $A$ is 
its pole (zero). In both cases at least one of the level curves $\{ R=0\}$ or $\{ R=\infty\}$ contains a local branch 
$a$ based at $A$ that does not lie in the $z$-axis $L_O$.  Let us denote the latter level curve by $\Gamma$. 
Its $(L_O,O)$-localization is a multigerm satisfying the conditions, and hence, the statement of Proposition 
\ref{pmgerm}. Therefore, it consists of germs of curves based at the unique point $O$, while, by assumption, 
some of its germs has base point $A\neq O$. The contradiction thus obtained proves that 
$L_O\setminus\{ O\}$ is a regular leaf of the foliation $R=const$ and proves Theorem \ref{lalt}.
\end{proof}
 
\subsection{Uniqueness of singular point with singular branch. Proof of Theorem \ref{typesing}}
Here we prove Theorem \ref{typesing}. Quadraticity of local branches is already proved (Theorem \ref{lalt}). 
Let us prove uniqueness of point $O\in\gamma$ at which some local branch of the curve $\gamma$ 
is singular. Suppose the contrary: there exist at least two distinct points $O_1, O_2\in\gamma$ with singular local 
branches $b_1$ and $b_2$ respectively. Let $L^1$, $L^2$ denote their projective tangent lines 
at $O_1$, $O_2$. The rational integral is constant along both lines $L^1$ and $L^2$, 
by Theorem \ref{lalt}. One has $L^1\neq L^2$. Indeed, if  $L^1=L^2$, then the punctured line $L^1\setminus\{ O_1\}$ would contain a 
singular point $O_2$ of foliation by level curves of the integral, which is forbidden by Theorem \ref{lalt}. 
Thus,  $L^1$ and $L^2$ intersect at some point $A$ distinct from some of the points $O_j$, say, $O_1$. 
But then the punctured line $L^1\setminus\{ O_1\}$ contains a singular point $A$ of foliation by level curves, -- 
a contradiction to Theorem \ref{lalt}. Theorem \ref{typesing} is proved.

\section{Plane curve invariants. Proof of Theorem \ref{type-conic}}
Here we prove Theorem \ref{type-conic} stating that every irreducible algebraic curve $\gamma\subset\cp^2$ 
satisfying the statements of Theorem \ref{typesing} is a conic. The proof given in Subsection 6.2 is based 
on B\'ezout Theorem applied to the intersection of the curve $\gamma$ with its Hessian curve and 
 Shustin's  formula \cite{sh} for Hessians of  singular points.  The corresponding background material is recalled 
 in Subsection 6.1. 
\subsection{Invariants of plane curve singularities}
Hereby we recall the material from \cite[Chapter III]{BK}, \cite[\S10]{Mi}, \cite{sh}, see also a modern exposition in \cite[Section I.3]{GLS}. This material in a brief form needed here is presented in \cite[subsection 4.1]{gs}. 

Let $\gamma\subset\cp^2$ be a non-linear irreducible 
 algebraic curve.   Let $d$ denote its degree. Let $H_{\gamma}$ denote its Hessian curve: 
 the zero locus of the Hessian determinant of the defining homogeneous polynomial of $\gamma$. 
 It is an algebraic curve of  degree  $3(d-2)$.  
  The set of all singular and inflection points of the curve $\gamma$ 
 coincides with the intersection  $\gamma\cap H_{\gamma}$.   
The intersection index of these curves  is equal to $3d(d-2)$, by B\'ezout Theorem. 
On the other hand, it is equal to the sum of the contributions $h(\gamma,Q)$, which are called 
the {\it Hessians of the germs} $(\gamma,Q)$, through all 
the singular and inflection points $Q$ of the curve $\gamma$:
\begin{equation}3d(d-2)=\sum_{Q\in\gamma}h(\gamma,Q).\label{hesin}\end{equation} 
Let us recall an explicit formula for the Hessians $h(\gamma,Q)$  \cite[formula (2) and theorem 1]{sh}. To do this, let us introduce the following notations. For every local 
branch $b$ of the curve $\gamma$ at $Q$ let $s(b)$ denote its multiplicity: its intersection index with a generic line through $Q$. Let $s^*(b)$ denote the analogous multiplicity of the dual germ. Note that 
$s(b)=q_b$,  $s^*(b)=p_b-q_b,$
where $p_b$ and $q_b$ are the exponents in the parametrization $t\mapsto(t^{q_b},c_bt^{p_b}(1+o(1)))$ of the local branch 
$b$ in adapted 
coordinates. One has 
\begin{equation}s(b)=s^*(b)=q_b \text{ for every quadratic branch } b.\label{forquad}\end{equation}
Let 
$b_{Q1},\dots,b_{Qn(Q)}$ denote the local branches of the curve $\gamma$ at $Q$; here $n(Q)$ denotes their number. 
The above-mentioned formula for $h(\gamma,Q)$ 
from \cite{sh} has the form 
\begin{equation} h(\gamma,Q)=3\kappa(\gamma,Q)+\sum_{j=1}^{n(Q)}(s^*(b_{Qj})-s(b_{Qj})),\label{hessa}\end{equation} 
where $\kappa(\gamma,Q)$ is the $\kappa$-invariant, the class of the singular point. 
Namely, consider the germ of function $f$ defining the germ $(\gamma,Q)$; $(\gamma,Q)=\{ f=0\}$. Fix a line $L$ 
through $Q$ that is transversal to all the local branches of the curve $\gamma$ at $Q$. Fix a small ball  $U=U(Q)$ 
centered at $Q$ and consider a level curve $\gamma_{\var}=\{ f=\var\}\cap U$ with small $\var\neq0$, which is non-singular. 
The number 
$\kappa(Q)=\kappa(\gamma,Q)$ is the number of  points of the curve $\gamma_{\var}$ where its tangent line is parallel to $L$.  It is well-known that 
\begin{equation}\kappa(\gamma,Q)=2\delta(\gamma,Q)+\sum_{j=1}^{n(Q)}(s(b_{Qj})-1),\label{kap}\end{equation}
see, for example, \cite[propositions I.3.35 and I.3.38]{GLS}, where 
 $\delta(\gamma,Q)=\delta(Q)$ is the $\delta$-invariant. Namely, consider the curve $\gamma_{\var}$, which is a Riemann surface whose  boundary is a finite collection of 
closed curves: their number equals to $n(Q)$. Let us take the 2-sphere 
with $n(Q)$  deleted disks. Let us paste it to $\gamma_{\var}$: this yields to a compact surface. By definition, 
its genus is the $\delta$-invariant $\delta(Q)$. 
One has $\delta(Q)\geq0$, and $\delta(Q)=0$ whenever  $Q$ is a non-singular point.   Hironaka's genus formula \cite{hir} implies that 
\begin{equation}\sum_{Q\in\Sing(\gamma)}\delta(\gamma,Q)\leq\frac{(d-1)(d-2)}2.\label{d<d}\end{equation}
Formulas (\ref{hesin}), (\ref{hessa}) and  (\ref{kap}) together imply the formula 
\begin{equation}3d(d-2)=6\sum_Q\delta(\gamma,Q)+3\sum_Q\sum_{j=1}^{n(Q)}(s(b_{Qj})-1)\label{3dd-2}\end{equation}
$$+\sum_Q\sum_{j=1}^{n(Q)}(s^*(b_{Qj})-s(b_{Qj})).$$

\subsection{Proof of Theorem \ref{type-conic}}
All the local branches of the curve $\gamma$ are quadratic.  All of them are regular, except maybe for some branches at a unique singular point $O$ (if any). Therefore, the  third sum in the right-hand side in (\ref{3dd-2}) 
vanishes. All the terms in the second sum vanish except for those corresponding to the singular branches based  
at the point $O$. The first sum is no greater than $\frac{(d-1)(d-2)}2$, by (\ref{d<d}). Therefore, 
\begin{equation}3d(d-2)\leq 3(d-1)(d-2)+\sum_{j=1}^{n(O)}(s(b_{Oj})-1).\label{soj1}\end{equation}
If all the local branches at $O$ are regular, then the latter sum vanishes, and we get 
$3d(d-2)\leq 3(d-1)(d-2)$, hence $d=2$. Let now there exist at least one singular branch, say $b_{Ot}$: $s(b_{Ot})\geq2$. 
The intersection index of the curve $\gamma$ with a line through $O$ 
tangent to $b_{Ot}$  is no less than $2s(b_{Ot})+\sum_{j\neq t}s(b_{Oj})$. The latter 
intersection index should be no greater than $d$, by B\'ezout Theorem. Therefore, 
$$2s(b_{Ot})+\sum_{j\neq t}s(b_{Oj})\leq d, \ \sum_{j=1}^{n(O)}(s(b_{Oj})-1)<d-2,$$
$$3d(d-2)<3(d-1)(d-2)+d-2=3d(d-2).$$
The contradiction thus obtained proves Theorem \ref{type-conic}.

 \section{Classification of complex rationally integrable dual   billiards. Proof of Theorem \ref{tcompl}}
Let $\gamma\subset\cp^2$ be a non-linear irreducible algebraic curve equipped with a rationally integrable singular 
 dual   billiard structure. The curve $\gamma$ is a conic, by Theorems \ref{typesing} and \ref{type-conic}. 
 Thus, for the proof of Theorem \ref{tcompl} it suffices to  classify rationally integrable singular dual   billiard 
 structures on the conic 
 $$\gamma=\{ wt=z^2\}\subset\cp^2_{[z:w:t]}.$$
 To do this, we first classify the a priori possible residue configurations of the corresponding involution family. 
 In Subsection 7.1 we show that the billiard structure in question may have at most four singularities, 
 the corresponding residues lie in $\mcm\setminus\{0\}$ and their sum is equal to 4. This implies that 
 the a priori possible residue configurations are $4$, $(1,1,1,1)$, $(2,1,1)$, $(\rho,4-\rho)$ with $\rho\in\mcm\setminus\{0\}$,  $(\frac43,\frac43,\frac43)$, $(\frac32,\frac32,1)$, $(\frac43, \frac53, 1)$. We prove that 
 each residue configuration is realized by a rationally integrable dual  billiard, and we find the corresponding integrals. 
 We show that the cases of integer residues  correspond to the dual billiard structures of conical pencil type. 
 
\subsection{Residues of singular dual   billiard structures on conic}
\begin{proposition} \label{propsum} 
Let $\gamma\subset\cp^2$ be a regular conic  equipped with a singular holomorphic 
dual   billiard structure with isolated singularities that are its poles of order at most one. 
Then the sum of their residues  is equal to 4.
\end{proposition}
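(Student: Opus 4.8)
The plan is to reduce the statement to the residue theorem for a rational $1$-form on $\cp^1$, after encoding the whole dual billiard structure on the conic $\gamma$ by a single rational function. First I would normalize the situation: since $\gamma$ is a smooth conic and $\mathrm{PGL}_3(\cc)$ acts transitively on pairs (smooth conic, point on it), I choose homogeneous coordinates $[z:w:t]$ in which $\gamma=\{wt=z^2\}$ and the point $P_\infty:=[0:1:0]=\gamma\cap\{t=0\}$ is \emph{not} one of the (finitely many) singularities. In the affine chart $\{t=1\}$ then $\gamma=\{w=z^2\}$, parametrized by $s\mapsto P(s):=(s,s^2)$ with $s$ the $z$-coordinate, and $L_{P(s)}=\{w=2sz-s^2\}$ carries $z$ as an affine coordinate. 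For $P(s)$ not a singularity, the nontrivial involution $\sigma_{P(s)}$ has, besides $P(s)$, exactly one further fixed point; I denote its $z$-coordinate $\nu(s)$. The map sending $P$ to its second fixed point is holomorphic into $\cp^2$ away from the singularities, so $\nu$ is meromorphic on $\cc_s$ minus the singular values, and writing $\sigma_{P(s)}$ in the coordinate $u=z-s$ gives at once
\[\sigma_{P(s)}:u\longmapsto -\frac{u}{1+f(s)u},\qquad f(s):=\frac{2}{\,s-\nu(s)\,},\]
which matches the normal form in Proposition \ref{propress}.

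Next I would establish the local dictionary between singularities of the structure and fixed points of $\nu$. Fixing $s_0\in\cc$ and passing to the coordinates $(Z,W)=(z-s_0,\,w-2s_0z+s_0^2)$, which are adapted to $\gamma$ at $B=P(s_0)$ (so $\gamma=\{W=Z^2\}$), the line $L_{P(s)}$ gets the coordinate $\zeta=Z/Z(P(s))=\tfrac{z-s_0}{s-s_0}$, in which $\sigma_{P(s)}$ has fixed points $\zeta=1$ and $\zeta=\tfrac{\nu(s)-s_0}{s-s_0}$. If $B$ is regular the limiting involution is nontrivial, so its two fixed points stay distinct and $\nu(s_0)\ne s_0$; if $B$ is a singularity of pole order at most one then $\sigma_{P(s)}\to\eta_\rho$, whose fixed points are $1$ and $\tfrac{\rho-2}{\rho}$, forcing $\nu(s)\to s_0$, hence $\nu$ holomorphic at $s_0$ with $\nu(s_0)=s_0$, $\nu'(s_0)=\tfrac{\rho-2}{\rho}\neq1$ (a nontrivial involution is never parabolic), and therefore $\rho=\tfrac{2}{1-\nu'(s_0)}$. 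Thus the singularities are exactly the (finitely many, hence isolated) zeros of the not-identically-zero function $s\mapsto s-\nu(s)$, all simple, and there
\[\operatorname{Res}_{s=s_0}\!\big(f(s)\,ds\big)=\frac{2}{1-\nu'(s_0)}=\rho(s_0);\]
poles of $\nu$ are zeros, not poles, of $f$, so on $\cc_s$ the $1$-form $f(s)\,ds$ has simple poles precisely at the singularities, with residues the corresponding $\rho$'s.

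The step I expect to be the main obstacle is the analysis at $P_\infty$. Passing to the chart $(z',t')=(z/w,\,t/w)$ adapted to $\gamma$ at $P_\infty$, in which $\gamma=\{t'=z'^2\}$ and $P(s)$ has parameter $s'=1/s$, the second fixed point acquires $z'$-coordinate $\mu(s')=\dfrac{\nu(s)}{s\,(2\nu(s)-s)}$. Regularity of the structure at $P_\infty$ says $\mu$ extends holomorphically to $s'=0$ with $\mu(0)\ne0$ (in particular $\nu$ is then meromorphic at $s=\infty$, i.e.\ rational on $\cp^1_s$). Rewriting $\mu(s')=\dfrac{\nu(1/s')\,s'^2}{2s'\nu(1/s')-1}$ and going through the possible asymptotics of the rational function $\nu$ at $s=\infty$, the condition $\mu(0)\ne0$ turns out to be equivalent to $\nu(s)=\tfrac{s}{2}+O(1)$ as $s\to\infty$. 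Consequently $f(s)=\dfrac{2}{s-\nu(s)}=\dfrac{4}{s}+O(s^{-2})$, so $f(s)\,ds$ has a simple pole at $s=\infty$ with residue $-4$ and no further poles on $\cp^1_s$.

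Finally I would conclude by summing residues: the rational $1$-form $f(s)\,ds$ on $\cp^1_s\cong\gamma$ has residues $\rho(s_0)$ at the singularities and $-4$ at $s=\infty=P_\infty$, and their total is $0$, giving $\sum_{\text{singularities}}\rho=4$. Equivalently, one may finish with the holomorphic Lefschetz fixed-point formula $\sum_{\nu(p)=p}\tfrac1{1-\nu'(p)}=1$ for $\nu\colon\cp^1\to\cp^1$: the finite fixed points are the singularities and contribute $\tfrac12\sum\rho$, while the fixed point $p=\infty$, where $\nu'(\infty)=2$, contributes $\tfrac1{1-2}=-1$, so $\tfrac12\sum\rho-1=1$. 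Either way the computation at $P_\infty$ carries essentially all the weight; once $\nu(s)\sim s/2$ there, the rest is bookkeeping.
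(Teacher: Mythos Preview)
Your argument is correct and is, at its core, the same as the paper's: both encode the dual billiard on the parabola by the rational function $f(s)$ appearing in $\sigma_{P(s)}:u\mapsto -u/(1+f(s)u)$, identify the billiard residues at singularities with the residues of the $1$-form $f(s)\,ds$, and extract the number $4$ from the behaviour at the regular point placed at infinity. The paper reaches the asymptotic $f(s)\sim 4/s$ via Proposition~\ref{proinfi}, which in turn rests on the conjugacy $\eta_2\circ\eta_\rho\circ\eta_2=\eta_{4-\rho}$; you reach the same asymptotic by tracking the second fixed point $\nu(s)$ through the chart change and reading off $\nu(s)=\tfrac{s}{2}+O(1)$.

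What is genuinely a different packaging is your alternative finish via the holomorphic Lefschetz fixed-point formula for $\nu:\cp^1\to\cp^1$: the identity $\sum_{\nu(p)=p}\frac{1}{1-\nu'(p)}=1$ yields $\tfrac12\sum\rho-1=1$ directly, without ever writing $f$. This is a pleasant and slightly more conceptual reformulation of the same residue count, and it makes transparent why the answer is an integer independent of the conic. One small imprecision: when you say ``$\mu$ extends holomorphically to $s'=0$ with $\mu(0)\ne0$,'' you should allow $\mu(0)=\infty$ (the second fixed point of $\sigma_{P_\infty}$ may be the point $[1{:}0{:}0]$ at $z'=\infty$); your conclusion $\nu(s)=\tfrac{s}{2}+O(1)$, and hence $f(s)=\tfrac{4}{s}+O(s^{-2})$, remains valid in that case as well.
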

\begin{proof} Let us take an affine chart $\cc^2_{z,w}=\{ t=1\}\subset\cp^2_{[z:w:t]}$ in which $\gamma\cap\cc^2=\{ w=z^2\}$. 
For the $(2,1;2)$-billiard on the latter conic the sum of residues is equal to four: 
  the residues at $0$, $\infty$ are both equal to 2, since the projective symmetry 
  $(z,w)\mapsto(\frac zw, \frac1w)$ of the billiard structure (see the addendum to Theorem \ref{classpqr}) 
 permutes $0$ and $\infty$ and preserves the residues. To treat the general case, we use the following 
 proposition.
 \begin{proposition} \label{proinfi} Let $\gamma$ and the affine chart $\cc^2_{z,w}$ be as above. A  singular holomorphic dual billiard structure 
 on $\gamma$ is meromorphic (i.e., has  order at most one at each singular point), if and only if 
 the corresponding involutions $\sigma_P:L_P\to L_P$, $P=(z_0,z_0^2)\in\cc^2$,  written in the coordinate 
 $u=z-z_0$ on $L_P$, have the form 
$$\sigma_P:u\mapsto-\frac u{1+f(z_0)u}, \ \ f(z) \text{ is a rational function with simple poles,}$$
\begin{equation} f(z)=\frac1z(\la+o(1)), \ \text{ as } z\to\infty; \ \ \la\in\cc.
 \label{resinf}\end{equation}
 The residue at infinity is equal to $4-\la$. The above dual billiard structure has regular point at infinity, if and only if $\la=4$. 
 \end{proposition}
 \begin{proof} A finite singular point of $\sigma_P$ is of order  one, if and only if the corresponding function $f(z)$ has 
 simple pole there (Proposition \ref{propress}).  Let $E=[0:1:0]$ be the infinite point of the conic $\gamma$. 
 Consider the affine coordinates $(\wt z, \wt w)=(\frac zw, \frac1w)$ centered at $E$. 
Let $\rho$ denote the residue at $E$: then in the coordinate $\wt\zeta:=\frac{\wt z}{\wt z(P)}$ on $L_P$ 
 the involution $\sigma_P$ converges to $\eta_\rho(\wt\zeta)$, as $P\to E$. 
 Therefore, in the coordinate $\zeta$ the involution $\sigma_P$ converges to $\eta_{4-\rho}$, see statement (\ref{eta2rho}) and discussion before it. 
 Hence, in the coordinate $\hat u:=\zeta-1$ the involution $\sigma_P$ takes the form $\hat u\mapsto-\frac{\hat u}{1+g(z_0)u}$, $g(z)$ 
 is a rational function, $g(z)\to4-\rho$, as $z\to\infty$. Rescaling to the coordinate $u=z_0\hat u$ yields (\ref{resinf}) with $f(z)=\frac{g(z)}z$, 
 $\la=4-\rho$. The converse is proved by converse argument.  The last statement of Proposition \ref{proinfi} (regularity at $E$) 
 follows from Proposition \ref{propress}. Proposition \ref{proinfi} is proved. 
 \end{proof} 
 
 Consider now an arbitrary dual billiard structure on a conic whose singularities are of order at most one. Let us choose an 
affine chart $\cc^2_{z,w}$ in which $\gamma\cap\cc^2=\{ w=z^2\}$ and so that the above point $E$ at infinity be regular for the 
dual billiard structure. Then the corresponding function $f(z)$ from (\ref{resinf})  is rational with  simple poles, let us denote 
them $a_j$ (Proposition \ref{propress}). Hence, $f(z)=\sum_j\frac{\la_j}{z-a_j}$, $\la_j$ being residues, and thus,  
$\sum_j\la_j=4$, by   the last statement of Proposition \ref{proinfi}. This proves Proposition \ref{propsum}. 
\end{proof}

\begin{proposition} \label{props2} Let $\gamma$ be a regular conic equipped with a rationally integrable 
singular dual   billiard structure. Then each singular point of the structure is its pole of order 1, and 
its residue lies in   $\mcm\setminus\{0\}$.   
 \end{proposition}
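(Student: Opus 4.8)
\textbf{Plan of proof of Proposition \ref{props2}.} The strategy is to localise at a singular point and invoke the classification of quasihomogeneously integrable $\pqr$-billiards. Fix a singular point $B\in\gamma$ of the dual billiard structure and let $b$ be the germ of $\gamma$ at $B$; since $\gamma$ is a regular conic, $b$ is a regular quadratic germ, so its projective Puiseux exponent is $r=2$, i.e.\ $(p,q)=(2,1)$ in the notation of (\ref{propuis}). First I would check that $b$ inherits a meromorphically integrable singular dual billiard structure: the rational integral $R$ of the billiard on $\gamma$ restricts to a non-constant meromorphic function on any neighbourhood of $B$ (a non-constant rational function is non-constant on every open set), while the involutions $\sigma_P$, being part of a singular holomorphic dual billiard structure on $\gamma$, depend holomorphically on $P\in b\setminus\{B\}$ near $B$. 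By Proposition \ref{proalgm}, part 1), we may normalise $R|_\gamma\equiv 0$, write $R=f^kg_1$ with $f$ a local equation of $\gamma$ and $g_1|_\gamma\not\equiv 0$, and form $G=R^{1/k}=fg$ as in (\ref{ggf}).

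Next I would apply Proposition \ref{prhod} to the germ $b$ with this $G$: it yields that the involution family $\sigma_P$ is meromorphic with pole at $B$ of order at most one, with residue $\rho=-d/3$, where $d$ is the leading exponent of $H(G)|_b$. Then Theorem \ref{quasi-pqr}, applied to $b$ (Puiseux exponent $r=p/q=2/1$, residue $\rho$), gives that the $(2,1;\rho)$-billiard is quasihomogeneously integrable; by Theorem \ref{classpqr} this forces $\rho\in\mcm$. Finally, to rule out $\rho=0$: by Proposition \ref{propress} the involution family $\sigma_P$ is regular at $B$ precisely when its residue vanishes, and regularity at $B$ would contradict $B$ being a singular point of the structure. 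Hence $\rho\in\mcm\setminus\{0\}$, and in particular $\rho\neq0$, so $B$ is a pole of order exactly one by the definition following Proposition \ref{prhod}. As $B$ was an arbitrary singular point, this completes the argument.

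The main work has already been done in Theorems \ref{quasi-pqr} and \ref{classpqr}, so I do not anticipate a serious obstacle here; the only points that need care are verifying that the hypotheses of Propositions \ref{prhod}, \ref{propress} and of Theorem \ref{quasi-pqr} are met at $B$ (meromorphic integrability of the localised structure and a well-defined residue), and the bookkeeping identifying $\rho=0$ with regularity at $B$.
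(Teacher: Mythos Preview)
Your proposal is correct and follows essentially the same route as the paper's proof: Proposition \ref{prhod} for well-definedness of the residue, Theorem \ref{quasi-pqr} together with Theorem \ref{classpqr} to force $\rho\in\mcm$, and Proposition \ref{propress} to exclude $\rho=0$. You have simply spelled out in more detail the verification of hypotheses that the paper's three-sentence proof leaves implicit.
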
 
 \begin{proof} Well-definedness of residues follows from integrability and Proposition \ref{prhod}. 
 Their non-vanishing follows from  Proposition \ref{propress}. Let $O$ be a  singular point, and let $\rho$ be 
 the corresponding residue. Then the $(2,1;\rho)$-billiard 
 is quasihomogeneously integrable, by Theorem \ref{quasi-pqr}. 
 Therefore, $\rho\in\mcm$, by Theorem \ref{classpqr}.  Proposition \ref{props2} is proved.
 \end{proof}
 \begin{corollary} \label{cconfig} Let $\gamma$ be a regular conic equipped with a rationally integrable 
singular dual   billiard structure. Then it has at least one and most four singular points, with 
residue collections being of one of the following types:
\begin{equation}4, \ (2,2), \ (1,3), \ (2,1,1), \ (1,1,1,1); \  \ \ (\rho,4-\rho) \text{ with } \rho\in\mcm\setminus\zz;\label{rescoll}\end{equation}
$$ \left(\frac32,\frac32,1\right), \ \left(\frac43,\frac43,\frac43\right), \ \left(\frac43, \frac53,1\right).$$
\end{corollary}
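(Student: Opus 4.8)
The plan is to combine the two structural facts already at our disposal---Proposition \ref{props2}, which tells us that every singular point of the structure is a pole of order exactly one whose residue lies in $\mcm\setminus\{0\}$, and Proposition \ref{propsum}, which tells us that the residues at all singular points add up to $4$---and then run a purely arithmetic enumeration. First I would note that, the singular set of a singular holomorphic dual billiard structure being discrete and $\gamma$ compact, there are only finitely many singular points, say $O_1,\dots,O_n$, with residues $\rho_1,\dots,\rho_n\in\mcm\setminus\{0\}$; Proposition \ref{propsum} gives $\rho_1+\cdots+\rho_n=4$, so in particular $n\geq1$ (an empty sum would be $0\neq4$). Since the non-integer elements of $\mcm$ are exactly the numbers $2\pm\frac2k$, $k\in\nn_{\geq3}$, all of which are at least $\tfrac43$, while the integer ones different from $0$ are $1,2,3,4$, the minimum of $\mcm\setminus\{0\}$ is $1$; hence $\rho_i\geq1$ for all $i$, forcing $n\leq4$.

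Next I would enumerate, for each $n\in\{1,2,3,4\}$, the multisets of elements of $\mcm\setminus\{0\}$ of size $n$ with sum $4$, using two observations: the set of non-integer elements of $\mcm$ is symmetric about $2$ (so $\rho\in\mcm\setminus\zz$ implies $4-\rho\in\mcm\setminus\zz$), and for $n=3$ every entry is automatically $\leq2$ and thus lies in $\{1,2\}\cup\{\,2-\tfrac2k:k\geq3\,\}$. For $n=1$ the only possibility is $\rho_1=4$. For $n=2$ the integer solutions with both entries $\geq1$ are $(1,3)$ and $(2,2)$, and since a non-integer entry $\rho$ forces its partner $4-\rho$ to be non-integer as well, one also gets the one-parameter family $(\rho,4-\rho)$ with $\rho\in\mcm\setminus\zz$. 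For $n=4$, each $\rho_i\geq1$ together with $\sum\rho_i=4$ forces all $\rho_i=1$, giving $(1,1,1,1)$.

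The only case requiring real care is $n=3$. If some residue equals $2$, the remaining two lie in $[1,2]$ and sum to $2$, hence both equal $1$, giving $(2,1,1)$. If no residue equals $2$: when none equals $1$ either, writing $\rho_i=2-\frac2{k_i}$ with $k_i\geq3$ turns the sum condition into $\frac2{k_1}+\frac2{k_2}+\frac2{k_3}=2$ with each summand $\leq\tfrac23$, forcing all $k_i=3$ and the collection $(\tfrac43,\tfrac43,\tfrac43)$; when exactly one residue equals $1$, the other two are non-integers summing to $3$, and writing them as $2-\frac2{k_i}$ reduces the problem to solving $\frac1{k_1}+\frac1{k_2}=\frac12$ with $k_i\geq3$, whose solutions $\{k_1,k_2\}=\{4,4\}$ and $\{3,6\}$ give $(\tfrac32,\tfrac32,1)$ and $(\tfrac43,\tfrac53,1)$; the possibility that two or more residues equal $1$ is empty, since it would force the remaining entry to be $2$ or would make the sum equal $3$. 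Collecting all cases reproduces exactly the list (\ref{rescoll}). The main obstacle here is purely bookkeeping---making sure the finite casework is exhaustive, and in particular not overlooking a solution of $\frac1{k_1}+\frac1{k_2}=\frac12$ or a rearrangement---rather than any conceptual difficulty; that each configuration in (\ref{rescoll}) is actually realized by a rationally integrable dual billiard will be established in the remainder of Section 7.
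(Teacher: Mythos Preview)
Your proof is correct and follows essentially the same approach as the paper: invoke Propositions \ref{props2} and \ref{propsum} to reduce to the arithmetic problem of listing multisets in $\mcm\setminus\{0\}$ with sum $4$, observe that every element of $\mcm\setminus\{0\}$ is at least $1$ so there are at most four singularities, and then carry out the same case split on $n$, with the $n=3$ case handled via the equations $\sum\frac2{k_i}=2$ and $\frac1{k_1}+\frac1{k_2}=\frac12$. Your write-up is slightly more explicit than the paper's in a couple of places (the symmetry of $\mcm\setminus\zz$ about $2$, the bound $\rho_i\le2$ when $n=3$), but the logic and the casework are the same.
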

\begin{proof} The residues lie in $\mcm\setminus\{0\}$ (Proposition \ref{props2}), and hence, are greater or 
equal to one.  Their sum  is equal to 4 
(Proposition \ref{propsum}). Therefore, the number of singularities is between one and four. 
The cases of one and two singularities are obviously given by the first, second, third and sixth 
collections in (\ref{rescoll}). The case of three  singularities with natural residues is the collection $(2,1,1)$. 
The case of four singularities is $(1,1,1,1)$. Cases of three singularities with some of 
residues being non-integer correspond to the three last residue collections   in (\ref{rescoll}). Indeed, 
each non-integer number in $\mcm\setminus\{0\}$ takes the form   $2\pm\frac2k$, $k\in\nn_{\geq3}$. 
Therefore, if the number of singularities is three, then 
 non-integer residues are of the type $2-\frac2{k}$,  $k\geq3$. Finally, all possible configurations have one of the two 
 following types: 
 $(2-\frac2{k_1},2-\frac2{k_2},2-\frac2{k_3})$, $(2-\frac2{k_1},2-\frac2{k_2},1)$. For the first type, writing the condition that 
 the sum of residues is equal to 4  yields 
 $$\frac2{k_1}+\frac2{k_2}+\frac2{k_3}=2, \ \ \ \ \ k_1,k_2,k_3\geq3.$$
 Therefore, $k_1=k_2=k_3=3$, and we get the residue collection $(\frac43,\frac43,\frac43)$. 
 For the second type we get $\frac2{k_1}+\frac2{k_2}=1$, $k_1,k_2\in\nn_{\geq3}$. The only solutions of the 
 latter equation are $\{ k_1,k_2\}=\{4,4\}, \{3,6\}$, which correspond to the residue configurations $(\frac32,\frac32,1)$ and 
 $ (\frac43, \frac53,1)$ respectively.  The corollary is proved.
 \end{proof}
 
 \begin{proposition} \label{exun} 
 Let $\gamma\subset\cp^2$ be a regular conic. For any two collections of distinct points 
 $a_1,\dots,a_n\in\gamma$ and non-zero 
numbers $(x_1,\dots,x_n)$ with $\sum_{j=1}^nx_j=4$ there exists a unique  singular holomorphic 
 dual   billiard structure on $\gamma$ with singular points $a_j$ being poles of order  one with residues $x_j$.
 \end{proposition}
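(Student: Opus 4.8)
The plan is to reduce the statement to a concrete interpolation fact about rational functions, using the parametrization $\sigma_P:u\mapsto-\frac u{1+f(z_0)u}$ established in Propositions \ref{propress} and \ref{proinfi}. First I would fix an affine chart $\cc^2_{z,w}=\{t=1\}$ with $\gamma\cap\cc^2=\{w=z^2\}$, chosen so that the infinite point $E=[0:1:0]$ of $\gamma$ is \emph{not} one of the prescribed singular points $a_j$; this is possible since there are finitely many $a_j$ and we may move $E$ by a projective automorphism of $\gamma$. In this chart each point $a_j$ has a finite $z$-coordinate, which I will also call $a_j$ by abuse of notation. By Proposition \ref{propress}, a singular holomorphic dual billiard structure whose finite singularities are poles of order at most one, and which is regular at $E$, is \emph{exactly} the same data as a rational function $f(z)$ all of whose poles are simple, together with the normalization (\ref{resinf}) with $\la=4$ forced by regularity at $E$ (Proposition \ref{proinfi}); moreover the residue of the billiard structure at a finite point $a$ equals the residue of $f$ at $a$.

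Given this dictionary, the proposition becomes: there is a unique rational function $f(z)$ with simple poles precisely at $z=a_1,\dots,a_n$, having prescribed residues $x_1,\dots,x_n$ there, and satisfying $f(z)=\frac{\la}z(1+o(1))$ with $\la=4$ as $z\to\infty$. But such $f$ is visibly unique: it must be $f(z)=\sum_{j=1}^n\frac{x_j}{z-a_j}$, since the difference of two candidates is a rational function with no poles at all (the poles at the $a_j$ cancel by matching residues, and there is no pole at $\infty$ because each summand is $O(1/z)$), hence a constant, and that constant is $0$ because $f(z)\to0$ as $z\to\infty$. Existence is the same formula: $f(z)=\sum_j\frac{x_j}{z-a_j}$ has the right poles and residues, and its behavior at infinity is $f(z)=\frac1z\sum_j x_j+o(1/z)=\frac4z+o(1/z)$ precisely because $\sum_j x_j=4$, which is exactly the regularity condition at $E$. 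Thus $f$ exists and is unique, and therefore so is the billiard structure; one then checks, as in Proposition \ref{proinfi}, that the structure it defines has a singularity (necessarily of order one, as $x_j\neq0$) with residue $x_j$ at each $a_j$ and is regular elsewhere, including at $E$.

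The only genuine subtlety — the step I expect to be the main obstacle to write cleanly rather than hard mathematically — is bookkeeping the passage between the two affine charts and verifying that ``regular at $E$'' is correctly encoded by $\sum x_j=4$ rather than, say, by $\sum x_j$ equal to some other normalization constant; this is precisely the content of the last statement of Proposition \ref{proinfi} (residue at infinity $=4-\la$), and the hypothesis $\sum x_j=4$ is tailored to make that residue zero. One should also remark that the resulting structure is genuinely \emph{holomorphic} away from the $a_j$: the involutions $\sigma_P$ depend holomorphically on $P\in\gamma\setminus\{a_1,\dots,a_n,E\}$ because $f$ is holomorphic there, and holomorphicity at $E$ is exactly the regularity we arranged. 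Hence the proposition follows.
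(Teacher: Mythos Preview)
Your proposal is correct and follows essentially the same route as the paper: the paper's proof is a one-sentence reference to (\ref{resinf}) and to ``uniqueness of a rational function $f(z)$ vanishing at infinity as $\frac\la{z}(1+o(1))$ with given $\la$ and simple poles with given positions and residues (see the proof of Proposition \ref{propsum}).'' You have simply spelled out that sentence in full---choosing the chart so that $E\notin\{a_j\}$, writing down $f(z)=\sum_j\frac{x_j}{z-a_j}$, and checking that $\sum_jx_j=4$ is exactly the condition $\la=4$ needed for regularity at $E$ via Proposition \ref{proinfi}.
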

 \begin{proof} The proposition follows from (\ref{resinf}) and uniqueness of a rational function $f(z)$ vanishing 
 at infinity as $\frac\la{z}(1+o(1))$ with given $\la$ and  simple poles  with given positions and residues 
 (see the proof of Proposition \ref{propsum}). 
 \end{proof}  
 \subsection{Case of integer  residues: pencil of conics}
 
 \begin{proposition} \label{propencil} 
  Let a singular holomorphic dual billiard structure on  a regular conic $\gamma$ 
 have singularities $a_1,\dots,a_m$, $m\in\{1,2,3,4\}$, with residues $\la_j\in\nn$, $j=1,\dots,m$. 
 Then it is realized by the pencil of conics passing through $a_j$ and having contact with $\gamma$ of order $\la_j$  at $a_j$.
\end{proposition}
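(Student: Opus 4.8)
The plan is to combine the uniqueness statement of Proposition \ref{exun} with Example \ref{tabobs} and the Hessian residue formula of Proposition \ref{prhod}. By Proposition \ref{propsum} the residues $\lambda_j$ of the given structure satisfy $\sum_{j=1}^m\lambda_j=4$, and each is a nonzero natural number; hence by Proposition \ref{exun} the given structure is the \emph{unique} singular holomorphic dual billiard structure on $\gamma$ whose singular points are $a_1,\dots,a_m$ with respective residues $\lambda_1,\dots,\lambda_m$. Thus it suffices to exhibit a pencil of conics passing through the $a_j$ with contact order $\lambda_j$ with $\gamma$ at $a_j$, and to check that the associated dual billiard of conical pencil type (Example \ref{tabobs}, in its evident complex version) has exactly these singular points and exactly these residues; the proposition then follows.

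First I would construct the pencil. Let $D:=\sum_{j=1}^m\lambda_j a_j$, an effective divisor of degree $4$ on $\gamma\simeq\cp^1$. The conics $C\subset\cp^2$ with $C\cdot\gamma\geq D$ form a linear subsystem of $\cp^5$ cut out by at most $\deg D=4$ linear conditions, hence of projective dimension at least one, and it contains $\gamma$ itself. Choose $\Gamma\neq\gamma$ in this system and let $\mcc$ be the pencil spanned by $\gamma$ and $\Gamma$, with defining polynomials $f_\gamma$ (irreducible, of degree $2$) and $f_\Gamma$. Since $f_\Gamma|_\gamma$ is a nonzero section of $\mathcal O_\gamma(4)\cong\mathcal O_{\cp^1}(4)$ vanishing along $D$ and $\deg D=4$, it vanishes \emph{exactly} along $D$; therefore $\Gamma\cdot\gamma=D$ and the base set $\mcb(\mcc)$ of the pencil is the support $\{a_1,\dots,a_m\}$ of $D$. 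By Example \ref{tabobs} the dual billiard of conical pencil type defined by $\mcc$ is a singular holomorphic structure on $\gamma$, holomorphic on $\gamma\setminus\{a_1,\dots,a_m\}$, with quadratic integral $R:=f_\gamma/f_\Gamma$ (normalized so that $R|_\gamma\equiv 0$).

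Next I would compute the residue at each $a_j$ via Proposition \ref{prhod}. Fix $j$ and take an affine chart $(z,w)$ adapted to $\gamma$ and centered at $a_j$, so that $\gamma=\{w=z^2\}$ near $a_j$. Since $R$ vanishes to order one along $\gamma$, one has $k=1$ in the notation of Proposition \ref{prhod}, so $G:=R^{1/k}=f_\gamma\cdot f_\Gamma^{-1}$. A direct computation gives $H(w-z^2)=-2$, so by the Hessian-of-a-product formula (\ref{hess3}) applied with $f=f_\gamma$ and $g=f_\Gamma^{-1}$,
\[
H(G)|_\gamma = f_\Gamma^{-3}\,H(f_\gamma)|_\gamma = -2\,f_\Gamma^{-3}|_\gamma .
\]
By construction $\Gamma$ has contact order exactly $\lambda_j$ with $\gamma$ at $a_j$, i.e. $f_\Gamma$ pulled back to $\gamma$ by $z\mapsto(z,z^2)$ vanishes to order exactly $\lambda_j$ at $z=0$; hence $H(G)|_\gamma=\alpha z^{-3\lambda_j}(1+o(1))$ with $\alpha\neq 0$, that is, the asymptotic exponent of $H(G)|_\gamma$ is $d=-3\lambda_j$. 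By Proposition \ref{prhod} (and Remark \ref{ashessrem}, which permits computing $d$ in the adapted chart) the residue of the conical pencil structure at $a_j$ equals $-d/3=\lambda_j$. Being nonzero for every $j$, and the structure being holomorphic away from $a_1,\dots,a_m$, the conical pencil dual billiard has precisely the singular points $a_1,\dots,a_m$ with precisely the residues $\lambda_1,\dots,\lambda_m$. By the uniqueness part of Proposition \ref{exun} it coincides with the given structure, which completes the proof.

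The only steps requiring care are the existence of the auxiliary conic $\Gamma$ realizing the prescribed contact divisor (handled by the dimension count in $\cp^5$ together with the degree equality on $\gamma\simeq\cp^1$) and the identification of the analytically defined residue of Proposition \ref{prhod} with the geometric contact order (handled by formula (\ref{hess3})); both are routine once the classification results of the previous sections, which already force $\gamma$ to be a conic and the residues to sum to $4$, are available.
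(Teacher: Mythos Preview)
Your proof is correct and complete, but it takes a different route from the paper's. The paper argues case by case over the residue configurations $(1,1,1,1)$, $(2,1,1)$, $(1,3)$, $4$: for each configuration it writes down the corresponding pencil, establishes the residues at the residue-$1$ base points by an elementary geometric limit argument (as $P\to a_j$ the two intersection points of a conic of the pencil with $L_P$ have $\zeta$-coordinates tending to $0$ and $\infty$, forcing $\sigma_P\to\eta_1$), and then obtains the remaining residues from the sum rule of Proposition \ref{propsum}; uniqueness via Proposition \ref{exun} closes each case. You instead give a uniform argument: the pencil is produced by a linear-system dimension count on $\gamma\simeq\cp^1$, and \emph{every} residue is computed directly from Proposition \ref{prhod} using the Hessian-of-a-product formula (\ref{hess3}), reducing the residue to the exact vanishing order of $f_\Gamma|_\gamma$ at $a_j$. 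Your approach avoids the case split and gives a clean conceptual identification ``residue $=$ contact order''; the paper's approach is more elementary at the residue-$1$ points (no Hessian machinery needed there) but has to bootstrap the higher residues via the sum rule. Both ultimately conclude by the uniqueness in Proposition \ref{exun}.
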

 
 \begin{proof}  Case 1): four distinct points $a_1,\dots,a_4$ with residues $\la_j=1$. Consider the pencil of conics passing through them.  
 It defines the projective involutions $\sigma_P: L_P\to L_P$, $P\in\gamma\setminus\{ a_1,\dots,a_4\}$, 
  permuting the intersection points of the lines $L_P$ with each conic of the pencil. This yields a singular holomorphic dual billiard structure on $\gamma$ 
  with singularities at $a_j$. 
 
 {\bf Claim 6.} {\it The  involution family $\sigma_P$ is holomorphic with singularities $a_j$ of order one 
  and residue one.}
  
  \begin{proof} For every $a_j$ each conic $C$ of the pencil, $C\neq\gamma$, intersects the line $L_{a_j}$ 
  transversally at two distinct points: $a_j$ and some point $b_j$. For every $P\in\gamma$ close to $a_j$ 
  the line $L_P$ intersects $C$ at two points $Q(P)$ and $Y(P)$ converging to $a_j$ and $b_j$ respectively, 
  as $P\to a_j$. They are permuted by the involution $\sigma_P$, 
  and their $\zeta$-coordinates tend to $0$ and $\infty$ respectively. Therefore, in the coordinate $\zeta$ 
  the involution $\sigma_P$ converges to $\eta_1(\zeta)=\frac1\zeta$. Hence, $\sigma_P$ has simple pole 
  with residue one at $a_j$. The claim is proved.
  \end{proof}
  
  Claim 6 together with Proposition \ref{exun} imply that the initial dual billiard coincides with the one defined 
  by the above pencil. 
  
  Case 2):  three singular points $a_1$, $a_2$, $a_3$ with residues $1$, $1$, $2$ respectively. 
  Consider the pencil of conics passing through these points and tangent to $\gamma$ at $a_3$. 
  The above  involution family $\sigma_P$  defined by this pencil  has a rational quadratic 
  integral (Example \ref{tabobs}).  Therefore, its singularities $a_1$, $a_2$, $a_3$ are poles  of order one, by Proposition \ref{prhod}. 
Its residues at $a_1$, $a_2$ are equal to 1, see Claim 6 and its proof. Therefore, its residue at the 
third point $a_3$ is equal to $4-2=2$ (Proposition \ref{propsum}). Hence, the initial dual billiard structure 
coincides with the one defined by the pencil, by Propositon \ref{exun}.

The remaining cases of residue configurations $(1,3)$, $4$ are treated analogously. 
 Proposition \ref{propencil} is proved.
  \end{proof}
  
  \subsection{Case of two singularities: a quasihomogeneously 
  integrable $(2,1;\rho)$-billiard}
  
  \begin{proposition} \label{pquas} Every singular holomorphic dual   billiard structure on a regular conic 
  with two singularities of order one is projectively equivalent to a $(2,1;\rho)$-billiard. 
  It is rationally integrable, if and only if the latter billiard is quasihomogeneously integrable; 
  this holds if and only if $\rho\in\mcm$. 
  \end{proposition}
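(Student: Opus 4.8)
The plan is to prove Proposition \ref{pquas} in three stages: first reduce to a normal form, then translate rational integrability into quasihomogeneous integrability of the associated $(2,1;\rho)$-billiard, and finally invoke Theorem \ref{classpqr} for the final characterization.

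\textbf{Stage 1: Normal form.} Let $\gamma$ be a regular conic carrying a singular holomorphic dual billiard structure with exactly two singular points $O_1$, $O_2$, each a pole of order at most one (hence of order exactly one, since nonzero residues are forced: a regular point has residue $0$ and would not be a singularity). By Proposition \ref{propsum} the two residues sum to $4$; call them $\rho$ and $4-\rho$. First I would apply a projective transformation of $\cp^2$ carrying $\gamma$ to $\{wt=z^2\}$ and the two singular points to $[0:0:1]$ (the origin in the chart $\{t=1\}$) and $[0:1:0]$ (infinity). In the affine chart $\cc^2_{z,w}$ with $\gamma\cap\cc^2=\{w=z^2\}$, the structure is described by a rational function $f(z)$ as in Proposition \ref{proinfi}, with $f$ having a single simple pole, at $z=0$, and $f(z)=\frac{\la}{z}$ with residue $\la$; by Proposition \ref{propress} this $\la$ is the residue $\rho$ at the origin, and by Proposition \ref{proinfi} the residue at infinity is $4-\la=4-\rho$, consistent with Proposition \ref{propsum}. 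Thus $f(z)=\frac{\rho}{z}$ exactly, and in the coordinate $\zeta=z/z_0$ on $L_P$ the involution $\sigma_P$ is \emph{identically} $\eta_\rho$ for all $P$ (not merely in the limit): indeed rescaling $u=z-z_0$ by $z_0$ to $\wt u=\zeta-1$ turns $u\mapsto -u/(1+\frac{\rho}{z_0}u)$ into $\wt u\mapsto -\wt u/(1+\rho\wt u)$, which is $\eta_\rho$. Hence the structure is precisely the $(2,1;\rho)$-billiard of Definition \ref{dquasint}. This proves the first assertion. (One should note that a different choice of which singular point goes to the origin replaces $\rho$ by $4-\rho$, and by the addendum to Theorem \ref{classpqr} the variable change $(\wt z,\wt w)=(z/w,1/w)$ realizes this as a projective self-equivalence of $\{w=z^2\}$, so the normal form is well defined up to $\rho\leftrightarrow 4-\rho$.)

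\textbf{Stage 2: Rational integrability $\Leftrightarrow$ quasihomogeneous integrability.} Suppose the $(2,1;\rho)$-billiard admits a non-constant $(2,1)$-quasihomogeneous rational integral $R(z,w)$. Then $R$ is a fortiori a non-constant rational function on $\cp^2$ whose restriction to each $L_P$ is $\sigma_P$-invariant, i.e. a rational integral; so the dual billiard is rationally integrable. Conversely, suppose the dual billiard on $\gamma$ is rationally integrable with rational integral $R$. Since the curve is non-linear and $R|_\gamma\equiv\text{const}$ (Proposition \ref{proalg}, part 1), the germ of this structure at $O_1$ is a meromorphically (indeed rationally) integrable singular dual billiard on the regular quadratic germ $b=(\gamma,O_1)$ with residue $\rho$. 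By Theorem \ref{quasi-pqr}, applied with $p=2$, $q=1$, the corresponding $(2,1;\rho)$-billiard is quasihomogeneously integrable. This gives the equivalence of rational integrability of the structure and quasihomogeneous integrability of the $(2,1;\rho)$-billiard.

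\textbf{Stage 3: Conclusion.} By Theorem \ref{classpqr}, a $(2,1;\rho)$-billiard is quasihomogeneously integrable if and only if $\rho\in\mcm$. Combining with Stage 2, the dual billiard structure on $\gamma$ is rationally integrable if and only if $\rho\in\mcm$. This completes the proof. The only point requiring care—and the main obstacle to keep in mind—is ensuring that the reduction in Stage 1 genuinely produces the $(2,1;\rho)$-billiard on the nose (with $\sigma_P=\eta_\rho$ for \emph{every} $P$, not just asymptotically), which is what legitimizes the direct appeal to Theorem \ref{classpqr}; this is exactly where the rigidity $f(z)=\rho/z$ coming from the two-singularity hypothesis and Proposition \ref{proinfi} is used. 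Everything else is bookkeeping with residues and an application of the already-established Theorems \ref{quasi-pqr} and \ref{classpqr}.
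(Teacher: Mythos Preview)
Your proof is correct and follows essentially the same approach as the paper. The paper's version is terser: for Stage~1 it invokes Proposition~\ref{exun} (uniqueness of the dual billiard with prescribed poles and residues) rather than computing $f(z)=\rho/z$ explicitly, and for the necessity direction it cites Proposition~\ref{props2} (which packages Theorem~\ref{quasi-pqr} together with Theorem~\ref{classpqr}) rather than invoking Theorem~\ref{quasi-pqr} directly; but the underlying logic is the same.
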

  \begin{proof}
  The first statement of Proposition \ref{pquas}, with $\rho$ being the residue at some  
  singularity, follows from Propositions \ref{propsum} and  \ref{exun}. The condition that $\rho\in\mcm$ is 
   necessary for rational integrability, by Proposition \ref{props2}. Conversely, it $\rho\in\mcm$, then 
   the billiard, which is equivalent to the $(2,1;\rho)$-billiard, is rationally integrable, by Theorem \ref{classpqr}. 
   This proves Proposition \ref{pquas}.
   \end{proof}
  
   \subsection{Integrability of residue configuration $(\frac32,\frac32,1)$}
 
 Take an affine chart $(z,w)$ in which the conic $\gamma$ is given by the equation $w=z^2$,  
the singular points of the billiard structure with residue $\frac32$ are $(0,0)$ and the infinite point, and 
the singular point with residue $1$ is $(1,1)$. The corresponding involution family $\sigma_P:L_P\to L_P$ 
written in the coordinate $u=z-z_0$, $z_0:=z(P)$, takes the form (\ref{sigma2}), by Proposition \ref{propress}, \ref{proinfi}, \ref{exun}:
\begin{equation}u\mapsto -\frac u{1+f(z_0)u}, \ f(z)=\frac3{2z}+\frac1{z-1}=\frac{5z-3}{2z(z-1)}.\label{inv32}
\end{equation}
\begin{lemma} \label{int321} The billiard structure on $\gamma$ defined by the above involution family 
$\sigma_P$ admits 
the rational integral 
\begin{equation}R(z,w)=\frac{(w-z^2)^2}{(w+3z^2)(z-w)(z-1)}.\label{int01}\end{equation}
\end{lemma}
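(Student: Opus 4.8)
The plan is to verify directly that for every non-singular $P=(z_0,z_0^2)\in\gamma$ the restriction $R|_{L_P}$ is invariant under the involution $\sigma_P$ of (\ref{inv32}); since this is precisely what it means for $R$ to be an integral of the dual billiard, this finishes the proof. I would work in the affine coordinate $z$ on the projective tangent line $L_P=\{w=2z_0z-z_0^2\}$, together with the coordinate $u=z-z_0$ in which $\sigma_P:u\mapsto-\tfrac u{1+f(z_0)u}$, $f(z_0)=\tfrac{5z_0-3}{2z_0(z_0-1)}$. Restricting the factors of $R$ to $L_P$ gives $(w-z^2)|_{L_P}=-(z-z_0)^2$, hence numerator $(z-z_0)^4$; $(w+3z^2)|_{L_P}=3\left(z-\tfrac{z_0}3\right)(z+z_0)$; $(z-w)|_{L_P}=(1-2z_0)z+z_0^2$, vanishing at $z=\tfrac{z_0^2}{2z_0-1}$; and $(z-1)|_{L_P}$ vanishing at $z=1$. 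As numerator and denominator both have degree $4$ in $z$, the point $z=\infty$ is a regular value of $R|_{L_P}$, so the zero divisor of $R|_{L_P}$ on $L_P$ is $4[z_0]$ and its pole divisor is $\left[\tfrac{z_0}3\right]+[-z_0]+\left[\tfrac{z_0^2}{2z_0-1}\right]+[1]$.

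Next I would check that $\sigma_P$ preserves both divisors. The point $z=z_0$ (that is, $u=0$) is the fixed point of $\sigma_P$ lying on $\gamma$, so $4[z_0]$ is automatically $\sigma_P$-invariant, and it remains to see that $\sigma_P$ interchanges $\tfrac{z_0}3\leftrightarrow1$ and $-z_0\leftrightarrow\tfrac{z_0^2}{2z_0-1}$. This is a one-line substitution: plugging $u=-\tfrac{2z_0}3$ into $-\tfrac u{1+f(z_0)u}$ and simplifying with $f(z_0)=\tfrac{5z_0-3}{2z_0(z_0-1)}$ yields $1-z_0$, i.e. $z=1$; plugging $u=-2z_0$ yields $\tfrac{z_0(1-z_0)}{2z_0-1}$, i.e. $z=\tfrac{z_0^2}{2z_0-1}$. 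Since $\sigma_P$ is an involution, both transpositions hold, so the pole divisor is $\sigma_P$-invariant.

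Finally, since both divisors of $R|_{L_P}$ are $\sigma_P$-invariant and $R|_{L_P}$ has a zero of even order ($4$) at the fixed point $z_0$ of $\sigma_P$, the argument of Proposition \ref{invzero} applies (its proof works for any M\"obius involution, and the even order of $R|_{L_P}$ at the fixed point $z_0$ rules out the ambiguous sign): $R|_{L_P}$ is determined by its divisor up to a constant, so $R\circ\sigma_P=\pm R$, and $\sigma_P(u)=-u+O(u^2)$ forces $R\circ\sigma_P$ and $R|_{L_P}$ to agree to leading order near $u=0$, whence the sign is $+$. Therefore $R|_{L_P}$ is $\sigma_P$-invariant for every non-singular $P$, and $R$ is an integral of the dual billiard on $\gamma$. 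The argument is entirely computational, so I do not expect a genuine obstacle; the only mildly delicate points are the behavior at $z=\infty$ and the finitely many values of $z_0$ where two of the poles of $R|_{L_P}$ coincide (e.g. $z_0=\tfrac13$) or where $z-w$ is constant on $L_P$ so that a pole sits at $\infty$ (namely $z_0=\tfrac12$); in each such case the divisors remain $\sigma_P$-invariant, as one checks directly, so the conclusion persists — alternatively one verifies the identity $R\circ\sigma_P=R$ for generic $P$ and extends it to the remaining points by analyticity.
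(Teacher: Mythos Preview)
Your proof is correct and is essentially the same as the paper's. The paper phrases the pole computation geometrically --- it names the intersection points $A,D$ of $L_P$ with the conic $\{w+3z^2=0\}$ and then verifies (its Claim 7) that $\sigma_P(D)$ lands on $\{z=1\}$ and $\sigma_P(A)$ lands on $\{w=z\}$ --- but the actual $u$-coordinate substitutions it performs are identical to yours, and the conclusion is drawn via Proposition \ref{invzero} just as you do.
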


{\bf Motivation of construction of integral.} The singular point at the origin has residue $\frac32$. The corresponding 
$(2,1;\frac32)$-billiard has a quasihomogeneous integral $R_{\frac32}(z,w)=\frac{(w-z^2)^2}{z(w+3z^2)}$, see Theorem \ref{classpqr}.  
Let us try to construct a rational integral  of our non-quasihomogeneous dual billiard in the form $\frac{(w-z^2)^2}{Q(z,w)}$
 so that the lower $(2,1)$-quasihomogeneous  part at $(0,0)$ of the denominator $Q(z,w)$ be equal to the denominator in $R_{\frac32}$ up 
 to constant factor  
   (see the proof of Theorem \ref{quasi-pqr}). Then the zero locus $\{ Q=0\}$ should contain an irreducible quadratic germ of analytic curve at $(0,0)$ 
 having a contact bigger than two with the conic $C:=\{ w+3z^2=0\}$. Let us look for  a polynomial $Q$ of degree four vanishing on  
the conic $C$: $Q(z,w)=(w+3z^2)H(z,w)$. To find the zero locus of the polynomial $H$, we have to find the images of the points of  intersection 
 $L_P\cap C$ under the involutions $\sigma_P$. We show that the latter images lie in the union of lines $\{ w=z\}$ and $\{ z=1\}$. This 
 together with Proposition \ref{invzero} will imply that the function (\ref{int01}) is an integral. 

\begin{proof} {\bf of Lemma \ref{int321}.} 
Fix a $P=(z_0,z_0^2)\in\gamma$. The line $L_P$ intersects the conic $C$ at two points $A$ and $D$ 
 with coordinates 
$z=-z_0$ and $z=\frac13z_0$ respectively, since the $\zeta$-coordinates of the intersection points 
are roots of the polynomial $\mcr_{2,1,-3}(\zeta)=3\zeta^2+2\zeta-1$, see (\ref{prest}); 
its roots are $-1$ and $\frac13$. Let $B$ and $F$ denote respectively the 
intersection points of the line $L_P$ with the lines $\{ z=1\}$ and $\{w=z\}$  respectively.

{\bf Claim 7.} {\it One has } $\sigma_P(D)=B$, $\sigma_P(A)=F$. 

\begin{proof} 
The $u$-coordinates of the points $A$, $D$, $B$, $F$, $u=z-z_0$,   are 
\begin{equation}u(A)=-2z_0, \ u(D)=-\frac23z_0, \ u(B)=1-z_0, \ u(F)=\frac{z_0(1-z_0)}{2z_0-1}.\label{adef}
\end{equation} 
For $A$, $D$, $B$ the formulas are obvious. The $z$-coordinate of the point $F$ is found from the equation  on 
$F$ as the point of intersection of the lines $\{ w=z\}$ and $L_P=\{ w=2zz_0-z_0^2\}$: 
$$w(F)=2z_0z(F)-z_0^2=z(F), \ \ z(F)=\frac{z_0^2}{2z_0-1}.$$
The latter formula implies the last formula  in (\ref{adef}). One has 
$$u(\sigma_P(D))=-\frac{u(D)}{1+f(z_0)u(D)}=\frac{2z_0}{3(1+\frac{5z_0-3}{2z_0(z_0-1)}(-\frac23z_0))}$$
$$ =
-\frac{2z_0(z_0-1)}{2z_0}=1-z_0=u(B),$$
$$u(\sigma_P(A))=-\frac{u(A)}{1+f(z_0)u(A)}=\frac{2z_0}{1-\frac{5z_0-3}{2z_0(z_0-1)}2z_0}=
\frac{2z_0(z_0-1)}{2-4z_0}=u(F).$$
The claim is proved.
\end{proof}

The claim together with the above discussion implies the statement of Lemma \ref{int321}.
\end{proof}

  \subsection{Integrability of residue configuration $(\frac43,\frac43,\frac43)$}
  
  \begin{lemma} \label{lem43} The singular holomorphic dual   billiard structure on a regular conic with 
  three poles of order one and residues equal to $\frac43$ is rationally integrable. 
  In the affine chart $\cc^2_{z,w}$, where the conic is given by the equation $w=z^2$ and the singularities are 
  $$a_j:=(\var^j,\var^{2j}), \ \var=e^{\frac{2\pi i}3}, \ j=0,1,2,$$
     the corresponding involution family $\sigma_P:L_P\to L_P$ 
  written in the coordinate $u:=z-z_0$, $z_0=z(P)$, takes the form (\ref{sigma3}):
   \begin{equation}\sigma_P: u\mapsto-\frac u{1+f(z_0)u}, \ f(z)=\frac{4z^2}{z^3-1}.\label{sigma33}\end{equation}
The rational function given by (\ref{exo2b}): 
  \begin{equation}R(z,w)=\frac{(w-z^2)^3}{(1+w^3-2zw)^2},\label{exo2b2}\end{equation}
  is an integral of the billiard.
  \end{lemma}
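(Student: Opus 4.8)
The proof will follow the pattern of Lemma \ref{int321}, combining the normalization of the involution family with Proposition \ref{invzero}. First I would pin down $f$. Since the billiard structure has three singularities, all poles of order one with residue $\frac43$, located at the $a_j$ whose $z$-coordinates are $1,\var,\var^2$, Propositions \ref{propress}, \ref{proinfi} and \ref{exun} determine it uniquely, and the corresponding rational function from \eqref{sigres} is
\[
f(z)=\frac43\sum_{j=0}^2\frac1{z-\var^j}=\frac43\cdot\frac{3z^2}{z^3-1}=\frac{4z^2}{z^3-1},
\]
the middle equality being the logarithmic-derivative identity $\sum_j(z-\var^j)^{-1}=(z^3-1)'/(z^3-1)$; note $\lim_{z\to\infty}zf(z)=4$, so the point at infinity is regular for the structure, consistently with $3\cdot\frac43=4$ (Proposition \ref{propsum}). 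This gives \eqref{sigma33}. Substituting the parametrization $t\mapsto(t,t^2)$ of $\gamma$ into the cubic $C:=\{1+w^3-2zw=0\}$ yields $(1+w^3-2zw)|_\gamma=(1-t^3)^2$, so $R$ vanishes identically on $\gamma$ and $C$ meets $\gamma$ exactly at $a_0,a_1,a_2$ with contact of order two at each — which is precisely the local picture at each $a_j$ of the quasihomogeneous integral $R_{4/3}(z,w)=(w-z^2)^3/(w+8z^2)^2$ from Theorem \ref{classpqr}, and is what motivates the Ansatz for $R$.

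Next I would verify that for every $P=(z_0,z_0^2)\in\gamma$ with $z_0^3\neq1$ the restriction $R|_{L_P}$ is $\sigma_P$-invariant, using Proposition \ref{invzero}. In the coordinate $u=z-z_0$ on $L_P$ one has $L_P=\{w=z_0^2+2z_0u\}$ and $(w-z^2)|_{L_P}=-u^2$, so the numerator $(w-z^2)^3$ restricts to $-u^6$, vanishing to even order $6$ at the fixed point $u=0$ of $\sigma_P$; moreover $R|_{L_P}$ is a ratio of two polynomials of degree $6$ in $z$, so the point at infinity of $L_P$ is neither a zero nor a pole. Hence the zero divisor of $R|_{L_P}$ equals $6\cdot\{u=0\}$, which is $\sigma_P$-invariant, and it remains only to show that the pole divisor, namely twice the root divisor of $(1+w^3-2zw)|_{L_P}$, is $\sigma_P$-invariant.

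The core of the argument is the analogue of Claim 7. A direct expansion gives
\[
(1+w^3-2zw)\big|_{L_P}=P_3(u):=8z_0^3u^3+4z_0(3z_0^3-1)u^2+6z_0^2(z_0^3-1)u+(1-z_0^3)^2,
\]
and I would then establish the polynomial identity in $u$
\[
(1+f(z_0)u)^3\,P_3\!\left(-\frac{u}{1+f(z_0)u}\right)=P_3(u),\qquad f(z_0)=\frac{4z_0^2}{z_0^3-1},
\]
by expanding both sides and comparing the coefficients of $1,u,u^2,u^3$; the constant, linear and quadratic terms match essentially term by term, while for the $u^3$-coefficient one must use $z_0^6=(z_0^3-1)^2+2(z_0^3-1)+1$ to see the fractional contributions collapse to $8z_0^3$. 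This identity shows that $\sigma_P:u\mapsto-u/(1+f(z_0)u)$ permutes the three roots of $P_3$ on $L_P$, so the pole divisor of $R|_{L_P}$ is $\sigma_P$-invariant; Proposition \ref{invzero} then gives that $R|_{L_P}$ is $\sigma_P$-invariant for all such $P$, hence for all $P\in\gamma$ away from the singularities by continuity, so $R$ is an integral of the billiard. The only genuine obstacle is the bookkeeping in this last coefficient comparison; everything preceding it is forced by the residue data together with Propositions \ref{propress}, \ref{proinfi} and \ref{exun}.
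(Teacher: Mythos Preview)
Your proposal is correct and takes a genuinely different route from the paper's own argument. After deriving $f(z)=\frac{4z^2}{z^3-1}$ (identically to you), the paper proceeds geometrically: it first shows that the \emph{second} fixed point $S(P)$ of $\sigma_P$ on $L_P$ traces out exactly the cubic $C=\{1+w^3-2zw=0\}$, obtaining the rational parametrization $t\mapsto\bigl(\frac{1+t^3}{2t^2},\frac1t\bigr)$ of $C$; it then finds the three $t$-values of points in $L_P\cap C$ explicitly (namely $t=z_0$ and $t=\pm z_0^{-1/2}$) and checks by hand that $\sigma_P$ swaps the last two. Your approach bypasses the parametrization entirely: you write down the cubic $P_3(u)=(1+w^3-2zw)|_{L_P}$ and verify the functional equation $(1+f(z_0)u)^3\,P_3(\sigma_P(u))=P_3(u)$ directly by coefficient comparison. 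This is precisely the technique the paper employs one subsection later (Claim 12 in the proof of Lemma \ref{l362}, for the quadratic factor there), so you have effectively anticipated and transplanted that device to the present case. The paper's version has the advantage of explaining \emph{why} this particular cubic appears (as the fixed-point locus), and it avoids the somewhat delicate $u^3$-coefficient bookkeeping; your version is more uniform and does not require guessing or discovering the parametrization. One small caveat: your claim that ``the point at infinity of $L_P$ is neither a zero nor a pole'' fails at $z_0=0$, where $P_3$ degenerates to the constant $1$ and $R|_{L_P}=-u^6$ acquires a pole of order $6$ at infinity; but since $f(0)=0$ and $\sigma_P$ is then $u\mapsto -u$, invariance is immediate there, so this does not damage the argument.
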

 \begin{proof} Take an affine chart as above.
 The involution family $\sigma_P$ given by (\ref{sigma33}) has first order poles at $a_j$ with residues $\frac43$ 
  and is regular at infinity,  by Propositions \ref{propress} and \ref{proinfi}. 
  For the proof of invariance of the restrictions $R|_{L_P}$ under the involutions $\sigma_P$, 
 it suffices to show that the intersection of each line $L_P$ with the zero locus of the denominator,  the cubic  
 $$C:=\{1+w^3-2zw=0\},$$
 is $\sigma_P$-invariant. We will do this in the two following propositions.
 
 \begin{proposition} For every $P\in\gamma\setminus\{ a_0, a_1, a_2\}$ let  $S(P)\in L_P$ denote the fixed point 
 distinct from $P$  of the involution $\sigma_P$. The fixed point family $S(P)$ coincides   
with the triple punctured cubic  $C\setminus\{ a_0, a_1, a_2\}$. In particular, the cubic $C$ is rational.
\end{proposition}
\begin{proof} Solving the fixed point equation $u=-\frac u{1+f(z_0)u}$ in non-zero $u$ yields
$$u(S(P))=-\frac2{f(z_0)}=\frac{1-z_0^3}{2z_0^2}, \ z(S(P))=\frac{1+z_0^3}{2z_0^2},$$ 
$$w(S(P))=w(P)+2z_0u(S(P))=z_0^2+\frac{1-z_0^3}{z_0}=\frac{1}{z_0}.$$
Therefore, the fixed point family $S(P)$ runs along the parametrized rational curve 
\begin{equation}K:=\left(t\mapsto\left(\frac{1+t^3}{2t^2}, \frac1t\right), \ | \ t\in\oc\right).\label{parte}\end{equation}
The curve $K$ obviously satisfies the equation $1+w^3-2zw=0$ of the cubic $C$, and hence, coincides with $C$. 
\end{proof}

\begin{proposition} \label{pro3int}
 For every $P\in\gamma\setminus\{ a_0, a_1, a_2\}$ the intersection $L_P\cap C$ is 
$\sigma_P$-invariant.
\end{proposition}
\begin{proof} One of the points of the intersection $L_P\cap C$ is the fixed point $S(P)$. Let us show that the other 
intersection points are permuted by $\sigma_P$. To do this, let us find  explicitly their $t$-parameters, see 
(\ref{parte}). Along the line $L_P$ one has $w=z_0^2+2z_0(z-z_0)=2z_0z-z_0^2$. Substituting 
$z=\frac{1+t^3}{2t^2}$ and $w=\frac1t$ to the latter equation yields the equation 
$$(t-z_0)(t^2-\frac1{z_0})=0.$$
Its solution $t=z_0$ corresponds to the fixed point $S(P)$. The other  two solutions are $t=\pm\frac1{\sqrt{z_0}}$.  
Here we fix some value of square root and denote it $\sqrt{z_0}$; the other value is $-\sqrt{z_0}$.    The corresponding values $z$ and $u$ are equal respectively to 
$$z_{\pm}=\frac{z_0\pm\frac1{\sqrt{z_0}}}2, \ u_{\pm}=z_{\pm}-z_0=\frac{-z_0\pm\frac1{\sqrt{z_0}}}2.$$
The involution $\sigma_P$ sends the point with the $u$-coordinate  $u_+$ to the point with the $u$-coordinate 
$$-\frac{u_+}{1+f(z_0)u_+}=-\frac{\frac1{\sqrt{z_0}}-z_0}{2(1+2\frac{z_0^{\frac32}}{z_0^3-1}(1-z_0^{\frac32}))}.$$
Writing $z_0^3-1=(z_0^{\frac32}-1)(z_0^{\frac32}+1)$ in the denominator and cancelling the former factor 
yields 
$$-\frac{u_+}{1+f(z_0)u_+}=-\frac{(1-z_0^{\frac32})(1+z_0^{\frac32})}{2\sqrt{z_0}(1+z_0^{\frac32}-2z_0^{\frac32})}
=-\frac{z_0+\frac1{\sqrt{z_0}}}2=u_-.$$
This implies that the involution $\sigma_P$ permutes the intersection points with $u$-coordinates $u_{\pm}$. 
The proposition is proved.
\end{proof} 

Lemma \ref{lem43} follows from Propositions \ref{pro3int} and \ref{invzero}. 
\end{proof}

\subsection{Integrability of the configuration $(\frac43, \frac53, 1)$. End of proof of Theorem \ref{tcompl}}

\begin{lemma} \label{l362} The singular holomorphic dual billiard structure on a regular conic $\gamma$ with three singularities of order one and 
residues $\frac43$, $\frac53$, $1$ is rationally integrable. In the affine chart $\cc^2_{z,w}$ where $\gamma=\{ w=z^2\}$ 
and the corresponding singularities are $(0,0)$, infinity and $(1,1)$ respectively the involutions $\sigma_P:L_P\to L_P$ 
defining the dual billiard structure have the following form in the coordinate $u=z-z_0$, $z_0=z(P)$: 
\begin{equation}\sigma_P:u\mapsto-\frac u{1+f(z_0)u}, \ \ f(z)=\frac4{3z}+\frac1{z-1}=\frac{7z-4}{3z(z-1)}.\label{43fz}\end{equation}
The function 
\begin{equation} R(z,w)=\frac{(w-z^2)^3}{(w+8z^2)(z-1)(w+8z^2+4w^2+5z^2w-14zw-4z^3)}\label{int43}\end{equation}
is an integral of the dual billiard.
\end{lemma}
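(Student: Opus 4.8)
The plan is to proceed as in the proofs of Lemmas \ref{int321} and \ref{lem43}. First I would check that the involution family (\ref{43fz}) realizes the asserted residue configuration: by Propositions \ref{propress} and \ref{proinfi} the rational function $f(z)=\frac4{3z}+\frac1{z-1}$ has simple poles at $z=0$ and $z=1$ with residues $\frac43$ and $1$, while $f(z)=\frac{7/3}{z}(1+o(1))$ as $z\to\infty$, so the structure has a pole of order one at the infinite point of $\gamma$ with residue $4-\frac73=\frac53$. By Proposition \ref{exun} it is the unique singular holomorphic dual billiard structure on $\gamma$ with these singular points and residues, hence it is the structure in the statement, with residue configuration $(\frac43,\frac53,1)$.

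Next, as in Lemmas \ref{int321} and \ref{lem43}, I would invoke Proposition \ref{invzero}: it is enough to show, for every $P=(z_0,z_0^2)\in\gamma$ away from the three singular points, that the zero and pole divisors of $R|_{L_P}$ are $\sigma_P$-invariant. Parametrizing $L_P$ by $z$, so that $w=2z_0z-z_0^2$ on $L_P$, one has $(w-z^2)|_{L_P}=-(z-z_0)^2$, so the zero divisor of $R|_{L_P}$ is $6\{P\}$, which is $\sigma_P$-invariant, and the hypothesis of Proposition \ref{invzero} at the fixed point $P$ is met since this zero has even order. Since $(w-z^2)^3$ is coprime with the denominator of $R$, no cancellation occurs, and the pole divisor of $R|_{L_P}$ equals $\{A_+\}+\{A_-\}+\{B\}+\{D_1\}+\{D_2\}+\{D_3\}$, where $A_\pm=L_P\cap\{w+8z^2=0\}$ have $\zeta$-coordinates ($\zeta=z/z_0$) equal to $\frac14$ for $A_+$ and $-\frac12$ for $A_-$ (the roots of $8\zeta^2+2\zeta-1=\mcr_{2,1,-8}(\zeta)$), $B=L_P\cap\{z=1\}$, and $D_1,D_2,D_3=L_P\cap\{Q=0\}$ with $Q:=w+8z^2+4w^2+5z^2w-14zw-4z^3$.

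The core of the argument, and where essentially all the computation sits, is to show that $\sigma_P$ permutes this degree-six divisor. In the coordinate $u=z-z_0$, in which $\sigma_P:u\mapsto-u/(1+f(z_0)u)$, a pair $\{u_1,u_2\}$ is interchanged by $\sigma_P$ exactly when $u_1^{-1}+u_2^{-1}=-f(z_0)$. I would show: (i) $\sigma_P(A_+)=B$, via $u(A_+)^{-1}+u(B)^{-1}=-f(z_0)$ with $u(A_+)=-\frac34 z_0$ and $u(B)=1-z_0$; (ii) $\sigma_P(A_-)\in\{Q=0\}$, by computing $\sigma_P(A_-)$ explicitly (its coordinates are rational functions of $z_0$) and checking that $Q$ vanishes at it identically in $z_0$ --- equivalently, $\{Q=0\}$ is the rational cubic swept out by $\sigma_P(A_-)$ as $P$ runs over $\gamma$. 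Given (i) and (ii), $\{A_+\}+\{B\}$ and $\{A_-\}+\{\sigma_P(A_-)\}$ are $\sigma_P$-invariant, so their complement in the pole divisor is a $\sigma_P$-invariant divisor of degree two contained in $L_P\cap\{Q=0\}$; since $\sigma_P$ fixes only $P$ and the second fixed point $S(P)$ on $L_P$, and for generic $P$ (hence, by Zariski density, for all $P$) these differ from those two remaining poles, $\sigma_P$ must swap them. Thus the whole pole divisor is $\sigma_P$-invariant, Proposition \ref{invzero} gives $\sigma_P$-invariance of $R|_{L_P}$ for all admissible $P$, and $R$ is a rational integral. The only nontrivial point is (ii): it amounts to verifying that an explicit polynomial in $z_0$ of degree at most seven vanishes identically, and I expect no conceptual obstruction --- the cubic $Q$ was reverse-engineered for precisely this purpose, its lower $(2,1)$-quasihomogeneous part at the origin being $w+8z^2$, the denominator of the quasihomogeneous integral $R_{4/3}$ of the $(2,1;\frac43)$-billiard from Theorem \ref{classpqr} (compare the proof of Theorem \ref{quasi-pqr} and the motivations in Lemmas \ref{int321} and \ref{lem43}).
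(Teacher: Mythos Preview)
Your approach mirrors the paper's through step (ii), but the final inference is circular. From (i) and (ii) you know that $\sigma_P$ maps the four-point set $\{A_+,B,A_-,\sigma_P(A_-)\}$ to itself; however, this does \emph{not} let you conclude that ``the complement in the pole divisor is $\sigma_P$-invariant'': invariance of a sub-divisor forces invariance of its complement only if the full divisor is already known to be invariant --- which is precisely what you are trying to prove. Knowing that the two remaining points $D_2,D_3\in L_P\cap\{Q=0\}$ are not fixed points of $\sigma_P$ says nothing about where $\sigma_P$ sends them; a priori $\sigma_P(D_2)$ could be any point of $L_P$.

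The paper closes this gap by direct computation (its Claims 11 and 12): restricting the cubic to $L_P$ and dividing out the known root $u(\sigma_P(A_-))=\dfrac{3z_0(z_0-1)}{2-5z_0}$, Vieta's formulas give for the remaining two roots $u_1,u_2$ the relations $u_1+u_2=\dfrac{4-7z_0}{2}$ and $u_1u_2=\dfrac{3z_0(z_0-1)}{2}$, whence
\[
u_1^{-1}+u_2^{-1}=\frac{u_1+u_2}{u_1u_2}=\frac{4-7z_0}{3z_0(z_0-1)}=-f(z_0),
\]
which by your own swapping criterion means $\sigma_P$ interchanges $D_2$ and $D_3$. This is exactly the calculation you need; your outline is otherwise sound and matches the paper's route, but the last step cannot be shortcut.
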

\begin{proof} Formula (\ref{43fz}) follows  from  Propositions \ref{proinfi} and \ref{exun}. 

{\bf Motivation of the construction of the integral $R$.} The residue of the dual billiard at $(0,0)$ is equal to $\frac43$. 
The  corresponding $(2,1;\frac43)$-billiard has quasihomogeneous integral $R_{\frac43}(z,w)=\frac{(w-z^2)^3}{(w+8z^2)^2}$, 
by Theorem \ref{classpqr}. 
 Let $E$ denote the infinity point of the conic $\gamma$. Its residue is equal to $\frac53$. In the coordinates 
 $(\wt z,\wt w)=(\frac zw, \frac1w)$  the corresponding $(2,1;\frac53)$-billiard 
 has quasihomogeneous integral $R_{\frac53}(\wt z,\wt w)=\frac{(\wt w-\wt z^2)^3}{(\wt w+8\wt z^2)(\wt w+\frac54\wt z^2)}$ 
 (Theorem \ref{classpqr}). Note that the denominators in both $R_{\frac43}$ and $R_{\frac53}$ vanish on the same conic 
 $$C:=\{ w=-8z^2\}=\{\wt w=-8\wt z^2\}.$$ 
We would like to construct an integral of the billiard from the lemma as a ratio 
$R(z,w)=\frac{(w-z^2)^3}{(w+8z^2)Y(z,w)}$. To find the polynomial $Y$, we find the images 
of points of the intersection  $L_P\cap C$ under the involution $\sigma_P$. We show that their families parametrized by $P$ 
form the union of the line $\{ z=1\}$ and a cubic. The latter union will be the zero locus of the polynomial $Y$. 

\begin{proposition} For every $P=(z_0,z_0^2)\in\gamma\setminus\{(0,0),(1,1), E\}$ the intersection $L_P\cap C$ consists of two points 
$A=A(P)$ and $D=D(P)$: $z(A)=\frac{z_0}4$, $z(D)=-\frac{z_0}2$.  One has  
\begin{equation}z(\sigma_P(A))=1, \ \sigma_P(D)=\left(-\frac{z_0(2z_0+1)}{2-5z_0}, \frac{z_0^2(z_0-4)}{2-5z_0}\right).\label{parcurve}\end{equation}
\end{proposition}
\begin{proof} The $\zeta$-coordinates of points of the intersection $L_P\cap C$, $\zeta=\frac z{z_0}$, are roots of the polynomial 
$\mcr_{2,1,-8}(\zeta)=8\zeta^2+2\zeta-1$. Its roots are $\frac14$ and $-\frac12$, and the corresponding intersection points will 
be denoted by $A$ and $D$ respectively. This proves the first statement of the proposition. Let us find their $\sigma_P$-images in the 
coordinate $u=z-z_0$. One has 
$$u(A)=-\frac{3z_0}4, \ u(\sigma_P(A))=-\frac{u(A)}{1+f(z_0)u(A)}=\frac{\frac{3z_0}4}{1-\frac{7z_0-4}{3z_0(z_0-1)}\frac{3z_0}4}=1-z_0,$$ 
$$z(\sigma_P(A))=1, \ \ \ \ \ u(D)=-\frac{3z_0}2,$$ 
$$u(\sigma_P(D))=-\frac{u(D)}{1+f(z_0)u(D)}=\frac{\frac{3z_0}2}{1-\frac{7z_0-4}{3z_0(z_0-1)}\frac{3z_0}2}
=\frac{3z_0(z_0-1)}{2-5z_0},$$ 
$$z(\sigma_P(D))=-\frac{z_0(2z_0+1)}{2-5z_0}, \ 
w(\sigma_P(D))=2z_0z(\sigma_P(D))-z_0^2=\frac{z_0^2(z_0-4)}{2-5z_0}.$$
Proposition \ref{parcurve} is proved.
\end{proof}
\begin{corollary} The families  of images $\sigma_P(A(P))$ and $\sigma_P(D(P))$ are respectively the line $\{ z=1\}$ and  the  rational cubic 
 \begin{equation}S:=\{\left(-\frac{t(2t+1)}{2-5t}, \frac{t^2(t-4)}{2-5t}\right) \ | \ t\in\oc\}\label{defcos}\end{equation}
\end{corollary}
 \begin{proposition} \label{procub} The cubic $S$ is the zero locus of the polynomial
 \begin{equation} K(z,w)=w+8z^2+4w^2+5z^2w-14zw-4z^3\label{s=k=0}\end{equation}
 \end{proposition}
 \begin{proof} One can prove the proposition directly by substituting the parametrization (\ref{defcos}) to (\ref{s=k=0}). But we will 
 give a geometric proof explaining how formula (\ref{s=k=0}) was found. First let us show that 
 \begin{equation} \gamma\cap S=\Sigma:=\{(0,0), (1,1), E\},\label{gsig}\end{equation}
 $$t=0,1,\infty \ \text{ at } (0,0),  (1,1),  E \ \text{ respectively.}$$
 Indeed, for every $P\in\gamma\setminus\Sigma$ the line $L_P$ intersects $\gamma$ 
 only at $P$, and its intersection points $A$ and $D$ with $C$ do not coincide with $P$, since $\gamma$ 
 and $C$ intersect only  at two points: $(0,0)$ and $E$.   Therefore, $\sigma_P(D)\in L_P\setminus\{ P\}$ lies outside $\gamma$. 
On the other hand, as $P$ tends to a point $X\in\Sigma$, one has $\sigma_D(P)\to X$. Indeed, this holds exactly when 
$z_0=z(P)$ tends to some of the points $0$, $1$ or $\infty$, and in this case $\sigma_D(P)\to X$: both latter statements follow from 
(\ref{parcurve}). This proves (\ref{gsig}). 

{\bf Claim 8.} {\it The germs of the  curve $S$ at $(0,0)$ and $E$ are regular and tangent to the conic $C$ and to the conic 
$\{ w=-\frac54z^2\}$ respectively with contact of order at least three. The curve $S$ is bijectively parametrized by the parameter $t$, see (\ref{defcos}), except maybe for possible self-intersections.}

\begin{proof} The coordinates of a point of the curve $S$ with a parameter $t\to0$ are asymptotic to $-\frac t2$ and $-2t^2$ respectively. 
This implies the statement of the claim for the germ at $(0,0)$. The proof for the germ at infinity is analogous. The parametrization 
(\ref{defcos}) is either bijective (up to self-intersections), or a covering of degree at least two. The latter case is clearly impossible, since 
the germ of the curve $S$ at $(0,0)$ is injectively parametrized by a neighborhood of the point $t_0=0$ and no other parameter value is sent 
to $(0,0)$. 
\end{proof} 

{\bf Claim 9.} {\it The germ of the curve $S$ at $(1,1)$ is a cusp.} 

\begin{proof} The germ of the curve $S$ at $(1,1)$ is irreducible, since it is a germ of curve parametrized by the parameter $t$ at the base point $t_0=1$ in the parameter line, see (\ref{gsig}). It is a singular germ, since the derivative of the map (\ref{defcos}) at $t=1$ is zero and by the last statement 
of Claim 8. Therefore, the projective line $L=L_{(1,1)}$ tangent to $S$ at the point $(1,1)$ is tangent to $S$ with contact at least three. 
On the other hand, the tangency order cannot be bigger than three, since $S$ is a cubic. Hence, it is equal to three, and $(1,1)$ is a cusp. 
The claim is proved.
\end{proof}

{\bf Claim 10.} {\it There exists a cubic polynomial vanishing on $S$ of the form}
\begin{equation}K(z,w)=w+8z^2+\alpha w(w+\frac54z^2)+\beta zw+\psi z^3.\label{kprep}\end{equation}

\begin{proof} Let $K(z,w)$ be a cubic polynomial vanishing on $S$. Its homogeneous cubic part should contain no $w^3$ and $w^2z$ 
 terms, since $S$ contains the point $E=[0:1:0]\in\cp^2$ and is tangent to the infinity line there. Therefore, it 
 is a linear combination of monomials from (\ref{kprep}) (and may be $z$).  Its lower $(2,1)$-quasihomogeneous  
part at $(0,0)$ is $w+8z^2$ up to constant factor, since the germ of the curve $S$  at the origin is regular and tangent to the conic $C=\{ w+8z^2=0\}$ 
with contact of order at least three (Claim 8). Hence normalizing $K$ by constant factor, we can and will consider 
that  $K$ is equal to $w+8z^2$ plus a linear combination of monomials  $w^2$, $wz$, $wz^2$, $z^3$. Passing to the affine coordinates $(\wt z,\wt w)=(\frac zw,\frac1w)$ centered at $E$ 
we get that $K(z,w)$ is equal to $\frac1{\wt w^3}H(\wt z,\wt w)$, where $H(\wt z,\wt w)$ is a polynomial vanishing 
on the germ of the curve $S$ at $E$. The latter germ being regular and tangent to the conic $\{ w+\frac54z^2=0\}=\{\wt w+\frac54\wt z^2=0\}$ with 
contact of order at least three (Claim 8), the $(2,1)$-quasihomogeneous part of the polynomial $H(\wt z,\wt w)$ is equal to 
$\wt w+\frac54\wt z^2$ up to constant factor; hence $H(\wt z,\wt w)$ is equal to $\alpha(\wt w+\frac54\wt z^2)+\psi\wt z^3$ plus 
a polynomial of degree at most three whose monomials are divisible by either $\wt w^2$, or  $\wt w\wt z$. One has 
$$\frac1{\wt w^3}(\wt w+\frac54\wt z^2)=w(w+\frac54z^2), \ \frac{\wt z^3}{\wt w^3}=z^3, \ \frac1{\wt w^3}\wt w\wt z=zw.$$
This together with the above discussion implies that the polynomial $K$ has the type (\ref{kprep}). The claim is proved.
\end{proof}

Finding  unknown coefficients $\alpha$, $\beta$, $\psi$ from the linear equation saying that $K(z,w)$ vanishes at $(1,1)$  with its 
first derivatives yields (\ref{s=k=0}). One can also check directly that the polynomial $K$ given by (\ref{s=k=0}) vanishes at $(1,1)$  with 
its  first partial derivatives. 
Proposition \ref{procub} is proved.
\end{proof}
\begin{proposition} \label{b1b2} Set 
$$\Gamma:=C\cup\{ z=1\}\cup S, \ \ C=\{ w+8z^2=0\}, \ S \text{ is given by (\ref{defcos}).}$$ 
For every $P\in\gamma\setminus\Sigma$ the intersection  $L_P\cap \Gamma$  is $\sigma_P$-invariant.
\end{proposition}
\begin{proof} For every $P\in\gamma\setminus\Sigma$ the line $L_P$ intersects $\Gamma$ at six points: $A,D\in C\cap L_P$, 
the point $\sigma_P(A)\in\{ z=1\}\cap L_P$, the point $\sigma_P(D)\in S\cap L_P$ and two more points $B_1, B_2\in S\cap L_P$; 
$B_j=B_j(P)$. It suffices to show that 
\begin{equation} \text{the involution } \sigma_P \text{ permutes  } B_1 \text{ and } B_2 \text{ for every } P\in\gamma\setminus\Sigma.
\label{invperm}\end{equation}
The proof of (\ref{invperm}) will be split into the two following claims. Set 
$$u_j=u(B_j(P))=z(B_j(P))-z(P)=z(B_j(P))-z_0, \ \ j=1,2.$$

{\bf Claim 11.} {\it One has} 
\begin{equation} u_1+u_2=\frac{4-7z_0}{2}, \ \ u_1u_2=\frac{3z_0(z_0-1)}2.\label{besum}\end{equation}

\begin{proof}  One has $z=z_0+u$, $w=z_0^2+2z_0u$ on $L_P$. In the coordinate $u$ on $L_P$ the restriction to $L_P$ of the polynomial 
$K(z,w)$  takes the form
\begin{equation}K|_{L_P}=(10z_0-4)u^3+(41z_0^2-40z_0+8)u^2+\chi u +9z_0^2(z_0-1)^2, \ \ \chi\in\cc.\label{320}\end{equation}
Indeed, the cubic term in $K|_{L_P}$ coincides with that of the sum 
\begin{equation}-4z^3+5z^2w=-4(u+z_0)^3+5(u+z_0)^2(z_0^2+2z_0u).\label{cubterm}\end{equation}
Thus, the coefficient at $u^3$ equals $10z_0-4$. The coefficient at $u^2$ in $K|_{L_P}$ is the sum of similar coefficients  in 
(\ref{cubterm}) and in the expression 
$$8z^2+4w^2-14zw=(8+16z_0^2-28z_0)u^2+\text{lower terms}.$$
The coefficient at $u^2$ in (\ref{cubterm}) is equal to $-12z_0+25z_0^2$. Hence, the coefficient at $u^2$ in $K|_{L_P}$ is equal to 
$41z_0^2-40z_0+8$. The free term of the polynomial $K|_{L_P}$ is equal to its value at the point $P=(z_0,z_0^2)$: 
$$K(z_0,z_0^2)=z_0^2+8z_0^2+4z_0^4+5z_0^4-14z_0^3-4z_0^3=9z_0^2(z_0-1)^2.$$
This proves (\ref{320}). The roots of the restriction $K|_{L_P}$ are $u_1$, $u_2$ and the $u$-coordinate $u_3$ of the point $\sigma_P(D)$: 
$$u_3:=z(\sigma_P(D))-z_0=-\frac{z_0(2z_0+1)}{2-5z_0}-z_0=\frac{3z_0(z_0-1)}{2-5z_0}.$$
Formula (\ref{320}) together with Vieta's formulas imply that 
$$u_1+u_2=\frac{41z_0^2-40z_0+8}{4-10z_0}-u_3=\frac{41z_0^2-40z_0+8-6z_0^2+6z_0}{4-10z_0}=\frac{4-7z_0}2,$$
$$u_1u_2=u_3^{-1}\frac{9z_0^2(z_0-1)^2}{4-10z_0}=\frac{3z_0(z_0-1)}2.$$
This proves (\ref{besum}). 
\end{proof}

Thus, by (\ref{besum}), the numbers $u_1$, $u_2$ are roots of the quadratic polynomial 
$$Q(u):=2u^2+(7z_0-4)u+3z_0(z_0-1).$$

{\bf Claim 12.} {\it One has}
\begin{equation}Q\circ\sigma_P(u)=\frac{Q(u)}{(1+f(z_0)u)^2}, \ f(z)=\frac{7z-4}{3z(z-1)}.\label{pulls}\end{equation}

\begin{proof} Recall that 
$\sigma_P(u)=-\frac u{1+f(z_0)u}.$ 
Therefore, 
$$Q\circ\sigma_P(u)=\frac{2u^2-(1+f(z_0)u)(7z_0-4)u+3z_0(z_0-1)(1+f(z_0)u)^2}{(1+f(z_0)u)^2}.$$
The numerator in the latter ratio is equal to 
$$(2-f(z_0)(7z_0-4)+3z_0(z_0-1)f^2(z_0))u^2$$
$$+(-(7z_0-4)+6z_0(z_0-1)f(z_0))u+3z_0(z_0-1)=Q(u).$$
This proves Claim 12.
\end{proof}

The involution $\sigma_P$ sends the collection  of roots of the polynomial $Q$ to itself: their images are zeros of 
the pullback $Q\circ\sigma_P$, which are roots of $Q$, by Claim 12.  Therefore, $\sigma_P$ permutes the roots: 
otherwise, it would fix three points, two  roots and   $0=u(P)$, which is impossible, since $\sigma_P\neq Id$. 
Thus, $\sigma_P$ permutes the points $B_1,B_2\in S\cap L_P$. This proves Proposition \ref{b1b2}.
\end{proof}

The zero locus of the rational function $R(z,w)$ given by (\ref{int43}) is the conic $\gamma$. Its 
polar locus  is the  curve $\Gamma$ from Proposition \ref{b1b2}. 
For every $P\in\gamma\setminus\Sigma$  the intersections of the latter loci 
 with $L_P$ are respectively the point $P$ and $\Gamma\cap L_P$.  They are $\sigma_P$-invariant, by  Proposition \ref{b1b2}. 
 Therefore, the function $R|_{L_P}$  is also $\sigma_P$-invariant, by Proposition \ref{invzero}. Hence, 
 $R$ is an integral of the dual billiard in question. This proves Lemma \ref{l362}.
 \end{proof}

\begin{proof} {\bf of Theorem \ref{tcompl}.} Let an irreducible germ of analytic curve $\gamma\subset\cp^2$ 
admit a structure of rationally integrable dual   billiard. Then the curve $\gamma$ is a conic, and 
the billiard structure extends to a singular holomorphic  one with poles of order at most one and residues lying in 
$\mcm\setminus\{0\}$, by  
 Proposition \ref{proalg},  Theorems \ref{typesing}, \ref{type-conic} and Proposition \ref{props2}. 
The sum of residues should be equal to four, by Proposition \ref{propsum}. All the  collections of residue values 
lying in $\mcm\setminus\{0\}$ with sum equal to 4 are described above. The corresponding billiard structures are rationally 
integrable with integrals given in this and previous subsections. 
This proves Theorem \ref{tcompl}. 
\end{proof}

\section{Real   integrable   dual billiards. Proof of Theorems  \ref{tgerm}, \ref{tclosed} and the addendums 
to Theorems \ref{tcompl}, \ref{tgerm}}

\subsection{Real germs: proof of Theorem \ref{tgerm} and the addendums to Theorems \ref{tcompl}, \ref{tgerm}}
Let a germ of real $C^4$-smooth curve $\gamma\subset\rr^2$ 
carry a rationally integrable dual   billiard structure. 
Then its complex Zariski closure $\overline\gamma\subset\cp^2_{[z:w:t]}\supset\cc^2_{z,w}=\{ t=1\}$ is an algebraic curve, 
and the billiard structure extends to a rationally integrable singular holomorphic dual   
billiard structure on every its non-linear irreducible component (Proposition \ref{proalg2}). Therefore, each non-linear irreducible component is a conic 
(Theorem \ref{tcompl}). Thus, 
the germ $\gamma$ is a chain of adjacent arcs of a finite collection of non-linear conics and maybe lines. It contains 
at least one conical arc, being non-linear. A conical arc cannot be adjacent to a straightline segment, since $\gamma$ is $C^2$-smooth. 
There are no adjacent arcs of distinct conics:  they would have contact of order at most 
4 (B\'ezout Theorem), and hence, would not paste together in $C^4$-smooth way, while $\gamma$ is $C^4$-smooth. This is the place where we use the condition that 
$\gamma$ is $C^4$-smooth.  Thus, $\gamma$ is a germ of real conic, 
which will be also denoted $\gamma$, and the complexified billiard structure on its complexification is one of those 
given by Theorem \ref{tcompl}. Let us find real forms of the complex dual billiards on conic given by Theorem \ref{tcompl}. 

Case 1):   The complexified dual billiard on the complexified  conic $\gamma$ is given by a complex pencil of conics $\mcc_\la$; $\gamma=\mcc_0$. 

\begin{proposition} \label{repens} The real dual billiard on the real conic 
$\gamma$ is defined by a real pencil of conics whose complexification is the pencil $\mcc_\la$.
\end{proposition}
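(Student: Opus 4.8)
The plan is to deduce reality of the pencil from reality of the dual billiard structure by means of complex conjugation. Let $c:\cp^2\to\cp^2$ denote the standard antiholomorphic involution fixing $\rp^2$ pointwise, and let $\overline\gamma\subset\cp^2$ be the complexification of $\gamma$. First I would verify that $c$ preserves the complexified singular holomorphic dual billiard structure on $\overline\gamma$. For a real point $P\in\gamma$ the involution $\sigma_P$ is a real projective involution, so its matrix may be taken with real entries; hence its complexified action on the line $L_P$ commutes with the antiholomorphic involution that $c$ induces on $L_P$, which gives $c\circ\sigma_P\circ c=\sigma_P$ for real $P$. Writing the holomorphic family $Q\mapsto\sigma_Q$ in a coordinate $z$ adapted to $\overline\gamma$ at a real point, this identity forces the holomorphic matrix family $M(z)$ of $\sigma_Q$ to satisfy $\overline{M(z)}=M(z)$ for real $z$, hence to have only real Taylor coefficients; therefore $c\circ\sigma_{c(Q)}\circ c=\sigma_Q$ for all $Q$ by analytic continuation, i.e. $c$ maps the complexified structure to itself.

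Consequently $c$ permutes the singular points $a_1,\dots,a_m$ of the complexified structure, and since by Proposition \ref{props2} their residues lie in $\mcm\setminus\{0\}\subset\rr$ (while complex conjugation carries $\eta_\rho$ to $\eta_{\overline\rho}$), this permutation preserves the residues. In Case 1) the residues $\la_j$ are positive integers and, by Example \ref{tabobs} and Proposition \ref{propencil}, they are exactly the contact orders of the conics of $\mcc_\la$ with $\gamma$ at the base points $a_j$ of the pencil lying on $\gamma$; thus the base divisor $\sum_j\la_j a_j$ of $\mcc_\la$ on $\gamma$ is $c$-invariant. Since $\mcc_\la$ is the pencil of all conics through this base divisor, it follows that $c$ maps $\mcc_\la$ to itself. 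Because $\gamma=\mcc_0$ is a real conic in the pencil, the antiholomorphic involution induced by $c$ on the parameter line $\cp^1$ of the pencil has a fixed point, so its fixed locus is a circle $\rp^1$; the members $\mcc_\la$ with $\la$ in this $\rp^1$ are precisely the real conics of the pencil, they form a real pencil $\mcc^{\rr}$, and since an $\rp^1$ spans $\cp^1$ over $\cc$ the complexification of $\mcc^{\rr}$ is $\mcc_\la$.

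It remains to check that $\mcc^{\rr}$ defines the real dual billiard on $\gamma$. Tabachnikov's construction (Example \ref{tabobs}) applied over $\rr$ to $\mcc^{\rr}$ yields a dual billiard structure on $\gamma$ which is real: for a real point $P$ the two points of $\mcc_\la\cap L_P$ are either both real or complex-conjugate, so the involution permuting them simultaneously for every $\la$ is a real projective involution. Its complexification is the dual billiard defined by the complex pencil $\mcc_\la$, which is our complexified structure; hence $\mcc^{\rr}$ defines the real dual billiard on $\gamma$, as claimed. The step I expect to be the main obstacle is the first one: upgrading the a priori pointwise reality of $\sigma_P$ along $\gamma\cap\rr^2$ to genuine $c$-invariance of the whole complexified structure (and with it of the contact divisor $\sum_j\la_j a_j$), since everything downstream is then the standard fact that a conjugation-invariant pencil of conics containing a real member is defined over $\rr$.
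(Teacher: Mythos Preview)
Your argument is correct and takes a genuinely different route from the paper's. The paper proceeds concretely: it picks a real point $P_0\in\gamma$, iterates the real dual billiard map starting from a nearby real point $E_1$ to produce five real points $E_1,\dots,E_5$ lying on a single member $\mcc_{\la_0}$ of the pencil, and concludes that $\mcc_{\la_0}$ is real because a conic through five real points must coincide with its complex conjugate. The real pencil is then generated by $\gamma$ and $\mcc_{\la_0}$.

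Your approach is the structural one: show that complex conjugation $c$ intertwines the complexified involution family with itself (analytic continuation from the real locus), deduce that the singular divisor $\sum_j\la_j a_j$ is $c$-invariant, and then use that this divisor determines the pencil through $\gamma$ to conclude $c$-invariance of the pencil. This is cleaner conceptually and yields the $c$-equivariance of the entire complexified structure as a reusable byproduct (indeed, the paper implicitly uses this equivariance later when classifying real forms in Cases 2a)--2d)). The paper's proof, by contrast, is shorter and avoids invoking Propositions \ref{propencil} and \ref{exun} to identify residues with contact multiplicities; it works directly with the geometric definition of the pencil billiard. Both proofs are complete; yours places the result inside the general ``real forms via Galois descent'' framework, while the paper's extracts a single real conic by hand.
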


{\bf Claim 13.} {\it The   pencil $\mcc_\la$ contains a complexified  real conic  $\mcc_{\la_0}\neq\gamma$.} 

\begin{proof} Fix a real point $P_0\in\gamma$ where the dual billiard involution $\sigma_{P_0}:L_{P_0}\to L_{P_0}$ is well-defined. Fix 
a point $E_1\in\rp^2\setminus\gamma$ close to $P_0$ and lying on the concave side from the conic $\gamma$. Consider the right real tangent line 
to $\gamma$ through $E_1$, let $P_1$ be its tangency point with $\gamma$. Set $E_2=\sigma_{P_1}(E_1)$. Take now the right real tangent line 
to $\gamma$ through $E_2$, let $P_2$ be the corresponding tangency point. Similarly we construct $E_3$, $E_4$, $E_5$. If $E_1$ is close 
enough to $P_0$, then the points $E_2,\dots, E_5$, $P_2,\dots,P_4$ are well-defined and close to $P_0$. The five real points $E_1,\dots, E_5$ 
lie in the same complex conic $\mcc_{\la_0}\neq\gamma$, since the complex dual billiard  is defined by the pencil $\mcc_\la$. 
The conic $\mcc_{\la_0}$ is the complexification of a real conic. Indeed, otherwise  $\mcc_{\la_0}$ and its complex conjugate conic would be distinct and would intersect at five distinct points $E_1,\dots,E_5$, which is  impossible. The claim is proved.
\end{proof}

\begin{proof} {\bf of Proposition \ref{repens}.} 
Let $\mcc_{\la_0}$ be the real conic from the claim. The complex pencil $\mcc_\la$ of complex conics is the 
complexification of the real pencil of real conics containing $\gamma$ and $\mcc_{\la_0}$, by construction and 
since a pencil of conics is uniquely determined by its two conics.  
Hence, the involution $\sigma_{P}:L_P\to L_P$ permutes 
the points of intersection of the line $L_P$ with each conic from the  real pencil,  since this is true for the pencil $\mcc_\la$. 
Thus, the real dual billiard on $\gamma$ is defined by a real pencil. This proves Proposition \ref{repens}.
\end{proof}

Case 2a): the billiard structure has two singular points with residues $2\pm\frac2k$, $k\in\nn_{\geq3}$. 
The coordinatewise complex conjugation cannot permute them, since it should preserve the residue. 
Therefore, both singular points lie in the real part of the conic, and the dual   billiard structure 
has the type 2a) from Theorem \ref{tgerm} and has the corresponding integral (\ref{exot1}) or (\ref{exot2}). 

Case 2b): the billiard structure has three singular points with residues $\frac32$, $\frac32$, $1$. 
Then the coordinatewise complex involution fixes the singular point with residue $1$ and may either fix, or permute 
the two other singular points. 

Subcase 2b1): the coordinatewise complex conjugation fixes all the three singular points. Then all of them lie 
in the real conic. Let us choose real homogeneous coordinates $[z:w:t]$  so that the singular points with residue 
$\frac32$ are $[0:0:1]$ and $[0:1:0]$, the singular point with residue $1$ is $[1:1:1]$  and the lines 
$w=0$, $z=0$ are tangent to $\gamma$ at the two former points. Then we get that  
in the affine chart $\cc^2_{z,w}=\{ t=1\}$ the real conic $\gamma$ is given by the equation $w=z^2$, 
the dual   billiard structure has type 2b1) from Theorem \ref{tgerm} and has integral (\ref{exo2bnew}) (Lemma \ref{int321}). 
The fact that $\gamma$ indeed coincides with the conic $\{wt=z^2\}\subset\cp^2_{[z:w:t]}$ follows from 
B\'ezout Theorem and the 
fact that the conics in question are tangent to each other at two points $[0:0:1]$ and $[0:1:0]$ and have 
yet another common point $[1:1:1]$. 

Subcase 2b2): the coordinatewise complex conjugation permutes the singular points with residue $\frac32$. 
Applying a real projective transformation, we can and will consider that the singular point with residue $1$ (which lies 
in the real conic) has coordinates $[0:1:0]$, the line $\{ t=0\}$ is tangent to  $\gamma$ at the latter point, 
 the line $\{ w=0\}$ is tangent to $\gamma$ at the point 
$[0:0:1]$,  the points with residue $\frac32$ have coordinates $[\pm i:-1:1]$. Then $\gamma$ is given by the 
equation $wt=z^2$, as in the above discussion. Passing to the affine chart $\cc^2_{z,w}=\{ t=1\}$ we get that the 
dual   billiard structure has type 2b2) from Theorem \ref{tgerm}. This argument implies that 
the dual   billiard structures 2b1) and 2b2) are complex-projective equivalent and proves 
the corresponding statement of the addendum to Theorem \ref{tcompl}.

Let us calculate the integral of the dual   billiard structure 2b2) in the above affine chart $(z,w)$.  
To do this, we find explicitly the projective equivalence $F:\cp^2\to\cp^2$ 
between the structures 2b1) and 2b2). It should send singular points $[0:0:1]$, $[0:1:0]$, $[1:1:1]$ of the 
structure 2b1) to the singular points of the structure 2b2) with the same residues. Let us construct an $F$ 
sending them to  $[i,-1:1]$, $[-i:-1:1]$, $[0:1:0]$. It should send the projective tangent lines to the conic at the points $[0:0:1]$, $[0:1:0]$ to its tangent lines at their images  $[i,-1:1]$, $[-i:-1:1]$. The two former tangent lines are the $z$-axis and 
the infinity line $\{ t=0\}$; their intersection point is $[1:0:0]$. The two latter tangent lines at $[i,-1:1]$, $[-i:-1:1]$  intersect at the point $(0,1)=[0:1:1]$, by symmetry and since in the affine chart $\cc^2_{z,w}=\{ t=1\}$ they pass 
through $(\pm i,-1)$ and have slopes $\pm2i$. Therefore, $F$ should also send 
$[1:0:0]$ to $[0:1:1]$. Finally, it sends $[1:0:0]$, $[0:1:0]$, $[0:0:1]$ to $[0:1:1]$, $[-i:-1:1]$, $[i,-1:1]$. 
The matrix of the  projective transformation $F$ is uniquely defined up  to scalar factor. 
Its columns are $\la_1(0,1,1)$, $\la_2(-i,-1,1)$,  $\la_3(i,-1,1)$, by the above statement; $\la_j\in\cc^*$. 
Let us choose the normalizing scalar factor so that $\la_1=1$. Then the  coefficients $\la_2$ and $\la_3$ 
are found from the linear equation saying that $F([1:1:1])=[0:1:0]$: $\la_2=\la_3=-\frac12$. We get that 
\begin{equation}F \text{ is given by } M:=\left(\begin{matrix} 0 & \frac i2 & -\frac i2\\
1 & \frac12 & \frac12\\ 1 & -\frac12 & -\frac12\end{matrix}\right), \ \ \ M^{-1}=\left(\begin{matrix} 0 & \frac12 & \frac12\\
-i & \frac12 & -\frac12\\ i & \frac12 & -\frac12\end{matrix}\right).\label{fmatr}\end{equation}
The  transformation $F$ thus constructed preserves the conic $\gamma=\{ wt=z^2\}$. Indeed,  
 its image is a conic tangent to $\gamma$ at two points $[\pm i:-1:1]$ and intersecting $\gamma$ in yet 
another point $[0:1:0]$ (by construction). Hence, it has intersection index at least 5 with $\gamma$ and thus, coincides with $\gamma$, 
by B\'ezout Theorem.  
The map $F$ sends the billiard structure 2b1) to 2b2), by construction. Let us check that it sends the integral $R_{b1}$ 
of the structure 2b1) to the integral $R_{b2}$ of the structure 2b2). Indeed, 
the integrals  written in the homogeneous coordinates $[z:w:t]$ take the form 
$$R_{b1}(z,w,t)=\frac{(wt-z^2)^2}{(wt+3z^2)(z-t)(z-w)},$$ 
$$R_{b2}(z,w,t)=\frac{(wt-z^2)^2}{(z^2+w^2+t^2+wt)(z^2+t^2)}.$$
The variable change given by the inverse matrix in (\ref{fmatr}), 
$$\left(\begin{matrix}\wt z \\ \wt w \\ \wt t\end{matrix}\right)=M^{-1}\left(\begin{matrix} z \\  w \\ t\end{matrix}\right),$$
transforms $R_{b1}(\wt z,\wt w,\wt t)$ to $R_{b2}(z,w,t)$. 

Case 2c): the complexified dual   billiard structure has three singularities with residues equal to $\frac43$. 
The coordinatewise complex conjugation permutes them. Hence, it should fix some of them, being an involution. 
Thus, it either fixes one singularity and permutes the two other ones (Subcase c1)), or fixes all the three singularities (Subcase c2)).

Subcase c1). The singular point fixed by coordinatewise complex conjugation is a real point of the 
conic $\gamma$, and the two other (permuted) singularities are complex-conjugated. Applying a  projective transformation 
with a real matrix, we can and will consider that  $\gamma=\{ wt=z^2\}$, the fixed singularity is the point 
$[1:1:1]$ and the permuted singularities are $[e^{\pm\frac{2\pi i}3}: e^{\mp\frac{2\pi i}3}:1]$. Then 
in the affine chart $\cc^2_{z,w}=\{ t=1\}$ the dual   billiard structure in question takes the form 2c1) 
as in Theorem \ref{tgerm}. Hence, it has first integral $R_{c1}$ given by (\ref{exo2b}), see  Lemma \ref{lem43}. 

Subcase c2). Then all the singularities of the billiard structure 
are real points in $\gamma$. Applying a  real projective transformation, we can and will consider that  $\gamma=\{ wt=z^2\}$ and the singularities are $[0:0:1]$, $[0:1:0]$ and $[1:1:1]$. Then 
in the affine chart $\cc^2_{z,w}=\{ t=1\}$ the dual   billiard   takes the form 2c2). 
This together with Proposition \ref{exun} implies that there exists a complex projective transformation 
$F$ fixing the complexified conic $\gamma$  and sending the dual billiard of type 2c2) to that of type 2c1). This 
proves the corresponding statement of the addendum to Theorem \ref{tcompl}.   Let us show 
that the real dual billiard of type 2c2)  has the integral $R_{c2}$ given by (\ref{exoc2}), and $R_{c2}=R_{c1}\circ F$ 
up to constant factor. To do this, we find $F$. Set  
$$\var:=e^{-\frac{2\pi i}3}.$$
We choose $F$ so that it sends the singular 
points $[0:0:1]$, $[0:1:0]$, $[1:1:1]$ of the structure 2c2) respectively to the singular points 
$[\var: \bar\var:1]$, $[\bar\var: \var:1]$, $[1:1:1]$ of the structure 2c1). 
It should preserve the conic $\gamma$. Hence, it sends its projective tangent lines at $[0:0:1]$, $[0:1:0]$ to 
those at the points $[\var: \bar\var:1]$, $[\bar\var:\var:1]$. Therefore, the intersection point 
$[1:0:0]$ of the two former tangent lines should be sent to the intersection point of the two latter tangent lines.
In the above affine chart $\cc^2_{z,w}$ the $z$-coordinate of the latter intersection point is found 
from the equation
$$\bar\var+2\var(z-\var)=\var+2\bar\var(z-\bar\var):$$
$$z=-\frac12, \ w=-\bar\var-\var=1.$$
Finally, the projective transformation $F$ should send the points $[1:0:0]$, $[0:1:0]$, $[0:0:1]$ to 
$[-\frac12:1:1]$, $[\bar\var:\var:1]$, $[\var:\bar\var:1]$. Hence,  its matrix   (normalized by appropriate scalar factor) 
takes the form  
\begin{equation}\left(\begin{matrix}-\frac12 & \la_2\bar\var & \la_3\var\\
1 & \la_2\var & \la_3\bar\var\\
1 & \la_2 & \la_3\end{matrix}\right), \ \ \la_2,\la_3\in\cc^*.\label{matr111}\end{equation}
The coefficients $\la_2$ and $\la_3$ are found from the following system of equations saying that the transformation 
$F$ should fix the point $[1:1:1]$: 
$$\begin{cases}\la_2(1-\var)+\la_3(1-\bar\var)=0\\
\frac32+\la_2(1-\bar\var)+\la_3(1-\var)=0.\end{cases}$$
We get that $\la_3=\var\la_2$, $\frac32+\la_2(1+\var-2\bar\var)=0$, $\la_2=\frac{\var}2$, $\la_3=\frac{\bar\var}2$, 
and the transformation $F$ is given by the matrix
\begin{equation}M:=\left(\begin{matrix}-\frac12 & \frac12 & \frac12\\
1 & \frac{\bar\var}2 & \frac\var2\\
1 & \frac\var2 & \frac{\bar\var}2\end{matrix}\right)\label{matr111}\end{equation}
Let us now calculate the pullback of the integral $R_{c1}$ under the projective transformation $F$. 
Writing $R_{c1}$ in the homogeneous coordinates $[z:w:t]$, we get 
$$R_{c1}([z:w:t])=\frac{(wt-z^2)^3}{(t^3+w^3-2zwt)^2}.$$
Applying the linear transformation given by the  matrix $M$ to the 
polynomial $Q_1(z,w,t)=wt-z^2$ in the numerator   yields 
$$Q_1\circ M(z,w,t)=\frac1{4}((2z+\bar\var w+\var t)(2z+\var w+\bar\var t)-(w+t-z)^2)=-\frac{3}{4}(wt-z^2).$$
Applying $M$ to the polynomial $Q_2(z,w,t)=t^3+w^3-2zwt$ in the denominator yields 
$$8Q_2\circ M(z,w,t)=(2z+\var w+\bar\var t)^3+
(2z+\bar\var w+\var t)^3$$
$$-2(w+t-z)(2z+\var w+\bar\var t)(2z+\bar\var w+\var t)$$
$$=16z^3+2w^3+2t^3-12z^2w-12z^2t-6zw^2-6zt^2-3w^2t-3wt^2$$
$$+24zwt+2(z-w-t)(4z^2+w^2+t^2-2zw-2zt-wt)$$
$$=24(z^3-z^2w-z^2t)-3(w^2t+wt^2)+30zwt$$
$$=3(8z^3-8z^2w-8z^2t-w^2t-wt^2+10zwt).$$
In the affine chart $\cc^2_{z,w}=\{ t=1\}$ we get 
$Q_1\circ M(z,w,1)=-\frac34(w-z^2)$, 
 $$Q_2\circ M(z,w,1)=\frac38(8z^3-8z^2w-8z^2-w^2-w+10zw).$$
Therefore,  $R_{c1}\circ F=R_{c2}$ up to constant factor. 

Case 2d): the complexified dual billiard on the conic has three singularities with residues $\frac43$, $1$, $\frac53$. 
The complex conjugation, which preserves the dual billiard, should fix them, since their residues are distinct. 
Therefore, applying a real projective transformation, we can and will consider that the underlying real conic  
is the parabola $\{ w=z^2\}$, and the singularities are respectively the points $(0,0)$, $(1,1)$ and its infinite point. 
The involution family defining the dual billiard is of the type 2d), by construction and Proposition \ref{exun}. 
It has integral $R_{2d}$ given by  (\ref{exodd}), due to Lemma \ref{l362}.
Theorem \ref{tgerm} and the addendums to Theorems \ref{tgerm} and \ref{tcompl} are proved.

\subsection{Case of closed curve. Proof of Theorem \ref{tclosed}} Let now $\gamma$ be a $C^4$-smooth 
closed curve equipped with an integrable dual   billiard structure. 
 The involutions $\sigma_P$ can be defined by just one 
convex closed invariant curve, and they depend continuously  
on $P\in\gamma$.  Let $R$ be a non-trivial rational first integral of the foliation by invariant curves. For every 
$P\in\gamma$ the restriction $R|_{L_P}$ is $\sigma_P$-invariant, since this holds in a neighborhood of the point $P$ in $L_P$ 
(by definition), and by analyticity. Therefore, $\gamma$ is a conic, by Theorem \ref{tgerm}, and it 
contains no singularity of the dual billiard. Hence, the dual billiard is given by a pencil of 
conics containing $\gamma$, since all the other rationally integrable dual billiards on conic listed in Theorem \ref{tgerm} have real 
singularities. Those conics of the pencil that are close enough to $\gamma$ and lie on its concave side
 are disjoint and form a foliation 
of a topological annulus adjacent to $\gamma$. Indeed, otherwise the pencil would consists of 
conics intersecting at some point $P_0\in\gamma$. But then $P_0$ would  be a singular point of the  dual billiard, 
see the proof of  Proposition \ref{propencil}. The contradiction thus obtained proves Theorem \ref{tclosed}. 

The basic set of the corresponding pencil lies in $\cp^2\setminus\rp^2$ and is described by the following obvious proposition. 

\begin{proposition} Let $\gamma\subset\rp^2$ be a regular conic equipped with a dual billiard structure given by a real pencil of conics. 
Let the basic set of the pencil contain no real points. Then 
it   consists of either four distinct points, or two distinct 
points. In the latter case the regular complexified conics of the pencil are tangent to each other at the two points of the basic set. 
\end{proposition}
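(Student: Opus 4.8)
The plan is to identify the basic set $\mcb(\mcc)$ with the intersection of any two distinct members of the pencil, classify the possible intersection patterns by B\'ezout Theorem, and then use reality of the pencil to eliminate the non-symmetric patterns.

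First I would record the preliminary reductions. Since $\gamma$ is a regular, hence irreducible, conic, for any conic $\mcc'$ of the pencil with $\mcc'\neq\gamma$ the complexified intersection $\gamma_\cc\cap\mcc'_\cc$ is finite, and by B\'ezout Theorem its points carry total intersection multiplicity $4$. A one-line computation with the defining quadratic forms --- if $<Bx,x>=0$ and $<(B-\la_0A)x,x>=0$ then $<Ax,x>=0$, hence $x$ lies on every $\mcc_\la$ --- shows that $\mcb(\mcc)=\gamma_\cc\cap\mcc'_\cc$, and that the local intersection number $m_P$ at a point $P\in\mcb(\mcc)$ is the same for every choice of $\mcc'$ in the pencil, being determined by the pencil itself (any two distinct members generate the same ideal). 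Thus $\mcb(\mcc)$ consists of between one and four points, and the multiset of multiplicities $(m_P)_{P\in\mcb(\mcc)}$ is one of $(4)$, $(3,1)$, $(2,2)$, $(2,1,1)$, $(1,1,1,1)$, with $1,2,2,3,4$ distinct points respectively.

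Next I would invoke reality: the pencil being real, complex conjugation maps $\mcb(\mcc)$ to itself and preserves the function $P\mapsto m_P$. The hypothesis that $\mcb(\mcc)$ contains no real point says exactly that conjugation has no fixed point on $\mcb(\mcc)$. Hence any base point whose multiplicity differs from the multiplicity of every other base point would be conjugation-invariant, hence real --- a contradiction. This at once rules out the patterns $(4)$, $(3,1)$ and $(2,1,1)$ (each has a base point of ``unique'' multiplicity), leaving only $(1,1,1,1)$ and $(2,2)$.

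Finally I would read off the two cases. The pattern $(1,1,1,1)$ gives four distinct points, the first alternative. For $(2,2)$ the basic set is a pair of distinct points $P_1,P_2$ (necessarily interchanged by conjugation, so a complex-conjugate pair), and $m_{P_i}=2$ means precisely that $\gamma_\cc$ and $\mcc'_\cc$ are tangent, with contact of order exactly two, at $P_1$ and $P_2$; since every regular member of the pencil passes through $P_1,P_2$ and is there tangent to the line cut out by the length-two local scheme of $\mcb(\mcc)$, all regular complexified conics of the pencil are pairwise tangent at $P_1$ and $P_2$, which is the second alternative. The only point needing a little care is the standard fact that for conics ``local intersection multiplicity $2$ at $P$'' is equivalent to simple tangency and is independent of the chosen pair of members of the pencil; the rest is bookkeeping, and I expect no real obstacle --- consistent with the statement being labelled obvious.
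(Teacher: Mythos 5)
Your argument is correct. Note that the paper itself offers no proof of this proposition — it is stated as ``obvious'' at the end of Subsection 8.2 — so there is no authorial route to compare against; your B\'ezout-plus-conjugation argument is a sound way to fill it in: the basic set is the scheme-theoretic intersection of any two distinct members, its local multiplicities are determined by the ideal $(\langle Ax,x\rangle,\langle Bx,x\rangle)$ and hence member-independent, complex conjugation preserves them, and absence of real base points kills the asymmetric patterns $(4)$, $(3,1)$, $(2,1,1)$, while multiplicity two at a point where both members are smooth is exactly simple tangency. The only implicit hypothesis you use — that the pencil is genuine (the two real quadratic forms are not proportional), so that the intersection is zero-dimensional — is harmless here, since otherwise the basic set would be all of $\gamma_\cc$ and would contain real points, contradicting the assumption.
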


\section{Integrable projective billiards. Proof of Theorem \ref{tgermproj} and 
its addendum}

Here we prove Theorem \ref{tgermproj} classifying rationally $0$-homogeneously integrable projective billiards and its addendum 
providing formulas for integrals (in Subsection 9.4). To do this, in Subsection 9.1 
we  prove Proposition \ref{ratmom} stating that such a billiard admits an integral that is a $0$-homogeneous rational function in the moment vector. 
Afterwards in Subsection 9.2  we prove Proposition \ref{procriter} stating 
that rational $0$-homogeneously integrability of a projective billiard is equivalent to rational 
integrability of its dual billiard. In Subsection 9.3 we prove Proposition \ref{procons}.

\subsection{The moment map and normalization of integral. Proof of Proposition \ref{ratmom}}

Recall that we identify the ambient Euclidean plane $\rr^2_{x_1,x_2}$ of a projective billiard with the plane 
$\{ x_3=1\}\subset\rr^3_{x_1,x_2,x_3}$, and we denote $r=(x_1,x_2,1)$. 
The geodesic flow has an universal invariant: the moment vector 
$$M:=[r,v]=(-v_2, v_1,\Delta(x,v)), \ \ \Delta(x,v):=x_1v_2-x_2v_1,$$
which separates any two orbits of the geodesic flow \cite{bolotin2}. This  implies that, 
{\it every integral of the projective billiard is a reflection-invariant function of $M$ and vice versa,}  as in \cite{bolotin2}. 

Consider  now a $C^2$-germ of planar curve equipped with a transversal line field,  a connected domain $U$ adjacent to  it and the 
projective billiard in $U$. (Or  a (global) projective billiard in some connected domain $U$ in $\rr^2$.) Let it have a first  integral $R(x,v)$ 
that is a rational $0$-homogeneous function in $v$ of degree uniformly bounded by some constant $d$. Let $W\subset\rr^3_{M_1,M_2,M_3}$ 
denote the image of the moment map $T\rr^2|_U\to\rr^3$, $(x,v)\mapsto[r,v]$, $x=(x_1,x_2)$. 

\begin{proposition} \label{prat}   Let us represent the above integral $R$ as a function of the moment $M$. The function $R$ is 
$0$-homogeneous in $M$: $R(\la M)=R(M)$ for every $\la\in\rr$; thus, it is well-defined on the tautological projection image 
$\mathbb P(W)=\pi(W\setminus\{0\})$. The image $\mathbb P(W)$ is a union of projective lines along which the function $R$ is rational 
of degree at most $d$. The family of the latter lines forms an open subset in the space $\rp^{2*}$ of lines. 
  \end{proposition}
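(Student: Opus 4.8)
\textbf{Proof plan for Proposition \ref{prat}.}

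The plan is to exploit the fact that the moment vector $M=[r,v]$ is linear in $v$: for fixed $x$ the map $v\mapsto M(x,v)=[r,v]$ is a linear isomorphism from $T_x\rr^2$ onto the plane $\{M_3=\Delta\} $ through... more precisely onto a two-dimensional subspace of $\rr^3$, namely the orthogonal complement of $r$. Since $r=(x_1,x_2,1)$ is never the zero vector, this linear map $v\mapsto[r,v]$ is injective with image $r^\perp$, a two-plane depending on $x$. Hence $R(x,v)$, being a $0$-homogeneous rational function of $v$, transports to a function on $r^\perp\setminus\{0\}$ that is again $0$-homogeneous (the linear change of variables preserves $0$-homogeneity and the degree bound $d$), and therefore descends to a rational function on the projective line $\mathbb P(r^\perp)\subset\rp^{2*}$. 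First I would make this precise: fix $x$, write $v$ in terms of $M\in r^\perp$ by inverting the linear map, substitute into $R(x,v)$, and observe numerator and denominator become homogeneous polynomials in $M$ of equal degree $\le d$. This already shows $R$ is $0$-homogeneous in $M$ and rational of degree $\le d$ along each fibre line $\mathbb P(r^\perp)$.

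Next I would check the global consistency, i.e.\ that the value of $R$ at a point $[M]\in\rp^{2*}$ does not depend on which $x$ was used to produce it, so that $R$ is genuinely a single function on $\mathbb P(W)$. This is exactly the content of the quoted fact (following Bolotin \cite{bolotin2}) that $M$ separates geodesic orbits: two pairs $(x,v)$ and $(x',v')$ with the same moment $M$ lie on the same oriented line (same geodesic), and the billiard integral $R$, being a first integral of the billiard flow, is constant along geodesic segments, hence takes the same value. I would state this as: the integral $R$ is a reflection-invariant function of $M$ (already noted in the paragraph preceding the proposition), so it is well-defined as a function of $M$ alone, and then the fibrewise computation above shows it is $0$-homogeneous in $M$. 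Therefore $R$ descends to a well-defined rational function on $\mathbb P(W)=\pi(W\setminus\{0\})$, rational of degree $\le d$ on each line $\mathbb P(r^\perp)$.

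Finally I would address the statement that $\mathbb P(W)$ is a union of such projective lines forming an open subset of $\rp^{2*}$. For each $x\in U$ the line $\ell_x:=\mathbb P(r^\perp)$ is the dual of the point $x$ under the orthogonal polarity of the Introduction, and $W\setminus\{0\}=\bigcup_{x\in U}(r^\perp\setminus\{0\})$ since every nonzero $M\in r^\perp$ is $[r,v]$ for a suitable $v\ne0$ (fibre of the linear map is a full punctured plane). Hence $\mathbb P(W)=\bigcup_{x\in U}\ell_x$, a union of projective lines. That the family $\{\ell_x : x\in U\}$ is open in $\rp^{2*}$ follows because $x\mapsto\ell_x$ is a smooth (indeed algebraic) embedding of the open set $U$ into $\rp^{2*}$ — it is the restriction of the orthogonal polarity, a diffeomorphism $\rr^2\to$ (open subset of) $\rp^{2*}$ — so the image of the open set $U$ is open. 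The only mild subtlety, and the step I expect to need the most care, is the interchange of the two roles of "rational in $v$" versus "rational in $M$": one must verify that inverting a linear substitution cannot raise the degree and cannot introduce a denominator that vanishes identically, which is routine linear algebra but should be spelled out so that the uniform bound $d$ on the degree is genuinely preserved; this also uses that $R$ is non-constant so neither numerator nor denominator is zero. Assembling these points gives the proposition.
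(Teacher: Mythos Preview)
Your proposal is correct and follows essentially the same route as the paper: fix $x$, observe that $v\mapsto[r,v]$ is a linear isomorphism $T_x\rr^2\to r^\perp$, transport the $0$-homogeneous rational function in $v$ to one in $M$ on $r^\perp$ of the same degree, set $\ell_x=\mathbb P(r^\perp)$, write $\mathbb P(W)=\bigcup_{x\in U}\ell_x$, and use that $x\mapsto\ell_x$ is the restriction of a diffeomorphism (the orthogonal polarity) so the image of $U$ is open. Your paragraph on ``global consistency'' is redundant --- the proposition already assumes $R$ is represented as a function of $M$, and the paper establishes this in the sentence preceding the proposition --- but it does no harm.
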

   \begin{proof} As $x=(x_1,x_2)$ is fixed, the restricted moment map $v\mapsto M=[r,v]$ is a linear isomorphism of the tangent plane 
   $T_x\rr^2$ and the plane $r^\perp$ orthogonal to $r$. Therefore, the restriction to $r^\perp$ of the function $R$ is 
   rational $0$-homogeneous of degree no greater than $d$. This proves $0$-homogeneity of the function $R(M)$ and  
   well-definedness  of the function $R$ on 
   $\mathbb P(W)$. Let $\ell(x)\subset\rp^2$ denote the projective line 
   that is the projectivization of the  subspace $r^\perp$.  One has $\mathbb P(W)=\cup_{x\in U}\ell(x)$. 
   The function $R|_{\ell(x)}$ is rational for every $x\in U$, by rationality on $r^\perp$. 
   The map $\rr^2\to\rp^{2*}$, $x\mapsto\ell(x)$ is a  diffeomorphism onto the open subset of those projective lines 
   that do not pass through the origin in the affine chart $\rr^2_{x_1,x_2}=\{ x_3=1\}$. Hence, it maps $U$ onto an open subset in $\rp^{2*}$. This  proves the proposition.
   \end{proof}
   
 As is shown below, the statement of Proposition \ref{ratmom} is implied by Proposition \ref{prat} and the next proposition. 
\begin{proposition} \label{2rat} Let $d\in\nn$. Let a function $f(z,w)$ be defined on a 
neighborhood of the origin in $\rr^2_{z,w}$. Let 
it be rational  in each  variable, and let its degree in the variable $w$ be no greater than $d$. 
Then it is a rational  function of two variables. 
\end{proposition}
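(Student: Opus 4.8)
```latex
The plan is to prove Proposition \ref{2rat} by a standard "interpolation in one variable with the other as parameter" argument, exploiting the uniform degree bound in $w$. First I would fix the degree bound $d$ in $w$ and write, for each fixed generic $z$ near $0$, the rational function $w\mapsto f(z,w)$ as a ratio $\frac{A(z,w)}{B(z,w)}$ where $A$ has degree $\le d$ in $w$ and $B$ has degree $\le d$ in $w$ (by hypothesis the degree, i.e. the max of numerator and denominator degrees, is $\le d$). The point is that such a rational function of $w$ is determined, up to a common scalar, by its values at $2d+1$ distinct points $w_0,\dots,w_{2d}$: indeed the coefficients of $A$ and $B$ (there are $2(d+1)$ of them) satisfy the $2d+1$ linear equations $A(z,w_j)=f(z,w_j)B(z,w_j)$, a homogeneous linear system whose solution space is (generically) one-dimensional.

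Concretely, I would choose $2d+1$ fixed values $w_0,\dots,w_{2d}$ (rational numbers near $0$), and for each $j$ consider the one-variable function $z\mapsto f(z,w_j)$, which is rational in $z$ by hypothesis. Then the entries of the coefficient matrix of the above linear system are rational functions of $z$. Solving the system — e.g. by Cramer's rule applied to a maximal-rank square subsystem — produces coefficients of $A$ and $B$ that are rational functions of $z$. Clearing denominators in $z$, I obtain polynomials $\widetilde A(z,w)$, $\widetilde B(z,w)$ (polynomial in both variables) such that $\widetilde A(z,w_j)=f(z,w_j)\widetilde B(z,w_j)$ for all $j$ and all generic $z$. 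Since for each generic fixed $z$ the function $f(z,\cdot)$ is rational of $w$-degree $\le d$ and agrees with $\frac{\widetilde A(z,\cdot)}{\widetilde B(z,\cdot)}$ at $2d+1$ points, and both sides are ratios of $w$-polynomials of degree $\le d$ (so their difference, after clearing, is a $w$-polynomial of degree $\le 2d$ vanishing at $2d+1$ points), we get $f(z,w)=\frac{\widetilde A(z,w)}{\widetilde B(z,w)}$ identically for generic $z$, hence on a full neighborhood of the origin by continuity away from the zero set of $\widetilde B$.

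The main obstacle is bookkeeping the genericity/non-degeneracy: I must ensure that the linear system for the coefficients of $A,B$ has the expected rank for $z$ outside a proper algebraic (hence measure-zero, nowhere dense) subset, so that Cramer's rule is legitimate and the resulting $\widetilde B$ does not vanish identically. This is handled by noting that if the rank dropped on an open set of $z$, then $f(z,\cdot)$ would have $w$-degree strictly smaller than the assumed representation allows on that open set, and one can then simply work with the smaller degree; alternatively one observes that the set of "good" $z$ is the complement of the common zero locus of finitely many minors, each a rational function of $z$ not identically zero, hence a proper subvariety. A second minor point is reducing to the case $f(0,0)$ finite and $f$ defined near the origin: shrink the neighborhood and, if necessary, translate so that the chosen sample points $w_j$ and a generic vertical line avoid the poles; none of this affects rationality. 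Once the identity $f=\widetilde A/\widetilde B$ holds on a dense open set, it holds wherever both sides are defined, which proves that $f$ is a rational function of the two variables. Feeding this back: Proposition \ref{prat} gives that the integral $R$, as a function of the moment $M$, is $0$-homogeneous and rational of bounded degree along each line of an open family in $\rp^{2*}$; choosing suitable affine coordinates $(z,w)$ as in the proof of Proposition \ref{2ratbis} and applying Proposition \ref{2rat} shows $R$ is a genuine rational $0$-homogeneous function of $M$, which is exactly the assertion of Proposition \ref{ratmom}, part 1); part 2) then follows since this characterization of integrability refers only to the germ of curve with its transversal line field and is symmetric in the choice of side.
```
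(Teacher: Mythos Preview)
Your proposal is correct and follows essentially the same approach as the paper: fix $2d+1$ sample values $w_0,\dots,w_{2d}$, write the $2d+1$ linear equations $A(z,w_j)=f(z,w_j)B(z,w_j)$ for the $2(d+1)$ unknown coefficients of $A,B$, observe that the solution is unique up to scalar (since two rational functions of $w$-degree $\le d$ agreeing at $2d+1$ points coincide), and solve to get coefficients rational in $z$. You are slightly more careful than the paper about the genericity bookkeeping (rank of the system, non-vanishing of $\widetilde B$), but the argument is the same.
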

 \begin{proof} Let us write 
 $$f(z,w)=\frac{a_0(z)+a_1(z)w+\dots+a_d(z)w^d}{b_0(z)+b_1(z)w+\dots+b_d(z)w^d}.$$
 Fix $2d+1$ distinct points $w_0,\dots,w_{2d}$ close to zero. The functions $R_j(z):=f(z,w_j)$ 
 are rational. The system of $2d+1$ equations  
$$ f(z,w_j)=\frac{a_0(z)+a_1(z)w_j+\dots+a_d(z)w_j^d}{b_0(z)+b_1(z)w_j+\dots+b_d(z)w_j^d}=R_j(z)$$
in $2d+2$ unknown coefficients $a_s(z)$, $b_s(z)$ can be rewritten as a system of 
$2d+1$ linear equations on them (multiplying by denominator). For every $z$ it has a unique solution up to 
constant factor depending on $z$, since two rational functions in $w$ of degree at most $d$ cannot 
coincide at $2d+1$ distinct points. This follows from the fact that their difference, which is a rational 
function in $w$ of degree at most $2d$, cannot have more than $2d$ zeros. The  solution 
$(a_0(z),\dots,a_d(z),b_0(z),\dots,b_d(z))$ of the above linear systems 
can be normalized by constant factor so that its  components 
be expressed as rational functions of the parameters  $w_j$ and $R_j(z)$ of the  system. Therefore, 
$a_s(z)$ and $b_s(z)$ are rational functions in $z$. This proves the proposition.
\end{proof}

Let $V\subset\rp^{2*}$ denote the open set of lines from the last statement of Proposition \ref{prat}. Fix two distinct lines $\La_1,\La_2\in V$ and two distinct points $y_j\in\La_j$, $j=1,2$. 
Consider two pencils $\mcp_j$ of lines  through $y_j$. The function $R$ is rational along each line in $\mcp_j$ close to $\La_j$. 
Choosing affine chart  $\rr^2_{z,w}\subset\rp^2$ so that $y_1$, $y_2$ be the intersection points of the infinity line with the coordinate axes 
we get that $R$ is locally a rational function in each separate variable $z$, $w$. Therefore, it is locally rational in two variables, by 
Proposition \ref{2rat}. Hence, it is globally rational on all of $\mathbb P(W)$, by connectivity of the domain $U$, and hence, of the open subset 
$\mathbb P(W)$. Therefore, $R(M)$ is a  $0$-homogeneous 
rational function in $M$. The first part of  
Proposition \ref{ratmom} is proved. Let us prove its second part: independence of integrability on choise of side. Let 
a $C^2$-smooth germ of curve $C$ equipped with a transversal line field define a rationally $0$-homogeneously integrable 
projective billiard on one side from $C$. Then it admits an integral that is a rational $0$-homogeneous 
function $R(M)$ of the moment vector $M$ 
(the first part of Proposition \ref{ratmom}). The moment vector (and hence, the  integral) extends as a constant function 
along  straight lines crossing $C$ (treated as orbits of geodesic flow)  
from one side  of the curve $C$ to the other side. 
Invariance of the integral $R(M)$ under the billiard flow is equivalent to its reflection invariance. But reflection invariance depends only 
on the transversal line field and not on the choice of side. Therefore, if $R$ is an integral on one side, it will be automatically an integral on the 
other side. Proposition \ref{ratmom} is proved.

\subsection{Integrability and duality. Proof of Proposition \ref{procriter}}

The proof of  Proposition  \ref{procriter} is analogous to the arguments from \cite{bolotin2, bm, gl2}. 
 On the ambient projective plane $\rp^2_{[x_1:x_2:x_3]}\supset\rr^2$ we deal with the projective duality $\rp^{2*}\to\rp^2$ 
given by the orthogonal polarity. We use the following
 \begin{remark} \cite{bolotin2, bm, gl2}. For every $r=(x_1,x_2,1)\in\rr^3$ and $v\in T_{(x_1,x_2)}\rr^2$ consider the two-dimensional vector subspace in $\rr^3$ 
 generated by $r$ and $v$ (punctured at the origin). Let $L(r,v)\subset\rp^2$ denote the corresponding 
 projective line (its projectivization). 
The composition of the moment map $(r,v)\mapsto M=[r,v]$ and the tautological projection 
 $\rr^3_{M_1,M_2,M_3}\setminus\{0\}\to\rp^2_{[M_1,M_2,M_3]}$ sends each pair $(r,v)$ to the 
 point $L^*(r,v)$ dual to $L(r,v)$. 
 \end{remark}
 Consider a projective billiard on a curve $C\subset\rr^2_{x_1,x_2}$. Its dual  curve is identified with a curve  
 $\gamma=C^*\subset\rp^2_{[M_1:M_2:M_3]}$, see the above remark. Let 
 the dual billiard on $\gamma$ have a rational integral. It can be written as a $0$-homogeneous rational function $R(M_1,M_2,M_3)$. 
  The corresponding function 
 $R([r,v])$ is a rational $0$-homogeneous integral of the projective billiard. Indeed, its invariance under reflections acting on $v\in T_Q\rr^2$, 
 $Q\in C$,  follows from the above remark and the fact that duality conjugates 
 the billiard reflection acting on lines through $Q$ 
 to the dual billiard involution acting on the dual line $Q^*$.
 Conversely, let the projective billiard have a rational $0$-homogeneous integral. Then it can be written as 
 $R[r,v]$, where $R(M)$ is a rational $0$-homogeneous function,  by Proposition \ref{ratmom}. The function  
 $R(M)$ is an integral of the dual billiard, since $R[r,v]$ is an integral of the projective billiard and by the 
above conjugacy. Proposition \ref{procriter} is proved.

\subsection{Space form billiards on conics. Proof of Proposition \ref{procons}}

Let a projective billiard on a finitely punctured conic $C$ be a space form billiard with matrix $A$. 
In the case, when $A=\diag(1,1,0)$, the billiard is Euclidean, and each conic confocal to $C$ is a caustic. Analogous statement 
holds in the case of non-zero constant curvature, when $A=\diag(1,1,\pm1)$.  This implies the second statement of Proposition \ref{procons}.

Let us prove the converse. Let a transversal line field $\mcn$ on a punctured conic $C$ define a projective billiard having a complex conical caustic 
$S$. Let us show that it is projectively equivalent to a space form billiard with matrix  $\diag(1,1,-1)$. 
Let $\mcd\subset C$ denote the finite set of those points $Q\in C$ for which the line $L_Q$ tangent to $C$  at $Q$ is also tangent to $S$ 
at some point. 
For every $Q\in C^o:=C\setminus(\mcd\cup S)$ the line $\mcn(Q)$ is well-defined by harmonicity condition on the tuple of four distinct 
lines through $Q$:  
 $L_Q$, $\mcn(Q)$ and the complex lines $\La_1$, $\La_2$ through $Q$ tangent to $S$. It says that  
there exists a projective involution of the space $\cp^1$ of complex lines through $Q$ that fixes $L_Q$, $\mcn(Q)$ and permutes 
$\La_1$, $\La_2$.
Let $E_j=E_j(Q)$, $j=1,2$,  denote the  tangency points of the  lines $\La_j$ with $S$. 
Fix  coordinates $(x_1,x_2,x_3)$ on $\rr^3$ 
(homogeneous coordinates on $\rp^2\supset C$) in which $S=\{<Ax,x>=0\}$, $A=\diag(1,1,-1)$. Let us show that 
the projective billiard on $C^o$ is the space form billiard with the matrix $A$: for every $Q\in C^o$ 
the two-dimensional subspaces $H_T(Q), H_\mcn(Q)\subset\rr^3$ 
projected to the lines $L_Q$ and $\mcn(Q)$ respectively are orthogonal in the scalar product $<Ax,x>$.

 Fix a point $B\in\mcn(Q)\cap S$.  The four points $E_1$, $E_2$, $Q$, $B$ are distinct, and no three of them are collinear, since $Q\in C^o$.
There exists 
a  projective involution $\mathbf I:\cp^2\to\cp^2$ fixing the points of the line $QB$ and permuting $E_1$, $E_2$ (and hence, $\La_1$, $\La_2$). It  
fixes $\mcn(Q)$, and hence, 
$L_Q$, by  harmonicity. It preserves  $S$: the conic  
$\mathbf I(S)$ is  tangent to $S$ at  $E_1$ and $E_2$ and intersects $S$ at  $B\neq E_{1,2}$;  hence, 
 $\mathbf I(S)=S$. Thus, $\mathbf I$ is  the  projectivization of a non-trivial linear involution $\rr^3\to\rr^3$ preserving 
  the quadratic form $<Ax,x>$ and transversal two-dimensional subspaces $H_T(Q)$, $H_\mcn(Q)$ and acting trivially on $H_\mcn(Q)$. 
  This  implies orthogonality of the latter subspaces in the  scalar product $<Ax,x>$. Proposition \ref{procons} is proved.

\subsection{Proof of Theorem  \ref{tgermproj} and its addendum}

\begin{proof} {\bf of Theorem \ref{tgermproj}.} 
Let a nonlinear germ of $C^4$-smooth curve $C\subset\rr^2$ carry a transversal line field  $\mcn$ 
defining a 0-homogeneously 
rationally integrable projective billiard. Then the dual   
billiard  on the dual curve $\gamma=C^*$  is rationally integrable (Proposition \ref{procriter}). 
Let $C'\subset C$ denote the complement of the curve $C$ to the set of its inflection points, i.e., points 
where the geodesic curvature vanishes. (A priori the set of inflection points may 
contain a straightline interval.) The dual to $C'$ is a union of $C^4$-smooth arcs of the curve $\gamma$.  The latter 
arcs are conics, by Theorem \ref{tgerm}. Hence, $C'$ is a union of conical arcs. The curve $C$ being $C^4$-smooth, 
the boundary points of the set $C'$ are not inflection points, and adjacent conical arcs paste  $C^4$-smoothly. 
This implies that  $C=C'$ is a conic. 

The rationally $0$-homogeneously integrable projective billiards on a (punctured) conic $C$ are exactly those dual to the rationally integrable 
dual billiards on (punctured) conic $\gamma$ (Proposition \ref{procriter}). Thus, it suffices to  find the projective billiards dual to all the integrable  dual billiards in Theorem \ref{tgerm}. In each of these projective billiards the transversal line field $\mcn$ is defined on 
 $$C^o=C\setminus(\text{at most four points}).$$   
Case 1): the dual   billiard structure on a (punctured) conic $\gamma$ is given by a pencil of conics. 
Then the complexified conic dual to any regular conic from the pencil 
is a complex caustic of the projective billiard on $C$. This together with Proposition \ref{procons} implies that the projective billiard is a space form 
billiard, whose space form matrix can be chosen $\diag(1,1,-1)$.

To treate the other cases, let us introduce the next notations. For  $Q\in C^o$ set 
\begin{equation} P=L_Q^*:=\text{ the point  dual  to the  line  } L_Q; \ \ P\in \gamma=C^*;
\label{whp}\end{equation}
$$\wt P=\mcn^*(Q):= \text{ the point dual to  the projective line tangent to } \mcn(Q);$$
$$\wt P \text{ lies in the line } Q^*=L_{P} \text{ tangent to } \gamma \text{ at } P.$$
 \begin{proposition} Consider the dual billiard on $\gamma$ for the projective billiard defined by the line field $\mcn$. 
 For every $Q\in C^o$ the point $\wt P$ is the unique fixed point distinct from $P$ of the 
 dual billiard involution $\sigma_{P}:Q^*\to Q^*$.
 \end{proposition}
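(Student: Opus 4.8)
The plan is simply to unwind the construction of the dual billiard structure on $\gamma=C^*$ attached to the projective billiard $(C,\mcn)$ and to use that a non-trivial projective involution of a projective line has exactly two fixed points. First I would recall, from the discussion preceding the definition of dual billiard structure, that for $Q\in C^o$ and $P=L_Q^*$ the orthogonal polarity ``line through $Q\mapsto$ point of $Q^*=L_P$'' conjugates the projective billiard reflection involution $T_Q$ acting on the pencil $\rp^1_Q$ of lines through $Q$ with the dual billiard involution $\sigma_P$ acting on $Q^*=L_P$. Since $\sigma_P$ is a non-trivial projective involution of a projective line, it has precisely two fixed points, one of which is $P$ by the very definition of the dual billiard structure on $\gamma$.

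Next I would identify the two fixed lines of $T_Q$ in the pencil $\rp^1_Q$. By the definition of the projective billiard reflection, $T_Q$ is the affine involution of $\rr^2$ that fixes the tangent line $L_Q=T_QC$ pointwise, preserves $\mcn(Q)$ and acts on it as the central symmetry about $Q$; hence, viewed as a transformation of the pencil $\rp^1_Q$, it fixes the two lines $L_Q$ and $\mcn(Q)$, which are distinct because $\mcn$ is transversal to $C$. Being non-trivial, $T_Q$ fixes no other line of the pencil. Applying the duality conjugacy, the images of the fixed lines $L_Q$ and $\mcn(Q)$ under ``line $\mapsto$ point'' are exactly the two fixed points of $\sigma_P$ on $L_P$, namely $L_Q^*=P$ and $\mcn(Q)^*=\mcn^*(Q)=\wt P$. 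Here $\wt P$ lies on $L_P=Q^*$ since $\mcn(Q)\ni Q$, so its polar point lies on $Q^*$, and $\wt P\neq P$ again by transversality of $\mcn$.

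Combining these two steps gives the claim: $\sigma_P$ fixes $P$, its other fixed point is $\wt P$, and since a non-trivial projective involution has exactly two fixed points, $\wt P$ is the unique fixed point of $\sigma_P:Q^*\to Q^*$ distinct from $P$. I do not expect any real obstacle here: the statement is a direct translation, under the orthogonal polarity, of the elementary observation that the projective billiard reflection at $Q$ fixes precisely the tangent line $L_Q$ and the transversal line $\mcn(Q)$; the only points requiring a word of care are the transversality-based facts that $\wt P$ lies on $L_P$ and that $\wt P\neq P$.
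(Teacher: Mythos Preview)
Your proof is correct and takes essentially the same approach as the paper, which simply says ``The proposition follows from definition.'' You have spelled out explicitly what the paper leaves implicit: the duality conjugacy between the projective billiard reflection on $\rp^1_Q$ and $\sigma_P$ on $Q^*$, together with the identification of the two fixed lines $L_Q$ and $\mcn(Q)$ of the reflection and the fact that a non-trivial projective involution of $\rp^1$ has exactly two fixed points.
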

 The proposition follows from definition. 
 
 In what follows for every rationally integrable dual billiard from  Theorem \ref{tgerm}, cases 2a)--2d), we find the above   fixed points 
$\wt P$  of the corresponding involutions. Their dual lines $\mcn(Q)=\wt P^*$ form the line field 
 defining the corresponding projective billiard.  To do this, 
 we work in homogeneous coordinates $[z:w:t]$ in the ambient projective plane $\rp^2\supset\gamma$ in which 
 $$\gamma=\{ wt-z^2=0\}; \ \ \gamma=\{ w=z^2\} \text{ in the affine chart } \rr^2_{z,w}=\{ t=1\}.$$
The curve $C$ is projective dual to $\gamma$ with respect to the duality  $\rp^{2*}\to\rp^2_{[z:w:t]}$ given by the 
 orthogonal polarity. We will work with the curve $C$ in the new homogeneous coordinates $[x_1,x_2,x_3]$ given by   
the projectivization $[F]:\rp^2_{[z:w:t]}\to\rp^2_{[x_1,x_2,x_3]}$ of the linear map 
 \begin{equation}F:(z,w,t)\mapsto(x_1,x_2,x_3):=(\frac z2, t, w).\label{dualf}\end{equation} 
 For every point $Q\in C$ let $P\in\gamma$ be the corresponding point in (\ref{whp}).  Set  
 $$ z_0:=z(P).$$ 
 
 {\bf Claim 14.} {\it In the coordinates $[x_1:x_2:x_3]$ given by (\ref{dualf}) one has}
  \begin{equation} Q=[-z_0:z_0^2:1], \ \ C=\{ x_2x_3=x_1^2\}; \ \ C\cap\{ x_3=1\}=\{ x_2=x_1^2\}.\label{gcon}\end{equation} 
  
  \begin{proof}   The projective tangent line $Q^*$ to 
 $\gamma$ at the point $P$ and its orthogonal-polar-dual point $Q\in\rp^2_{[z:w:t]}$ are given by the equations 
 $$Q^*=\{-2z_0z+w+z_0^2t=0\}, \ Q=[-2z_0:1:z_0^2]\in C.$$
 In the coordinates $[x_1:x_2:x_3]$ one has $Q=[-z_0:z_0^2:1]$.
  \end{proof}
 
{\bf Claim 15.} {\it Let $Q=(x_1,x_2)\in C$ in the affine chart $\rr^2_{x_1,x_2}=\{ x_3=1\}$. 
Consider a $Q$-parametrized family   of points $B(Q)\in Q^*$. Let $z(B(Q))=g(z_0)$; 
$g(z_0)$ is a function of $z_0$. For every $Q\in C$ the dual to the point $B(Q)$ 
  is the line through $Q$ directed by the vector $(\dot x_1,\dot x_2)=(1,-2g(-x_1))$ at $Q$.}
  
  \begin{proof} In the affine chart $\rr^2_{z,w}=\{ t=1\}\subset\rp^2_{[z:w:t]}$ one has 
  $$w(B(Q))=2z(B(Q))z(P)-z^2(P)=2g(z_0)z_0-z_0^2.$$
Thus, $[z:w:t](B(Q))=[g(z_0):2g(z_0)z_0-z_0^2: 1]$. Hence, the dual line 
  $B^*(Q)\subset\rp^2_{[z:w:t]}$ is given by the equation $g(z_0)z+(2g(z_0)z_0-z_0^2)w+t=0$. Writing it in the coordinates 
  $(x_1,x_2,x_3)$, see (\ref{dualf}), yields 
  $2g(z_0)x_1+(2g(z_0)z_0-z_0^2)x_3+x_2=0$. Thus, in the affine chart $\rr^2_{x_1,x_2}$ the latter line is directed by the 
  vector $(1, -2g(z_0))=(1,-2g(-x_1(Q))$, by (\ref{gcon}). 
\end{proof}

 Case 2a): the dual   billiard structure on $\gamma$ is given by the family of involutions 
 $\sigma_{P}:L_{P}\to L_{P}$ taking the form 
 $$\sigma_{P}:\zeta\mapsto\frac{(\rho-1)\zeta-(\rho-2)}{\rho\zeta-(\rho-1)}, \ \zeta=\frac z{z_0}, \ 
 \rho=2-\frac2{2N+1}, \text{ or } \rho=2-\frac1{N+1}.$$
 The fixed point $\wt P\in L_{P}$ of the involution $\sigma_{P}$ has $\zeta$-coordinate $\frac{\rho-2}\rho$, 
 hence 
 $$z(\wt P)=g(z_0), \ g(\theta)=\frac{\rho-2}\rho \theta.$$
 Therefore, the dual line $\wt P^*$ is directed by the vector $(1,-2g(-x_1(Q)))=(1,\frac{2(\rho-2)}\rho x_1(Q))$ at $Q$. 
  Thus, the line field $\mcn$ defining the projective billiard on $C$ is directed by 
 the vector field $(\rho,2(\rho-2)x_1)$ on $C$. 
 The latter field  is tangent to the level curves of the quadratic polynomial
$\mcq_{\rho}(x_1,x_2):=\rho x_2-(\rho-2)x_1^2$. Thus, it has type 2a) from Theorem \ref{tgermproj}.
 
 Cases 2b), 2c), 2d): in the coordinate $u:=z-z_0$, $z_0=z(P)$, 
 the involutions $\sigma_{P}:Q^*\to Q^*$ take the form 
$$ \sigma_{P}: u\mapsto-\frac u{1+f(z_0)u}$$
$$ f=f_{b1}(z):=\frac{5z-3}{2z(z-1)} \text{ (type 2b1))},  \text{ or } \ 
   f=f_{b2}(z):=\frac{3z}{z^2+1} \text{ (type 2b2)),}$$
$$f=f_{c1}(z):=\frac{4z^2}{z^3-1} \text{ (type 2c1))}, \  \text{ or } \ 
   f=f_{c2}(z):=\frac{8z-4}{3z(z-1)} \text{ (type 2c2))},$$
   $$f=f_d(z):=\frac{7z-4}{3z(z-1)}.$$

The $u$- and $z$-coordinates of the fixed point $\wt P$ of the involution $\sigma_P$ are 
   $$u(\wt P)=-\frac2{f(z_0)}, \ z(\wt P)=z_0-\frac2{f(z_0)}.$$

   Subcase 2b1). One has 
   $$z(\wt P)=z_0-\frac{4z_0(z_0-1)}{5z_0-3}=g(z_0), \ \ g(\theta)=\frac{\theta(\theta+1)}{5\theta-3}.$$
  Therefore, the dual line $\wt P^*$ is directed by the vector 
  $$(1,-2g(-x_1))=(1,\frac{2x_1(x_1-1)}{5x_1+3})=(1,\frac{2(x_2-x_1)}{5x_1+3}), \ x_j=x_j(Q).$$
  Here we have substituted $x_1^2=x_2$, since $Q\in C$.  Hence, the line field $\mcn$ is directed by 
the vector field $(5x_1+3, 2(x_2-x_1))$ and thus, has type 2b1).
   
   Subcase 2b2). One has
   $$z(\wt P)=z_0-\frac{2(z_0^2+1)}{3z_0}=g(z_0), \ \ g(\theta)=\frac{\theta^2-2}{3\theta},$$
and the line field $\mcn$ on $C$ is directed by the vector field 
$(1,-2g(-x_1))=(1,\frac{2(x_1^2-2)}{3x_1})$. Or equivalently, by the 
 vector field $(3x_1,2x_2-4)$, since  $x_1^2=x_2$ on $C$. Thus, it has type 2b2).
 
 Subcase 2c1). One has 
 $$z(\wt P)=z_0+\frac{1-z_0^3}{2z_0^2}=g(z_0), \ \ g(\theta)=\frac{\theta^3+1}{2\theta^2},$$
 the  field  $\mcn$ is directed by the vector field $(1,-2g(-x_1))=(1,\frac{x_1^3-1}{x_1^2})=(1,\frac{x_1x_2-1}{x_2})$ on $C$. 
 Or equivalently, by  $(x_2, x_1x_2-1)$. We get type 2c1).

Subcase 2c2). One has 
$$z(\wt P)=z_0-\frac{3z_0(z_0-1)}{4z_0-2}=g(z_0), \ \ g(\theta)=\frac{\theta(\theta+1)}{4\theta-2},$$
the line field $\mcn$ is directed by the vector field $(1,\frac{x_1(x_1-1)}{2x_1+1})=(1,\frac{x_2-x_1}{2x_1+1})$ on $C$. 
Or equivalently, by the field  $(2x_1+1, x_2-x_1)$. Hence, it has type 2c2). 

Subcase 2d). One has 
$$z(\wt P)=z_0-\frac{6z_0(z_0-1)}{7z_0-4}=g(z_0), \ \ g(\theta)=\frac{\theta(\theta+2)}{7\theta-4},$$
the line field $\mcn$ is directed by the vector field $(1,\frac{2x_1(x_1-2)}{7x_1+4})=(1,\frac{2x_2-4x_1}{7x_1+4})$. 
Or equivalently, by the field  $(7x_1+4,2x_2-4x_1)$. Hence, it has type 2d).
This proves Theorem \ref{tgermproj}.
\end{proof}

\begin{proof} {\bf of  the addendum to Theorem \ref{tgermproj}.} Consider the real conic $C=\{ x_2x_3=x_1^2\}\subset\rp^2_{[x_1:x_2:x_3]}$ 
equipped with a projective billiard structure from  Theorem \ref{tgermproj}. 
Let $F:\rr^3\to\rr^3$ be the transformation from (\ref{dualf}):  
$$(z,w,t):=F^{-1}(x_1,x_2,x_3)=(2x_1,x_3,x_2).$$
Let $[F]^{-1}:\rp^2\to\rp^2$ denote the projectivization of the transformation $F^{-1}$. 
Recall that  the orthogonal-polar-dual  to the conic $[F]^{-1}(C)$ is  
the conic $\gamma=C^*=\{ wt=z^2\}$, see Claim 14 above, and the post-composition of $[F]^{-1}$ with the 
duality sends  the projective billiard on $C$  to 
the corresponding dual   billiard   on $\gamma$ given by Theorem \ref{tgerm}. See the above proof of 
Theorem \ref{tgermproj}. 

\begin{proposition} \label{printeg} Let $R$ be a rational integral of the dual billiard on $\gamma$ written as a 
$0$-homogeneous rational function $R(z,w,t)$. Then the function 
\begin{equation}\wt R(x,v):=R(v_2,-2\Delta, -2v_1), \ \ \Delta:=x_1v_2-x_2v_1,\label{rxv}\end{equation}
is a $0$-homogeneous rational integral of the projective billiard on $C$.
\end{proposition}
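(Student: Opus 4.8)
The statement to prove is Proposition \ref{printeg}: given a $0$-homogeneous rational integral $R(z,w,t)$ of the dual billiard on $\gamma=\{wt=z^2\}$, the function $\wt R(x,v):=R(v_2,-2\Delta,-2v_1)$ with $\Delta=x_1v_2-x_2v_1$ is a $0$-homogeneous rational integral of the projective billiard on $C$. The natural route is via Proposition \ref{procriter}: a rational integral $R$ of the dual billiard on $\gamma$, written as a $0$-homogeneous function in homogeneous coordinates, yields the integral $R([r,v])$ of the projective billiard dual to it. So the whole task reduces to identifying, in the specific coordinates set up in the proof of Theorem \ref{tgermproj}, what the substitution $[M_1:M_2:M_3]=[r,v]$ becomes after we pass through the change of coordinates $F$ of \eqref{dualf}.

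First I would recall the geometric dictionary from Subsection 9.2: for a projective billiard on a curve $C\subset\rr^2_{x_1,x_2}\subset\rp^2_{[x_1:x_2:x_3]}$, the dual curve sits in $\rp^{2*}$, and the composition of the moment map $(r,v)\mapsto M=[r,v]=(-v_2,v_1,\Delta)$ with tautological projection lands in the plane where the dual billiard lives; by Proposition \ref{procriter}, if $S(M_1,M_2,M_3)$ is a $0$-homogeneous rational integral of that dual billiard then $S(-v_2,v_1,\Delta)$ is a $0$-homogeneous rational integral of the projective billiard. Now in the proof of Theorem \ref{tgermproj} the dual curve was not taken with the ``bare'' coordinates $[M_1:M_2:M_3]$ but rather transported to the model $\gamma=\{wt=z^2\}\subset\rp^2_{[z:w:t]}$ via the linear map $F$ of \eqref{dualf}, namely $(z,w,t)=F^{-1}(x_1,x_2,x_3)=(2x_1,x_3,x_2)$ applied to the dual-plane coordinates. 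Concretely: the dual billiard structure on $\gamma$ is the push-forward under $[F]$ of the dual billiard associated to the projective billiard on $C$, where the dual-plane coordinates are $[M_1:M_2:M_3]=[-v_2:v_1:\Delta]$. Therefore an integral $R(z,w,t)$ on $\gamma$ pulls back, under $[F]$ acting on the $M$-coordinates, to $R\big(F^{-1}(M_1,M_2,M_3)\big)=R(2M_1,M_3,M_2)$ as an integral of that dual billiard; substituting $(M_1,M_2,M_3)=(-v_2,v_1,\Delta)$ gives $R(-2v_2,\Delta,v_1)$ as an integral of the projective billiard. Up to the overall sign normalization (multiplying all arguments of the $0$-homogeneous function $R$ by $-2$, which does nothing), this is exactly $R(v_2,-2\Delta,-2v_1)=\wt R(x,v)$ displayed in \eqref{rxv}. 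Rationality and $0$-homogeneity in $v$ (with uniformly bounded degree) are immediate since $R$ is $0$-homogeneous rational and the arguments are polynomial of degree $1$ in $v$.

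Thus the key steps, in order, are: (1) invoke Proposition \ref{procriter} in the form ``a $0$-homogeneous rational integral of the dual billiard, expressed in homogeneous coordinates of the dual plane, becomes $S([r,v])$ for the projective billiard''; (2) recall from the proof of Theorem \ref{tgermproj} that the dual billiard on $\gamma=\{wt=z^2\}$ is the image under the projective linear map $[F]$, $F$ as in \eqref{dualf}, of the dual billiard attached to the projective billiard on $C$ in the natural dual-plane coordinates $[-v_2:v_1:\Delta]$; (3) compose: pull $R$ back by $F^{-1}$ on the $M$-variables, substitute $M=[r,v]$, and simplify using $0$-homogeneity (absorbing the factor $-2$) to land on the displayed formula \eqref{rxv}; (4) note rationality and bounded-degree $0$-homogeneity in $v$ are formal. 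The main — really the only — obstacle is bookkeeping the direction of the arrows: whether $R\circ F^{-1}$ or $R\circ F$ is the integral in the $M$-coordinates, and matching the sign/scaling conventions in $M=(-v_2,v_1,\Delta)$ versus the arguments $(v_2,-2\Delta,-2v_1)$ in \eqref{rxv}. I would pin this down by testing on the simplest case $R_0=wt-z^2$ from Theorem \ref{classpqr} (case 1, a pencil): then $\wt R$ should be (a multiple of) $4v_1\Delta-v_2^2$, the numerator appearing throughout the addendum to Theorem \ref{tgermproj}, which checks the substitution $(z,w,t)\mapsto(v_2,-2\Delta,-2v_1)$ directly and fixes all conventions.
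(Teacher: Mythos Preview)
Your overall strategy is sound and is a legitimate alternative to the paper's computation, but there is a concrete bookkeeping error in step~(3) that makes your final identification false. You correctly identify that $[F]$ sends the dual billiard on $C^*$ (in the $M$-coordinates) to the dual billiard on $\gamma$. But then the pullback of $R$ under $[F]:\gamma'\to\gamma$ is $R\circ F$, not $R\circ F^{-1}$: you should obtain $R(F(M))=R(M_1/2,\,M_3,\,M_2)$, hence after substituting $M=(-v_2,v_1,\Delta)$ you get $R(-v_2/2,\,\Delta,\,v_1)$, and multiplying all arguments by $-2$ (using $0$-homogeneity) yields $R(v_2,-2\Delta,-2v_1)$ as desired. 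Your formula $R(-2v_2,\Delta,v_1)$ is \emph{not} a scalar multiple of $(v_2,-2\Delta,-2v_1)$: there is no $\lambda$ with $\lambda(-2v_2,\Delta,v_1)=(v_2,-2\Delta,-2v_1)$ for generic $v,\Delta$. Your own sanity check catches this: plugging $(z,w,t)=(-2v_2,\Delta,v_1)$ into $R_0=wt-z^2$ gives $v_1\Delta-4v_2^2$, not $4v_1\Delta-v_2^2$. Once you swap $F^{-1}$ for $F$ in the pullback, your argument goes through.

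For comparison, the paper takes a different and more direct route: instead of dualizing the linear map $[F]$, it pushes the projective billiard on $C$ forward by $[F]^{-1}$ to the affine chart $\{t=1\}$ in $\rp^2_{[z:w:t]}$, computes the affine map $(x_1,x_2)\mapsto(z,w)=(2x_1/x_2,\,1/x_2)$ and its differential $v\mapsto\hat v=(-2\Delta/x_2^2,\,-v_2/x_2^2)$, and then evaluates the moment vector there: $[r,\hat v]=\tfrac{1}{x_2^2}(v_2,-2\Delta,-2v_1)$, so $R([r,\hat v])=R(v_2,-2\Delta,-2v_1)$ by $0$-homogeneity. This avoids the contragredient-map bookkeeping entirely at the cost of one explicit Jacobian computation; your route is more conceptual but, as you anticipated, is vulnerable to exactly the direction error you made.
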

\begin{proof} In the affine charts $\rr^2_{x_1,x_2}=\{ x_3=1\}\subset\rp^2$, $\rr^2_{z,w}=\{ t=1\}\subset\rp^2$ 
in the source and  image the map $[F]^{-1}$ and its differential take the form 
$$[F]^{-1}:(x_1,x_2)\mapsto(z,w):=\left(\frac{2x_1}{x_2},\frac1{x_2}\right),$$
$$d[F]^{-1}(x_1,x_2)(v)=\hat v:=\left(-\frac{2\Delta}{x_2^2}, -\frac{v_2}{x_2^2}\right), \ \Delta:=x_1v_2-x_2v_1.$$
Set $r:=(z,w,1)=(\frac{2x_1}{x_2},\frac1{x_2},1)$, and let us identify $\hat v=(\hat v_1,\hat v_2)$ with  
$$\hat v=(\hat v_1,\hat v_2,0)=\left(-\frac{2\Delta}{x_2^2}, -\frac{v_2}{x_2^2},0\right)\in\rr^3_{z,w,t}.$$
 The function $R([r,\hat v])$ is an integral of the $[F]^{-1}$-pushforward 
of the projective billiard on $C$, which is a projective billiard on $[F]^{-1}(C)$; see  
Proposition \ref{procriter}. Therefore, $R([r,\hat v])$ written as a function of $x=(x_1,x_2)$ and $v=(v_1,v_2)$ 
 is an integral of the projective billiard on $C$. One has 
$$[r,\hat v]=\frac1{x_2^2}(v_2,-2\Delta,-2v_1).$$
Hence, $R([r,\hat v])$  takes the form (\ref{rxv}), by $0$-homogeneity. Proposition \ref{printeg} is proved.
\end{proof}

 In what follows we calculate the integral (\ref{rxv}) explicitly  for  the integrals $R$  listed in the addendum to Theorem \ref{tgerm}.

  Case 1): the dual billiard structure on $\gamma$ is given by a pencil of conics containing $\gamma$. 
  Then it admits a quadratic rational integral $R$, which is a ratio of two quadratic forms in $(M_1,M_2,M_3)$. 
  The corresponding integral (\ref{rxv}) is  a ratio of two quadratic forms in the vector $(v_2,-2\Delta,-2v_1)$. 
  
  Case 2a1):  $\rho=2-\frac2{2N+1}$, the integral $R(z,w)$ written in the affine chart $\rr^2_{z,w}=\{ t=1\}$ 
   has type (\ref{exot1}). In the homogeneous coordinates 
  $[z:w:t]$ it takes the form
  $$R(z,w,t)=\frac{(wt-z^2)^{2N+1}}{t^2\prod_{j=1}^N(wt-c_jz^2)^2}, \ \ 
  c_j=-\frac{4j(2N+1-j)}{(2N+1-2j)^2}.$$
  Substituting 
  \begin{equation}(z,w,t)=(v_2,-2\Delta, -2v_1), \ \Delta=x_1v_2-x_2v_1,\label{substor}\end{equation} 
   to $R$, see (\ref{rxv}), and multiplying by 4 yields   integral (\ref{r2a1v}): 
  $$\Psi=\Psi_{2a1}(x_1,x_2,v_1,v_2):=\frac{(4v_1\Delta-v_2^2)^{2N+1}}{v_1^2\prod_{j=1}^N(4v_1\Delta-c_jv_2^2)^2}.$$

Case 2a2): $\rho=2-\frac1{N+1}$, the integral $R(z,w)$ has type (\ref{exot2}), and in 
the homogeneous coordinates
$$R(z,w,t)=\frac{(wt-z^2)^{N+1}}{zt\prod_{j=1}^N(wt-c_jz^2)}, \ \  c_j=-\frac{j(2N+2-j)}{(N+1-j)^2}.$$
Substitution (\ref{substor}) and multiplication by $-2$ yield (\ref{r2a2v}): 
$$\Psi=\Psi_{2a2}(x_1,x_2,v_1,v_2)=\frac{(4v_1\Delta-v_2^2)^{N+1}}{v_1v_2\prod_{j=1}^N(4v_1\Delta-c_jv_2^2)}.$$

Case 2b1): $R(z,w)$ has type (\ref{exo2bnew}), and in the homogeneous coordinates 
$$R(z,w,t)=\frac{(wt-z^2)^2}{(wt+3z^2)(z-t)(z-w)}.$$
Substitution (\ref{substor})   yields (\ref{r2b1v}):
 $$\Psi=\Psi_{2b1}(x_1,x_2,v_1,v_2)=\frac{(4v_1\Delta-v_2^2)^2}{(4v_1\Delta+3v_2^2)(2v_1+v_2)(2\Delta+v_2)}.$$
 
 Case 2b2): $R(z,w)$ has type (\ref{exo2bnew2}), and in the homogeneous coordinates 
$$R(z,w,t)=\frac{(wt-z^2)^2}{(z^2+w^2+wt+t^2)(z^2+t^2)}.$$
Substitution (\ref{substor}) yields (\ref{r3b2v}):
$$\Psi=\Psi_{2b2}(x_1,x_2,v_1,v_2)=\frac{(4v_1\Delta-v_2^2)^2}{(v_2^2+4\Delta^2+
4v_1\Delta+4v_1^2)(v_2^2+4v_1^2)}.$$

Case 2c1): $R(z,w)$ has type (\ref{exo2b}), and in the homogeneous coordinates 
$$R(z,w,t)=\frac{(wt-z^2)^3}{(t^3+w^3-2zwt)^2}.$$
 Substitution (\ref{substor})  and multiplication by $64$ yield (\ref{r2c1v}): 
 $$\Psi=\Psi_{2c1}(x_1,x_2,v_1,v_2)=\frac{(4v_1\Delta-v_2^2)^3}{(v_1^3+\Delta^3+
 v_1v_2\Delta)^2}$$
 
 Case 2c2): $R(z,w)$ has type (\ref{exoc2}), and in the homogeneous coordinates 
$$R(z,w,t)=\frac{(wt-z^2)^3}{(8z^3-8z^2w-8z^2t-w^2t-wt^2+10zwt)^2}.$$
Substituting (\ref{substor}) and multiplying by 64 yields (\ref{r2c2v}):
$$\Psi=\Psi_{2c2}(x_1,x_2,v_1,v_2)=\frac{(4v_1\Delta-v_2^2)^3}{(v_2^3+2v_2^2v_1+(v_1^2+2v_2^2+5v_1v_2)\Delta+v_1\Delta^2)^2}.$$

Case 2d): $R(z,w)$ is as in (\ref{exodd}), and in the homogeneous coordinates
$$R(z,w,t)=\frac{(wt-z^2)^3}{(wt+8z^2)(z-t)(wt^2+8z^2t+4w^2t+5wz^2-14zwt-4z^3)}.$$
Substituting (\ref{substor}) and multiplying by $-8$ yields (\ref{r2dv}): 
$$\Psi=\Psi_{2d}(x_1,x_2,v_1,v_2)$$
$$=\frac{(4v_1\Delta-v_2^2)^3}{(v_1\Delta+2v_2^2)(2v_1+v_2)(8v_1v_2^2+2v_2^3+(4v_1^2+5v_2^2+28v_1v_2)\Delta+16v_1\Delta^2)}.$$
The addendum to Theorem \ref{tgermproj} is proved.
\end{proof}

\section{Billiards with complex algebraic caustics. Proof of Theorems \ref{talg1} and \ref{talg2}}
\subsection{Case of  Euclidean billiard. Proof of Theorem \ref{talg1}}

Let $C\subset\rr^2_{x_1,x_2}=\{ x_3=1\}\subset\rr^3_{x_1,x_2,x_3}$ be a $C^2$-smooth connected curve. We identify its ambient  plane with the 
 affine chart  $\{ x_3=1\}\subset\rp^2_{[x_1:x_2:x_3]}$. 
Let $\gamma=C^*\subset\rp^2_{[x_1:x_2:x_3]}$ be its orthogonal-polar dual curve. 
Consider the usual billiard on  $C$. Its dual billiard on $\gamma$ is given by {\it Bialy--Mironov  
angular symmetries} $\sigma_P:L_P\to L_P$, $P\in\gamma$, defined as follows: $\sigma_P(P)=P$; $\sigma_P$ permutes  points 
 $a^*, b^*\in L_P$, if and only if  the lines $Oa^*$, $Ob^*$  are symmetric with respect to the line $OP$. 
See Fig. 5 below.  Here $O=(0,0)\in\rr^2$. 
\begin{remark}
The Bialy -- Mironov angular billiard was used in the solution of Bolotin's polynomial version of Birkhoff Conjecture 
\cite{bm, bm2, gl2}. 
\end{remark}
 \begin{figure}[ht]
  \begin{center}
   \epsfig{file=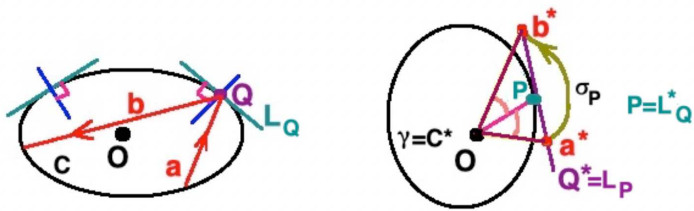, width=25em}
   \vspace{-0.3cm}
   \caption{Euclidean  billiard and its dual: Bialy -- Mironov angular billiard}
        \label{fig:4}
  \end{center}
\end{figure}
\begin{proposition} \label{pinvlp} Let $C$, $\gamma$, $\sigma_P$ be as above. 
 Let  the billiard in $C$ have a complex algebraic 
caustic $S$. Let $S^*$ be its complex  projective dual, and let 
$H(x_1,x_2,x_3)$ be its defining homogeneous polynomial: $S^*=\{ H=0\}$, and $H$ has the minimal possible 
degree. Set 
$$d:=\deg H, \ \  R(x_1,x_2,x_3):=\frac{H^2(x_1,x_2,x_3)}{(x_1^2+x_2^2)^{d}}.$$
The function $R$ 
is an integral of the Bialy--Mironov angual billiard on $\gamma$.
\end{proposition}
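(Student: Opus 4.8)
\textbf{Proof proposal for Proposition \ref{pinvlp}.}

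The plan is to reduce the invariance of $R$ under the angular billiard $\sigma_P$ to the defining tangency-caustic property of $S$ for the Euclidean billiard on $C$, via the orthogonal polarity. First I would recall that a point $a^*\in L_P$, $a^*\neq P$, corresponds under the polarity to a line $a$ through the point $Q\in C$ with $L_Q^*=P$; moreover $a^*$ lies on the dual curve $S^*$ if and only if the line $a$ is tangent to $S$. Thus the two intersection points $a^*,b^*$ of $L_P$ with $S^*$ correspond to the two complex lines through $Q$ tangent to $S$, and the caustic property says precisely that the Euclidean reflection at $Q$ permutes $a$ and $b$. I would then invoke the known fact (Bialy--Mironov, cf. the references \cite{bm, bm2, gl2} cited just above) that under the polarity the Euclidean reflection at $Q$ is conjugated to the angular symmetry $\sigma_P$ on $L_P$: reflecting $a\mapsto b$ at $Q$ corresponds to $\sigma_P(a^*)=b^*$. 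Hence $\sigma_P$ preserves the intersection divisor $L_P\cap S^*$, i.e.\ preserves the zero divisor of $H|_{L_P}$.

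Next I would observe that $\sigma_P$ also preserves the divisor cut on $L_P$ by the quadric $\{x_1^2+x_2^2=0\}$, the union of the two isotropic lines through $O$. Indeed, by the very definition of the angular symmetry, $\sigma_P$ is the involution of $L_P$ induced by reflection (about the line $OP$) of the pencil of lines through $O$; the two isotropic directions through $O$ are interchanged by any such reflection, so their intersection points with $L_P$ are permuted by $\sigma_P$. Equivalently, the two points $[1:\pm i:0]\in L_P$ — or rather the points where $L_P$ meets $\{x_1^2+x_2^2=0\}$ — form a $\sigma_P$-invariant pair. Therefore the zero divisor of $(x_1^2+x_2^2)|_{L_P}$ is $\sigma_P$-invariant as well.

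Now the restriction $R|_{L_P}=\dfrac{(H|_{L_P})^2}{((x_1^2+x_2^2)|_{L_P})^{d}}$ is a rational function on $L_P$ whose zero locus (the zeros of $H|_{L_P}$, doubled) and pole locus (the zeros of $(x_1^2+x_2^2)|_{L_P}$, each with multiplicity $d$) are both $\sigma_P$-invariant. A rational function is determined up to a constant by its zero and pole divisors, so $R\circ\sigma_P=c(P)\,R$ on $L_P$ for some constant $c(P)$. To pin down $c(P)=1$ I would check the behavior at a $\sigma_P$-fixed point: the point $P$ itself is fixed by $\sigma_P$, and at $P$ the numerator $H^2$ vanishes to even order (being a square) while the denominator is nonzero and nonsingular there (the conic $\{x_1^2+x_2^2=0\}$ does not pass through $P\in\gamma\subset\rr^2$, so $(x_1^2+x_2^2)(P)\neq 0$); hence $R|_{L_P}$ either is nonvanishing at $P$ or has $P$ as a zero of even order, and Proposition \ref{invzero} (applied with the fixed point $P$ of $\sigma_P$ in place of $1$, after an affine coordinate change, or directly its proof) forces $c(P)=1$. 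This gives $R\circ\sigma_P=R$ on $L_P$ for every $P\in\gamma$, which is exactly the assertion that $R$ is an integral of the Bialy--Mironov angular billiard on $\gamma$.

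The main obstacle I anticipate is the bookkeeping in the first step: making precise, with correct multiplicities, the correspondence between $L_P\cap S^*$ and the tangent lines from $Q$ to $S$, and verifying that the polarity genuinely conjugates Euclidean reflection at $Q$ to $\sigma_P$ on all of $L_P$ (not merely on the two distinguished points) — including the degenerate cases where $L_P$ is tangent to $S^*$ or passes through a singular point of $S^*$, where one must argue that the divisor identity still holds by continuity/analyticity. Once that conjugation is in hand, the invariance of the isotropic pair and the divisor argument are routine, and the fixed-point normalization via Proposition \ref{invzero} closes the proof.
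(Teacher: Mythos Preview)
Your proposal is correct and follows essentially the same route as the paper: show that $\sigma_P$ preserves both the zero divisor $L_P\cap S^*$ (via the duality between the Euclidean reflection at $Q$ and the angular symmetry, which is exactly the statement that the Bialy--Mironov angular billiard is the dual billiard) and the pole divisor $L_P\cap\{x_1^2+x_2^2=0\}$ (since $\sigma_P$ permutes the two isotropic points), then invoke Proposition~\ref{invzero}. The paper's proof is terser --- it takes the invariance of $L_P\cap S^*$ as implicit from the setup that $\sigma_P$ \emph{is} the dual billiard involution, and it adds one small point you omit: the observation $S^*\not\subset\ii$ (equivalently, $S$ contains no line) is needed to ensure $R$ is non-constant, which is part of being an integral.
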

\begin{proof} Set 
$$\ii:=\{ x_1^2+x_2^2=0\}\subset\cp^2_{[x_1:x_2:x_3]}.$$
For every $P\in\gamma$ the complexification of the angular symmetry $\sigma_P:L_P\to L_P$ is the projective involution of 
 the complexified  line $L_P$ that permutes its intersection points with $\ii$, see \cite{bm}, \cite[proposition 2.18]{gl2}. 
 Thus, it leaves invariant  polar and zero loci  $L_P\cap \ii$, $L_P\cap S^*$  of the rational function $R|_{L_P}$. 
One has $S^*\not\subset\ii$, since  a caustic  contains no straight line. 
This together with Proposition \ref{invzero} applied to the involution $\sigma_P$ implies non-constance and $\sigma_P$-invariance of the restriction $R|_{L_P}$ 
and proves Proposition \ref{pinvlp}.
\end{proof} 

Thus, the dual billiard structure on $\gamma$ is rationally integrable. Therefore, $\gamma$ (and hence, 
$C$) lies in  a conic. In more detail, 
if $C$ were $C^4$-smooth, then this would follow from Theorem \ref{tgerm}. Let us treat the case, when $C$ 
is $C^2$-smooth.  The polar locus of the integral $R$ lies in $\ii$, and $R|_{L_P}$ is invariant under the angular symmetry. 
Therefore, the billiard  on $C$ is polynomially integrable, 
 see  the discussion on p. 1004 in \cite{gl2}, and hence, $C$ is a conic, by 
 \cite[theorem 1.6]{gl2}. Here is a more detailed explanation. The complex Zariski closure of the curve $\gamma$ is 
 an algebraic curve (Proposition \ref{proalg2}). The family $\sigma_P$ extends to a singular dual billiard structure on each its 
  non-linear  irreducible component, with integral $R$ having polar locus  in $\ii$. Hence, each  component is 
  a conic, by   \cite[theorem 1.25]{gl2}. Thus, 
  $C$ is a union of conical arcs. Different conical arcs (if any) should be confocal, see the discussion in \cite[subsection 6.2]{gl2}. 
  Any two intersecting confocal conics are orthogonal. This together with $C^2$-smoothness of the curve $C$ 
  implies that  $C$ lies in a conic; see \cite[subsection 6.3]{gl2} for more details.  
 Theorem \ref{talg1} is proved.
 \subsection{Case of projective billiard. Proof of Theorem \ref{talg2}}
Let $S_1$ and $S_2$ be two complex algebraic caustics. Let $S_1^*$, $S_2^*$ be their dual curves. Let $d_j$ denote the 
degrees of the curves $S_j^*$, and let $\mcp_j$ be their defining polynomials of degrees $d_j$.  
The dual curve $\gamma=C^*$ equipped with the corresponding dual billiard structure has a non-constant rational integral 
$$R:=\frac{\mcp_1^{2d_2}}{\mcp_2^{2d_1}},$$
as in the above proof of Proposition \ref{pinvlp}. Therefore, $\gamma$, and hence, $C$ is a conic, by Theorem \ref{tgerm}. 
Theorem \ref{talg2} is proved.

\section{Acknowledgements}

I am grateful to Sergei Tabachnikov for  introducing me to projective billiards, statement of conjecture and helpful discussions. 
I wish to thank  Sergei Bolotin,  Mikhail Bialy, Andrey Mironov and Eugenii Shustin for helpful discussions.

\end{document}